\documentclass[11pt]{amsart}

\usepackage[utf8]{inputenc}
\usepackage[T1]{fontenc}
\usepackage[english]{babel}

\usepackage{amsaddr}

\usepackage{amsthm}
\usepackage{amsmath}
\usepackage{amssymb}
\usepackage{mathrsfs}
\usepackage{dsfont}

\usepackage{graphicx}

\usepackage{color}
\definecolor{marin}{rgb}   {0.,   0.1,   0.5} 
\definecolor{rouge}{rgb}   {0.8,   0.,   0.} 
\definecolor{sepia}{rgb}   {0.4,   0.25,   0.} 
\definecolor{mag}{rgb}   {0.3,   0,   0.3} 

\usepackage[colorlinks,citecolor=sepia,linkcolor=marin,urlcolor=mag ,bookmarksopen,bookmarksnumbered]{hyperref}

\newcommand{\ic}{\mathrm{i}}

\newcommand{\ul}{\underline{\boldsymbol{\ell}}}
\newcommand{\us}{\underline{\boldsymbol{\sigma}}}
\newcommand{\uq}{\underline{q}}

\newcommand{\bili}{\mathcal{M}}

\newcommand{\cste}{}

\newcommand{\bk}{\boldsymbol{k}}
\newcommand{\bm}{\boldsymbol{m}}
\newcommand{\bi}{\boldsymbol{i}}
\newcommand{\bj}{\boldsymbol{j}}
\newcommand{\bh}{\boldsymbol{h}}
\newcommand{\bl}{\boldsymbol{\ell}}
\newcommand{\bs}{\boldsymbol{\sigma}}
\newcommand{\bI}{\mathbb{I}}
\def\Z{{\mathbb Z}}

\def\R{{\mathbb R}}
\def\C{{\mathbb C}}
\newcommand{\N}{\mathbb N}
\newcommand{\bro}{\boldsymbol{\rho}}

\newtheorem{theorem}{Theorem}[section]
\newtheorem{corollary}[theorem]{Corollary}
\newtheorem{lemma}[theorem]{Lemma}
\newtheorem{proposition}[theorem]{Proposition}
\newtheorem{definition}[theorem]{Definition}
\newtheorem{remark}[theorem]{Remark}
\newtheorem{example}[theorem]{Example}

\newcommand\quotientthree[2]{{^{\displaystyle #1}}/{_{\displaystyle #2}}}

\begin{document}

\title[Full dimensional invariant tori for NLS]{
Lagrangian invariant tori for completely resonant nonlinear Schr\"odinger equations
}

\author{Joackim Bernier$^*$, Michela Procesi$^{\dagger}$}

\thanks{$^*$Nantes Universit\'e, CNRS, Laboratoire de Math\'ematiques Jean Leray, LMJL, F-44000 Nantes, France}
\thanks{$^\dagger$Dipartimento di Matematica e Fisica, Universit\`a Roma Tre, Roma, 00146, Italy}

\keywords{Full dimensional tori, NLS equation, Amplitude-frequency map, Action-T\"oplitz}

\subjclass[2020]{35B15,35Q55,37K55,53D12}

\email{joackim.bernier@univ-nantes.fr}

\email{michela.procesi@uniroma3.it}

\begin{abstract}

We study small solutions of the parameterless nonlinear Schr\"odinger equations on the circle. In this regime, the nonlinearity is perturbative and the system is thus nearly integrable. We prove the persistence, in the presence of nonlinearity, of full-dimensional tori. We justify this denomination by proving that they are Lagrangian in the sense that, although they are not submanifolds (since they are compact), at every point their tangent cone is a Lagrangian subspace. This is the first result of this type for an equation without an infinity of external random parameters. 
We achieve this by establishing new fine decay and uniformity properties of the amplitude-to-frequency  map. This is the main technical novelty of this paper. It allows us to modulate all the frequencies and thus obtain non-resonance conditions without any assumption on the choice of the tangential sites.

\end{abstract}

\maketitle

\setcounter{tocdepth}{1} 
\tableofcontents

\section{Introduction}

We are interested in the dynamics of  solutions, generated by small and smooth initial data, to analytic nonlinear Schr\"odinger equations on $\mathbb{T} = \mathbb{R}/2\pi \mathbb{Z}$
\begin{equation}
\label{eq:NLS}
\tag{NLS}
\ic \partial_t u + \partial_x^2 u =  f(|u|^2)u, \quad x\in \mathbb{T}
\end{equation}
where $u(t,x)\in \mathbb{C}$, $f : \mathbb{R} \to \mathbb{R}$ is analytic in a neighborhood of the origin. Moreover, without loss of generality, we assume that $f(0) = 0$.  Because of its simplicity, this equation is an emblematic model for studying the stability of nearly integrable systems without external parameters \cite{KP96}.

\medskip

Indeed, since we consider small solutions, the nonlinearity is, at least for some time, perturbative. It is therefore natural to ask whether the solutions behave, for arbitrarily long times, like those of the linear Schr\"odinger equation
\begin{equation}
\label{eq:LS}
\tag{LS}
\ic \partial_t u + \partial_x^2 u = 0, \quad x\in \mathbb{T}.
\end{equation}
The solutions to \eqref{eq:LS} can be written as
$$
u(t,x) = \sum_{k\in \mathbb{Z}} e^{\ic (t k^2 + k x)}  u_k(0), \ x\in \mathbb{T}, \, t\in \mathbb{R},
$$
where the Fourier coefficients are defined by
\begin{equation}
\label{fourier}
\forall k\in \mathbb{Z}, \quad u_k := \int_{\mathbb{T}} u(x) e^{-\ic k x}\frac{\mathrm{d}x}{2\pi}.
\end{equation}

\medskip

This is a subtle problem as we are considering perturbations of a fully resonant linear oscillatory integrable equation (its frequencies are the integers $k^2$ with $k\in \mathbb{Z}$). In this paper, we always assume the existence of a cubic term in \eqref{eq:NLS}, that is
$$
f'(0) \neq 0.
$$
Thanks to an observation due to Kuksin--P{\"o}schel \cite{KP96}, this term is known to provide stability for solutions. It induces a slight modulation of the frequencies by the initial data which prevents most of the resonance phenomena from occurring (e.g. turbulence phenomena for $x\in \mathbb{T}^2$ \cite{CKSTT10}).  Indeed, it has been proven in \cite{Bou00,BFG20,BC24} the so called almost global stability of small, smooth and random initial data to \eqref{eq:NLS}. That is, given any $r$ chosen arbitrarily large, provided that $s$ is sufficiently large and $\varepsilon$ is small enough, most of the initial data of size $\varepsilon$ in $H^s$ (drawn with standard probability laws) generate solutions to \eqref{eq:NLS} which satisfy
\begin{equation}
\label{eq:almost_pres_actions}
\forall t\leq \varepsilon^{-r}, \quad \sum_{k\in \mathbb{Z}} \langle k \rangle^{2s} \big| |u_k(t)|^2 - |u_k(0)|^2 \big| \leq \| u(0) \|_{H^s}^{2+\nu}
\end{equation}
where $\nu\in (0,2]$ is a given number. This is the almost preservation, for very long times, of the actions of \eqref{eq:LS} by the flow of \eqref{eq:NLS}. These results have been extended in higher dimension in \cite{BC24}, for perturbations of the Korteweg–de Vries and Benjamin--Ono equations  in \cite{BG21}, for Kirchhoff equations in \cite{LX26} and for longer time scale in \cite{LX24} for solutions in Gevrey spaces. Let us also mention that, for algebraic reasons, in the completely integrable case for which $f''=0$, the actions of \eqref{eq:LS} are almost preserved for all times and all sufficiently small initial data \cite{GK14}.

\medskip

Actually, a simple and naive analogy with respect to what is known in Kolmogorov--Arnold--Moser (KAM) theory for finite dimensional Hamiltonian systems suggests that one might hope, even in the non integrable case, to remove this time restriction  to prove the almost global preservation of the actions for all times. Such a result has been recently proven in \cite{BCGP24}, for solutions in Gevrey space, when adding a random convolution potential to \eqref{eq:NLS}
\begin{equation}
\label{eq:NLS*}
\tag{NLS*}
\ic \partial_t u + \partial_x^2 u =  V \ast u + f(|u|^2)u, \quad x\in \mathbb{T} \quad \mathrm{where} \quad \forall k\in \mathbb{Z}, \ V_k \in  \mathbb{R} .
\end{equation}
 However, apart from this particular case, this problem is still fully open. Thus it is natural to wonder for which kind of initial data such a result holds.

\medskip

To gain further insight, it is more meaningful to reformulate the problem in terms of the existence of invariant tori. Indeed, solutions to \eqref{eq:LS} live on straight tori
$$
\mathbf{T}_{\xi} := \Big\{ \sum_{k\in \mathbb{Z}} u_k e^{\ic k x} \ | \ \forall k\in \mathbb{Z}, \ |u_k|^2 = \xi_k  \Big\}, \quad \xi \in \mathbb{R}^{\mathbb{Z}}_+
$$
and a natural question in KAM theory is to wonder which of these tori remain invariant, being slightly deformed, in presence of the nonlinear perturbation. Since we only consider tori which are slightly deformed, picking initial data on such invariant tori, we get solutions whose actions are almost preserved for all times.

\medskip

Thirty years ago Kuksin--P{\"o}schel \cite{KP96} proved that most of the finite dimensional tori survive. They are slight deformations of straight tori $\mathbf{T}_\xi$ in which the support of $\xi$ is finitely supported and its coefficients are sufficiently small with respect to the size of the support. Many other similar results concerning the existence of finite-dimensional invariant tori have been obtained for nearly integrable Hamiltonian PDEs (see e.g. \cite{Kuk87, Way90, CW93, Bou94, Pos96} for the earliest results for 1d PDEs, \cite{Bou98, EK10, BB13, PP15, Wan16, EGK16} for some results in higher dimension, \cite{BBHM18, BHM23, FG24,FMT25} for some results about quasi-linear equations and \cite{BKM18} for large amplitude solutions) and often at the cost of adding external parameters. 
Regarding the existence of infinite dimensional tori, the results are relatively few, we mention  \cite{Pos02,Bou05,GX13,BMP20,CY21,Con23,CGP23,BCGP24,WY26} which all deal with models with external parameters and convolution potentials (with the notable exception of \cite{Bou96,Pos02}, where the potential is multiplicative).
Recently, Bernier--Grébert--Robert \cite{BGR24} proved the existence of infinite dimensional invariant tori for \eqref{eq:NLS}. They are slight deformations of some tori $\mathbf{T}_\xi$ given by some fast decaying sequences $\xi$ with an infinite but (very) lacunary support. This was the only result of its kind for PDEs without external parameters (to the notable exception of completely integrable equations and incompressible Euler equations on $\mathbb{T}^d$, $d\in \{3\} \cup 2 \mathbb{Z}$, for which there exists explicit infinite dimensional invariant tori \cite{FM24}).

\medskip

In this paper, we  remove this lacunarity assumption. We prove the existence of invariant tori which are deformations of $\mathbf{T}_\xi$ for sequences $\xi$ with arbitrary support (which include sequences $\xi$ with full support and so of full dimensional tori). The significance of this result is, of course, highly dependent on the precise definition of 'deformation' (e.g. in $H^s$ topology all infinite dimensional tori are homeomorphic to $\mathbb{T}^\mathbb{N}$ equipped with the product topology). We introduce a natural notion of \emph{symplectic deformation} which in particular applies for the tori we construct. It turns out that the number of normal sites (i.e. the cardinal of the complementary of the support of $\xi$) is associated with a quantity which is invariant by this kind of transformation. This allows us to claim that
\begin{center}
\emph{there exist full dimensional invariant tori for \eqref{eq:NLS}.}
\end{center}
Actually, we prove that they are \emph{Lagrangian} in the sense that,  even though they are not differential submanifolds (because they are infinite dimensional and compact), at any point their tangent cone is a Lagrangian vector space.

\medskip

To the best of our knowledge, this is the first geometric definition of full dimensional tori in the context of Hamiltonian PDEs. While this terminology is not new — it has been used in many papers concerning existence of smooth solutions for equations with convolution potentials like \eqref{eq:NLS*} %\mic { I would move this before \cite{Pos02,Bou05,GX13,BMP20,CY21,Con23,CGP23,BCGP24,WY26}} (
%with the notable exception of \cite{Bou96},\cite{Pos02}, where the potential is multiplicative.
% \mic{also Poschel is multiplicative...even if the proof is very sketched} 
 — the fact that a full set of frequencies is considered (i.e., that non-lacunary tori are treated) was not especially more challenging\footnote{to the notable exception of the construction infinite dimensional tori of non smooth solutions for which only the existence of lacunary tori is known \cite{BMP23}.} and was not previously highlighted with a precise definition. %\mic{there are various discussions on this topic in \cite{BMP20}, however they are not really pertinent,so I will not cite them here.}

\medskip

This situation contrasts sharply with that of equations with no (or very few) parameters. The only way to obtain non resonance conditions (i.e. small divisor estimates) is to try to modulate the frequencies through the amplitude-frequency map \cite{Bou05}. In \cite{BB11} the authors used this ``twist'' property to construct a finite sequence of  tori of increasing dimension, in the spirit of \cite{Pos02}; they could not construct an infinite sequence of tori converging to an almost-periodic solution due to lack of sufficient information on the amplitude-frequency map.
	 This is a very delicate point which is the main technical advance of this paper. It was to partially overcome this issue that only lacunary tori were considered in \cite{BGR24}. It requires to identify subtle cancellation properties of the frequencies and their derivatives in small divisors. Moreover, it requires to prove dispersive smoothing effects at very low regularity in normal forms techniques in order to invert the amplitude-frequency map (up to some canceling terms). Our estimates are sharp in the sense that, as for the completely integrable case $f''=0$ \cite[Corollary 18.2 page 84]{Mol16} (but for completely different reasons), we manage to reduce the proof of the invertibility of the amplitude-frequency map to the boundedness of a kind of discrete Hilbert transform.

\medskip

Finally, we emphasize that, although our tori are full-dimensional, they are not maximal in the sense that there is no reason to expect them to satisfy Bourgain’s estimate \cite{Bou05}
$$
\forall v\in \mathcal{T},\forall k\in \mathbb{Z}, \quad 0< \sqrt{\xi_k}/2\leq  |v_k| \leq  2\sqrt{\xi_k}
$$
where $\mathcal{T}$ is an invariant torus which is a deformation of $\mathbf{T}_\xi$. 
Technically, it requires dealing with sequences decaying as little  as possible, in the sense that their decay has to match with the one given by the regularity of the phase space (which is limited by the algebra property). 
Constructing such invariant tori for \eqref{eq:NLS} would add serious issues related to the singularity of the divisors for parameterless equations \cite{Bou00,Bou05}.
At the moment, existence of such maximal tori have only be proven for equations with random convolution potentials \cite{Bou05,BMP20,CY21,Con23,CGP23,BCGP24}. % and their existence for nearly integrable PDEs without parameters remains an open very challenging problem.  \jo{Should write this now that there are AIs ? I am very tempted to cut this sentence after the reference. A human would understand that this is a good open problem but maybe not an AI...}\mic{Yes maybe cut...}
In the case of infinite dimensional lattice systems there are some results on persistence (and also on the measure) of invariant maximal tori without the use of external parameters, we mention for example \cite{CP95,CGP24,DFP26}, this is mainly due to the fact that one may impose conditions on the non-linearity which dramatically simplify the study of the amplitude-frequency modulation.

\subsection*{Notations}
{The Japanese bracket is defined by $\langle a \rangle := \sqrt{1+a^2}$ for $a\in \mathbb{R}$.  We shall use the notation $A\lesssim B$ to denote $A\le C B$ where $C$ is a positive constant
depending on  parameters fixed once for all. We will emphasize by writing $\lesssim_{\alpha}$ when the constant $C$ depends on some other parameter $\alpha$ and $A \sim_\alpha B$ if $A\lesssim_\alpha B \lesssim_\alpha A$.  We denote $\mathbb{R}^*_+$ the set of the positive integers. Given a property $P$, $\mathds{1}_P$ is equal to $1$ if $P$ is true and $0$ else. If $a,b\in \mathbb{R}$, we denote $a\wedge b := \min(a,b)$ and $a\vee b := \max(a,b)$. {Given a normed real vector space $E$, $\mathscr{L}(E)$ denotes the space of the continuous linear maps from $E$ to $E$.}

Given $S\subset \mathbb{Z}$ we always identify $\mathbb{C}^{\mathcal{S}}$ as the subspace of $\mathbb{C}^{\mathbb{Z}}$ whose all components indexed by $\mathcal{S}^c := \mathbb{Z} \setminus \mathcal{S}$ vanish.
}

\subsection*{Acknowledgments}   We warmly thank Benoît Grébert for valuable  discussions about preliminary versions of this work and in particular on the notion of action T\"oplitz Hamiltonians. J.B. thanks Baptiste Chantraine, St\'ephane Guillermou, and R\'emi Leclercq for many enriching and stimulating discussions on geometry, particularly around the notion of Lagrangian subsets.   M.P. thanks Luca Biasco, Livia Corsi and Roberto Feola for helpful discussions. J.B. was partially supported by the ANR projects ANR-22-CE40-0016 and ANR-26-CE40-5907-01.  M.P. was partially supported by Miur: PRIN 2022FPZEES  and GNAMPA-INdAM, ''Growth of Sobolev norms and energy cascades in Dispersive Hamiltonian PDEs'' Cup: E53C25002010001.  
 
\subsection*{Declaration of AI Use} We did not use any AI for the mathematical content or for brainstorming. However, we used one to assist us with translating some explanatory paragraphs in English. 

\section{Main results} \label{sec:main}
\subsection{More context}
To state our main result, we need a phase space. Since we only consider perturbations of tori $\mathbf{T}_{\xi}$ where $\xi$ is fast decaying, it turns out that the choice of the phase space does not play a crucial role for us. So for simplicity and to avoid introducing additional notation, we only consider solutions to \eqref{eq:NLS} in the phase space
$$
H^s:= \{ u \in L^2(\mathbb{T};\mathbb C)  \ | \ \|  u \|_{H^s}^2 := \sum_{k \in \mathbb{Z}} |u_k|^2 \langle k \rangle^{2s} <\infty\} \quad \mathrm{with} \quad s\geq 3.
$$
Thus, straight tori $\mathbf{T}_\xi$ are subsets of $H^s$ that are naturally parametrized by sequences $\xi$ in
$$
\ell^1_{2s}(\mathbb{Z};\mathbb{R}_+) =\Big\{ \xi \in \mathbb{R}_+^\mathbb{Z} \ | \ \sum_{k\in \mathbb{Z}} \langle k \rangle^{2s}\xi_k <\infty \Big\}\,,\quad \mathbb R_+:=\{x\in \mathbb R\,|\quad x\geq 0\}.
$$
Given $\xi \in \ell^1_{2s}(\mathbb{Z};\mathbb{R}_+) $, the support of $\xi$ (also called \emph{tangential sites} of $\mathbf{T}_\xi$) is naturally defined by
$$
\mathrm{supp} \, \xi := \{ k\in \mathbb{Z} \ | \ \xi_k \neq 0\}.
$$
As in \cite{BGR24}, we could easily consider more general phase spaces (for example spaces of analytic or Gevrey functions). This would prove that the almost periodic solutions are analytic in the space variable. Nevertheless, it would just be a direct extension of the result and would require less standard notations.

\medskip

We also need to recall the structure of the equation. Indeed, \eqref{eq:NLS} is an infinite dimensional Hamiltonian system. It can be written as
$$
\ic \partial_t u = \nabla H^{\eqref{eq:NLS}}(u)
$$
where $\nabla$ denotes the $L^2(\mathbb{T})$ gradient (see subsection \ref{sub:ham_form} for more details)
\begin{equation}
\label{eq:def_Ham_NLS}
H^{\eqref{eq:NLS}}(u) = \frac12 \int_{\mathbb{T}}  |\partial_x u|^2 + F(|u|^2) \frac{\mathrm{d}x}{2\pi},
\end{equation}
and $F$ is the primitive of $f$ vanishing at the origin. It is thus a Hamiltonian system on the weak symplectic space $(H^s,\varpi_0)$ (this notion is recalled in subsection \ref{sub:weak_symp}) where $\varpi_0=(\ic \,\cdot, \cdot)_{L^2}$ is the usual symplectic form on $L^2$ and $(\cdot,\cdot)_{L^2}$ denotes the usual real $L^2$ scalar product (recalled in subsection \ref{sub:ham_form} below). 

\medskip

As explained in the introduction, an important issue is to precisely define what full dimensional tori are. One could consider very strong definitions based, for example, on the results of \cite{Bou05,BMP20}, but they do not seem suitable for the tori we construct in this paper. We therefore propose a natural and fairly minimalist definition that generalizes the notion of Lagrangian torus. More precisely, we only consider invariant tori that are symplectic deformations of the straight tori $\mathbf{T}_\xi$.

\begin{definition}[Symplectic deformation] \label{def:symp_def} Given a weak symplectic space $(E,\varpi)$ and $X,Y\subset E$, a map $\tau : X \to Y$ is a symplectic deformation if there exists $L \in C^0(X ;\mathscr{L}(E))$, such that
\begin{itemize}
\item $\tau$ is a homeomorphism,
\item for all $x\in X$, $L(x)$ is symplectic invertible and we have
\begin{equation}
\label{eq:lim_sup}
\limsup_{\substack{x' \to x \\ X\ni  x'\neq x}}  \frac{\| \tau(x') - \tau(x) - L(x)(x'-x) \|_{E}}{\|x-x'\|_{E}} = 0.
\end{equation}
\end{itemize}
We say that the map $L$ is a derivative of $\tau$ and that $Y$ is a symplectic deformation of $X$.
\end{definition}
\begin{remark} Intuitively, the last property means that $\tau$ is as differentiable as possible in  $x$ and $L(x)$ is an extension of its derivative. We require $L$ to be a kind of Fr\'echet derivative but everything below would also work if we mimicked the notion of Gateaux derivative (although this would be more cumbersome). We do not currently have any potential applications in mind for the Gateaux derivative approach.
\end{remark}

This notion is studied and discussed in detail in Section \ref{sec:symp_def} below. It is particularly well-suited because it generates an equivalence relation on the subsets of $H^s$, and we can prove (Proposition \ref{prop:geom_key}) that the number of normal sites is invariant under such transformations. It is then natural to define full dimensional tori as follows.

\begin{definition}[Full dimensional tori] A subset $\mathcal{T} \subset H^s$ is a \emph{full dimensional torus} if it is a symplectic deformation of a straight torus $\mathbf{T}_\xi$, $\xi \in \ell^1_{2s}(\mathbb{Z};\mathbb{R}_+)$ , with no normal sites (i.e. satisfying $\mathrm{supp} \, \xi = \mathbb{Z}$).
\end{definition}

This definition is also geometrically natural because we show in Section \ref{sec:symp_def} that these tori are Lagrangian in the sense that at each point their tangent cone is a Lagrangian space.

\subsection{The result} Thanks to these notions, we can now state the main result of this paper. This is a result of existence of arbitrarily small invariant tori for \eqref{eq:NLS} based on arbitrary sets of tangential sites. We prove that that the smaller they are the closer they are to the straight tori (relatively to their size). {The proof is done in Section \ref{sec:proof}.}

%\mic{I think that some mention of the hypotheses on the equation should appear. we say them in the introduction but they are not grouped and maybe  it is better to make the result more sefl contained....} \jo{It is a matter of taste, I do not use to do it but I fully understand your point, so ok for me  :)}
\begin{theorem} \label{thm:main} 
{We consider \eqref{eq:NLS} with $f$ an analytic function satisfying $f(0)=0$ and $f'(0)\neq 0$.}
	For all  infinite set $\mathcal{S}_\infty \subset \mathbb{Z}$ and $\varepsilon \lesssim_{\mathcal{S}_\infty} 1$, there exists $\xi \in \ell^1_{2s}(\mathbb{Z};\mathbb{R}_+)$, a sympectic deformation $\Psi :  \mathbf{T}_\xi \to \mathcal{T} \subset H^s$ and frequencies $\omega \in \mathbb{R}^{\mathcal{S}_\infty}$ such that
\begin{itemize}
\item  $\mathcal{T}$ is an invariant set on which the flow of \eqref{eq:NLS} is conjugated to a rotation of frequencies $\omega$, i.e.
\begin{equation}
\label{eq:thm_inv_tor}
\forall v\in \mathbf{T}_\xi,\quad  \displaystyle t\mapsto \Psi ( \sum_{j\in \mathcal{S}_\infty} v_j e^{-\ic \omega_j t} e^{ijx} ) \ \mathrm{is \ a \ global\ solution\ of\ \eqref{eq:NLS}},
\end{equation} 
\item $\mathcal{S}_\infty$ is the set of the tangential sites of $\mathbf{T}_\xi$
$$
\mathrm{supp} \, \xi = \mathcal{S}_\infty 
$$
\item $\mathbf{T}_\xi$ is of size $\varepsilon$ and $\Psi$ is close to the identity
$$
\varepsilon^2/4 \leq \| \xi \|_{\ell^1_{2s}}\leq 2\varepsilon^2 \quad \mathrm{and} \quad \forall v\in \mathbf{T}_\xi, \quad \| \Psi(v) - v\|_{H^s} \leq \varepsilon^{5/4},
$$
\item the frequencies are close to those of \eqref{eq:LS}, 
$$
\forall j\in \mathcal{S}_\infty, \quad |\omega_j - j^2|\leq \varepsilon^{1/4}.
$$ 
\end{itemize}
\end{theorem}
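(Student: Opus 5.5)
The proof is an infinite-dimensional KAM scheme of Pöschel type in which the modulation comes from the amplitude-to-frequency map instead of external parameters.

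\textbf{Step 1: normal form.} Pass to Fourier coefficients and write $H^{\eqref{eq:NLS}} = \sum_k k^2 |u_k|^2 + \frac{f'(0)}{4}\big(2(\sum_k |u_k|^2)^2 - \sum_k |u_k|^4\big) + \dots$. A Birkhoff normal form step removes the non-resonant quartic terms. Since the only cubic resonances on $\mathbb{T}$ are trivial, the integrable part depends only on the actions $I_k = |u_k|^2$, and the remainder is of higher order.

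\textbf{Step 2: modulation.} Fix the support $\mathcal{S}_\infty$ and choose a fast-decaying profile $\xi = \varepsilon^2 \zeta$ with $\mathrm{supp}\,\zeta = \mathcal{S}_\infty$. The actions $\zeta_j$ are used as the parameters. The frequencies have the form $\omega_j(\zeta) = j^2 + \varepsilon^2 f'(0)\big(2\|\zeta\|_{\ell^1} - \zeta_j\big) + O(\varepsilon^4)$. Up to the harmless rank-one term $2\|\zeta\|_{\ell^1}$, which cancels in momentum- and mass-preserving small divisors, the map $\zeta \mapsto \omega$ is a diagonal twist. The twist on mode $j$ is proportional to $\zeta_j$, which is tiny for large $|j|$. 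The divisors $\omega\cdot\ell$ must therefore be measured in weighted norms adapted to the decay of $\zeta$.

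\textbf{Step 3: iteration.} Run an iterative KAM scheme on the space of \emph{action-Töplitz} Hamiltonians, following \cite{BGR24} and \cite{BMP20}. At each step:
\begin{itemize}
\item solve the homological equation on a set of "good" $\zeta$ satisfying Bourgain-type Diophantine conditions $|\omega\cdot\ell| \ge \gamma \prod_j \langle \ell_j \rangle^{-\tau}$ in weighted form;
\item update the frequencies;
\item estimate the measure of the excluded set, with respect to a product probability measure on $\zeta$, through derivatives of $\omega$ with respect to $\zeta$.
\end{itemize}
The positivity of the measure of the surviving set yields one good $\zeta$. Its support is $\mathcal{S}_\infty$, and the bound $\varepsilon^2/4 \le \|\xi\|_{\ell^1_{2s}} \le 2\varepsilon^2$ follows from the normalization of $\zeta$. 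The composition of the changes of variables converges to $\Psi$. It is close to the identity in $H^s$ with the $\varepsilon^{5/4}$ bound, since each step is of order $\varepsilon^{3/2}$ relative to the size. The frequency bound $|\omega_j - j^2| \le \varepsilon^{1/4}$ comes directly from Step 2.

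\textbf{Step 4: symplectic deformation.} To check that $\Psi$ satisfies Definition~\ref{def:symp_def}, restrict the limit map to the compact torus $\mathbf{T}_\xi$. It is a homeomorphism onto its image because it is a small perturbation of the identity. Its derivatives $L$ are the limits of products of the symplectic derivatives of the time-one maps, which are uniformly convergent in $\mathscr{L}(H^s)$. Invariance, \eqref{eq:thm_inv_tor}, follows because the final Hamiltonian restricted to the torus is $\omega\cdot I$ plus terms vanishing to second order at $I = \xi$ along the normal directions. There are none of these here, since $\mathcal{S}_\infty$ may be all of $\mathbb{Z}$; for general $\mathcal{S}_\infty$ one adds the standard normal-site reducibility, which is the Kuksin--Pöschel part.

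\textbf{Main obstacle.} The hardest step is the control of the amplitude-frequency map along the iteration. One must show that the corrected frequencies $\omega(\zeta)$ keep a derivative structure of the form diagonal twist plus a perturbation with sufficiently fast off-diagonal decay. Only then can the small-divisor conditions be imposed for all modes simultaneously, rather than just on a lacunary set. My plan is:
\begin{itemize}
\item prove that the $\zeta$-derivatives of the $\varepsilon^4$ corrections satisfy $|\partial_{\zeta_k}\omega_j^{(1)}| \lesssim \varepsilon^2$ times a kernel like $\langle j-k\rangle^{-1}$, using dispersive smoothing in the normal form;
\item reduce invertibility of $\partial_\zeta \omega$, in the weighted norm, to boundedness of a discrete Hilbert-type operator, as announced in the introduction.
\end{itemize}
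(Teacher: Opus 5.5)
There is a genuine gap, and it sits in the step you yourself call the main obstacle: the structure you plan to prove for $\partial_\zeta\omega$ is false, and even if it held it would be too weak.

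\emph{It is false} because the higher-order corrections contain terms that depend on a single index and do not decay at all. For instance, if $f''(0)\neq 0$, the integrable part of $\|u\|_{L^6}^6$ is $6I^3-9I\sum_a I_a^2+4\sum_a I_a^3$ with $I=\sum_a I_a$. So for $k\neq j$ the derivative $\partial_{\xi_k}\omega_j$ contains $c\,(2\|\xi\|_{\ell^1}-\xi_j-\xi_k)$ with $c\neq 0$, which does not tend to $0$ as $|k|\to\infty$ with $j$ fixed. More generally, every action-T\"oplitz term produces such pieces, including triangular ones $e_j\mathds{1}_{j\prec k}$ (see the proof of Lemma \ref{lem:the_key}).

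\emph{It is too weak} because the positive kernel $\langle j-k\rangle^{-1}$ is unbounded on $\ell^2(\mathbb{Z})$. Its quadratic form on $\mathds{1}_{[1,N]}$ grows like $N\log N$, while $\|\mathds{1}_{[1,N]}\|_{\ell^2}^2=N$. Hence no prefactor, however small, gives invertibility of $\partial_\zeta\omega$ uniformly in the number of sites. A kernel in $j-k$ is $\ell^2$-bounded only through sign cancellations, and an absolute-value bound discards them. A twist bound that is not uniform over infinitely many sites is exactly what forced lacunarity in \cite{BGR24}.

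What the paper proves instead is
\[
\partial_{\xi_j}\omega_i=D_i\mathds{1}_{i=j}+M_{ij}+A_j+B_i
\]
plus the triangular pieces. Only $M$ decays, and it decays in the size of the \emph{larger} index, $|M_{ij}|\lesssim r_1^{\alpha_4}(1+|i|+|j|)^{-1}$. This positive kernel is $\ell^2$-bounded by Hilbert's inequality (Lemma \ref{lem:Hilb}).

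The one-index pieces are neutralized by mass conservation rather than by decay, and this forces the test direction $\mathbf{e}=\mathbf{k}$. When $\sum_j\mathbf{k}_j=0$, the $A$-part of $\sum_{i,j}\mathbf{k}_i\mathbf{e}_j\partial_{\xi_j}\omega_i$ vanishes for every $\mathbf{e}$. The $B$-part vanishes only if $\sum_j\mathbf{e}_j=0$, so the cancellation you observe for $2\|\zeta\|_{\ell^1}$ does not extend by itself.

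Obtaining the $(1+|i|+|j|)^{-1}$ gain without polynomial losses in the other indices needs two further ingredients:
\begin{itemize}
\item Lemma \ref{lem:gain_one_der}, whose loss involves only the fourth largest index;
\item the fact that every Hamiltonian removed at step $p$ has its fourth largest normal index bounded by $\max_{j\in\mathcal{S}_p}|j|$.
\end{itemize}

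That last fact comes from a genuinely P\"oschel-type architecture, which your Step 3 is not. The paper never runs a single infinite-dimensional iteration with Bourgain-type conditions on all of $\zeta$. Instead it proceeds as follows.
\begin{itemize}
\item It builds finite-dimensional tori on $\mathcal{S}_1\subsetneq\mathcal{S}_2\subsetneq\cdots$, extracting the new parameters at scale $r_p^{2(1-\delta)}$ with $r_{p+1}\ll r_p$.
\item As a result, the remainder produced at step $p$ is $O(r_p^{4-2\delta})$, while every divisor met at that step has the homogeneous size $r_p^{2-}$ (Proposition \ref{prop:the_loop}).
\item It then passes to the limit of the compositions and of their derivatives on $\mathbf{T}_\xi$ (Proposition \ref{prop:cvg}), and extracts the frequencies by compactness.
\end{itemize}
In your one-shot scheme, divisors involving modes with super-exponentially small $\zeta_j$ can only be modulated on correspondingly tiny scales. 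You give no argument that the perturbation is small enough there.
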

Since this theorem includes the case $\mathcal{S}_\infty= \mathbb{Z}$, we deduce that
\begin{corollary} There exists full dimensional invariant tori for \eqref{eq:NLS}.

\end{corollary}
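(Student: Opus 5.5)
The plan is an infinite-dimensional KAM scheme with no external parameters: the actions $\xi$ themselves serve as parameters, modulated through the amplitude-to-frequency map generated by the cubic term.

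\textbf{Step 1: Birkhoff normal form.} Using $f'(0)\neq 0$, we first eliminate all non-resonant cubic terms. The integrable part that remains is
\[
\tfrac{f'(0)}{2}\Big(2\|u\|_{L^2}^4-\sum_k |u_k|^4\Big),
\]
together with higher-order action-dependent terms. Around $\mathbf{T}_\xi$ we pass to action-angle variables on the sites $\mathcal{S}_\infty$ and to Cartesian normal variables on $\mathcal{S}_\infty^c$. The Hamiltonian then becomes
\[
\omega(\xi)\cdot y+\Omega(\xi)\cdot|z|^2+P,
\]
where $\omega_j(\xi)=j^2+f'(0)\big(2\|\xi\|_{\ell^1}-\xi_j\big)+O(\xi^2)$.

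The constant shift $2f'(0)\|\xi\|_{\ell^1}$ is a gauge-type term and is harmless, since it cancels in the differences $\omega_j-\omega_k$. The genuinely twisting part is the diagonal term $-f'(0)\xi_j$. Because the $\xi_j$ decay, however, this diagonal modulation is tiny at high $j$. The parameters must therefore be chosen in a weighted class, $\xi_j=\varepsilon^2\langle j\rangle^{-2s-\gamma}\,\zeta_j$ with $\zeta$ ranging over a cube. The plan is to prove that $\zeta\mapsto\omega$ is a Lipschitz homeomorphism onto its image, in a norm adapted to these weights.

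\textbf{Step 2: KAM iteration.} We run a KAM iteration in action-Töplitz/decay norms, in the spirit of \cite{BGR24,BMP20}. At each step we solve homological equations with small divisors
\[
\omega\cdot\ell+\Omega\cdot\sigma .
\]
We impose Bourgain/Pöschel-type non-resonance conditions, with thresholds weighted by the decay of $\ell$. The new normal-form corrections keep $\omega(\zeta)$ close to the Step 1 formula, together with its Lipschitz dependence on $\zeta$.

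\textbf{Step 3: measure estimate.} We estimate the measure of the excluded set of $\zeta$. For each resonance $(\ell,\sigma)$ one needs a lower bound on the derivative of the divisor in some direction $\zeta_j$. This is where the fine structure of the amplitude-frequency map enters.
\begin{itemize}
\item The derivatives of the higher-order corrections must decay off-diagonally. This is the action-Töplitz structure: $\partial_{\xi_k}\omega_j$ equals a constant plus a term decaying in $|j-k|$.
\item The constant part must cancel in divisors with zero momentum and zero mass.
\end{itemize}
Inverting the map $\xi\mapsto\omega$ modulo these canceling terms reduces to the boundedness of a discrete Hilbert-transform-type operator on weighted $\ell^\infty$. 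I expect this to be the main obstacle. It requires the dispersive smoothing estimates at low regularity in the normal form, so that the off-diagonal part of $\partial\omega/\partial\xi$ is a small perturbation of the diagonal $-f'(0)\,\mathrm{Id}$ in the weighted norm.

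\textbf{Step 4: conclusion.} With positive measure of good $\zeta$, we fix one such value to obtain $\xi$, with $\|\xi\|_{\ell^1_{2s}}\sim\varepsilon^2$ and $\mathrm{supp}\,\xi=\mathcal{S}_\infty$. The composition of the limiting KAM maps with the Birkhoff map gives $\Psi$. Invariance and conjugation to rotation by $\omega$ hold on the torus. The estimates
\[
\|\Psi-\mathrm{Id}\|_{H^s}\le\varepsilon^{5/4},\qquad |\omega_j-j^2|\le\varepsilon^{1/4}
\]
follow from the size of the perturbation. Finally, to see that $\Psi$ is a symplectic deformation, we take $L(v)$ to be the differential of the symplectic change of variables, extended to all of $H^s$. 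The maps $L(v)$ are symplectic and invertible and depend continuously on $v$, and \eqref{eq:lim_sup} holds because the convergence is uniform in $C^1$ on a neighborhood of the torus. The homeomorphism property follows since $\Psi$ is close to the identity in $C^1$.
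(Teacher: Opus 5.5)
\paragraph{The gap: Steps 1--3.}
You take all actions $\xi_j$, $j\in\mathcal{S}_\infty$, as parameters at once, with polynomial decay $\xi_j=\varepsilon^2\langle j\rangle^{-2s-\gamma}\zeta_j$, and run a single infinite-dimensional KAM scheme. This is the regime of tori whose actions decay only as fast as the phase space allows. The paper points out that this regime has so far been reached only with random convolution potentials, precisely because the divisors become singular when there are no external parameters. Your proposal treats it as routine and gives no mechanism for handling it.

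Concretely, the leading twist for a divisor whose highest tangential site is $N$ has size $\varepsilon^2N^{-2s-\gamma}$. By Corollary \ref{cor:give_measure_est}, its non-resonance threshold can then only be taken $\ll\varepsilon^2N^{-2s-\gamma}$, and the summation over infinitely many $\bk$ degrades it further. Nothing in Step 2 shows that the homological equations in $H^s$ survive such divisors.

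There are two further problems.
\begin{itemize}
\item Action-angle variables for infinitely many actions with $\xi_j\langle j\rangle^{2s}\to0$ are not defined on any $H^s$-neighbourhood of $\mathbf{T}_\xi$: every ball contains points with $u_j=0$ for large $j$. So the $C^1$ convergence on a neighbourhood that you invoke in Step 4 is not available.
\item The twist structure you posit is not the one that holds. The non-decaying parts of $\partial_{\xi_j}\omega_i$ are single-index terms $A_j+B_i$, not a constant. They are disposed of through mass conservation together with the choice of direction $\mathbf{e}=\bk$. The remainder is governed by the kernel $(1+|i|+|j|)^{-1}$, not by decay in $|i-j|$. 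This kernel is bounded on $\ell^2$ (Lemma \ref{lem:Hilb}) but not on $\ell^\infty$ (test it on $v\equiv1$), so an $\ell^\infty$-type inversion fails.
\end{itemize}

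\paragraph{The missing idea: P\"oschel's approach.}
The paper fixes nested finite sets $\mathcal{S}_1\subsetneq\mathcal{S}_2\subsetneq\cdots$. At step $p$ it extracts only the finitely many new actions, of size $r_p^{2(1-\delta)}$, with $r_p$ super-exponentially small relative to $r_{p-1}$ (depending on $\#\mathcal{S}_p$ and $\max_{\mathcal{S}_p}|j|$).

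Each step (Proposition \ref{prop:the_loop}) is then a finite-dimensional Birkhoff/KAM step. The divisors have their homogeneous size $r_p^{2-}$, and any loss polynomial in $\max_{\mathcal{S}_p}|j|$ costs only $r_p^{-0^+}$. This is what makes Lemma \ref{lem:gain_one_der} sufficient: it gains one derivative at the price of powers of $\langle\bl_4^*\rangle$ rather than $\langle\bl_3\rangle$, and the terms removed have at most three normal modes.

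The only thing that must be uniform in $p$ is the effect of the earlier corrections $Z^{(p')}$ on the new divisors. This is Lemma \ref{lem:the_key}: along $\xi-\zeta\in\mathbb{R}\bk$ with $|\sum_j\bk_j|\le4$, they move $\bk\cdot\omega^{(p',p)}$ by at most $r_{p'}^{-\delta}\lambda\|\bk\|_{\ell^2}\|\xi-\zeta\|_{\ell^2}$, where $\lambda\le r_{p'}^{1/3}$, with no dependence on $\#\mathcal{S}_p$. The limiting $\xi$ decays super-exponentially, so the tori are full dimensional but not maximal.

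The maps $\tau^{(p)}$ live on annuli $\mathcal{A}_\xi(r_p)$ that shrink to the torus. The paper therefore proves uniform convergence of $\Psi^{(p)}$ and $\mathrm{d}\Psi^{(p)}$ only on $\mathbf{T}_\xi$. It obtains \eqref{eq:lim_sup} from H\"older bounds on $\mathrm{d}\varsigma^{(p)}$ and integration along paths inside the torus (Proposition \ref{prop:cvg}). This is why the weak notion of symplectic deformation is needed. The corollary is then Theorem \ref{thm:main} with $\mathcal{S}_\infty=\mathbb{Z}$.
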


\subsection{Comments about the result}
We group here some comments regarding our result. For an informal presentation of the main strategic ideas see the subsection \ref{proofidee}.
\\
Let us start with a brief comparison with the results of \cite{BGR24}. The two main differences are that
\begin{itemize}
	\item In our setting the non-linearity $f$  is just analytic, while  in \cite{BGR24} $f$ must be entire (and with some quantitative control) so that one may perform Wick renormalization.
	\item  Our result  applies to {\em any choice of tangential sites} $\mathcal S_\infty$, while in \cite{BGR24} only lacunary sets were considered. This is the core of the paper and requires (among other things) a sharp control of the amplitude to frequency map. We refer the reader to subsections \ref{smalldiv}  and \ref{melnikov} for details.
\end{itemize}
To evidence the fact that the invariant tori  we construct are intrinsically different from the ones of \cite{BGR24}, we introduced the definition of symplectic deformation and of full dimensional  tori. In essence we need some way to characterize tori in terms of the ''number of normal modes''

\begin{itemize}
	\item The main point is that our result allows to construct, and distinguish,  invariant tori with arbitrary choices of $\mathcal S_\infty$, see subsection \ref{num_normal} for more detail.
	\item Theorem \ref{thm:main}  also holds if $\mathcal{S}_\infty$ is finite but, in this case, it is just a corollary of \cite{KP96}.
\end{itemize}

 To make the result more readable, we formulated our main Theorem \ref{thm:main}  mainly in terms of the invariant tori. Several direct consequences follow by rephrasing in our setting the corresponding proofs of \cite{BGR24}:
\begin{itemize}
	\item  We could prove that arbitrarily close to most of the finite-dimensional invariant tori constructed in \cite{KP96}, there exist full-dimensional invariant tori. %This is a fairly direct byproduct of the proof which is done in \cite{BGR24} for infinite dimensional tori. 
	\end{itemize}
%	We did not state it here to avoid overloading the statement.
	Our construction actually produces something stronger: we show the existence of a set of higher-dimensional (but finite) tori of asymptotically full measure, accumulating on most of the tori constructed in \cite{KP96}. We then show that the same holds for the latter, and so on. We thus construct families of tori of arbitrarily large dimension, accumulating on one another (a finite number of steps in this direction was done in \cite{BB11}, for example). It is, moreover, by taking the limit in this procedure that we obtain infinite-dimensional tori (see Section \ref{sec:disc} for a more detailed presentation of the proof).
	 \begin{itemize}
	 \item 
	Proceeding just as in \cite{BGR24} one sees that, up to multiplication by a scalar, there exists at most one non trivial vanishing rational linear combination of the frequencies. It follows that, as in \cite{BGR24}, solutions to \eqref{eq:NLS} on these tori are almost periodic functions which are not quasi periodic. Moreover their trajectories are dense and ergodic in sub-invariant tori of co-dimension\footnote{in some weak but natural sense} at most $1$. 
\end{itemize}
In fact, adding a term $m u$ to \eqref{eq:NLS}, we could prove that for almost all $m \in \mathbb{R}$, the frequencies are rationally independent. 
\begin{itemize}
	\item The invariant tori of Theorem \ref{thm:main} are obtained as limit of quadratically stable finite dimensional invariant tori. Thus they certainly satisfy a form of stability property. We did not go further in this interesting direction which would  deserve a proper study. We mention also \cite{CLSY18} for a  stability result for maximal tori for the NLS with external parameters.
	\item In our construction the invariant tori are parameterized by sequences $\xi\in \ell^1_{2s}(\Z,\mathbb R_+)$ which turn out to be super-exponentially decreasing. We remark that this does not imply that the solutions in \eqref{eq:thm_inv_tor} have space Fourier coefficients which decay super-exponentially; in fact in our construction we can only say that solutions are in $H^s$.  Since the choice of the $H^s$ setting is purely for reasons of convenience, we could prove that our solutions are in fact analytic (but similarly to the quasi-periodic case, we do not expect higher regularity). 
	\end{itemize}

\section{Formalism and discussions about the proof}
\label{sec:disc}
In this section we give a brief presentation of the formalism and of the new ideas in the paper. 
\subsection{Lebesgue spaces and Hamiltonian formalism}

\label{sub:ham_form} 

As usual, discrete Lebesgue spaces are defined, for $1\leq p<\infty$, by
 $$
 \ell^{p} := \{ u\in \mathbb{C}^{\mathbb{Z}} \ | \ \|u\|_{\ell^p}^p := \sum_{k\in \mathbb{Z}}  |u_k|^p <\infty \}.
 $$
 We extend these definitions for sequences indexed by subsets of $\mathbb{Z}$. Recalling that we identify functions with their sequence of Fourier coefficients, the Fourier--Plancherel isometry writes
$$
\forall u \in \ell^2, \quad \|u\|_{\ell^2}^2 = \int_{\mathbb{T}} |u(x)|^2 \frac{\mathrm{d}x}{2\pi} =: \| u\|_{L^2}^2.
$$ 
We equip $\ell^2$ of the standard real scalar product $(u,v)_{\ell^2}$  defined by
$$
\forall u,v\in \ell^2(\mathbb{T}), \quad  (u,v)_{\ell^2} :=  \Re \sum_{k\in \mathbb{Z}} u_k \overline{v_k} = \Re \int_{\mathbb{T}} u(x) \overline{v(x)} \frac{\mathrm{d}x}{2\pi}.
$$
We define the Lebesgue norms by
$$
\forall q\geq 1, \quad \| u\|_{L^{2q}}^{2q} := \int_{\mathbb{T}} |u|^{2q} \frac{\mathrm{d}x}{2\pi}
$$
Given an open subset $\mathcal{U} \subset H^s$, a smooth function $P :\mathcal{U} \to \mathbb{R}$ and $u\in \mathcal{U}$, its gradient $\nabla P(u)$ is the unique element of $h^{-s}$  satisfying
$$
\forall v\in H^s, \ (\nabla P(u),v)_{\ell^2} = \mathrm{d}P(u)(v).
$$
It can be easily checked that
\begin{equation}
\label{eq:formula_grad}
\forall k \in \mathbb{Z}, \quad (\nabla P(u))_k = 2\partial_{\overline{u_k}} P(u):= (\partial_{\Re u_k} +\ic \partial_{\Im u_k} )P(u).
\end{equation}
We always equip $H^s$ with the usual symplectic form $(\ic\cdot,\cdot)_{\ell^2}$.  Therefore \eqref{eq:NLS} is a Hamiltonian system as it writes
\begin{equation}
\label{eq:sympl_struct}
\ic\partial_t u = \nabla H^{\eqref{eq:NLS}}(u), \quad \mathrm{with} \quad H^{\eqref{eq:NLS}}(u) := \frac12 \int_{\mathbb{T}} |\partial_x u(x)|^2 + F(|u(x)|^2) \frac{\mathrm{d}x}{2\pi} 
\end{equation}
where $F$ is the primitive of $f$ vanishing at the origin.
Finally, we recall that a $C^1$ map $\tau : \mathcal{U} \to H^s$ is \emph{symplectic} if
$$
\forall u \in \mathcal{U}, \forall v,w \in H^s, \quad (\ic v,w)_{\ell^2} = (i\mathrm{d}\tau(u)(v),\mathrm{d}\tau(u)(w))_{\ell^2}.
$$
Moreover, if  $H,K :  \mathcal{U}  \to \mathbb{R}$ are two smooth functions such that $\nabla H$ (or $\nabla K$) is  $ H^s$ valued then the \emph{Poisson bracket} of $H$ and $K$ is defined by
$$
\{  H,K\}(u):= (\ic \nabla H(u),\nabla K(u))_{\ell^2}.
$$
Note that, as usual, it also satisfies
\begin{equation}
\label{eq:Poisson_formula}
\{  H,K\}
  =2\ic \sum_{k\in \mathbb{Z}} \partial_{\overline{u_k}}H \partial_{u_k} K - \partial_{u_k}H \partial_{\overline{u_k}} K.
\end{equation}

\subsection{Global constants}
Now, we fix some objects for the rest of the paper 
\begin{itemize}
\item a strictly increasing sequence of finite subsets $(\mathcal{S}_p)_{p\geq 0}$ of $\mathbb{Z}$,
$$
\emptyset =: \mathcal{S}_0 \varsubsetneq \mathcal{S}_1 \varsubsetneq \mathcal{S}_2 \varsubsetneq \cdots \varsubsetneq \mathcal{S}_\infty  \subset \mathbb{Z}, \quad \mathrm{where} \quad \mathcal{S}_\infty = \bigcup_{p\geq 1} \mathcal{S}_p.
$$
\item a sufficiently fast decaying sequence of radii $(r_p)_{p \geq 0} \in (\mathbb{R}_+^*)^{\mathbb{N}}$. 
\item  a minimal radius of analyticity that we arbitrarily fix as
$$
\eta_{\min} := 10.
$$
\item sets in which we will draw our internal parameters
$$
\mathcal{O}_p^{\mathrm{full}} := \prod_{1\leq p' \leq p }  \big(\frac{r_{p'}^{2(1-\delta)}}2,  r_{p'}^{2(1-\delta)}\big)^{\mathcal{S}_{p'} \setminus \mathcal{S}_{p'-1}} \subset \mathbb{R}^{\mathcal{S}_p}, \quad \mathrm{where} \quad \delta = \frac1{74}.
$$ 
The choice of $\delta$ is rather arbitrary it just has to be small enough. {As explained in the notation subsection, we identify $\mathcal{O}_p^{\mathrm{full}}$ with the subset of $\mathbb{R}^\mathbb{Z}$ of sequences vanishing on $\mathcal{S}_p^c$ and whose restriction to $\mathcal{S}_p$ belongs to $\mathcal{O}_p^{\mathrm{full}}$.} %I checked that we have at least the constraint \delta^{-1} > 29
\item in the proof, to avoid irrelevant constant, we fix arbitrarily 
$$
f'(0) = -1
$$
 but its precise value and its sign plays no role (we only use that it is different from $0$).
\end{itemize}
In the paper, we will careful track the dependencies with respect to $p$ and $r$ but not with respect to $\mathcal{S}$. We will impose a finite number of constraints on the coefficients $r_p$ (that $r_1$ is small enough and that $r_{p+1}$ is sufficiently small with respect to $r_p$ for all $p\geq 1$). We did not try to optimize such constraints, which depend on the sequence $\mathcal S_p$. However, by construction, the sequence will be at least  superexponentially decaying (due to a condition of the type $r_{p+1} \le r_p^\alpha$, with $\alpha > 1$). Moreover  one can easily verify that $r_{p+1}\to 0$ as the cardinality of $\mathcal S_{p+1}\setminus \mathcal S_p$ increases to infinity.

\subsection{Discussions about the proof}\label{proofidee}  In this subsection we present some of the most important aspects of the proof.

\subsubsection{General strategy} 

The general strategy of the proof relies on the so called P\"oschel's approach (in reference to \cite{Pos02}). It consists in proving the existence of infinite dimensional invariant tori as limit of sequences of finite dimensional ones whose dimension goes to $+\infty$. The sequences are constructed inductively. Each torus is obtained as a higher dimensional perturbation of the previous one using Kolmogorov--Arnold--Moser (KAM) theory.
The strength of this method is its flexibility because the finite dimensional tori can be constructed arbitrarily close from each other. So it provides infinity of smallness parameters\footnote{here the radii $(r_p)_p$ mentioned above} which are essentially at the end the amplitudes of the infinite dimensional torus. 
Unfortunately,  this flexibility is also its main limitation because if --as is always the case in the literature-- the amplitudes decay too fast the torus cannot be maximal and so typical. This approach was followed in \cite{Pos02,GX13,WY26} for PDEs with external parameters and in \cite{BGR24} in the  much more delicate parameterless case.

More concretely, in our context it consists in proving the existence (for some exponents $\alpha_1,\alpha_2,\alpha_3>0$ which are explicit in the paper), by induction, of Borel sets $\mathcal{O}_p \subset \mathcal{O}_{p-1} \times  \big(\frac{r_{p}^{2(1-\delta)}}2,  r_{p}^{2(1-\delta)}\big)^{\mathcal{S}_{p} \setminus \mathcal{S}_{p-1}}  \subset \mathcal{O}_p^{\mathrm{full}}$  of relative measure larger than  $ 1- r_p^{\alpha_1}$,
and for all $\xi \in \mathcal{O}_p $ 
\begin{itemize}
\item of a symplectic change of variable $\tau_{\xi}^{(p)}$, close to the identity on a $r_p^{\alpha_2}$ neighborhood of $\mathbf{T}_\xi$,
\item of frequencies $W_\xi^{(p)} \in \ell^\infty(\mathbb{Z};\mathbb{R})$ of size $\| W_\xi^{(p)} \|_{\ell^\infty} \leq r_p^{\alpha_3}$ 
\end{itemize}
such that, setting $\xi^{(q)} = (\xi_{j})_{j\in \mathcal{S}_q}$,
$$
\forall v\in \mathbf{T}_\xi, \ t\mapsto \tau_{\xi^{(1)}}^{(1)} \circ \cdots \circ \tau_{\xi^{(p)}}^{(p)} \left(  \sum_{k\in \mathbb{Z}} e^{\ic \omega_k(\xi) t } v_k e^{-\ic k x} \right) \quad \mathrm{solves \ \eqref{eq:NLS} }
$$
where
$$
\omega_k^{(p)}(\xi) := k^2 + ( W_{\xi^{(1)}}^{(1)} +\cdots +W_{\xi^{(p)}}^{(p)})_k.
$$
Finally, we pass to the limit by proving that $\tau_{\xi^{(1)}}^{(1)}, \cdots,\tau_{\xi^{(p)}}^{(p)}$  are sufficiently close to the identity so that $\tau_{\xi^{(1)}}^{(1)} \circ \cdots \circ \tau_{\xi^{(p)}}^{(p)}$ converges (this is done in Section \ref{sec:conv}).

\subsubsection{Small divisors and twist condition} \label{smalldiv}

The main difficulty when working only with internal parameters is to obtain small divisor estimates because the frequencies have to be modulated through the amplitude-frequency map. In particular, this requires a very fine understanding of this map.

More precisely, in Pöschel's approach, to pass from step $p-1$ to $p$ using KAM, we have to solve cohomological equations involving small divisors of the form (among others)
$$
\bk \cdot \omega^{(p)}(\xi) := \sum_{j\in \mathcal{S}_p} \bk_j \omega_{j}^{(p)}(\xi) \quad \mathrm{where} \quad \bk \in \mathbb{Z}^{\mathcal{S}_p} \setminus \{0\}.
$$
We have to prove a quantitative lower bound on them for most of the values of $\xi$. The simplest and most natural way to do this is to prove a lower bound on the derivative of the amplitude-frequency map $\omega^{(p)}$, that is
\begin{equation}
\label{eq:trouve_e}
\forall \bk \in \mathbb{Z}^{\mathcal{S}_p} \setminus \{0\}, \exists \mathbf{e} \in \mathbb{R}^{\mathcal{S}_p} \setminus \{0\}, \quad \bk \cdot \mathrm{d} \omega^{(p)}(\xi)(\mathbf{e})  = \sum_{i,j\in \mathcal{S}_p} \bk_i \mathbf{e}_j \partial_{\xi_j} \omega^{(p)}_i (\xi) \gtrsim \|\mathbf{e}\|_{\ell^2}
\end{equation}
uniformly in $\xi$. In practice, $\xi$ lives on a Borel set and $\omega^{(p)}$ is only Lipschitz, so we have to give a weak sense to these derivatives, but this is standard in KAM theory.

The naive estimates we can prove on these derivatives (in the spirit of \cite{KP96}) are of the form\footnote{In fact, it is $\partial_{\xi_j} \omega^{(p)}_i (\xi) + 2$ that takes this form. We omit this term $2$ because it somewhat obscures the discussion, even though it poses no difficulty. It can, for example, be removed by an obvious gauge transformation.}
$$
\partial_{\xi_j} \omega^{(p)}_i  = \mathds{1}_{i=j} + M_{i,j}^{(p)} \quad \mathrm{with} \quad |M_{i,j}^{(p)}| \leq C_p r_1^{\alpha_4},
$$
for an explicit exponent $\alpha_4 > 0$ and a constant $C_p \geq 1$ depending only on $p$. This implies that, if $r_1$ is small enough, the coefficients of the matrix of $\mathrm{d} \omega^{(p)}(\xi)$ are close to those of the identity matrix. The identity matrix comes from the cubic term of the nonlinearity. However, to obtain \eqref{eq:trouve_e}, we need more: we would like $\mathrm{d} \omega^{(p)}(\xi)$ to be close to the identity in a standard matrix norm (i.e., typically associated with a $\ell^q$ space). In other words, we need the matrix $M^{(p)}$ to be small in a standard matrix norm. But the size of the matrix is $\# \mathcal{S}_p \times \# \mathcal{S}_p$, so to ensure that $M$ is small in a matrix norm, we have to assume that $r_1$ is smaller than a constant that goes to $0$ as $\# \mathcal{S}_p$ goes to $+\infty$. As a consequence, these estimates are sufficient to obtain finite-dimensional tori but not infinite-dimensional ones. To go further, decay or summability properties of the coefficients of $\mathrm{d}\omega^{(p)}$ have to be understood.

A significant progress was made in \cite{BGR24}. By leveraging dispersive smoothing effects, the authors managed to show a decomposition of the form
\begin{equation}
\label{eq:vieille_dec}
\partial_{\xi_j} \omega^{(p)}_i  = D_i \mathds{1}_{i=j} + M_{i,j}^{(p)} + A_j^{(p)} \quad \mathrm{with} \quad |D_i-1| + |A_j| \leq r_1^{\alpha_4}  \quad \mathrm{and} \quad   |M_{i,j}| \leq \Gamma_{i,j} r_1^{\alpha_4}
\end{equation}
$$
\mathrm{with} \quad \Gamma_{i,j} := \left( 1 \wedge \frac{\langle i \rangle^{4}}{\langle j\rangle} \wedge \frac{\langle j \rangle^{4}}{ \langle i\rangle} \right)  \vee \frac{1}{ \langle i\rangle}.
$$
The $A_j^{(p)}$ terms do not decay but they depend only on $j$. The point is that, due to the gauge invariance of \eqref{eq:NLS} (which is equivalent to the preservation of the mass $\|u\|_{L^2}^2$), it actually suffices to prove \eqref{eq:trouve_e} for coefficients $\bk \in \mathbb{Z}^{\mathcal{S}_p} \setminus \{0\}$ satisfying $\sum_{j\in \mathcal{S}_p} \bk_j = 0$. Thus, the contribution of the $A^{(p)}$ term vanishes in \eqref{eq:trouve_e}, and so it can be ignored. It suffices to prove that $M^{(p)}$ is small in a well-chosen matrix norm. {In \cite{BGR24} the authors prove a bound $\| M^{(p)}\|_{\ell^1 \to \ell^1} \lesssim r_1^{\alpha_4}$ (uniformly in $p$) by exploiting  the decay of the coefficients $\Gamma_{i,j}$ and requiring  sparsity condition on $\mathcal S_\infty$, mainly used to  control the  $\langle i \rangle^{4}, \langle j \rangle^{4}$ factors. Note that typically the distance between two sites should grow like $\exp \circ \exp$. Consequently  this estimate was sufficient to prove the existence of infinite-dimensional tori but not full-dimensional ones.} 
%\\
%	Thanks to the decay of the coefficients $\Gamma_{i,j}$, it is proven in \cite{BGR24} that $\| M^{(p)}\|_{\ell^1 \to \ell^1} \lesssim r_1^{\alpha_4}$ (uniformly in $p$). However, this decay being rather weak (mainly due to the $\langle i \rangle^{4}, \langle j \rangle^{4}$ factors), they needed a sparsity assumption to obtain this uniform estimate (typically, that the distance between two sites grows like $\exp \circ \exp$). In any case, this estimate was sufficient to prove the existence of infinite-dimensional tori but not full-dimensional ones.

In this paper, we obtain a much finer decomposition. We succeed in proving (in Section \ref{sec:res_coho}) a decomposition of the form (up to simpler terms)
\begin{equation}
\label{eq:jolie_dec}
\begin{split}
\partial_{\xi_j} \omega^{(p)}_i  = {D_i} \mathds{1}_{i=j} + M_{i,j}^{(p)} + A_j^{(p)} + B_i^{(p)} \quad \mathrm{with}  \\
{|D_i-1| +}  |A_j^{(p)}| + |B_i^{(p)}| \leq r_1^{\alpha_4}  \quad \mathrm{and} \quad |M_{i,j}^{(p)}| \leq \frac{r_1^{\alpha_4}}{1 + |i| + |j|}.
\end{split}
\end{equation}
As previously, we can ignore the contributions of $A^{(p)}, B^{(p)}$ in \eqref{eq:trouve_e} by imposing $\sum_{j\in \mathcal{S}_p} \bk_j = 0$ and $\mathbf{e} = \bk$  (which is a very natural choice). Then, the smallness of the matrix $M^{(p)}$ in the $\| \cdot \|_{\ell^2 \to \ell^2}$ norm simply follows from the following estimate (see Lemma \ref{lem:Hilb}):
$$
\forall u, v \in \ell^2, \quad \Big|  \sum_{i,j\in \mathbb{Z}}  \frac{u_i v_j}{1 + |i| + |j|} \Big| \lesssim \| u\|_{\ell^2} \| v\|_{\ell^2}.
$$
This is the key that allows us to prove the existence of full-dimensional tori. Of course, obtaining a decomposition of the form \eqref{eq:jolie_dec} is non trivial and is one of the main technical novelty of this paper.

% \mic{In our estimate with the $\ell^2$ norms there are some factors depending on $p$ right? (to have a completely uniform bound one would need $\ell_1$, should we put $\lesssim_p$ in (9)?} \jo{I do not  really see what you mean. For me bounds are uniform in $p$ here. In particular the implicit constant we prove for (9) is $1/2$. Nevertheless, I agree with you that it would not be a problem if it depended on $p$.} \mic{sorry, what I meant to say is that in the measure estimates if you know the $\ell^2$-bound of (9) then you will get some p dependence, which you would not get if you were using $\ell^1$ operator norms, but of course this is not a problem...the thing that I had in mind is that maybe we should explain the difference between having an $\ell^1$ operator bound as in \cite{BGR24} or an $\ell^2$ one. Maybe it is better to say this inside the paper}
%\jo{Yes, I think it is a remark we could do when do the measure estimates. I do not think it is crucial because in any case the constant is harmless in any case, it is $\sqrt{\# \mathcal{S}_p}$, so it is bounded by $\|\bk\|_{\ell^1}^{1/2}$. A reference is [Projection bodies, Bourgain, Lindenstrauss,  pp 250–270  \href{https://link.springer.com/chapter/10.1007/BFb0081746}{https://link.springer.com/chapter/10.1007/BFb0081746}]. It is surprisingly good ! And very helpful to get maximal tori...   } \mic{very interesting}

\subsubsection{Smoothing effects and Melnikov's conditions}\label{melnikov} One important difference between the decomposition \eqref{eq:vieille_dec} of \cite{BGR24} and the new decomposition \eqref{eq:jolie_dec} is the absence of the large factors $\langle i \rangle^{4}$ and $\langle j \rangle^{4}$. Let us first explain where they came from. The point in \eqref{eq:vieille_dec} was to obtain the small factors $\langle i \rangle^{-1}, \langle j \rangle^{-1}$. They come from the smoothing properties of \eqref{eq:NLS}, i.e., that up to a gauge transform, the flow of \eqref{eq:NLS} is a $1$-smoothing perturbation of the identity (see, e.g., \cite{BGR24}). To prove this smoothing property, one typically uses the fact that when one averages the nonlinearity by the flow of the linear part of \eqref{eq:NLS} (or when solving a cohomological equation), we gain one derivative. Technically, this materializes through large divisor estimates of the form
\begin{equation}
\label{eq:gd_old}
\langle \bs_1 \bl_1^2 + \cdots +\bs_q \bl_q^2\rangle \gtrsim |\bl_1| \langle \bl_3 \rangle^{-2} q^{-1}
\end{equation}
where $q \geq 2$, $\bl \in \mathbb{Z}^{q}$, $\bs \in \{-1,1\}^q$ satisfy $|\bl_1| \geq \cdots \geq |\bl_q|$, 
\begin{equation}
\label{eq:assump_gd}
\bs_1 \bl_1 + \cdots +\bs_q \bl_q = 0 \quad \mathrm{and} \quad (\bl_1,\bs_1) \neq (\bl_2,-\bs_2)
\end{equation}
and $\bl_j:=0$ if $q<j$. The one-smoothing property is due to the factor $|\bl_1|$ in \eqref{eq:gd_old}, whereas the factor $\langle \bl_3 \rangle^{-2}$ forces us to work in higher regularity spaces. In particular, it is the cause of the $\langle i \rangle^{4}$ and $\langle j \rangle^{4}$ factors in \eqref{eq:jolie_dec}. The assumption $(\bl_1,\bs_1) \neq (\bl_2,-\bs_2)$ explains why the smoothing effect holds only up to a gauge transform.

This discussion might surprise some readers familiar with KAM theory. Usually, there is no need to consider Melnikov conditions of order higher than $2$. In other words, in Pöschel's approach, when considering an equation with external parameters as in \cite{Pos02,GX13,WY26}, one can assume to pass from step $p-1$ to step $p$ that
$\bl$ contains at most two sites not in $\mathcal S_p^c$, so that
$$
|\bl_3| \leq \max_{j \in \mathcal{S}_{p}} |j| = \mathcal{O}(r_{p}^{0^+}).
$$
In other words, the estimate \eqref{eq:gd_new} allows us to gain one derivative at the cost of only an arbitrarily small power of the homogeneity parameter {$r_p$}. This loss can be absorbed without issue, which means that somehow the $\langle \bl_3 \rangle^{-2}$ factor could be ignored. However, the situation is slightly different here. The parameters must be extracted from the initial conditions through the cubic terms of the nonlinearity. As a result, for fairly natural homogeneity reasons, we also need to consider the third Melnikov condition, and so we only have
\begin{equation}
\label{eq:what_we_only_have}
|\bl_4| \leq \max_{j \in \mathcal{S}_{p}} |j| = \mathcal{O}(r_{p}^{0^+}).
\end{equation}

To remedy this point, we prove a stronger estimate than \eqref{eq:gd_old}. More precisely, we prove (Lemma \ref{lem:gain_one_der}) that, under the same assumptions \eqref{eq:assump_gd}, by convexity arguments, we have 
\begin{equation}
\label{eq:gd_new}
\langle \bs_1 \bl_1^2 + \cdots +\bs_q \bl_q^2\rangle \gtrsim |\bl_1| \langle \bl_4 \rangle^{-2} q^{-1}
\end{equation}
This estimate is much stronger for us because of \eqref{eq:what_we_only_have}.
In other words, the estimate \eqref{eq:gd_new} allows us to gain one derivative at the cost of only an arbitrarily small power of the homogeneity parameter. This loss can be absorbed without issue, which explains why in \eqref{eq:jolie_dec} we have the gain $(1 + |i| + |j|)^{-1}$ without the large numerator as in \eqref{eq:vieille_dec}.

Finally, we point out that the estimate \eqref{eq:gd_new} also essentially reduces the proof of the third Melnikov condition to that of the $0$-Melnikov condition. Indeed, it allows us to derive, from a small divisor estimate depending on the three largest indices, a uniform estimate in these indices. In \cite{BGR24}, this difficulty was overcome by using the polynomial decay of the frequencies (modulo integers and a constant). However, this second approach broke the natural homogeneity of the small divisor estimates and led to highly singular estimates. In other words, instead of having small divisors of size $r_{p}^{2(1-\delta)}$ (corresponding to the size of $(\xi_j)_{j\in \mathcal{S}_{p}}$), which is the natural lower bound we can expect by homogeneity arguments, they could only be controlled by $r_p^{\beta}$ with $\beta \gg 2$ ($\beta=68$ in \cite{BGR24} for the third Melnikov condition). This singularity created numerous difficulties that were overcome by imposing the assumption $\# \mathcal{S}_{p} \setminus \mathcal{S}_{p-1} = 1$ and performing many Birkhoff normal form steps at each stage to reduce the size of the perturbation in order to implement a KAM scheme. The estimate \eqref{eq:gd_old} thus allows us to avoid having to do the same, making our proof significantly simpler.

\subsubsection{Action-T\"oplitz Hamiltonians} The estimates on the twist matrices (such as \eqref{eq:vieille_dec} or \eqref{eq:jolie_dec}) are naturally obtained as consequences of estimates on the structure of the Hamiltonians involved at each step of the construction (notably $H^{\eqref{eq:NLS}} \circ \tau_{\xi^{(1)}}^{(1)} \circ \cdots \circ \tau_{\xi^{(p)}}^{(p)}$). The latter are obtained by introducing classes of Hamiltonians that must
\begin{itemize}
    \item allow us to take advantage of the regularizing effects given by \eqref{eq:gd_new},
    \item be stable under the various transformations we need to apply to them,
    \item include $H^{\eqref{eq:NLS}}$.
\end{itemize}
One of the main challenges is to ensure that these classes allow us to analyze what happens for terms whose largest index is associated with an action. These are the terms for which we have $(\bl_1,\bs_1) = (\bl_2,-\bs_2)$, and thus for which we do not have the gain given by \eqref{eq:gd_new}. In particular, we want to show that such terms either have been created by the changes of variables and are associated with coefficients decaying like $\langle \ell_1 \rangle^{-1}$, or that they do not depend on $\ell_1$. The latter generate the $A^{(p)}, B^{(p)}$ terms in the twist matrix \eqref{eq:jolie_dec}. One can also easily verify the existence of such terms in $H^{\eqref{eq:NLS}}$.

This problem also appeared in \cite{BGR24} (where the decay was $\langle \ell_1 \rangle^{-1}\langle \ell_3 \rangle^{2}\langle \ell_4 \rangle^{2}$ instead of $\langle \ell_1 \rangle^{-1}$). It was resolved by noting that all terms in $H^{\eqref{eq:NLS}}$ containing an action (also called pairing) could be factorized and thus removed from the Hamiltonian\footnote{with the notable exception of the overpaired terms for which $\bl_1 = \bl_2 = \bl_3$, but these are simple and special enough to be treated separately.} provided that $\| u \|_{L^2}^2$ is considered as a new variable of the Hamiltonian (this is the Wick resummation). Since $\| u \|_{L^2}^2$ is a constant of motion, it can essentially be ignored, and thus one can consider that there are no terms in $H^{\eqref{eq:NLS}}$ whose largest index is associated with an action. It then suffices to verify that all new terms of this form created by the changes of variables indeed have a decay in $\langle \ell_1 \rangle^{-1}$. This approach, which is very algebraic, has the advantage of being relatively simple but requires more regularity on the nonlinearity $f$ (which must in particular be an entire function not growing too fast) due to the constants appearing in the Wick resummation.

We could have followed the same strategy in this paper, but we preferred a more pedestrian approach to remove this restriction on $f$ and only require that the latter is analytic. More precisely, we introduce classes of Hamiltonians that directly have the desired properties, and we show that they are stable under our transformations. This is relatively simple but tedious due to all the possible cases that need to be considered. Most of these verifications are carried out in the appendix.

%\mic{maybe a brief explanation of the idea of Action Toplitz?} \jo{Yes, why not... if you have an idea how to present it without being too long.}

 Let us give a heuristic presentation. As is well known frequency modulation in KAM or normal form schemes comes form  action terms, and  in our scheme this amounts to controlling all the terms in the Hamiltonian where  the  highest two    modes form an action.
Let us first consider the case of just one action term. Assume for simplicity that $H$ only contains terms of such type, and only depends on the normal variables so that is it is  a linear combination of monomials of the type 
	\[ v_{\bl_q}^{\bs_q}\dots v_{\bl_3}^{\bs_3}|v_{\bl_1}|^2  \,,\quad (\bl_3,\bs_3) \neq (\bl_4,-\bs_4)
	\] 
	then we group H into two summands. In the first term (the 1-action-T\"oplitz part) the coefficients of the Hamiltonian do not depend on $\ell_1$  while in the second part (1-action smoothing) the coefficients decay as $|\ell_1|^{-1}$.
	\\
In the case of  at least two action terms we are considering combinations of monomials of the type $v_{\bl_q}^{\bs_q}\dots v_{\bl_5}^{\bs_5} |v_{\bl_3}|^2 |v_{\bl_1}|^2$. 
The  terms where $\ell_4=\ell_1$ are easily controlled because they only produce a small diagonal correction,  
%\jo{for me it is not the reason.\mic{you are right it was bullshit} They are the over-paired terms. We control them because they generate, in the twist matrix, diagonal terms of size $r_1^{\alpha_4}$. Actually I forgot to include them in the "discussion about the proof" subsection. Now I included them. }
 so let us assume that $\ell_4=\ell_3<\ell_2=\ell_1$. 
Also in this case we group $H$ into two summands. In the first term (the 2-action-T\"oplitz part) the coefficients of the Hamiltonian do not depend on $\ell_1,\dots,\ell_4$  while the second part  either  the coefficients decay as $|\ell_1|^{-1}$ (1-action smoothing), or they do not depend on $\ell_1$ and  decay as $|\ell_3|^{-1}$ (2-action smoothing). With this structure in mind, we essentially recover the decomposition \eqref{eq:jolie_dec} of the twist matrix : the $D_i$ factors come from the  $|v_{\bl_1}|^4$ terms, $M_{i,j}$ from the 1-action smoothing terms and $A_i,B_j$ from the terms which do not depend on $\bl_1$.
	The idea of deducing asymptotics of the eigenvalues from symmetry properties of the Hamiltonian appears in many contexts, for instance using the T\"oplitz-Lipschitz property (like say in \cite{EK10}) or paralinearizations  like \cite{BD17}.  These definitions, however, only control the ``para-quadratic'' part of an Hamiltonian and this is not enough to obtain  a modulation like \eqref{eq:jolie_dec}.

\section{Symplectic deformations} \label{sec:symp_def} Now we present the new notion of \emph{symplectic deformation} define above in Definition \ref{def:symp_def}. It is a natural way to classify invariant tori. In particular, it provides robust definitions of the intuitive concepts of \emph{full dimensional tori} and \emph{number of normal sites} (see Proposition \ref{prop:geom_key} below). The classification is certainly finer but we did not try to go further in this direction.

In this paper, we prove the existence of invariant tori for \eqref{eq:NLS} of the form $\mathcal{T}=\tau(\mathbf{T}_\xi)$ where $\tau :\mathbf{T}_\xi \to  \mathcal{T}$ is a homeomorphism. However, the fact $\tau$ that is a homeomorphism provides very little information on $\mathcal{T}$: whenever $\# \mathrm{supp} \, \xi = +\infty$, $\mathbf{T}_\xi$ is homeomorphic to $\mathbb{T}^{\mathbb{N}}$. To proceed further, we need to consider properties analogous to those of $C^1$ maps. The ideal situation would be to have a deformation $\tau$ admitting a symplectic extension to a neighborhood of $\mathbf{T}_\xi$ (and even better if this neighborhood is also a neighborhood of the origin as in \cite{Bou05,BMP20}). 
%\jo{A remark for a future possible discussion: one of my friend doing symplectic topology (Rémi Leclerc) told me it is very nice because it allows to give a robust definition to the notion of amplitudes. In other words, we could prove that the $(\xi_j)_{j\in \mathcal{S}_\infty}$ are invariant up to permutation by this kind of transformation. Did you know ? Is it obvious for you ? I have the intuition from what he told me but I do not have the full proof clear in my mind.}  \mic{I am not sure i understand..discuss?} \jo{Yes, with pleasure :) I do not think it is difficult but it is something I would like to fully understand at some point.} 
However, the possible existence of such an extension is not known in many cases and seems, in any case, out of reach without external parameters. The minimal property we would like to impose on $\tau$ would be to be derivable and that, in any point, its derivative admits a symplectic invertible extension (to discuss the properties of tangent space). Unfortunately, we face another problem:  $\mathbf{T}_\xi$ is compact and so it is not a submanifold of $H^s$ (unless $\# \mathrm{supp} \, \xi < \infty$). To the best of our knowledge, the notions of tangent space and derivative are not standard in this context. Thus, in this subsection, we naturally extend these notions to our context.  For possible future applications, we present these concepts in an abstract framework. {Note that even if we decided to work in an $\ell^\infty$ based topology our tori would in any case be compact (because the actions decay superexponentially) so the problem would persist.}

\subsection{Weak symplectic spaces}
\label{sub:weak_symp}
To avoid ambiguities, first we recall basic definitions and results about weak symplectic spaces.
\begin{definition}[Weak symplectic space] A (real) weak symplectic space is a couple $(E,\varpi)$ where $E$ is a real Banach space and $\varpi$ is bilinear continuous alternate form which is non degenerate:
$$
\{ x\in E \ | \ \forall y\in E, \ \varpi(x,y)=0 \}=\{0\}.
$$
\end{definition}

\begin{example} { The realization of a complex Hilbert space $(\mathbb H, \langle\cdot,\cdot \rangle)$ has a natural (strong) symplectic structure, with scalar product 
	$(\cdot,\cdot)_{L^2}:=\Re \langle \cdot,\cdot \rangle $ and symplectic form $\varpi_0(\cdot,\cdot):=\Im\langle \cdot,\cdot\rangle$. We shall mostly consider $\mathbb H= L^2(\mathbb T;\mathbb C)$, so that
$(H^s, \varpi_0)$ is a weak symplectic space. }
\end{example}

%\begin{definition}[$\mathscr{L}(E)$] Given a normed space $E$, $\mathscr{L}(E)$ denotes the space of the continuous linear maps from $E$ to $E$.
%\end{definition}

\begin{definition}[Symplectic orthogonal]  Given a weak symplectic space $(E,\varpi)$ and a subset $X\subset E$, we set
$$
X^{\perp_{\mathrm{symp}}}  := \{ y\in E \ | \ \forall x\in X, \quad \varpi(x,y) = 0 \}.
$$
\end{definition}

\begin{definition}[Symplectic map] Given a weak symplectic space $(E,\varpi)$ and an open set $\Omega \subset E$, a $C^1$ map $\Psi:\Omega \to E$ is symplectic if
$$
\forall x\in \Omega,\forall y,z \in E, \quad \varpi(\mathrm{d}\Psi(x)(y),\mathrm{d}\Psi(x)(z)) = \varpi(y,z).
$$ 
\end{definition}

From the definition, it is straightforward, to deduce the following standard lemma.
\begin{lemma} \label{lem:symp_perp} Given a weak symplectic space $(E,\varpi)$, a vector subspace $V\subset E$ and an invertible linear symplectic map $L : E \to E$, we have 
$$
(L V)^{\perp_{\mathrm{symp}}} = L (V^{\perp_{\mathrm{symp}}} ).
$$
\end{lemma}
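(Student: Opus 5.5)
We prove the two inclusions $(LV)^{\perp_{\mathrm{symp}}} \supset L(V^{\perp_{\mathrm{symp}}})$ and $(LV)^{\perp_{\mathrm{symp}}} \subset L(V^{\perp_{\mathrm{symp}}})$ separately. The only ingredients are the identity $\varpi(Lx,Ly)=\varpi(x,y)$ for all $x,y\in E$, which is what it means for the linear map $L$ to be symplectic, and the bijectivity of $L$. Note that for a linear map the differential is $L$ itself, so the definition of symplectic map reduces to this identity. A useful preliminary observation is that $L^{-1}$ is then also symplectic: for $x,y\in E$, apply the identity to $L^{-1}x, L^{-1}y$ to get $\varpi(x,y)=\varpi(LL^{-1}x,LL^{-1}y)=\varpi(L^{-1}x,L^{-1}y)$.

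For the inclusion $\supset$, take $y=Lz$ with $z\in V^{\perp_{\mathrm{symp}}}$. Any element of $LV$ has the form $Lv$ with $v\in V$, and $\varpi(Lv,Lz)=\varpi(v,z)=0$. Hence $y\in (LV)^{\perp_{\mathrm{symp}}}$.

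For the inclusion $\subset$, take $y\in (LV)^{\perp_{\mathrm{symp}}}$ and set $z:=L^{-1}y$, which uses surjectivity of $L$. For every $v\in V$ we have $\varpi(v,z)=\varpi(Lv,Lz)=\varpi(Lv,y)=0$. So $z\in V^{\perp_{\mathrm{symp}}}$, and therefore $y=Lz\in L(V^{\perp_{\mathrm{symp}}})$.

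There is no real obstacle here; the argument is purely algebraic. The one point to watch is the second inclusion, which genuinely requires invertibility of $L$ (surjectivity), and not merely that $L$ preserves $\varpi$. Continuity of $L^{-1}$, non-degeneracy of $\varpi$, and closedness of $V$ all play no role.
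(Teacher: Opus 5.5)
Your proof is correct, and it is exactly the direct verification from the definitions that the paper has in mind: the paper gives no written proof and just calls the lemma standard and straightforward. One small point: your preliminary remark that $L^{-1}$ is symplectic is never used, since both inclusions only need $\varpi(Lv,Lz)=\varpi(v,z)$, and the second one also needs the surjectivity of $L$.
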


\begin{definition}[isotropic and Lagrangian vector spaces] Given a weak symplectic space $(E,\varpi)$, a vector subspace $X \subset E$ is  \emph{isotropic} if $V\subset V^{\perp_{\mathrm{symp}}}  $ and \emph{Lagrangian} if $V= V^{\perp_{\mathrm{symp}}}  $.
\end{definition}

\subsection{First properties of symplectic deformation} We recall that the notion of symplectic deformation is defined in Definition \ref{def:symp_def} above. { First, we state that the inverse of a symplectic deformation is a symplectic deformation.}

\begin{lemma} \label{lem:inverse_symp_def} Given a weak symplectic space $(E,\varpi)$ and $X,Y\subset E$, a symplectic deformation $\tau : X \to Y$ and a derivative  $L:X\to \mathscr{L}(E)$ of $\tau$, we have that $\tau^{-1}$ is a symplectic deformation and that $(L\circ \tau^{-1})^{-1}$ is a derivative of $\tau^{-1}$.
\end{lemma}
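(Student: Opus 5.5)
We check the three requirements of Definition \ref{def:symp_def} for $\tau^{-1} : Y \to X$ with candidate derivative $M := (L\circ \tau^{-1})^{-1}$, i.e. $M(y) := L(\tau^{-1}(y))^{-1}$ for $y\in Y$. First, $\tau^{-1}$ is a homeomorphism since $\tau$ is. Second, for each $y\in Y$, $L(\tau^{-1}(y))$ is symplectic and invertible, hence so is its inverse: the inverse of an invertible linear symplectic map satisfies $\varpi(L^{-1}a,L^{-1}b)=\varpi(LL^{-1}a,LL^{-1}b)=\varpi(a,b)$. Here we read ``invertible'' as invertible in $\mathscr{L}(E)$, i.e. with continuous inverse. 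This is automatic by the open mapping theorem, since $E$ is Banach.

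Next we check continuity of $M : Y\to \mathscr{L}(E)$. The map $y\mapsto L(\tau^{-1}(y))$ is continuous as a composition of continuous maps. Inversion is continuous on the open set of invertible elements of the Banach algebra $\mathscr{L}(E)$ (Neumann series). Hence $M\in C^0(Y;\mathscr{L}(E))$.

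The main step is the limit \eqref{eq:lim_sup} for $\tau^{-1}$. Fix $y\in Y$, set $x=\tau^{-1}(y)$, and for $y'\in Y\setminus\{y\}$ set $x'=\tau^{-1}(y')\neq x$. Then
$$
\tau^{-1}(y')-\tau^{-1}(y)-M(y)(y'-y) = -L(x)^{-1}\big(\tau(x')-\tau(x)-L(x)(x'-x)\big),
$$
so its norm is at most $\|L(x)^{-1}\|\,\varepsilon(x')\,\|x'-x\|$, where $\varepsilon(x')\to 0$ as $x'\to x$ by \eqref{eq:lim_sup}. Since $\tau^{-1}$ is continuous, $y'\to y$ implies $x'\to x$, so $\varepsilon(x')\to 0$. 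It remains to compare $\|x'-x\|$ with $\|y'-y\|$; this is the only point needing care. By the triangle inequality,
$$
\|y'-y\| \geq \|L(x)(x'-x)\| - \varepsilon(x')\|x'-x\| \geq \big(\|L(x)^{-1}\|^{-1}-\varepsilon(x')\big)\|x'-x\|.
$$
For $y'$ close enough to $y$ we have $\varepsilon(x')\le \tfrac12\|L(x)^{-1}\|^{-1}$, hence $\|x'-x\|\le 2\|L(x)^{-1}\|\,\|y'-y\|$. Plugging this in, the quotient in \eqref{eq:lim_sup} for $\tau^{-1}$ at $y$ is bounded by $2\|L(x)^{-1}\|^2\varepsilon(x')\to 0$.

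This proves that $\tau^{-1}$ is a symplectic deformation with derivative $(L\circ\tau^{-1})^{-1}$.
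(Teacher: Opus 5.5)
Your proof is correct and is exactly the standard argument for differentiability of the inverse of a $C^1$ map, with the homeomorphism hypothesis taking the place of the inverse function theorem; this is what the paper invokes when it says it suffices to mimic the standard proof for $C^1$ functions on open sets. You have written out the details the paper omits: symplecticity and continuity of the inverse operators, continuity of inversion in $\mathscr{L}(E)$, and the key comparison $\|x'-x\|\le 2\|L(x)^{-1}\|\,\|y'-y\|$.
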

\begin{proof}
It suffices to mimic the standard proof for $C^1$ functions on open sets.%, there is no particular difficulty, so we omit the proof.
\end{proof}
{Then, we deduce that this notion allows to classify subsets.}
\begin{proposition} Given a weak symplectic space $(E,\varpi)$, being a symplectic deformation is a equivalence relation on the subsets of $E$.
\end{proposition}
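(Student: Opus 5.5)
We check the three axioms of an equivalence relation directly from Definition \ref{def:symp_def}, with ``$X \sim Y$'' meaning that there exists a symplectic deformation $\tau : X\to Y$.

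\emph{Reflexivity.} For any $X\subset E$, take $\tau=\mathrm{id}_X$ and the constant map $L(x)=\mathrm{Id}_E$. The identity is a homeomorphism, symplectic and invertible, and $L$ is continuous. The quotient in \eqref{eq:lim_sup} vanishes identically, so the limit condition holds.

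\emph{Symmetry.} This is exactly Lemma \ref{lem:inverse_symp_def}. If $\tau:X\to Y$ is a symplectic deformation with derivative $L$, then $\tau^{-1}:Y\to X$ is a symplectic deformation with derivative $y\mapsto L(\tau^{-1}(y))^{-1}$.

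\emph{Transitivity.} Let $\tau_1:X\to Y$ and $\tau_2:Y\to Z$ be symplectic deformations with derivatives $L_1,L_2$. Set
\[
\tau=\tau_2\circ\tau_1, \qquad L(x)=L_2(\tau_1(x))\,L_1(x).
\]
\begin{itemize}
\item $\tau$ is a homeomorphism, as a composition of homeomorphisms.
\item $L(x)$ is invertible and symplectic, as a composition of invertible symplectic maps.
\item $L$ is continuous $X\to\mathscr{L}(E)$. Indeed, $L_2\circ\tau_1$ is continuous, and composition is jointly continuous on bounded sets for the operator norm.
\end{itemize}

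It remains to prove \eqref{eq:lim_sup} for $\tau$, following the chain rule. Fix $x\in X$, let $X\ni x'\to x$ with $x'\neq x$, and write $y=\tau_1(x)$, $y'=\tau_1(x')$. Since $\tau_1$ is injective, $y'\neq y$, and by continuity $y'\to y$. We split
\[
\tau(x')-\tau(x)-L(x)(x'-x)=\big[\tau_2(y')-\tau_2(y)-L_2(y)(y'-y)\big]+L_2(y)\big[y'-y-L_1(x)(x'-x)\big].
\]
The second bracket is $o(\|x'-x\|)$ by \eqref{eq:lim_sup} for $\tau_1$, and $L_2(y)$ is bounded, so the second term is $o(\|x'-x\|)$. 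The first bracket is $o(\|y'-y\|)$ by \eqref{eq:lim_sup} for $\tau_2$. It remains to bound $\|y'-y\|$ by $\|x'-x\|$:
\[
\|y'-y\|\le \|L_1(x)\|\,\|x'-x\|+o(\|x'-x\|)\lesssim\|x'-x\|.
\]
So the first term is also $o(\|x'-x\|)$, and \eqref{eq:lim_sup} holds for $\tau$ with derivative $L$.

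The only delicate point is in the transitivity step. We need $y'\neq y$ so that the limsup defining the derivative of $\tau_2$ applies, and this is guaranteed by injectivity of $\tau_1$. We also need the local Lipschitz-type bound on $\tau_1$ at $x$, which follows from its differentiability there. Everything else is routine.
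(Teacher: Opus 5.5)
Your proof is correct and takes the same route as the paper. Reflexivity is trivial, symmetry is Lemma \ref{lem:inverse_symp_def}, and transitivity is the chain-rule argument for $\tau_2\circ\tau_1$ with derivative $x\mapsto L_2(\tau_1(x))L_1(x)$, which the paper only indicates by ``mimic the proof for $C^1$ functions on open sets''. Your write-up supplies the two points that make this work: injectivity of $\tau_1$ (so $y'\neq y$ and \eqref{eq:lim_sup} for $\tau_2$ applies), and the local bound $\|y'-y\|\lesssim\|x'-x\|$ deduced from \eqref{eq:lim_sup} for $\tau_1$.
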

\begin{proof} One only needs to check that the composition of two symplectic deformation is a symplectic deformation. Again, it suffices to mimic the proof for $C^1$ functions on open sets. %There is no extra difficulty, so we omit the proof.
\end{proof}

\subsection{Tangent cones} Unfortunately infinite dimensional tori are not submanifolds in $H^s$ (because they are compact). Thus their tangent space is not a priori well defined. So instead, we work with tangent cones (which turn out to be vector spaces).

\begin{definition}[Tangent cone]  Given a real normed vector space $E$, $X\subset E$ and $x\in X$. Its  differential tangent cone is defined by
$$
\mathrm{T}_x X = \big\{  \gamma'(0) \ | \ \gamma \in C^1([0,1]; X) \big\}
$$
%\emph{Bouligand tangent cone} (or \emph{contingent cone}) of $X$ in $x$ is defined by
%$$
%\mathrm{T}_x X := \bigcap_{\varepsilon>0} \overline{(\varepsilon^{-1},+\infty) (X-x)}
%$$ 
\end{definition}
\begin{remark} We could actually work with the Bouligand tangent cone and the Clarke tangent cone (which are more standard in geometry). However, it turns out that, in our applications, all these cones coincide.
\end{remark}

\begin{lemma} \label{lem:geo_expli} Let $\xi \in \ell^1_{2s}(\mathbb{Z};\mathbb{R}_+)$ and $u\in \mathbf{T}_\xi$. Then, in $(H^s, \varpi_0)$ we have
$$
\mathrm{T}_u \mathbf{T}_\xi =  \{ v \in H^s \ | \ \forall k\in \mathbb{Z}, \ v_k \in \ic u_k \mathbb{R} \}.
$$
\end{lemma}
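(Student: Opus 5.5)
The plan is to prove the two inclusions separately. Throughout we use that each coordinate map $w\mapsto w_k$ is a continuous linear form on $H^s$, and that $u_k=0$ exactly when $\xi_k=0$.

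\emph{Inclusion $\subset$.} Let $\gamma\in C^1([0,1];\mathbf{T}_\xi)$ with $\gamma(0)=u$, and set $v:=\gamma'(0)\in H^s$.
\begin{itemize}
\item Since the $k$-th coordinate map is continuous and linear, $t\mapsto \gamma_k(t)$ is $C^1$ with derivative $(\gamma'(t))_k$.
\item As $\gamma(t)\in\mathbf{T}_\xi$, we have $|\gamma_k(t)|^2=\xi_k$ for all $t$. Differentiating at $t=0$ gives $\Re\big(v_k\overline{u_k}\big)=0$.
\item If $u_k\neq 0$, this says $v_k\in \ic u_k\mathbb{R}$.
\item If $u_k=0$, then $\xi_k=0$, so $\gamma_k\equiv 0$ and $v_k=0\in \ic u_k\mathbb{R}$.
\end{itemize}

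\emph{Inclusion $\supset$.} Let $v\in H^s$ with $v_k\in \ic u_k\mathbb{R}$ for every $k$. Write $v_k=\ic\theta_k u_k$ with $\theta_k\in\mathbb{R}$, choosing $\theta_k:=0$ when $u_k=0$. Then $|\theta_k u_k|=|v_k|$, so $(\theta_k u_k)_k\in h^s$. Define
$$
\gamma(t)_k := e^{\ic t\theta_k}u_k, \qquad g(t)_k:=\ic\theta_k e^{\ic t\theta_k}u_k .
$$
Clearly $\gamma(t)\in\mathbf{T}_\xi$, $\gamma(0)=u$, and $g(t)\in H^s$ with $\|g(t)\|_{H^s}=\|v\|_{H^s}$ and $g(0)=v$. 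The only genuine point is that $\gamma$ is $C^1$ in $H^s$ with $\gamma'=g$. This is not automatic, because the $\theta_k$ may be unbounded, so $\gamma$ is not the flow of a bounded operator.

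\emph{Differentiability.} For $h\neq 0$ we have
$$
\Big\|\frac{\gamma(t+h)-\gamma(t)}{h}-g(t)\Big\|_{H^s}^2=\sum_{k}\langle k\rangle^{2s}|u_k|^2\Big|\frac{e^{\ic h\theta_k}-1}{h}-\ic\theta_k\Big|^2 .
$$
Each summand tends to $0$ as $h\to0$. Since $|e^{\ic a}-1|\le|a|$, each summand is bounded by $4\langle k\rangle^{2s}|\theta_k u_k|^2=4\langle k\rangle^{2s}|v_k|^2$, which is summable. Dominated convergence for series then gives $\gamma'(t)=g(t)$.

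\emph{Continuity of $g$.} Similarly,
$$
\|g(t)-g(t')\|_{H^s}^2=\sum_k\langle k\rangle^{2s}|v_k|^2\,|e^{\ic t\theta_k}-e^{\ic t'\theta_k}|^2 ,
$$
whose summands tend to $0$ as $t'\to t$ and are dominated by $4\langle k\rangle^{2s}|v_k|^2$. Hence $g$ is continuous and $\gamma\in C^1([0,1];\mathbf{T}_\xi)$ with $\gamma'(0)=v$.

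The main (mild) obstacle is this regularity step with unbounded $\theta_k$; dominated convergence, using only $v\in H^s$, handles it.
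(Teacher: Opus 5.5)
Your proof is correct and follows the paper's route: it proves the two inclusions separately and uses the same explicit curve $t\mapsto (e^{\ic t\theta_k}u_k)_k$ for the reverse inclusion. The differences are only technical. For $\subset$ you differentiate $|\gamma_k|^2=\xi_k$ instead of lifting $\gamma_k$ to a $C^1$ phase, which also handles the sites with $u_k=0$, where the paper's lift is not available. For $\supset$ you check $\gamma'=g$ directly by dominated convergence on difference quotients, whereas the paper uses Fourier truncations that converge uniformly together with their derivatives.
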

\begin{proof}
\emph{ $\bullet$ Step $1$: "$\subset$".} Let $\gamma \in  C^1([0,1]; \mathbf{T}_\xi)$. For all $k\in \mathbb{Z}$, we have that for all $t\in [0,1]$, $|\gamma_k(t)| = \sqrt{\xi}_k$. Thus there exists $\theta_k \in C^1([0,1]; \mathbb{R})$ such that $\gamma_k = e^{\ic \theta_k}$. Thus, by continuity of $w\mapsto w_k$ in $H^s$, we have
$$
(\gamma'(0))_k = (\gamma_k)'(0) = \ic \theta_k'(0) e^{\ic \theta_k(0)} = \ic \theta_k'(0) u_k(0) \in \ic u_k \mathbb{R}.
$$ 
\emph{ $\bullet$ Step $2$: "$\supset$".} Let $v\in H^s$ such that  for all $k\in \mathbb{Z}$ we have  $v_k \in \ic u_k \mathbb{R}$. Let $\gamma : \mathbb{R} \to H^s$ be defined by 
$$
\forall t\in \mathbb{R},\forall k\in \mathbb{Z}, \  \gamma_k(t) = u_k e^{\ic t c_k} \quad \mathrm{where} \quad \ic c_k = \left\{ \begin{array}{lll} v_k u_k^{-1}& \mathrm{if} \ u_k \neq 0 \\
0 &\mathrm{else}.\end{array} \right.
$$
Note that by assumption $c\in \mathbb{R}^{\mathbb{Z}}$. Then we define $d: \mathbb{R} \to H^s$ by
$$
\forall t\in \mathbb{R},\forall k\in \mathbb{Z}, \  d_k(t) = v_k e^{\ic t c_k} .
$$
The continuity of $d$ is a consequence of the dominated convergence theorem. 
So we just have to prove that $\gamma' = d$. We note that for all $N>0$
$$
\gamma_{\leq N} = \sum_{|k|\leq N} \gamma_k e^{\ic k \cdot} \in C^1(\mathbb{R};H^s) \quad \mathrm{and} \quad  (\gamma_{\leq N})' = \sum_{|k|\leq N} d_k e^{\ic k \cdot} 
$$
converges uniformly to $\gamma$ and $d$ respectively as $N$ goes to $+\infty$. Thus, to conclude, it suffices to pass to the limit, in the formula
$$
\gamma_{\leq N}(t) = \gamma_{\leq N}(0) + \int_0^t  (\gamma_{\leq N}(\mathfrak{t}))' \mathrm{d}\mathfrak{t}.
$$
\end{proof}
\begin{corollary} \label{cor:co-iso} We have that 
$$
 (\mathrm{T}_u \mathbf{T}_\xi )^{\perp_{\mathrm{symp}}} = {\{ w\in H^s \ | \ \forall k\in \mathrm{supp}(\xi), \quad w_k \in \ic u_k \mathbb{R} \}.}
$$
In particular,  $\mathrm{T}_u \mathbf{T}_\xi $ is \emph{isotropic} and if $\mathrm{supp}\, \xi = \mathbb{Z}$ then it is \emph{Lagrangian}.
\end{corollary}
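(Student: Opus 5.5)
The plan is to compute the symplectic orthogonal coordinatewise, using the explicit description of $\mathrm{T}_u \mathbf{T}_\xi$ from Lemma \ref{lem:geo_expli}. Recall that $\varpi_0(v,w) = (\ic v,w)_{\ell^2} = \Re \sum_k \ic v_k \overline{w_k}$.

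\emph{Step 1: the inclusion ``$\supset$''.} Let $w\in H^s$ with $w_k \in \ic u_k \mathbb{R}$ for every $k\in \mathrm{supp}\,\xi$, and let $v\in \mathrm{T}_u\mathbf{T}_\xi$. The sum defining $\varpi_0(v,w)$ is treated index by index.
\begin{itemize}
\item If $k\notin \mathrm{supp}\,\xi$, then $u_k=0$, hence $v_k\in \ic u_k\mathbb{R}=\{0\}$ and the $k$-th term vanishes.
\item If $k\in\mathrm{supp}\,\xi$, write $v_k=\ic a u_k$ and $w_k = \ic b u_k$ with $a,b\in\mathbb{R}$. Then $\ic v_k\overline{w_k} = \ic\cdot \ic a u_k \cdot(-\ic b \overline{u_k}) = \ic ab|u_k|^2$, which is purely imaginary, so its real part is $0$.
\end{itemize}
The series converges absolutely, so $\varpi_0(v,w)=0$.

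\emph{Step 2: the inclusion ``$\subset$''.} Let $w$ be in the orthogonal and fix $k\in\mathrm{supp}\,\xi$. Test against $v = \ic u_k e_k$, where $e_k$ denotes the $k$-th Fourier mode. This $v$ belongs to $\mathrm{T}_u\mathbf{T}_\xi$ by Lemma \ref{lem:geo_expli}, and it lies in $H^s$ since it has a single nonzero coefficient. We get
$$
0=\varpi_0(v,w)=\Re(\ic\cdot \ic u_k\overline{w_k}) = -\Re(u_k\overline{w_k}).
$$
Since $u_k\neq 0$, this says exactly that $w_k\in \ic u_k\mathbb{R}$.

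\emph{Step 3: consequences.}
\begin{itemize}
\item \emph{Isotropy.} Any $v\in\mathrm{T}_u\mathbf{T}_\xi$ satisfies $v_k\in \ic u_k\mathbb{R}$ for all $k$, in particular for $k\in\mathrm{supp}\,\xi$. So $\mathrm{T}_u\mathbf{T}_\xi\subset(\mathrm{T}_u\mathbf{T}_\xi)^{\perp_{\mathrm{symp}}}$.
\item \emph{Lagrangian property.} If $\mathrm{supp}\,\xi=\mathbb{Z}$, the condition defining the orthogonal is imposed at every $k$. It then coincides with the description of $\mathrm{T}_u\mathbf{T}_\xi$ in Lemma \ref{lem:geo_expli}, which gives equality.
\end{itemize}

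There is no real obstacle here. The only points needing care are the absolute convergence of the pairing, which holds because both vectors lie in $\ell^2$, and checking that the single-mode test vectors in Step 2 are genuine tangent vectors, which follows directly from Lemma \ref{lem:geo_expli}.
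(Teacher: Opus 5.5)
Your proof is correct and uses the same coordinatewise computation as the paper. The paper compresses your Steps 1--2 into the single observation that $w\in(\mathrm{T}_u \mathbf{T}_\xi)^{\perp_{\mathrm{symp}}}$ amounts to $w_k\overline{u_k}\in \ic\mathbb{R}$ for every $k\in\mathrm{supp}\,\xi$; your version spells out the single-mode test vectors, the term-by-term vanishing, and the deduction of isotropy and the Lagrangian property, all of which the paper leaves implicit.
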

\begin{proof}
	$w\in (\mathrm{T}_u \mathbf{T}_\xi )^{\perp_{\mathrm{symp}}}$ may be rephrased as $  w_k  \overline{ u_k}\in \ic \R$ for all $k\in \mathrm{supp}(\xi)$. Thus  (multiplying both sides by $u_k\neq 0$) $w\in (\mathrm{T}_u \mathbf{T}_\xi )^{\perp_{\mathrm{symp}}}$ if and only if $w_k \in \ic \R u_k$ for all
	$k\in \mathrm{supp}(\xi)$. 
\end{proof}
Finally, we note that as expected, tangent cones are transported by derivatives of symplectic deformation.
\begin{lemma} \label{lem:deform_cone} Given a weak symplectic space $E$, $X,Y\subset E$, a symplectic deformation $\tau : X\to Y$ and a derivative $L : X \to \mathscr{L}(E)$ of $\tau$, we have that for all $x\in X$,
$$
\mathrm{T}_{\tau(x)} Y = L(x) \mathrm{T}_x X.
$$
\end{lemma}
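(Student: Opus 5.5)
The plan is to prove the inclusion $L(x)\mathrm{T}_x X \subset \mathrm{T}_{\tau(x)} Y$ for an arbitrary symplectic deformation. The reverse inclusion then follows by applying the same inclusion to $\tau^{-1}$, which is a symplectic deformation with derivative $(L\circ\tau^{-1})^{-1}$ by Lemma \ref{lem:inverse_symp_def}. Indeed, that inclusion gives $L(x)^{-1}\mathrm{T}_{\tau(x)}Y \subset \mathrm{T}_x X$, and applying the invertible map $L(x)$ yields $\mathrm{T}_{\tau(x)} Y \subset L(x)\mathrm{T}_x X$.

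For the direct inclusion, take $\gamma\in C^1([0,1];X)$ with $\gamma(0)=x$, and set $\tilde\gamma := \tau\circ\gamma : [0,1]\to Y$. This map is continuous because $\tau$ is a homeomorphism. We first show that $\tilde\gamma$ is differentiable at every $t\in[0,1]$ with $\tilde\gamma'(t) = L(\gamma(t))\gamma'(t)$. Write
$$\tilde\gamma(t+h)-\tilde\gamma(t) = L(\gamma(t))\bigl(\gamma(t+h)-\gamma(t)\bigr) + R_t(h).$$
Property \eqref{eq:lim_sup}, applied at the point $\gamma(t)$, gives $\|R_t(h)\|_E = o\bigl(\|\gamma(t+h)-\gamma(t)\|_E\bigr)$ whenever $\gamma(t+h)\neq\gamma(t)$; when $\gamma(t+h)=\gamma(t)$ we simply have $R_t(h)=0$. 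Since $\gamma$ is $C^1$, $\|\gamma(t+h)-\gamma(t)\|_E\le C|h|$, so $R_t(h)=o(|h|)$. Dividing by $h$ and letting $h\to0$ gives the claimed derivative. Continuity of $t\mapsto L(\gamma(t))\gamma'(t)$ follows from the continuity of $L$ in operator norm along $\gamma$, the continuity of $\gamma'$, and the joint continuity of the evaluation $\mathscr{L}(E)\times E\to E$. Hence $\tilde\gamma\in C^1([0,1];Y)$, and $\tilde\gamma'(0)=L(x)\gamma'(0)$, which proves $L(x)\mathrm{T}_xX\subset \mathrm{T}_{\tau(x)}Y$.

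The only delicate point is the uniformity hidden in \eqref{eq:lim_sup} at points $\gamma(t)$ with $t>0$. The limsup is taken only over $x'\in X$ with $x'\neq x$, but this set contains all points $\gamma(t+h)$ that differ from $\gamma(t)$, and the remaining points are handled trivially as above, so the argument goes through pointwise in $t$. Note also that the continuity of $L$ is genuinely needed to obtain $C^1$ regularity rather than mere differentiability. Symplecticity of $L(x)$ plays no role in this lemma; only its invertibility is used, through Lemma \ref{lem:inverse_symp_def}.
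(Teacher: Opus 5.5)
Your proof is correct and follows the paper's intended route. The paper just says the lemma is a direct corollary of Lemma \ref{lem:inverse_symp_def}. You supply the implicit details: the chain rule along $C^1$ curves gives $L(x)\mathrm{T}_xX\subset\mathrm{T}_{\tau(x)}Y$, and applying this to $\tau^{-1}$ with derivative $(L\circ\tau^{-1})^{-1}$ gives the reverse inclusion.
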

\begin{proof} This is a direct corollary of Lemma \ref{lem:inverse_symp_def}.

\end{proof}

\subsection{Number of normal sites}\label{num_normal}

Finally, we state the main result of this subsection. 
\begin{proposition}
\label{prop:geom_key}  Let $\xi \in \ell^1_{2s}(\mathbb{Z};\mathbb{R}_+)$ and $\mathcal{T}\subset H^s$ be a symplectic deformation of $\mathbf{T}_\xi$. Then, for all $u\in \mathcal{T}$, the tangent cone $\mathrm{T}_u \mathcal{T}$ is a isotropic real vector space and we have
\begin{equation}
\label{eq:geom}
\# (\mathbb{Z} \setminus \mathrm{supp} \, \xi) = \frac12 \mathrm{dim}_{\mathbb{R}} \quotientthree{(\mathrm{T}_u \mathcal{T})^{\perp_{\mathrm{symp}}}\! \! }{\mathrm{T}_u \mathcal{T}}.
\end{equation}
In particular, if $\mathrm{supp} \, \xi = \mathbb{Z}$ then $\mathrm{T}_u \mathcal{T}$ is a Lagrangian real vector space.
\end{proposition}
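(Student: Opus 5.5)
Let $\tau:\mathbf{T}_\xi\to\mathcal{T}$ be a symplectic deformation with derivative $L$, and fix $u\in\mathcal{T}$. Set $x:=\tau^{-1}(u)\in\mathbf{T}_\xi$ and $A:=L(x)$. The idea is to transport everything from the straight torus, where the computation is explicit, through the single linear symplectic isomorphism $A$.

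First, Lemma \ref{lem:deform_cone} gives $\mathrm{T}_u\mathcal{T}=A\,\mathrm{T}_x\mathbf{T}_\xi$. By Lemma \ref{lem:geo_expli}, $V:=\mathrm{T}_x\mathbf{T}_\xi=\{v\in H^s \mid v_k\in \ic x_k\mathbb{R}\ \forall k\}$ is a real vector subspace, so its image under the linear map $A$ is one as well. Lemma \ref{lem:symp_perp} then gives $(AV)^{\perp_{\mathrm{symp}}}=A(V^{\perp_{\mathrm{symp}}})$. Corollary \ref{cor:co-iso} says $V\subset V^{\perp_{\mathrm{symp}}}$, and applying $A$ yields isotropy of $\mathrm{T}_u\mathcal{T}$. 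Since $A$ is invertible, it induces a linear isomorphism of the quotients
$$
\quotientthree{V^{\perp_{\mathrm{symp}}}\!\!}{V}\ \simeq\ \quotientthree{(\mathrm{T}_u\mathcal{T})^{\perp_{\mathrm{symp}}}\!\!}{\mathrm{T}_u\mathcal{T}},
$$
so \eqref{eq:geom} reduces to the case of the straight torus at $x$.

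It remains to compute $V^{\perp_{\mathrm{symp}}}/V$. Let $N:=\mathbb{Z}\setminus\mathrm{supp}\,\xi$; for $k\in N$ we have $x_k=0$. Corollary \ref{cor:co-iso} gives $V^{\perp_{\mathrm{symp}}}=\{w\in H^s \mid w_k\in \ic x_k\mathbb{R},\ k\in\mathrm{supp}\,\xi\}$, with no constraint on the coordinates in $N$. Meanwhile $V$ is the subspace of $V^{\perp_{\mathrm{symp}}}$ whose coordinates in $N$ vanish. Consider the map $w\mapsto(w_k)_{k\in N}$. Its kernel restricted to $V^{\perp_{\mathrm{symp}}}$ is exactly $V$. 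Its image is $\{(w_k)_{k\in N}\}$ ranging over the $N$-components of $H^s$, which equals $\mathbb{C}^N$ if $N$ is finite and an infinite-dimensional space otherwise. Surjectivity holds because any $H^s$ sequence supported on $N$ lies in $V^{\perp_{\mathrm{symp}}}$. Hence the quotient has real dimension $2\#N$, with both sides equal to $+\infty$ when $N$ is infinite, since $H^s$ contains infinitely many independent vectors supported on $N$.

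Finally, if $\mathrm{supp}\,\xi=\mathbb{Z}$, then $N=\emptyset$ and the quotient is trivial, so $(\mathrm{T}_u\mathcal{T})^{\perp_{\mathrm{symp}}}=\mathrm{T}_u\mathcal{T}$, which is the Lagrangian property. The only delicate point is Lemma \ref{lem:deform_cone}, which is used as given; one should also note that the deformation is applied to the point $x=\tau^{-1}(u)$, which is legitimate because $\tau$ is a bijection onto $\mathcal{T}$. The rest is linear algebra.
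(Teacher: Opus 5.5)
Your proof is correct and follows the paper's argument. You transport the tangent cone and its symplectic orthogonal from $\mathbf{T}_\xi$ through the invertible symplectic map $L(\tau^{-1}(u))$, via Lemmas \ref{lem:deform_cone} and \ref{lem:symp_perp} and Corollary \ref{cor:co-iso}, and then identify the quotient on the straight torus with the $H^s$ sequences supported on $\mathbb{Z}\setminus\mathrm{supp}\,\xi$. Your coordinate-projection argument for that last step, including the infinite case, simply spells out what the paper asserts in one line.
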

Since the right-hand side of \eqref{eq:geom} depends only on $\mathcal{T}$ (and not on $\xi$ and the symplectic deformation), this shows that it makes sense to speak about the \emph{number of normal sites} of $\mathcal{T}$. The second property justifies calling the torus \emph{full dimensional} or \emph{Lagrangian} when its number of normal sites vanishes.

\begin{proof}[Proof of Proposition \ref{prop:geom_key}] Let $\tau : \mathbf{T}_\xi \to  \mathcal{T}$ be a symplectic deformation of derivative $L$. Let $u\in \mathcal{T}$ and set  $v= \tau^{-1}(u)$. By Lemma \ref{lem:deform_cone}, we have that
$$
\mathrm{T}_u \mathcal{T} = L(v) \mathrm{T}_v \mathbf{T}_\xi.
$$
Then, since $L(v)$ is symplectic and invertible, we have by Lemma \ref{lem:symp_perp} that
$$
(\mathrm{T}_u \mathcal{T})^{\perp_{\mathrm{symp}}} = L(v)  (\mathrm{T}_v \mathbf{T}_\xi)^{\perp_\mathrm{symp}}.
$$
Thus, we get by Corollary \ref{cor:co-iso} that $\mathrm{T}_u \mathcal{T}$ is isotropic and, since $L(v)$ is invertible, that
$$
\mathrm{dim}_{\mathbb{R}} \quotientthree{(\mathrm{T}_u \mathcal{T})^{\perp_{\mathrm{symp}}}\! \! }{\mathrm{T}_u \mathcal{T}} = \mathrm{dim}_{\mathbb{R}} L(v) \quotientthree{(\mathrm{T}_v \mathbf{T}_\xi)^{\perp_{\mathrm{symp}}}\! \! }{\mathrm{T}_v  \mathbf{T}_\xi} =  \mathrm{dim}_{\mathbb{R}} \quotientthree{(\mathrm{T}_v \mathbf{T}_\xi)^{\perp_{\mathrm{symp}}}\! \! }{\mathrm{T}_v  \mathbf{T}_\xi}.
$$
Finally, it suffices to note by Lemma \ref{lem:geo_expli}  that
\begin{center}
$\quotientthree{(\mathrm{T}_v \mathbf{T}_\xi)^{\perp_{\mathrm{symp}}}\! \! }{\mathrm{T}_v  \mathbf{T}_\xi}$ is isomorphic to $\{u\in H^s \ | \ \forall k\in \mathrm{supp} \, \xi, \, u_k=0 \}$.
\end{center}

\end{proof}

\section{Analytic Hamiltonians in partial action-angle variables}

\subsection{A first standard class}

In this paper, the Hamiltonians will always be given by formal series written in partial action angle variables whose coefficients satisfy some natural symmetry conditions. We start by introducing a first basic class of formal series to impose these symmetry conditions. Then, we introduce some basic projections. Finally, we prove vector fields estimates and flow estimates for these Hamiltonians. 

\subsubsection{Definition of the class.}
 We start by introducing a first basic space of Hamiltonians. Later, we will introduce some refined spaces of Hamiltonians but they will always be subspaces of this one. In the definition just below, the variable $y$  should be thought as an action variable whereas the variable $z$ will correspond to  exponentials of angles (see Definition \ref{def:z,y} below for a precise definition).
\begin{definition}[Formal series without parameters $ \mathscr{F}_{\eta,r,p}^{\cste}$] \label{def:class_F} Let $p\geq 0$,  $r>0$ and $\eta \geq 0$. We denote by $\mathscr{F}_{\eta,r,p}^{\cste}$, the set of the formal series $P$, of the form
$$
P(y,z,v) :=  \sum_{ \boldsymbol{k} \in \mathbb{Z}^{\mathcal{S}_p}  }\sum_{ \boldsymbol{m} \in \mathbb{N}^{\mathcal{S}_p}  } \sum_{q\in \mathbb{N} }    \sum_{ \boldsymbol{\ell} \in (\mathcal{S}_p^c)^{q} }  \sum_{ \boldsymbol{\sigma} \in  \{-1,1\}^{q}  } P_{\boldsymbol{k},\boldsymbol{m}}^{\boldsymbol{\ell},\boldsymbol{\sigma}}  \,   \underbrace{ \prod_{i=1}^q v_{\boldsymbol{\ell}_i}^{\boldsymbol{\sigma}_i} }_{=:v_{\boldsymbol{\ell}}^{\boldsymbol{\sigma}} } \, \underbrace{ \prod_{j\in \mathcal{S}} y^{\boldsymbol{m}_j}_j z^{\boldsymbol{k}_j}_j  }_{=: y^{\boldsymbol{m}} \, z^{ \boldsymbol{k} }} \, 
$$
with the usual convention $ v_{\ell}^{-1} :=  \overline{v_{\ell}}$ and whose coefficients $ P_{\boldsymbol{k},\boldsymbol{m}}^{\boldsymbol{\ell},\boldsymbol{\sigma}} \in \mathbb{C}$ satisfy the relations
\begin{itemize}
\item reality : $P_{-\boldsymbol{k},\boldsymbol{m}}^{\boldsymbol{\ell},-\boldsymbol{\sigma}} = \overline{ P_{\boldsymbol{k},\boldsymbol{m}}^{\boldsymbol{\ell},\boldsymbol{\sigma}} }$,
\item symmetry : $P_{\boldsymbol{k},\boldsymbol{m}}^{\varphi \boldsymbol{\ell},\varphi \boldsymbol{\sigma}} = P_{\boldsymbol{k},\boldsymbol{m}}^{\boldsymbol{\ell},\boldsymbol{\sigma}} $ for all $\varphi \in \mathfrak{S}_q$,
\item preservation of the mass : if $P_{\boldsymbol{k},\boldsymbol{m}}^{\boldsymbol{\ell},\boldsymbol{\sigma}} \neq 0$ then
$
\displaystyle \sum_{j\in \mathcal{S}_p} \boldsymbol{k}_j + \sum_{1\leq i \leq q} \boldsymbol{\sigma}_i  =0,
$
\item zero momentum :  if $P_{\boldsymbol{k},\boldsymbol{m}}^{\boldsymbol{\ell},\boldsymbol{\sigma}} \neq 0$ then
$
\displaystyle \sum_{j\in \mathcal{S}_p} j\boldsymbol{k}_j + \sum_{1\leq i \leq q} \boldsymbol{\sigma}_i \boldsymbol{\ell}_i=0,
$
\end{itemize}
and the bound
\begin{equation}
\label{eq:def_norm_F}
\| P \|_{\mathscr{F}_{\eta,r,p}^{\cste}}:= \! \! \! \! \! \!   \!\!\!\!\sum_{\substack{({\boldsymbol{k}},{\boldsymbol{m}})\in \mathbb{Z}^{\mathcal{S}_p} \times \mathbb{N}^{\mathcal{S}_p}\\ q\in \mathbb{N}}} \!\!\! \! \! \! \! \!  \max(1,q ,\|\bk\|_{\ell^1} , \|\bm\|_{\ell^{\infty}})  r^{q+2\|\bm\|_{\ell^1}} e^{\eta(\|\bk\|_{\ell^1} + q+2\|\bm\|_{\ell^1})} \!\! \!\!\! \sup_{\substack{  {\boldsymbol{\ell}}\in (\mathcal S_p^c)^{q}\\  \boldsymbol{\sigma}\in\{-1,1\}^{q}}}  \! \! \! \! \! \! |P^{\boldsymbol{\ell},\boldsymbol{\sigma}}_{{\boldsymbol{k}},{\boldsymbol{m}}}|<\infty\,.
\end{equation}
\end{definition}
We are going to prove that these formal series define functions on the following annuli.
\begin{definition}[Annuli] Given $\xi \in (\mathbb{R}_+)^{\mathbb{Z}}$ and $\rho>0$, we set
$$
\mathcal{A}_\xi(\rho) = \Big\{ u\in H^s \ | \ \sum_{k \in \mathbb{Z}}  \big| |u_k|^2 - \xi_k \big| \langle k \rangle^{2s} \, \leq \rho^2 \Big\}.
$$
\end{definition}
Then, we define the associated action-angle variables.
\begin{definition}[Functions $y,\theta,z$] \label{def:z,y} Given $u \in L^2(\mathbb{T})$, $\xi \in \ell^1(\mathbb{Z};\mathbb{R})$, $k\in \mathbb{Z}$ we define
\begin{itemize}
\item the re-centered action $y_k(\xi;u) = |u_k|^2 - \xi_k$,
\item the angle $\theta_k(u) \in \mathbb{T}$ as an angle satisfying $|u_k| e^{i \theta_k(u)}= u_k$,
\item $z_k(u) \in \mathbb{C}$, is defined by $z_k(u) =e^{i\theta_k}$.
\end{itemize}
\end{definition}
 \begin{remark}
 	Of course if $u_k=0$ the choice of $\theta_k(u)$ is arbitrary. Otherwise, if we fix $\xi\in \ell^1(\mathbb Z;\mathbb R_+)$ with  support in the finite set $\mathcal S_p$, then  for $\rho< \min_{j\in \mathcal S_p} \sqrt{\xi_j}$,  the map\footnote{We are denoting by $ B_{r}(\mathbb R^\mathcal S_p)$ the ball centered at zero in $\mathbb R^{\mathcal S_p}$ endowed with the $\ell^1$ topology. }
 	$$\Psi_\xi:\mathcal A_\xi(\rho)\to B_{\rho^2}(\mathbb R^\mathcal S_p)\times \mathbb T^{\mathcal S_p}\times B_\rho(h^s(\mathcal S^c_p,\mathbb C))\,:\quad u  \mapsto \big((y_k(\xi;u), \theta_k(\xi;u))_{k\in \mathcal S},(u_j)_{j\in\mathcal S^c} \big)$$ is analytic with inverse $\mathcal A_\xi(\rho)\to B_{\rho^2}(\mathbb R^\mathcal S_p)\times \mathbb T^{\mathcal S_p}\times B_\rho(h^s(\mathcal S^c_p,\mathbb C))\to \mathcal A_\xi(2\mathtt j_{p}^{s}\rho)$, where $\mathtt j_{p}:=\max(\langle j \rangle )_{j\in \mathcal S_p}$. 
 	 \end{remark}

Now, we prove that these formal series define functions on these annuli.  

\begin{lemma}[Formal series define functions] \label{lem:ouf_cest_des_fonctions} Let $p\geq 0$, $\xi \in \mathcal{O}_p^{\mathrm{full}} $, $r \leq 2 r_p$, $\eta \geq \eta_{\min}$, $P\in  \mathscr{F}_{\eta,r,p}^{\cste}$. Then 
\begin{equation}
\label{identi}
P[\xi] := u\mapsto P(y(\xi;u),z(u),u )=:P(\xi;u)
\end{equation}
 defines a smooth function from $\mathcal{A}_\xi(3r) $ to $\mathbb{R}$ and enjoys the estimate
$$
\sup_{u\in \mathcal{A}_\xi(3r)}|P(\xi;u)| \lesssim_p \| P\|_{\mathscr{F}_{\eta,r,p}^{\cste}}\,,\qquad \sup_{u\in \mathcal{A}_\xi(3r)} \| \nabla P(\xi;u) \|_{H^s} \lesssim_p  \frac{r_0}{r^2 e^{2\eta}}\| P\|_{\mathscr{F}_{\eta,r,p}^{\cste}}
$$
\end{lemma}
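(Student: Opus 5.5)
We bound the formal series termwise on $\mathcal{A}_\xi(3r)$: show absolute convergence of the series of monomials, with uniform convergence of the series of derivatives, and then sum. Smoothness follows because each monomial is smooth on the annulus, since $u_j\neq 0$ for $j\in\mathcal{S}_p$ there. Reality of the sum follows from the reality condition on the coefficients.

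\textbf{Step 1: pointwise bounds on the variables.} Fix $u\in\mathcal{A}_\xi(3r)$. For $j\in\mathcal{S}_p$ we have $|y_j|\le 9r^2$, since $\langle j\rangle\ge 1$. We also have $\xi_j\ge r_{p'}^{2(1-\delta)}/2\gg r_p^2$ for the relevant $p'\le p$ (provided the $r_p$ decay fast enough). Hence $|u_j|^2\ge \xi_j-9r^2>0$, so $z_j=u_j/|u_j|$ is smooth with $|z_j|=1$. Then $|y^{\bm}z^{\bk}|\le (9r^2)^{\|\bm\|_{\ell^1}}$. For the normal variables, $\xi_\ell=0$ when $\ell\notin\mathcal{S}_p$, so $\sum_{\ell\in\mathcal{S}_p^c}|u_\ell|^2\langle\ell\rangle^{2s}\le 9r^2$. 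In particular $|u_\ell|\le 3r$, and
\[
|v^{\bs}_{\bl}|\le (3r)^q .
\]

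\textbf{Step 2: sup bound.} We estimate
\[
|P(\xi;u)|\le \sum 9^{\|\bm\|_{\ell^1}} 3^q r^{q+2\|\bm\|_{\ell^1}}\sum_{\bl,\bs}|P^{\bl,\bs}_{\bk,\bm}|.
\]
The naive difficulty is that the inner sum runs over infinitely many $\bl$, while the norm only controls a supremum. The remedy is to not bound monomials separately: at fixed $(\bk,\bm,q)$, bound the homogeneous polynomial $\sum_{\bl,\bs}P^{\bl,\bs}_{\bk,\bm}v^{\bs}_{\bl}$ by $\sup|P|\,(\sum_\ell|u_\ell|)^q\,2^q$. Cauchy--Schwarz with $s\ge 3$ gives
\[
\sum_{\ell}|u_\ell|\lesssim \Big(\sum_\ell \langle\ell\rangle^{2s}|u_\ell|^2\Big)^{1/2}\le C r ,
\]
so the block is at most $(2Cr)^q\sup|P|$. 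The factors $9^{\|\bm\|_{\ell^1}}(2C)^q$ are absorbed by $e^{\eta(q+2\|\bm\|_{\ell^1})}$ since $\eta\ge\eta_{\min}=10$, giving $|P|\lesssim\|P\|_{\mathscr{F}}$.

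\textbf{Step 3: gradient bound.} By \eqref{eq:formula_grad} we compute $\partial_{\overline{u_k}}$ of each term.

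For $k\in\mathcal{S}_p^c$, the derivative of the $q$-linear block is a $(q-1)$-linear form in $u$ paired with a mode $k$. Its $H^s$ norm is controlled by the zero momentum condition: it forces $k$ to be determined by the other indices, and $\langle k\rangle^{s}\lesssim q^s\max_i\langle\bl_i\rangle^s$ (the $q^s$ being at most $e^{q}$, and the $\bk$-contribution at most $\mathtt j_p^s\|\bk\|_{\ell^1}$), so that a convolution-type (algebra) estimate in $h^s$ applies. This loses a factor $q\lesssim e^{q}$ and one power of $r$ relative to Step 2, so the bound is of order $r^{-1}$.

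For $k\in\mathcal{S}_p$ the derivatives hit $y_k$ and $z_k$:
\[
\partial_{\overline{u_k}}y_k=u_k,\qquad \partial_{\overline{u_k}}z_k^{\bk_k}=O\!\big(|\bk_k|/|u_k|\big).
\]
Since $|u_k|\le\sqrt{2\xi_k}\lesssim 1$ and $|u_k|^{-1}\lesssim r_{p}^{-(1-\delta)}$, the losses are $\|\bm\|_{\ell^\infty}/r^2$ (from $y$) and $\|\bk\|_{\ell^1}\, r_p^{-1}$ (from $z$), times the finite weight $\langle k\rangle^s\le\mathtt j_p^s$. This $\mathtt j_p^s$ factor, together with summing over the finitely many $k\in\mathcal{S}_p$, is where the dependence of the implicit constant on $p$ enters. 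The extra factors $\max(q,\|\bk\|_{\ell^1},\|\bm\|_{\ell^\infty})$ are exactly those included in the norm \eqref{eq:def_norm_F}.

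Comparing with $r\le 2r_p$, the worst loss is $r^{-2}$. The spare exponential weight yields the $e^{-2\eta}$ factor after trading $e^{\eta}$ against constants with a slightly reduced $\eta$ (using $\eta\ge10$). This produces the claimed factor $r_0/(r^2e^{2\eta})$ up to $\lesssim_p$.

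\textbf{Main obstacle.} I expect the main obstacle to be the $H^s$ estimate on the normal-variable gradient. This is the step where zero momentum and the algebra property of $h^s$ must be combined to handle the infinite sum over $\bl$ with only sup-control of the coefficients. It is also where the weight $\langle k\rangle^s$ must be distributed onto the largest index at a cost of at most $q^s$, which is then absorbed in $e^{\eta q}$.
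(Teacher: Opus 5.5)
Your proposal is correct and follows essentially the paper's route. The sum over $\bl$ is controlled through $\ell^1\lesssim H^s$; the paper writes this as a $q$-fold convolution of $v+\bar v$, which is the same estimate. The normal gradient components are handled by the zero-momentum shift $\langle \sum_j j\bk_j-\ell\rangle^s/\langle\ell\rangle^s\lesssim e^{\alpha\|\bk\|_{\ell^1}}$ together with a Jensen/Young algebra estimate. The tangential ones come from $\partial_{\overline{u_\ell}}=-\frac{1}{2\overline{u_\ell}}z_\ell\partial_{z_\ell}+u_\ell\partial_{y_\ell}$ with $|u_\ell|\geq\sqrt{\xi_\ell}/2$.

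The only loose end is the final constant $r_0/(r^2e^{2\eta})$. To produce the factor $r_0$, do not relax $|u_k|\le\sqrt{2\xi_k}$ to $\lesssim 1$. Keep $|u_k|\lesssim r_1^{1-\delta}\lesssim r_0$ and $r^2/|u_k|\lesssim r_p^{1+\delta}\le r_0$. The factor $e^{-2\eta}$ (at the same $\eta$, not a reduced one) comes from mass conservation. It forces $\|\bk\|_{\ell^1}+q+2\|\bm\|_{\ell^1}\ge 2$ for every monomial with nonzero gradient, so each such weight carries a spare $e^{2\eta}$ beyond the $r^{-2}$ (respectively $r^{-1}$) loss.
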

\begin{proof}[Proof of Lemma \ref{lem:ouf_cest_des_fonctions}]
%	\mic{Maybe we can also remove the convolution, what I have in mind is this:
%		We have a basis on $H^s$, which I denote $\mathbf{e}_j$. Then I have the usual convolution
%		\[
%		\mathbf{e}_{i}\ast \mathbf{e}_j= \mathbf{e}_{i+j}
%		\]
%So, it seems to me that
%\[
%\sum_{ \substack{\boldsymbol{\ell},\bs \in (\mathcal{S}_p^c)^{q} \times  \{-1,1\}^{q} \\ \sum \bs_j \bl_j= k } } \prod_{i=1}^q v_{\boldsymbol{\ell}_i}^{\boldsymbol{\sigma}_i} =(\underbrace{\sum_{j,\sigma} v^\sigma_j \mathbf{e}_{\sigma j}  \ast \dots \ast  \sum_{j,\sigma} v^\sigma_j \mathbf{e}_{\sigma j} }_{q \, \mbox{\tiny times}})_k
%\]
%or if you prefer the notation $ v+\bar v= \sum_{j,\sigma} v^\sigma_j \mathbf{e}_{\sigma j} $ and 
%\[
%\sum_{ \substack{\boldsymbol{\ell},\bs \in (\mathcal{S}_p^c)^{q} \times  \{-1,1\}^{q} \\ \sum \bs_j \bl_j= k } } \prod_{i=1}^q v_{\boldsymbol{\ell}_i}^{\boldsymbol{\sigma}_i} =(\underbrace{(v+\bar v) \ast \dots \ast  (v+\bar v)}_{q \, \mbox{\tiny times}})_k \sim \int_{\mathbb T} (v(x)+\bar v(x))^q e^{-i  k x}
%\]
%	} 
%	\jo{Yes, it is true. Just a remark : when you denote $\overline{v}$ you are considering $v$ as a function but when you denote $\ast$ you are considering $v$ as a sequence... (but it is not a serious and we can let as it is.}
	%	 $v= \sum_j v_j \mathbb{e}_j$  and $w= \sum_j w_j \mathbb{e}_j$ 
	 For compactness we denote $D_\rho:= B_{\rho^2}(\mathbb R^{\mathcal S_p})\times \mathbb T^{{\mathcal S_p}}\times B_\rho(h^s(\mathcal S_p^c,\mathbb C))$ . 
	 We also denote $v= (v_j)_{j\in \mathbb Z}$, $\bar v= (\overline{v_{-j}})_{j\in \mathbb Z}$ so that
	 \[
	 \sum_{ \substack{(\boldsymbol{\ell},\bs) \in (\mathcal{S}_p^c)^{q} \times  \{-1,1\}^{q} \\ \sum \bs_j \bl_j= k } } \prod_{i=1}^q v_{\boldsymbol{\ell}_i}^{\boldsymbol{\sigma}_i} =(\underbrace{(v+\bar v) \ast \dots \ast  (v+\bar v)}_{q \, \mbox{\tiny times}})_k\,,
	 \]
	 where $u\ast w$ is the standard convolution.
	 \\
%	 For $v,w\in h^s(\mathcal S_p^c,\mathbb C)\subset H^s$, we set  for $k\in \mathbb Z$
%	 $$
%	 (v\star w)_k := \sum_{\substack{\sigma_1,\sigma_2=\pm , j_1,j_2\in {\mathcal S_p}^c\\ \sigma_1j_1+\sigma_2 j_2+k=0}}v_{j_1}^{\sigma_1} w_{j_2}^{\sigma_2}\,,%\quad \mbox{so that}\quad \|v\star w\|_{H^s} \le  \mathtt{C_{alg}}\|v\|_{H^s}\|w\|_{H^s}\,,
%	 $$
%	 so that, by the algebra property,  we have that $v\,\star\, w\in H^s$. 
%%	By construction one has 
%%	\[
%%	\sup_{u\in \mathcal{A}_\xi(3r)}|P(\xi;u)|  \le \sup_{D_{3r }}|{ P}| %\le  %\, ,\,  \sup_{D_{\eta, r e^{\eta}}}|\underline X_{P}|_E   \le \sup_{D_{\eta, r e^{\eta}}}|\underline X_{\hat P}|_E   
%%	\]
Setting
\[
\mathcal P_{\boldsymbol{k},\boldsymbol{m},q}:= \sup_{\substack{  {\boldsymbol{\ell}}\in (\mathcal S_p^c)^{q}\\  \boldsymbol{\sigma}\in\{-1,1\}^{q}}}  \! \! \! \! \! \! |P^{\boldsymbol{\ell},\boldsymbol{\sigma}}_{{\boldsymbol{k}},{\boldsymbol{m}}}|
\]
and noting that  
\begin{equation}
	\label{convq}
|\sum_{ \substack{\boldsymbol{\ell},\bs \in (\mathcal{S}_p^c)^{q} \times  \{-1,1\}^{q} \\ \sum \bs_j \bl_j= k } } \,    \prod_{i=1}^q v_{\boldsymbol{\ell}_i}^{\boldsymbol{\sigma}_i} | 	= |(\underbrace{(v+\bar v) \ast \dots \ast  (v+\bar v)}_{q \, \mbox{\tiny times}})_k |\lesssim  2^q\|v\|_{\ell^1}^q
%\|\underbrace{(v+\bar v) \ast \dots \ast  (v+\bar v)}_{q \, \mbox{\tiny times}} \|_{\ell^1} 
\lesssim  4^q  \|v\|_{H^s}^q,
\end{equation}
%	$
%	|(\underbrace{v\star \dots\star v}_{q \, \mbox{\tiny times}})_{\sum j k_j} |	\lesssim \|(\underbrace{v\star \dots\star v}_{q \, \mbox{\tiny times}})\|_{\ell^1} \lesssim(4 \|v\|_{\ell^1})^q\lesssim  2^q  \|v\|_{H^s}^q,
%	$ 
%	

	one obtains (provided that $\eta \ge \eta_{\min}$ is  sufficiently large)
\begin{equation}
		\label{stimaP}
	\begin{aligned}
		\sup_{u\in \mathcal{A}_\xi(3r)}|P(\xi;u)|  &\le \sup_{D_{3r }}|{ P}| \le  \sum_{\substack{({\boldsymbol{k}},{\boldsymbol{m}})\in \mathbb{Z}^{\mathcal{S}_p} \times \mathbb{N}^{\mathcal{S}_p}\\ q\in \mathbb{N}}} \mathcal P_{\boldsymbol{k},\boldsymbol{m},q} 	\sup_{D_{3r }} |y^{\boldsymbol{m}} z^{\boldsymbol{k}}
		|\sum_{ \substack{\boldsymbol{\ell},\bs \in (\mathcal{S}_p^c)^{q} \times  \{-1,1\}^{q} \\ \sum \bs_j \bl_j= -\sum  j \bk_j } } \,    \prod_{i=1}^q v_{\boldsymbol{\ell}_i}^{\boldsymbol{\sigma}_i}
		% (\underbrace{(v+\bar v) \ast \dots \ast  (v+\bar v)}_{q \, \mbox{\tiny times}})_{-\sum j k_j} 
		| \\ &\le  \sum_{\substack{({\boldsymbol{k}},{\boldsymbol{m}})\in \mathbb{Z}^{\mathcal{S}_p} \times \mathbb{N}^{\mathcal{S}_p}\\ q\in \mathbb{N}}}\mathcal P_{\boldsymbol{k},\boldsymbol{m},q}  (12 r)^{2\|\bm\|_{\ell^1}+q }  \lesssim  \| P\|_{\mathscr{F}_{\eta,r,p}^{\cste}}.
	\end{aligned}
\end{equation}
		Regarding the vector field, we note  that
		\begin{equation}
		\label{derivo}
	\partial_{\overline {u_\ell}} P = 
		\partial_{\overline{v_\ell}} P   \quad \mathrm{if} \ \ell\in \mathcal S_p^c \quad  \mathrm{and} \quad
		\partial_{\overline{u_\ell}} P  = -\frac{1}{2 \overline{ u_\ell}} (z_\ell \partial_{z_\ell}  P ) +  u_\ell  ( \partial_{y_\ell}  P) \quad \mathrm{else}.
	\end{equation}
We remark that 
	\[
	\|(\partial_{\overline{v_\ell}}\partial P )_{\ell\in \mathcal S_p^c}\|_{H^s} \le \sum_{\substack{({\boldsymbol{k}},{\boldsymbol{m}})\in \mathbb{Z}^{\mathcal{S}_p} \times \mathbb{N}^{\mathcal{S}_p}\\ q\in \mathbb{N}}}q  \mathcal P_{\bk,\bm,q} |y|^{\bm} |z|^{\bk}\big\|	\sum_{ \substack{\boldsymbol{\ell},\bs \in (\mathcal{S}_p^c)^{q} \times  \{-1,1\}^{q} \\ \sum \bs_j \bl_j= -\sum  j \bk_j +\ell } } \,    \prod_{i=1}^q v_{\boldsymbol{\ell}_i}^{\boldsymbol{\sigma}_i}\big\|_{H^s}\,.
	\]
Now, for any $\alpha>0$, one has
	\[
\begin{aligned}\label{jens}
		\| \sum_{ \substack{\boldsymbol{\ell},\bs \in (\mathcal{S}_p^c)^{q} \times  \{-1,1\}^{q} \\ \sum \bs_j \bl_j= -\sum  j \bk_j +\ell } } \,    \prod_{i=1}^q v_{\boldsymbol{\ell}_i}^{\boldsymbol{\sigma}_i} \|_{H^s} &\le \sup_{\ell}\frac{\langle\sum j \bk_j - \ell \rangle^s }{\langle\ell\rangle^s} \| \sum_{ \substack{\boldsymbol{\ell},\bs \in (\mathcal{S}_p^c)^{q} \times  \{-1,1\}^{q} \\ \sum \bs_j \bl_j= \ell } } \,    \prod_{i=1}^q v_{\boldsymbol{\ell}_i}^{\boldsymbol{\sigma}_i}\|_{H^s}\\ & \lesssim_{\alpha,p,s} 2^{q-1}e^{\alpha|\bk|_{\ell^1}} (q-1)^s (2 \|v\|_{H^s} )^{q-1}
\end{aligned}
	\]
	since 
	\[
	\sup_{\boldsymbol{k}} \sup_{\ell}\frac{\langle\sum j \bk_j + \ell \rangle^s }{\langle\ell\rangle^s} e^{-\alpha |\boldsymbol{k}|_{\ell^1}}\le \Big(\sup_{\boldsymbol{k}} \sup_{\ell}\frac{\mathtt j_{p}  |\boldsymbol{k}|_{\ell^1}+ \langle \ell \rangle }{\langle\ell\rangle} e^{-\frac{\alpha}{s} |\boldsymbol{k}|_{\ell^1}}\Big)^s \lesssim_{\alpha,p,s} 1\,,
	\]
	and (applying Jensen and Young inequalities)
	\[
\| \sum_{ \substack{\boldsymbol{\ell},\bs \in (\mathcal{S}_p^c)^{q} \times  \{-1,1\}^{q} \\ \sum \bs_j \bl_j= \ell } } \,    \prod_{i=1}^q v_{\boldsymbol{\ell}_i}^{\boldsymbol{\sigma}_i} \|_{H^s} \lesssim 2^{(q-1)}(q-1)^{s-1}  \|v\|_{H^s}^{q-1}\,.
	\]
	Hence
	\[
	\begin{aligned}
		\sup_{D_{3 r }}&\| (\partial_{\overline{v_\ell}}\partial P )_{\ell\in \mathcal S_p^c} \|_{H^s}   \le \sum_{\substack{({\boldsymbol{k}},{\boldsymbol{m}})\in \mathbb{Z}^{\mathcal{S}_p} \times \mathbb{N}^{\mathcal{S}_p}\\ q\in \mathbb{N}}} q  \mathcal P_{\bk,\bm,q}(3r)^{2\|\bm\|_{\ell^1}} \sup_{D_{3 r }} \|  \sum_{ \substack{\boldsymbol{\ell},\bs \in (\mathcal{S}_p^c)^{q} \times  \{-1,1\}^{q} \\ \sum \bs_j \bl_j= -\sum  j \bk_j +\ell } } \,    \prod_{i=1}^q v_{\boldsymbol{\ell}_i}^{\boldsymbol{\sigma}_i}  \|_{H^s}\\  
		&\lesssim_{\alpha,p,s} \sum_{\substack{({\boldsymbol{k}},{\boldsymbol{m}})\in \mathbb{Z}^{\mathcal{S}_p} \times \mathbb{N}^{\mathcal{S}_p}\\ q\in \mathbb{N}}} q  \mathcal P_{\bk,\bm,q}(3r)^{2\|\bm\|_{\ell^1}}  (6 r)^{q-1} (q-1)^s e^{\alpha \|\bk\|_{\ell^1}}\\ & \lesssim_{p} \sup_{q'} (q')^s e^{-\eta_{\min} \frac{q'}2} \sum_{\substack{({\boldsymbol{k}},{\boldsymbol{m}})\in \mathbb{Z}^{\mathcal{S}_p} \times \mathbb{N}^{\mathcal{S}_p}\\ q\in \mathbb{N}}} q  \mathcal P_{\bk,\bm,q}r^{2\|\bm\|_{\ell^1}+q-1} e^{\eta_{\min} (\|\bk\|_{\ell^1}+2\|\bm\|_{\ell^1}+ q-1)}\\ &  \lesssim_{p,s}  \frac{1}{r }\| P\|_{\mathscr{F}_{\eta,r,p}^{\cste}}
	\end{aligned}
	\]
	provided 	$\alpha=\eta_{\min}$ is sufficiently large. Regarding the other components we  reason as in \eqref{stimaP}, to show that for all $\ell\in \mathcal S_p$
		\begin{equation}
		\label{provo}
		\begin{aligned}
			\sup_{u\in \mathcal{A}_\xi(3r)}|\partial_{\theta_\ell} P(\xi;u) +r^2  \partial_{y_\ell} P(\xi;u)|   &\le   \sum_{\substack{({\boldsymbol{k}},{\boldsymbol{m}})\in \mathbb{Z}^{\mathcal{S}_p} \times \mathbb{N}^{\mathcal{S}_p}\\ q\in \mathbb{N}}}  (\|\bk\|_{\ell^1}+\|\bm\|_{\ell^1})\mathcal P_{\boldsymbol{k},\boldsymbol{m},q}  (12 r)^{2\|\boldsymbol{m}\|_{\ell^1} +q } \\ & \lesssim_{p,s}  \| P\|_{\mathscr{F}_{\eta,r,p}^{\cste}}\,.
		\end{aligned}
	\end{equation}
	Since by construction $|u_\ell| \ge \sqrt{\xi_\ell}/2$ for all $\ell\in \mathcal S_p$, by \eqref{derivo}
	\begin{equation*}
	\begin{aligned}
		\sup_{u\in \mathcal{A}_\xi(3r)}|  \partial_{\overline{u_\ell}} P(\xi;u)|  &\le  \sum_{\substack{({\boldsymbol{k}},{\boldsymbol{m}})\in \mathbb{Z}^{\mathcal{S}_p} \times \mathbb{N}^{\mathcal{S}_p}\\ q\in \mathbb{N}}}   \mathcal P_{\boldsymbol{k},\boldsymbol{m},q}	\sup_{D_{3r }}  (\frac{|\bk_\ell|}{\sqrt{\xi_\ell}} + \frac{\sqrt{\xi_\ell} \bm_\ell }{y_\ell})  |y^{\boldsymbol{m}} z^{\boldsymbol{k}} \sum_{ \substack{\boldsymbol{\ell},\bs \in (\mathcal{S}_p^c)^{q} \times  \{-1,1\}^{q} \\ \sum \bs_j \bl_j= -\sum  j \bk_j } } \,    \prod_{i=1}^q v_{\boldsymbol{\ell}_i}^{\boldsymbol{\sigma}_i} | \\ &\le r_0 \sum_{\substack{({\boldsymbol{k}},{\boldsymbol{m}})\in \mathbb{Z}^{\mathcal{S}_p} \times \mathbb{N}^{\mathcal{S}_p}\\ q\in \mathbb{N}}}  (\|\bk\|_{\ell^1}+\|\bm\|_{\ell^1})\mathcal P_{\boldsymbol{k},\boldsymbol{m},q}  (12 r)^{2\|\bm\|_{\ell^1} +q -2}  \lesssim  \frac{r_0}{r^2 e^{2\eta}} \| P\|_{\mathscr{F}_{\eta,r,p}^{\cste}}
	\end{aligned}
	\end{equation*}
	and the result follows.

\end{proof}

\begin{remark} {We could actually prove that these Hamiltonians are smooth on $\ell^1$ and that their vector fields map $\ell^1$ in $\ell^1$. We chose to work in $H^s$ because it is more standard and to ensure that the Hamiltonian of \eqref{eq:NLS} is locally bounded (which is not essential for us but it is convenient).}
\end{remark}

\begin{remark} Thanks to the symmetry condition on the coefficients of the formal Hamiltonian, for all $\xi \in \mathcal{O}_p^{\mathrm{full}}$, the map $P \mapsto P[\xi] $ is injective. In other words, it makes sense to identity the function with its expansion.
\end{remark}

As a corollary of the proof of Lemma \ref{lem:ouf_cest_des_fonctions}, we have the following Poisson bracket estimate which allows to control the variation of the actions.
\begin{corollary} \label{cor:var_actions} Let $p\geq 0$, $\xi \in \mathcal{O}_p^{\mathrm{full}} $, $r \leq 2 r_p$, $\eta \geq \eta_{\min}$, $P\in  \mathscr{F}_{\eta,r,p}^{\cste}$. Then 
$$
\sup_{u\in \mathcal{A}_\xi(3r)} \sum_{j\in \mathbb{Z}} \langle j \rangle^{2s} |\{ I_j, P[\xi] \}(u)| \lesssim_p   \| P\|_{\mathscr{F}_{\eta,r,p}^{\cste}} 
$$
where for all $u\in H^s$ and all $j\in \mathbb{Z}$, $I_j(u) := |u_j|^2$.
% \jo{I had in mind a loss of $r^{-1}$ in the estimate but doing the proof, it seems to me that the estimate is better than I thought... (but uselessly better)}
\end{corollary}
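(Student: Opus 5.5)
The plan is to compute $\{I_j,P[\xi]\}$ explicitly in the partial action-angle variables and then reuse the summation estimates from the proof of Lemma \ref{lem:ouf_cest_des_fonctions}. By \eqref{eq:Poisson_formula}, $\{I_j,P\} = 2\ic(\overline{u_j}\,\partial_{\overline{u_j}}P - u_j\,\partial_{u_j}P)$, which is the generator of the phase rotation of the $j$-th mode applied to $P$.

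\emph{Tangential sites.} For $j\in\mathcal S_p$ we have $I_j = \xi_j + y_j$, and $y_j$ Poisson-commutes with $y_k$ and with the normal variables $v_\ell$. Up to a sign convention, the bracket therefore reduces to the angle derivative $\partial_{\theta_j}P = \ic\, z_j\partial_{z_j}P$. Indeed, inserting \eqref{derivo}, the $y$-derivative contributions $|u_j|^2\partial_{y_j}P$ cancel between the two terms, and only $\mp z_j\partial_{z_j}P$ survives. Monomial by monomial this multiplies the coefficient by $\bk_j$. So $\sum_{j\in\mathcal S_p}\langle j\rangle^{2s}|\{I_j,P\}|$ is bounded, exactly as in \eqref{stimaP} or \eqref{provo}, by
$$\mathtt j_p^{2s}\sum \|\bk\|_{\ell^1}\,\mathcal P_{\bk,\bm,q}\,(12r)^{2\|\bm\|_{\ell^1}+q}\lesssim_p \|P\|_{\mathscr F^{\cste}_{\eta,r,p}}.$$
This uses the factor $\max(1,q,\|\bk\|_{\ell^1},\|\bm\|_{\ell^\infty})$ in the norm, and there is no division by $\sqrt{\xi_j}$. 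This is why no $r^{-1}$ loss appears, in contrast with the gradient estimate.

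\emph{Normal sites.} For $\ell\in\mathcal S_p^c$, the bracket $\{I_\ell,P\}$ is obtained by multiplying each monomial $v^{\bs}_{\bl}y^{\bm}z^{\bk}$ by $\sum_{i:\bl_i=\ell}\bs_i$, up to a factor of modulus $2$. The main step, and the only real obstacle, is to bound the weighted sum
$$\sum_{\ell\in\mathcal S_p^c}\langle\ell\rangle^{2s}\,\Big|\sum_{\bl,\bs}P^{\bl,\bs}_{\bk,\bm}\Big(\sum_{i:\bl_i=\ell}\bs_i\Big)v^{\bs}_{\bl}\Big|.$$
I will bound it by $|P^{\bl,\bs}_{\bk,\bm}|\le \mathcal P_{\bk,\bm,q}$ and expand the inner sum over $i$. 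By the symmetry of the coefficients, this gives at most $q$ copies of
$$\sum_\ell \langle\ell\rangle^{2s}|v_\ell|\sum_{\bl',\bs'}\prod_{i\le q-1}|v_{\bl'_i}|,$$
where the remaining $q-1$ indices satisfy the momentum constraint $\sum_i \bs'_i\bl'_i = -\sum_{j} j\bk_j - \bs_q\ell$.

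The weight $\langle\ell\rangle^{2s}$ must be split between the mode $|v_\ell|$ and the convolution of the remaining modes. I will not put all of it on $v_\ell$, since $\langle\ell\rangle^{2s}|v_\ell|$ is not summable for $v\in H^s$. Instead I write $\langle\ell\rangle^{s}|v_\ell|$ times $\langle\ell\rangle^{s}(\text{convolution})_{\ell'}$ and apply Cauchy--Schwarz in $\ell$. The first factor is bounded by $\|v\|_{H^s}$. For the second factor, $\langle\ell\rangle^s\lesssim_{p} e^{\alpha\|\bk\|_{\ell^1}}\langle\ell'\rangle^s$ with $\ell' = \ell+\sigma\sum_j j\bk_j$, exactly as in the displayed $\sup_{\bk}\sup_\ell$ estimate of the previous proof. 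Then the algebra/Young bound gives an $H^s$ norm of the $(q-1)$-fold convolution of at most $2^{q}(q-1)^{s}\|v\|_{H^s}^{q-1}$.

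Collecting, the normal part is bounded by
$$\sum q^{s+1}\,\mathcal P_{\bk,\bm,q}\,(3r)^{2\|\bm\|_{\ell^1}}(6r)^{q}e^{\alpha\|\bk\|_{\ell^1}}.$$
The polynomial factors $q^{s+1}$ are absorbed by $e^{\eta_{\min} q/2}$ and $e^{\alpha\|\bk\|_{\ell^1}}$ by $e^{\eta\|\bk\|_{\ell^1}}$ with $\alpha=\eta_{\min}$. This yields $\lesssim_p\|P\|_{\mathscr F^{\cste}_{\eta,r,p}}$. Since we keep the full factor $r^q$ (one $v_\ell$ is still present), no negative power of $r$ appears, and adding the tangential and normal contributions concludes the proof.
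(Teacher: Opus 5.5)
Your proof is correct and is essentially the paper's argument. Tangential sites are handled through $\{I_j,P[\xi]\}\propto z_j\partial_{z_j}P$ together with the factor $\|\bk\|_{\ell^1}$ in the norm (as in \eqref{provo}), and normal sites by Cauchy--Schwarz pairing $\langle j\rangle^{s}u_j$ against $\langle j\rangle^{s}\partial_{\overline{u_j}}P$; your overall sign in $\{I_j,P\}$ is reversed, which is harmless. The only difference is that you re-derive the normal-gradient bound monomial by monomial, keeping the full factor $r^q$, whereas the paper quotes $\|\nabla_{\mathcal S_p^c}P(\xi;u)\|_{H^s}\lesssim_p r^{-1}\|P\|_{\mathscr{F}_{\eta,r,p}^{\cste}}$ from the proof of Lemma \ref{lem:ouf_cest_des_fonctions} and multiplies it by $\|u_{\mathcal S_p^c}\|_{H^s}\le 3r$.
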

\begin{proof} 
	 On the one hand, given $u\in \mathcal{A}_\xi(3r)$, by Cauchy--Schwarz, we have
\begin{equation*}
\begin{split}
\sum_{j\in \mathcal{S}_p^c}  \langle j \rangle^{2s} | \{ I_j, P[\xi] \}(u) | &\lesssim \sum_{j\in \mathcal{S}_p^c}  \langle j \rangle^{2s} | u_j \partial_{\overline{u_j}} P(\xi;u) | \lesssim \sqrt{\sum_{j\in \mathcal{S}_p^c } \langle j \rangle^{2s}  |u_j|^2 } \| \nabla_{\mathcal{S}_p^c}  P(\xi;u)\|_{H^s}\lesssim_p \| P\|_{\mathscr{F}_{\eta,r,p}}.
\end{split}
\end{equation*}
On the other hand, given $j\in \mathcal{S}_p$, $I_j$ commutes with $y_j$. Moreover, we have $\{ I_j, e^{\ic \theta_j} \} = 4 \ic e^{\ic \theta_j} $.
Thus,  using \eqref{provo}%estimating $\partial_{\theta_j} P[\xi]$ as $P[\xi]$, we have
\begin{equation*}
\begin{split}
\sum_{j\in \mathcal{S}_p}  \langle j \rangle^{2s} | \{ I_j, P[\xi] \}(u) |  \lesssim_p \sup_{j\in \mathcal{S}_p} |\partial_{\theta_j} P(\xi;u)|  \lesssim_p \| P\|_{\mathscr{F}_{\eta,r,p}}.
\end{split}
\end{equation*}
\end{proof}

\subsubsection{Projections} \label{sub:projection} Now, we define some basic projections we will use in all the paper. %\mic{I noticed now that you use $\|\cdot\|_{\ell^1}$ instead of $|\cdot|_{\ell^1}$...I shall have to correct my part..} \jo{yes, sorry}
We remark that, by construction, all the projections defined by restrictions of the coefficients are continuous with respect to $\|\cdot\|_{\mathscr{F}_{\eta,r,p}^{\cste}}$.
\begin{definition}[Degree] Let $p\geq 0$, $\eta \geq 0$ and $r>0$. A Hamiltonian $P\in  \mathscr{F}_{\eta,r,p}^{\cste}$ is homogeneous of degree $d\geq 0$ if, for all $\boldsymbol{k} \in \mathbb{Z}^{\mathcal{S}_p},\boldsymbol{m} \in \mathbb{N}^{\mathcal{S}_p},q\in \mathbb{N}, \boldsymbol{\ell} \in (\mathcal{S}_p^c)^q ,\boldsymbol{\sigma} \in  \{-1,1\}^q $ either $P_{\boldsymbol{k},\boldsymbol{m}}^{\boldsymbol{\ell},\boldsymbol{\sigma}} =0$ or $2n + 2\| \boldsymbol{m} \|_{\ell^1} +q = d$. We denote by $\Pi_{\mathrm{deg}=d},\Pi_{\mathrm{deg}\leq d}, \Pi_{\mathrm{deg}\geq d}$ the associated projections defined by restriction of the coefficients, i.e.
$$
(\Pi_{\mathrm{deg}=d} P)_{n,\boldsymbol{k},\boldsymbol{m}}^{\boldsymbol{\ell},\boldsymbol{\sigma}} := \mathds{1}_{2n + 2\| \boldsymbol{m} \|_{\ell^1} +q = d} P_{n,\boldsymbol{k},\boldsymbol{m}}^{\boldsymbol{\ell},\boldsymbol{\sigma}}, \quad \Pi_{\mathrm{deg}\leq d} := \sum_{0\leq d'\leq d }\Pi_{\mathrm{deg}=  d'}, \quad \! \! \Pi_{\mathrm{deg}\geq d} = \mathrm{Id} - \Pi_{\mathrm{deg}\leq d-1} .
$$
\end{definition}

\begin{definition}[$y$-homogeneity] Let $p\geq 0$, $\eta \geq 0$ and $r>0$. A Hamiltonian $P\in  \mathscr{F}_{\eta,r,p}^{\cste}$  is $y$-homogeneous of degree $d\geq 0$ if, for all $\boldsymbol{k} \in \mathbb{Z}^{\mathcal{S}_p},\boldsymbol{m} \in \mathbb{N}^{\mathcal{S}_p},q\in \mathbb{N}, \boldsymbol{\ell} \in (\mathcal{S}_p^c)^q ,\boldsymbol{\sigma} \in  \{-1,1\}^q $ either $P_{\boldsymbol{k},\boldsymbol{m}}^{\boldsymbol{\ell},\boldsymbol{\sigma}} =0$ or $ \| \boldsymbol{m} \|_{\ell^1} = d$. We denote by $\Pi_{\| \boldsymbol{m} \|_{\ell^1} =d},\Pi_{\| \boldsymbol{m} \|_{\ell^1} \geq d},\Pi_{\| \boldsymbol{m} \|_{\ell^1} \leq d}$ the associated projections defined by restriction of the coefficients.
\end{definition}

\begin{definition}[Integrable Hamiltonians] Let $p\geq 0$, $\eta \geq 0$ and $r>0$. A Hamiltonian $P\in  \mathscr{F}_{\eta,r,p}^{\cste}$ is integrable if for all $\boldsymbol{k} \in \mathbb{Z}^{\mathcal{S}_p},\boldsymbol{m} \in \mathbb{N}^{\mathcal{S}_p},q\in \mathbb{N}, \boldsymbol{\ell} \in (\mathcal{S}_p^c)^q ,\boldsymbol{\sigma} \in  \{-1,1\}^q $ either $P_{\boldsymbol{k},\boldsymbol{m}}^{\boldsymbol{\ell},\boldsymbol{\sigma}} =0$ or $\boldsymbol{k}  = 0$ and there exists a permutation $\varphi$ such that $\varphi \boldsymbol{\ell} = \boldsymbol{\ell}$ and $\varphi \boldsymbol{\sigma} = - \boldsymbol{\sigma}$. We denote by $\Pi_{\mathrm{int}}$ the associated projection defined by restriction of the coefficients.
\end{definition}

\subsubsection{Vector field estimate and flow estimate}

The following lemma provides an estimate on the derivatives of the vector fields. Note that a standard Cauchy estimate would give a homogeneity factor of the type $r^{-2(h+1)}$ due to derivatives in the variables $y$. To get better bounds we restrict to Hamiltonians at most linear in $y$.
\begin{lemma}[Vector field estimates] \label{lem:vector_fields} Let $p\geq 0$, $r\leq 2r_p$,  $\eta \geq \eta_{\min}$ and $\chi    \in \Pi_{\| \boldsymbol{m} \|_{\ell^1} \leq 1} \mathscr{F}_{\eta,r,p}^{\cste}$ be a Hamiltonian of $y-$homogeneity smaller or equal to $1$. Then, for all $\xi \in \mathcal{O}_p^{\mathrm{full}} $, $\nabla \chi[\xi] : \mathcal{A}_\xi(3r) \to H^s$ is $C^\infty$ and satisfies, for all $ h\geq 0$, $u\in   \mathcal{A}_\xi(3r)$, $v^{(1)},\cdots,v^{(h)}\in H^s$, 
$$
 \|\mathrm{d}^h \nabla\chi(\xi;u)(v^{(1)},\cdots,v^{(h)})\|_{H^s} \lesssim_h \| \chi \|_{\mathscr{F}_{\eta,r,p}^{\cste}} r^{-2-h} \prod_{1\leq i\leq h} \|v^{(i)}\|_{H^s}.
 $$
\end{lemma}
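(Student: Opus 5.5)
We write $\chi[\xi]$ in the original variables $u$ and use the formula for the gradient \eqref{derivo}. The components of $\nabla\chi$ split into two families. The normal components $\ell\in\mathcal S_p^c$ are $2\partial_{\overline{v_\ell}}\chi$. The tangential components $\ell\in\mathcal S_p$ are $-\frac{1}{\overline{u_\ell}}z_\ell\partial_{z_\ell}\chi+2u_\ell\partial_{y_\ell}\chi$. The plan is to differentiate each family $h$ times in $u$ and to bound every term monomial by monomial, as in the proof of Lemma~\ref{lem:ouf_cest_des_fonctions}. Each monomial $y^{\bm}z^{\bk}v_{\bl}^{\bs}$ has $\|\bm\|_{\ell^1}\le 1$, which is why we lose only $r^{-1}$ per derivative rather than $r^{-2}$.

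\emph{Normal derivatives.} A $v$-derivative of a $q$-linear form $\sum v_{\bl}^{\bs}$ is estimated as in the proof of Lemma~\ref{lem:ouf_cest_des_fonctions}, using the convolution representation and the algebra property of $H^s$. Each derivative costs a factor $q$ and replaces one $\|v\|_{H^s}\lesssim r$ by $\|v^{(i)}\|_{H^s}$. This gives a loss $q^h r^{-h}$, and the factor $q^h$ is absorbed by $e^{\eta q}$ with $\eta\ge\eta_{\min}$.

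\emph{Tangential derivatives.} On $\mathcal A_\xi(3r)$ we have $|u_j|\ge\sqrt{\xi_j}/2\gtrsim r_p^{1-\delta}\gg r$ for $j\in\mathcal S_p$. The functions $y_j=|u_j|^2-\xi_j$ and $z_j=u_j/|u_j|$ are therefore smooth, and each $u_j$-derivative of them is controlled explicitly:
\begin{itemize}
\item $\mathrm d y_j$ has size $\lesssim|u_j|$, and $\mathrm d^2 y_j$ is bounded;
\item $\mathrm d^n z_j$ has size $\lesssim_n |u_j|^{-n}\lesssim r^{-n}$.
\end{itemize}
For $\|\bm\|_{\ell^1}\le 1$ the factor $y^{\bm}$ is at most linear in $y$, so it can be differentiated at most once nontrivially (plus the bounded second derivative of $y_j$). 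That derivative removes a factor $|y_j|\lesssim r^2$ and produces a factor $|u_j|$. This is where the restriction to $y$-homogeneity $\le1$ is used: a generic Cauchy estimate would instead lose $r^{-2}$ per $y$-derivative.

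\emph{Assembling.} We expand $\mathrm d^h$ by Leibniz over the factors $y^{\bm}$, $z^{\bk}$, $v_{\bl}^{\bs}$, the prefactors $u_\ell$ and $1/\overline{u_\ell}$, and the derivative $\partial_{y_\ell}$. Each term is a product of
\begin{itemize}
\item combinatorial factors bounded by $(\|\bk\|_{\ell^1}+q+1)^h$,
\item powers of $r^{-1}$ coming from derivatives of $z$ and $v$,
\item the original size $r^{q+2\|\bm\|_{\ell^1}}$ divided by $r^2$, as in the gradient bound.
\end{itemize}
Summing over $(\bk,\bm,q)$ and absorbing the polynomial factors with the weights $e^{\eta(\|\bk\|_{\ell^1}+q+2\|\bm\|_{\ell^1})}$ gives $\lesssim_h\|\chi\|_{\mathscr F^{\cste}_{\eta,r,p}}r^{-2-h}$. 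For the tangential components we bound the $\ell^2$-type sum over $\ell\in\mathcal S_p$ by a constant depending only on $p$, and we use the weight $\langle\sum j\bk_j\rangle^s$ as in \eqref{jens}. Smoothness ($C^\infty$) follows because these series of derivatives converge normally on $\mathcal A_\xi(3r)$.

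\emph{Main obstacle.} The delicate step is the bookkeeping of mixed terms that differentiate both $y^{\bm}$ and the $1/\overline{u_\ell}$ or $z$ factors. We must check that no term loses more than $r^{-2-h}$. This holds because each $u_j$-derivative on a tangential site costs only $|u_j|^{-1}\le r^{-1}$, and the $y$-derivative is compensated by its prefactor $|u_j|$. We will verify this using $r\le 2r_p\ll r_p^{1-\delta}$.
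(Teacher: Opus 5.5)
Your proof is correct, but it takes a different route from the paper.

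\emph{The paper's route.} It does not differentiate monomial by monomial on the real annulus. Instead it extends $\chi(\xi;\cdot)$ holomorphically to a complex domain $\mathcal{A}^{\mathbb{C}}_\xi(\boldsymbol{\rho},4r)$, in which $u^{+1}$ and $u^{-1}$ are independent variables. It takes $\boldsymbol{\rho}_j=\sqrt{\xi_j}/2$, so that $\mathrm{Re}(u_j^{+1}u_j^{-1})>\xi_j/2$ and the square root defining $z_j$ is single-valued. It bounds $\chi$ and $\nabla\chi$ there by summing the series. It then gets every higher derivative at once from Cauchy estimates, since $\mathcal{A}_\xi(3r)$ lies at distance $\gtrsim r$ from the boundary.

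For $y$-degree one, the paper writes $\chi=\sum_j(u_j^{+1}u_j^{-1}-\xi_j)\chi_j$ and applies Leibniz. It uses that the quadratic factor has vanishing third derivative and is $\lesssim r^2$ on the real annulus. This is exactly your mechanism for paying $r^{-2}$ only once. The factorization is forced there: on the complex domain, $|u_j^{+1}u_j^{-1}-\xi_j|$ is only bounded by $\boldsymbol{\rho}_j^2\gg r^2$, so a direct Cauchy estimate would lose an extra $r^{-1}$.

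\emph{What each buys.} The Cauchy route avoids computing derivatives of $z^{\boldsymbol{k}}$ and of $1/\overline{u_\ell}$, and gives analyticity for free. The price is the complexification and a harmless $4^{\|\boldsymbol{k}\|_{\ell^1}}$ loss off the real domain. Your route stays on the real annulus, where $|z_j|=1$ and $|y_j|\lesssim r^2$, so it is more elementary. It needs instead the Fa\`a di Bruno bound $|\mathrm{d}^n z_j^{\boldsymbol{k}_j}|\lesssim_n \langle\boldsymbol{k}_j\rangle^n|u_j|^{-n}$ and careful Leibniz bookkeeping.

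\emph{A point to make explicit.} When you write it up, state the derivative count plainly. There are $h+1$ derivatives in total, the gradient included.
\begin{itemize}
\item If exactly one lands on $y_j$, it costs $|u_j|\,r^{-2}\lesssim r^{-2}$; this uses $|u_j|\lesssim 1$.
\item If two land on $y_j$, they cost $r^{-2}$ together.
\item Every other derivative costs $\lesssim r^{-1}$.
\end{itemize}
This is what gives the total $r^{-2-h}$. Your phrase ``original size divided by $r^2$'' encodes this. As written, though, it could be misread as a baseline of $r^{-2}$ with a further $y$-loss on top, which would give $r^{-3-h}$.
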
 
\begin{proof} 
We shall take advantage of the fact that $H$ is real analytic and a polynomial in $y$ in order to make an appropriate holomorphic extension and prove the bounds by Cauchy estimates. We recall  \cite[Theorem 2 of Appendix  A]{PT87}, which states that for any holomorphic function {$f: \mathcal A \to F$} where $E,F$ are Banach spaces  (and ${\mathcal A \subset E}$ is an open set) one has\footnote{The key point is that, as a consequence of Cauchy's integral formula,  the $h$'th differential can be computed, as continuous $n$--linear operator as:
		\begin{equation}\label{ildifn}
			d^hf(z_0)[v^{(1)},\dots,v^{(h)}] =  \frac 1{(2\pi \ic )^h} \!\!\!\!\!\!\!\underset{|\zeta_1|,\dots,|\zeta_n|=\varrho}{\iint} \!\!\!\!\!\!\!\frac{f(z_0+ \zeta_1 v^{(1)}+\dots \zeta_h v^{(h)})}{ \zeta_1^2\dots \zeta_h^2} \mathrm{d} \zeta_1 \dots \mathrm{d}\zeta_h \quad \text{for $\varrho$ small enough. }
		\end{equation}
		}
	\begin{equation}\label{prce}
		\begin{aligned}
	| d^hf(z_0)|_F^{\rm op}:=	\sup_{|v^{(1)}|_E,\dots,|v^{(h)}|_E\le 1}|\mathrm{d}^hf(z_0)[v^{(1)},\dots,v^{(h)}]|_F \le h^h\varrho^{-h}\sup_{z\in B_\varrho(z_0)}|f(z)|_F\,,%\\
		%	\sup_{|v|_E\le 1}|d^nf(z_0)[v,\dots,v]|_F  &\le n! \varrho^{-n}\sup_{z\in \mathcal B_\varrho(z_0)}|f(z)|_F\,,
		\end{aligned}
	\end{equation} 
	for all $0<\varrho<\varrho_{\max}$, where $\varrho_{\max}$ is the distance between $z_0$ and the boundary of $\mathcal A$.

%	\medskip
%	
%	%\mic{qui mi sembra sia l'errore di PoTru che dice $\varrho < \min (R(z_0)/\|v_i\|)$ a me sembra che la cosa corretta sia $\varrho< R/(\sum_i \|v^{(i)}\|)$.}  
%	
%	Denoting by $\delta$ the distance between $z_0$ and the boundary of $\mathcal A$  one has the Cauchy estimates	for its operator norm:\begin{equation}\label{prce}
%		\begin{aligned}
%			|d^nf(z_0)|^{\rm op}:= \!\!\!\!\sup_{|v^{(1)}|_E,\dots,|v^{(n)}|_E\le 1}|d^nf(z_0)[v^{(1)},\dots,v^{(n)}]|_F &\le n^n \varrho^{-n}\sup_{z\in B_\varrho(z_0)}|f(z)|_F\,,\\
%			\sup_{|v|_E\le 1}|d^nf(z_0)[v,\dots,v]|_F  &\le n! \varrho^{-n}\sup_{z\in \mathcal B_\varrho(z_0)}|f(z)|_F\,,
%		\end{aligned}
%	\end{equation} 
%	for all $0<\varrho<\delta$.
%	So that, for all $z_0\in \mathcal A$, one has
%	\begin{equation}
%		\label{tota0}
%		f(z)= \sum_{n=0}^\infty \frac{1}{n!} d^n f(z_0)[(z-z_0,\dots, z-z_0]
%	\end{equation}
%	where the series on the right hand side is totally convergent in any ball $B_\varrho(z_0)$ with $\varrho<\delta$.	
	
	\medskip
	By definition $\chi(\xi;u)$ on $\mathcal A_\xi(3r)$ is  a real analytic function defined by the totally convergent series 
	\begin{equation}
		\label{est}
		\chi(\xi;u)= \hspace{-0.5cm} \sum_{\substack{({\boldsymbol{k}},{\boldsymbol{m}})\in \mathbb{Z}^{\mathcal{S}_p} \times \mathbb{N}^{\mathcal{S}_p}\\ q\in \mathbb{N}}}   \sum_{ \boldsymbol{\ell} \in (\mathcal{S}_p^c)^{q} }  \sum_{ \boldsymbol{\sigma} \in  \{-1,1\}^{q}  } \chi _{\boldsymbol{k},\boldsymbol{m}}^{\boldsymbol{\ell},\boldsymbol{\sigma}}  \,   { \prod_{i=1}^q u_{\boldsymbol{\ell}_i}^{\boldsymbol{\sigma}_i} } \,  \prod_{j\in \mathcal{S}_p} ( u_j^{+1} u_j^{-1} -\xi_j )^{\boldsymbol{m}_j} \left(\frac{u_j^{+1}}{\sqrt{u_j^{+1} u_j^{-1}}}\right)^{\boldsymbol{k}_j}\,.
	\end{equation}
	where $u^{+1}_j = u_j$ and $  u^{-1}_j =\overline{ u_j}$. 
	%In fact, as we have seen in the proof of Lemma \ref{lem:ouf_cest_des_fonctions}, $\nabla \chi$ is a (real) analytic function  $\mathcal{A}_\xi(3r) \to H^s$. 
For $\boldsymbol{\rho} \in (\mathbb{R}_+^*)^{\mathcal{S}_p}$  and  for $0<r\le 2 r_p$,  we  define 
	$$\mathcal{A}^\C_\xi(\boldsymbol{\rho}, r) = \Big\{ u^{\pm 1} \in H^s \  | \sup_{j\in \mathcal{S}_p}\big| u_j^{+ 1} u_j^{-1} - \xi_j \big| \vee  \big| |u_j^{\pm 1}|^2 - \xi_j \big| \, \leq \bro_j^2 \,,\quad %  \big| u_j^{+ 1} - \overline {u_j^{-1}}  \big| \, \leq \bro_j/4\,,\quad 
	\sum_{j \in \mathcal S^c_p}  \big| u_j^{\pm 1}   \big|^2 \langle j \rangle^{2s} \, \leq  r ^2\Big\}.
	$$
	Let us now assume that $\chi$ is {$y$-homogeneous of degree $d_y=0,1$.}
	We claim that $\chi(\xi;u)$ can be extended to $\mathcal{A}^\C_\xi(\boldsymbol{\rho}, 4r)$ with $\boldsymbol{\rho}:=(\frac{\sqrt{\xi_j}}{2})_{j\in \mathcal S_p}$.   In each summand of \eqref{est} the only term which might not be holomorphic is  $ \sqrt{u_j^{+1} u_j^{-1}}$,  however  in  $\mathcal{A}^\C_\xi(\boldsymbol{\rho}, 4r)$ one has Re$(u_j^{+1} u_j^{-1})>\xi_j/2>0$ so the polydromy of the square root is avoided. By  reasoning as in \eqref{stimaP},  one verifies that  \eqref{est} is totally convergent and satisfies the bounds
	 \[
	\begin{aligned}
		&\sum_{\substack{({\boldsymbol{k}},{\boldsymbol{m}})\in \mathbb{Z}^{\mathcal{S}_p} \times \mathbb{N}^{\mathcal{S}_p}\\ q\in \mathbb{N}}}   \sum_{ \boldsymbol{\ell} \in (\mathcal{S}_p^c)^{q} }  \sum_{ \boldsymbol{\sigma} \in  \{-1,1\}^{q}  } |\chi _{\boldsymbol{k},\boldsymbol{m}}^{\boldsymbol{\ell},\boldsymbol{\sigma}} |\sup_{\mathcal{A}^\C_\xi(\boldsymbol{\rho}, 4r)}| \,   { \prod_{i=1}^q u_{\boldsymbol{\ell}_i}^{\boldsymbol{\sigma}_i} } \,  \prod_{j\in \mathcal{S}} ( u_j^{+1} u_j^{-1} -\xi_j )^{\boldsymbol{m}_j} \left(\frac{u_j^{+1}}{\sqrt{u_j^{+1} u_j^{-1}}}\right)^{\boldsymbol{k}_j}| \\ &
	\le \sum_{\substack{({\boldsymbol{k}},{\boldsymbol{m}})\in \mathbb{Z}^{\mathcal{S}_p} \times \mathbb{N}^{\mathcal{S}_p}\\ q\in \mathbb{N}}}  \sup_{ \substack{\boldsymbol{\ell} \in (\mathcal{S}_p^c)^{q}\\  \boldsymbol{\sigma} \in  \{-1,1\}^{q}  }} |\chi _{\boldsymbol{k},\boldsymbol{m}}^{\boldsymbol{\ell},\boldsymbol{\sigma}}| \,  ( 16 r)^q  \, \boldsymbol{\rho}^{2 \boldsymbol{m}}  4^{|\boldsymbol{k}|_{\ell^1}} \lesssim \Big(\frac{1}{r}\Big)^{2 d_y}  \| \chi \|_{\mathscr{F}_{\eta,r,p}^{\cste}}\,. 
	\end{aligned}
	\]
	for $\eta_{\min}$ large enough.
	\\
	We conclude that $\chi(\xi;u)$ is holomorphic $\mathcal{A}^\C_\xi(\boldsymbol{\rho},4r)\to \mathbb C$. Moreover, proceeding word by word as in the proof of Lemma \ref{lem:ouf_cest_des_fonctions}, we have that
	$\nabla_{\mathcal S^c_p} \chi(\xi;u):= 2(\partial_{ u^{-1}_j}\chi (\xi;u))_{j \in \mathcal S_p^c}$ is holomorphic $\mathcal{A}^\C_\xi(\boldsymbol{\rho},4r)\to h^s({\mathcal S^c},\C)$ and satisfies the bounds
	\[
	\sup_{u^{\pm 1}\in  \mathcal{A}^\C_\xi(\boldsymbol{\rho},4r)}\| \nabla_{\mathcal S^c} \chi(\xi;u)\|_{H^s} \lesssim \frac{1}{r^{2 d_y+1}} \| \chi \|_{\mathscr{F}_{\eta,r,p}^{\cste}}\,. 
	\]
	Similarly, given $j \in \mathcal{S}_p$, by using Cauchy estimates we have 
	\[
	\sup_{u^{\pm 1}\in  \mathcal{A}^\C_\xi(\frac{2}{3}\boldsymbol{\rho},4r)}| \partial_{u_j^-} \chi(\xi;u) | \lesssim \frac{1}{\min_k \bro_k} \sup_{u^{\pm 1}\in  \mathcal{A}^\C_\xi(\boldsymbol{\rho},4r)}|  \chi (\xi;u)| \lesssim  \frac{1}{r^{2 d_y+1}} \| \chi \|_{\mathscr{F}_{\eta,r,p}^{\cste}}\, .
	\]
	
	If $d_y=0$ the desired result follows directly by the Cauchy estimates
	\[
	\sup_{u \in  \mathcal{A}_\xi(3r)}\| \mathtt d^h \nabla \chi(\xi;u)\|^{\rm op}_s \lesssim_h \frac{1}{\varrho^h }  \sup_{u^{\pm 1}\in  \mathcal{A}^\C_\xi(\frac23 \boldsymbol{\rho},4r)}\| \nabla  \chi(\xi;u)\|_{H^s}  \lesssim_{h,p} \frac{1}{r^{h+1}} \| \chi \|_{\mathscr{F}_{\eta,r,p}^{\cste}}
	\]
	 since the distance $\varrho$ between  $\mathcal{A}_\xi(3r)$  and the border of $\mathcal{A}^\C_\xi(\frac23 \boldsymbol{\rho},4r) $ is at least of order $r< \min_{k\in \mathcal S_p} \bro_k$.

 If $d_y=1$, it is convenient to rewrite
 \[
 \chi= \sum_{j\in \mathcal S_p} ( u_j^{+1} u_j^{-1} -\xi_j ) \chi_j\,,\quad \chi_j:= \sum_{ \boldsymbol{k} \in \mathbb{Z}^{\mathcal{S}_p}  } \sum_{q\in \mathbb{N} }    \sum_{ \boldsymbol{\ell} \in (\mathcal{S}_p^c)^{q} }  \sum_{ \boldsymbol{\sigma} \in  \{-1,1\}^{q}  } \chi _{\boldsymbol{k},\boldsymbol{e}_j}^{\boldsymbol{\ell},\boldsymbol{\sigma}}  \,   { \prod_{i=1}^q v_{\boldsymbol{\ell}_i}^{\boldsymbol{\sigma}_i} } \,  \prod_{j\in \mathcal{S}} \left(\frac{u_j^{+1}}{\sqrt{u_j^{+1} u_j^{-1}}}\right)^{\boldsymbol{k}_j}\,.
 \]
 Where, reasoning as in the case $d_y=0$, we have that  each $\chi_j, \nabla \chi_j$ is holomorphic in $ \mathcal{A}^\C_\xi(\frac23 \boldsymbol{\rho},4r)$ with the bounds
 \[
  \begin{aligned}
  	&\sup_{u^{\pm 1}\in  \mathcal{A}^\C_\xi(\boldsymbol{\rho},4r)}|  \chi_j | \lesssim_p \frac{1}{r^{2}} \| \chi \|_{\mathscr{F}_{\eta,r,p}^{\cste}}\,,  \quad \sup_{u^{\pm 1 }\in  \mathcal{A}^\C_\xi(\boldsymbol{\rho},4r)}\| \nabla \chi_j \|_{H^s} \lesssim_p \frac{1}{r} \| \chi_j\|_{\eta,r,p} \lesssim_p \frac{1}{r^{3}} \| \chi \|_{\mathscr{F}_{\eta,r,p}^{\cste}} \\
  	&\sup_{u\in  \mathcal{A}_\xi(3r)}|\mathrm{d}^k\chi_j|^{\rm op} \lesssim_p \frac{1}{r^{2+k}} \| \chi \|_{\mathscr{F}_{\eta,r,p}^{\cste}}\,,  \quad
 \sup_{u\in  \mathcal{A}_\xi(3r)}\| \mathrm{d}^{k}\nabla \chi_j (u)\|^{\rm op}_s \lesssim_{k,p} \frac{1}{r^{3+k}} \| \chi \|_{\mathscr{F}_{\eta,r,p}^{\cste}}
  \end{aligned}
 \]
  By construction 
  $$
  \begin{aligned}
  	&\mathrm{d}^{h}\nabla_{\mathcal S^c} \chi=\sum_{j\in \mathcal S_p}\sum_{k=0}^{\min(2,h)} \binom{h}{k} \mathrm{d}^{k} ( u_j^{+1} u_j^{-1} -\xi_j ) \mathrm{d}^{h-k}  \nabla_{\mathcal S^c} \chi_j \,,\\ &
  \mathrm{d}^{h}\partial_{u_j^{-1}} \chi=\sum_{j\in \mathcal S_p}\sum_{k=0}^{\min(2,h)}  \binom{h}{k}  \mathrm{d}^{k} ( u_j^{+1} u_j^{-1} -\xi_j ) \mathrm{d}^{h-k}  \partial_{u_j^{-1}} \chi_j   +  \sum_{k=0}^{1}  \binom{h}{k}  \mathrm d^k( u_j^{+1} )\mathrm{d}^{h-k}  \chi_j  
  \end{aligned}$$   
  Since these are  finite sums we may consider each term separately.  We have,  for all $u\in  \mathcal{A}_\xi(3r)$,
 \[
\| \mathrm{d}^{k} ( u_j^{+1} u_j^{-1} -\xi_j ) \mathrm{d}^{h-k}  \nabla_{\mathcal S^c} \chi_j\|_{H^s}^{\rm op} \lesssim_{p,h} (r^{2} \mathds{1}_{k=0}+  \mathds{1}_{k\geq 1}) \frac{1}{r^{3+h-k}}\| \chi \|_{\mathscr{F}_{\eta,r,p}^{\cste}} \le  \frac{1}{r^{2+h}}\| \chi \|_{\mathscr{F}_{\eta,r,p}^{\cste}}
 \]
 and
 \[
 |\mathrm d^k( u_j^{+1} )\mathrm{d}^{h-k}  \chi_j |^{\rm op}  \lesssim_{p,k}  \frac{1}{r^{2+h}}\| \chi \|_{\mathscr{F}_{\eta,r,p}^{\cste}}\,.
 \]
\end{proof}

By Cauchy--Lipchitz, Lemma \ref{lem:vector_fields} ensures the existence and uniqueness of maximal solutions to $\ic \partial_t u = \nabla \chi(\xi;u)$.  In the following lemma, we emphasize some of the basic properties of its flow. As usual in normal form paper, these flows will be the building block for all the changes of variables we are going to construct.

\begin{lemma}[Hamiltonian flows] \label{lem:flows} Let $p\geq 0$, $r\leq 2r_p$,  $\eta \geq \eta_{\min}$ and $\chi    \in \Pi_{\| \boldsymbol{m} \|_{\ell^1} \leq 1}\mathscr{F}_{\eta,r,p}^{\cste}$ be a Hamiltonian of $y-$homogeneity smaller or equal to $1$.
If 
$$
\| \chi \|_{\mathscr{F}_{\eta,r,p}^{\cste}} \leq r^{2+\delta}
$$
then, for all $ \xi \in \mathcal{O}_p^{\mathrm{full}}$, there exist a smooth map 
$$
\Phi_{\chi[\xi]} : \left\{ \begin{array}{lll} [-1,1]\times  \mathcal{A}_\xi(\frac{5}2r)& \to & \mathcal{A}_\xi(3r) \\
(t,u) & \mapsto &  \Phi_{\chi[\xi]}^t(u) \end{array}\right.
$$
 satisfying the following properties.
\begin{itemize}
\item It is the Hamiltonian flow of $\chi[\xi]$ : for all $t\in [-1,1]$ and $u\in \mathcal{A}_\xi(\frac{5}2 r)$, we have 
$$
\ic \partial_t \Phi_{\chi[\xi]}^t(u) = \nabla \chi(\xi;\Phi_{\chi[\xi]}^t(u) ), \quad \mathrm{and} \quad \Phi_{\chi[\xi]}^0(u) = u.
$$
\item It satisfies the flow property : for all $t,\tau \in [-1,1]$ and $u\in \mathcal{A}_\xi(\frac{5}2 r)$, provided that $|t+\tau|\leq 1$ and $\Phi_{\chi[\xi]}^t(u)\in \mathcal{A}_\xi(\frac{5}2 r)$, we have
$$
\Phi_{\chi[\xi]}^{\tau}(\Phi_{\chi[\xi]}^t(u)) = \Phi_{\chi[\xi]}^{t+\tau}(u).
$$
\item For all  $t\in [-1,1]$, for all $\rho < \frac{5}2 r$,   
$$
\Phi_{\chi[\xi]}^t (\mathcal{A}_\xi(\rho)) \subset \mathcal{A}_\xi(\sqrt{\rho^2+C_p\| \chi \|_{\mathscr{F}_{\eta,r,p}^{\cste}} } ),
$$
where $C_p>0$ is a constant depending only on $p$.
\item For all $t\in [-1,1]$, $\Phi_{\chi[\xi]}^t$ is symplectic.
\item It is close to the identity: for all $ j\geq 0$, $t\in [-1,1]$, $u\in   \mathcal{A}_\xi(\frac{5}2 r)$, $v^{(1)},\cdots,v^{(j)}\in H^s$,
$$
 \|\mathrm{d}^j \big(\Phi_{\chi[\xi]}^t - \mathrm{Id}  \big)(u)  (v^{(1)},\cdots,v^{(j)}) \|_{H^s} \lesssim_j \| \chi \|_{\mathscr{F}_{\eta,r,p}^{\cste}} r^{-2-j} \prod_{1\leq i\leq j} \|v^{(i)}\|_{H^s}.
 $$
\end{itemize}
\end{lemma}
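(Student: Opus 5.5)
We will treat $\ic\partial_t u=\nabla\chi(\xi;u)$ as an ODE in $H^s$ on the closed set $\mathcal{A}_\xi(3r)$. By Lemma \ref{lem:vector_fields} with $h=0$ and $h=1$, the vector field $\nabla\chi[\xi]$ is bounded by $C\|\chi\|_{\mathscr{F}_{\eta,r,p}^{\cste}} r^{-2}\le C r^{\delta}$ there. It is also Lipschitz with constant $C\|\chi\|_{\mathscr{F}_{\eta,r,p}^{\cste}} r^{-3}$, and more generally smooth. Cauchy--Lipschitz therefore gives a unique maximal smooth solution starting at any $u\in\mathcal{A}_\xi(\frac52 r)$, depending smoothly on $(t,u)$ as long as it stays in the interior of $\mathcal{A}_\xi(3r)$. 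The flow property then follows from uniqueness, and symplecticity is the standard fact for Hamiltonian flows: differentiate $\varpi_0(\mathrm{d}\Phi^t v,\mathrm{d}\Phi^t w)$ in $t$ and use the symmetry of $\mathrm{d}\nabla\chi$, i.e. the Hessian of $\chi[\xi]$, which is legitimate since $\nabla\chi[\xi]$ is $C^1$ with values in $H^s$.

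The key step, and the main obstacle, is to show the solution does not leave $\mathcal{A}_\xi(3r)$ for $|t|\le1$. This is why the third item is stated in terms of $\mathcal{A}_\xi$ rather than balls. We control the quantity $N_\xi(u):=\sum_k\langle k\rangle^{2s}\big||u_k|^2-\xi_k\big|$ along the flow. Since $\partial_t |u_k|^2 = \{I_k,\chi[\xi]\}$ up to sign convention, Corollary \ref{cor:var_actions} gives
$$\Big|\frac{\mathrm{d}}{\mathrm{d}t}\Big|\, N_\xi(\Phi^t(u))\le \sum_k\langle k\rangle^{2s}|\{I_k,\chi[\xi]\}(\Phi^t u)|\le C_p\|\chi\|_{\mathscr{F}_{\eta,r,p}^{\cste}}$$
in the weak (Lipschitz/absolutely continuous) sense, as long as $\Phi^t u\in\mathcal{A}_\xi(3r)$. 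To justify differentiating through the absolute values, we will differentiate the truncations $\sum_{|k|\le K}$, then pass to the limit in the integrated form.

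Starting in $\mathcal{A}_\xi(\rho)$ with $\rho<\frac52r$, integrating over $|t|\le1$ gives $N_\xi\le\rho^2+C_p\|\chi\|_{\mathscr{F}_{\eta,r,p}^{\cste}}$. This is at most $\frac{25}{4}r^2+C_pr^{2+\delta}<9r^2$ for $r\le 2r_p$ small, which in turn needs $r_p$ small depending on $p$, as allowed by the global constraints. A continuity/bootstrap argument then shows that the maximal solution exists on $[-1,1]$ and stays in $\mathcal{A}_\xi(\sqrt{\rho^2+C_p\|\chi\|_{\mathscr{F}_{\eta,r,p}^{\cste}}})\subset\mathcal{A}_\xi(3r)$. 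Here we use that $\mathcal{A}_\xi(3r)$ contains a uniform $H^s$-neighbourhood of $\mathcal{A}_\xi(\sqrt{\rho^2+C_p\|\chi\|_{\mathscr{F}_{\eta,r,p}^{\cste}}})$, so exiting would require crossing, by continuity of $N_\xi$ in $H^s$. This yields the first three items simultaneously.

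For the closeness estimates we write $\Phi^t(u)-u=-\ic\int_0^t\nabla\chi(\xi;\Phi^\tau u)\,\mathrm{d}\tau$; the case $j=0$ is immediate from Lemma \ref{lem:vector_fields}. For $j\ge1$ we differentiate the integral equation $j$ times in $u$. This shows that $D_j(t):=\mathrm{d}^j(\Phi^t-\mathrm{Id})(u)$ satisfies a linear equation
$$D_j(t)=-\ic\int_0^t \mathrm{d}\nabla\chi(\Phi^\tau)\,D_j(\tau)\,\mathrm{d}\tau+R_j(t),$$
where $R_j$ is a Faà di Bruno sum of terms $\mathrm{d}^h\nabla\chi(\Phi^\tau)$ applied to lower derivatives $\mathrm{d}^{i}\Phi^\tau$. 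By induction on $j$, with $\|\mathrm{d}\Phi^\tau\|\lesssim1$ (Gronwall, using $\|\mathrm{d}\nabla\chi\|\lesssim r^{\delta-1}$ and $\mathrm{d}\Phi=\mathrm{Id}+\mathrm{d}(\Phi-\mathrm{Id})$), each term in $R_j$ is bounded by $\|\chi\|_{\mathscr{F}_{\eta,r,p}^{\cste}} r^{-2-h}$ times products of $r^{-(i-1)}$-type factors, giving total homogeneity $r^{-2-j}$. To see this, note that higher derivatives of $\Phi-\mathrm{Id}$ carry an extra factor $\|\chi\|_{\mathscr{F}_{\eta,r,p}^{\cste}}r^{-2-i}\le r^{\delta-i}\le r^{1-i}$, exactly compensating. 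Gronwall on the linear term, with coefficient $\lesssim r^{\delta-1}\cdot$ wait, coefficient $\|\chi\|_{\mathscr{F}_{\eta,r,p}^{\cste}} r^{-3}\le r^{\delta-1}$, is the delicate point. It is not small. We will instead use the bound $\|\mathrm{d}\nabla\chi\|_{\mathrm{op}}\lesssim r^{\delta-1}$, and if necessary subdivide, or accept $e^{Cr^{\delta-1}}$? That is unacceptable, so we will rather establish the derivative bounds first on the complexified annulus via Cauchy estimates. Concretely, we extend the flow holomorphically (as in the proof of Lemma \ref{lem:vector_fields}), bound $\Phi^t-\mathrm{Id}$ by $\|\chi\|_{\mathscr{F}_{\eta,r,p}^{\cste}}r^{-2}$ on a complex neighbourhood of width $\sim r$, and then obtain the $j$-th derivative estimate from \eqref{prce} with $\varrho\sim r$. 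This gives exactly $\|\chi\|_{\mathscr{F}_{\eta,r,p}^{\cste}}r^{-2-j}$, and we regard this Cauchy-estimate route as the intended one.
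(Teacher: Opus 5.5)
Your treatment of the first four items and of the case $j=0$ is sound and matches the paper. The paper uses Cauchy--Lipschitz with the bounds of Lemma \ref{lem:vector_fields}, and controls $\sum_k\langle k\rangle^{2s}\big||u_k|^2-\xi_k\big|$ through Corollary \ref{cor:var_actions} to keep trajectories in $\mathcal{A}_\xi(3r)$. Symplecticity comes from the Hamiltonian structure, and $j=0$ from the integral formula for $\Phi^t-\mathrm{Id}$.

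The gap is in the estimates for $j\geq 1$. Set $X:=\|\chi\|_{\mathscr{F}_{\eta,r,p}^{\cste}}\,r^{-3}$, and write $\|\chi\|$ for this norm.

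\textbf{Gr\"onwall bookkeeping.} You use $\|\chi\| r^{-2-i}\le r^{\delta-i}\le r^{1-i}$, but the second inequality is false for $\delta<1$. The compensation you need is precisely $X\lesssim 1$. The hypothesis $\|\chi\|\le r^{2+\delta}$ only gives $X\le r^{\delta-1}$, so the linear Gr\"onwall factor $e^{CX}$ is uncontrolled, as you noticed.

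\textbf{Cauchy-estimate route.} Switching to Cauchy estimates does not remove this obstruction; it moves it into an unproved claim. You would need the holomorphic extension of the flow to exist for $|t|\le 1$ on a complex $H^s$-neighbourhood of width $\sim r$ of $\mathcal{A}_\xi(\frac52 r)$, and to stay where $\nabla\chi$ is bounded by $\|\chi\| r^{-2}$.
\begin{itemize}
\item Corollary \ref{cor:var_actions} cannot give this, since it relies on the actions $|u_k|^2$ being real. Off the real annulus the drift is governed by $\mathrm{d}\nabla\chi$, i.e.\ by $X$ again.
\item On the complex domain used in the proof of Lemma \ref{lem:vector_fields}, $|u_j^{+1}u_j^{-1}-\xi_j|$ can be as large as $\xi_j/4\gg r^2$. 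The normal gradient there is only bounded by $\|\chi\|r^{-3}$, so a priori trajectories may move by up to $r^{\delta-1}\gg r$.
\item If you restrict to $|u_j^{+1}u_j^{-1}-\xi_j|\lesssim r\sqrt{\xi_j}$, which is the smallest choice still containing an $H^s$-ball of radius $\sim r$, the terms linear in $y$ give a normal gradient of size $\sqrt{\xi_j}\,\|\chi\|r^{-2}$. This keeps the normal modes inside the domain over unit time only if $\|\chi\|\lesssim r^3\xi_j^{-1/2}$. That is again a condition of type $X\lesssim 1$, not implied by $\|\chi\|\le r^{2+\delta}$.
\end{itemize}

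\textbf{What the paper does, and the fix.} The paper indicates a Gr\"onwall induction based on Lemma \ref{lem:vector_fields}, not Cauchy estimates. That argument closes exactly when $X\lesssim 1$.
\begin{itemize}
\item Then $\|\mathrm{d}\Phi^t\|\le e^{CX}\lesssim 1$.
\item In the Fa\`a di Bruno expansion of $\mathrm{d}^j\Phi^t$, consider a term $\mathrm{d}^h\nabla\chi(\Phi^\tau)(\mathrm{d}^{i_1}\Phi^\tau,\dots,\mathrm{d}^{i_h}\Phi^\tau)$ with $h\ge 2$ and $n_2$ indices $i_m\ge 2$. Using $\|\mathrm{d}\Phi^\tau\|\lesssim 1$ and the induction hypothesis for the factors with $i_m\ge 2$, it is bounded by $\|\chi\|r^{-2-j}X^{n_2}$.
\item Gr\"onwall on the remaining linear term $\mathrm{d}\nabla\chi(\Phi^\tau)\,\mathrm{d}^j\Phi^\tau$ then gives $\|\mathrm{d}^j(\Phi^t-\mathrm{Id})\|\lesssim_j\|\chi\|r^{-2-j}$.
\end{itemize}
So the fix is to assume $\|\chi\|\lesssim r^3$ for the last item. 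The paper's own sketch implicitly needs this too. It holds wherever the lemma is used in the paper: $\|\chi\|\le\rho^4r_p^{-\delta/2}$ at radius $\rho$ in Step 2 of the loop, and $\|\Gamma\|\le r_p^{4-7\delta}$ at radius $r_p$ in Step 4.
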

\begin{proof}  The three first items are standard consequences of the Cauchy--Lipchitz theorem combined with the estimates of Lemma \ref{lem:vector_fields} and Corollary \ref{cor:var_actions}. The fourth item is a basic consequence of the fact that $\Phi_{\chi[\xi]}$ is a Hamiltonian flow. Finally, for the last item, the case $j=0$ can be deduced from Lemma \ref{lem:ouf_cest_des_fonctions} by a standard bootstrap argument while the case $j\geq 1$ can be proven by induction using the Gr\"onwall estimate (and estimates of Lemma \ref{lem:vector_fields}). For more details about the proof of this kind of properties, we refer for example to \cite{BGR24}.
\end{proof}

\subsection{Action-T\"oplitz Hamiltonians and decay properties} Now, we aim at introducing two refined subspaces of $\mathscr{F}_{\eta,r,p}^{\cste}$. The point is that we want to control a rather subtle decay property of the coefficients of the Hamiltonian. This decay property will be crucial to modulate the frequencies in order to get non resonance conditions (see Section \ref{sec:res_coho}).

\medskip

To define this class of Hamiltonians, first we have to introduce the polar-lexicographic order, the associated decreasing rearrangement and the concatenation operator for indices. 
\begin{definition}[Polar-lexicographic order $\prec$] We denote by $\prec$ the total order on $\mathbb{Z}$ such that 
$$
0 \prec -1\prec 1\prec -2\prec 2\prec\cdots.
$$
\end{definition}
\begin{definition}[Decreasing rearrangement $\boldsymbol{\ell}^*$] Given $q\geq 0$ and $\boldsymbol{\ell} \in \mathbb{Z}^d$, we denote by $\boldsymbol{\ell}^* \in \mathbb{Z}^d$  the decreasing rearrangement of $\boldsymbol{\ell}$ (for the polar-lexicographic order): $\boldsymbol{\ell}^*$ is the permutation of $\boldsymbol{\ell}$ satisfying $\boldsymbol{\ell}_q^*\preccurlyeq \cdots \preccurlyeq \boldsymbol{\ell}_1^*$. If $q'>q$, we set by convention $\boldsymbol{\ell}_{q'}^*:=0$.
\end{definition}

\begin{definition}[Concatenation operator $(\cdot \, ; \, \cdot )$]\label{def:concat}
	 Given $q\geq 0$, $i\in \mathbb{Z}$, $\varsigma \in \{-1,1\}$, $\boldsymbol{\ell}\in \mathbb{Z}^q$, $\boldsymbol{\sigma}\in \{-1,1\}^q$, we set
$$
(\boldsymbol{\ell},\boldsymbol{\sigma};i,\varsigma):= ((\boldsymbol{\ell},i),(\boldsymbol{\sigma},\varsigma))
$$
and, to shorten the notation,
$$
(\boldsymbol{\ell},\boldsymbol{\sigma};i,\pm 1) := (\boldsymbol{\ell},\boldsymbol{\sigma};i,1;i,-1).
$$
In many cases, we will omit the parentheses to keep the notation uncluttered.
\end{definition}
Now we define the sets of indices associated with monomials whose $2n$ highest modes are distinct actions.
 \begin{definition}[Sets of indices with high modes pairing $ \mathbf{C}_{q,n} $] \label{def:highpair} For all $q\geq 0$ and $n\leq q$, we set
$$
 \mathbf{C}_{q,n} :=  \mathfrak{S}_q \{ (\boldsymbol{\ell},\boldsymbol{\sigma}; \boldsymbol{i}_n,\pm 1; \cdots ;\boldsymbol{i}_1,\pm 1) \ | \ \boldsymbol{\ell}_{1}^* \prec \boldsymbol{i}_n \prec \cdots \prec \boldsymbol{i}_1 \} \subset \mathbb{Z}^q \times \{-1,1\}^q.
 $$
\end{definition}

\medskip

		In order to define our class of Hamiltonians, we start by introducing two spaces of Hamiltonians characterized by some algebraic properties. First we define the $n$-action T\"oplitz Hamiltonians as being the Hamiltonians whose non zero coefficients correspond to monomials whose highest modes are given by $n$ unpaid actions and do not depend on the indices of these $n$ actions.
\begin{definition}[$n$-action T\"oplitz Hamiltonians $\mathscr{T}_{n,\eta,r,p}^{\cste} $] Given $\eta \geq 0,r>0,p\geq 0,n\geq 0$, we denote by $ \mathscr{T}_{n,\eta,r,p}^{\cste} $  the space of the Hamiltonian $P\in \mathscr{F}_{\eta,r,\mathcal{S}}^{\cste}$ which are \emph{ $n$-action T\"oplitz} in the sense that,  for all $\boldsymbol{k} \in \mathbb{Z}^{\mathcal{S}_p},\boldsymbol{m} \in \mathbb{N}^{\mathcal{S}_p},q\in \mathbb{N}, \boldsymbol{\ell} \in (\mathcal{S}_p^c)^q ,\boldsymbol{\sigma} \in  \{-1,1\}^q$, its coefficients satisfy
$$
P^{\boldsymbol{\ell},\boldsymbol{\sigma}}_{\boldsymbol{k},\boldsymbol{m}} = 0 \quad \mathrm{if} \quad (\boldsymbol{\ell},\boldsymbol{\sigma}) \notin  \mathbf{C}_{q,n} 
$$
and
$$
P_{{\boldsymbol{k}},{\boldsymbol{m}}}^{{\boldsymbol{\ell}},{\boldsymbol{\sigma}};\boldsymbol{i}_n,\pm 1; \cdots ; \boldsymbol{i}_1,\pm 1} = P_{{\boldsymbol{k}},{\boldsymbol{m}}}^{{\boldsymbol{\ell}},{\boldsymbol{\sigma}};\boldsymbol{j}_n,\pm 1; \cdots ; \boldsymbol{j}_1,\pm 1}
	$$
{whenever  $\boldsymbol{i},\boldsymbol{j} \in (\mathcal{S}_p^c)^n$ are such that both $({\boldsymbol{\ell}},{\boldsymbol{\sigma}};\boldsymbol{i}_n,\pm 1; \cdots ; \boldsymbol{i}_1,\pm 1)$ and $({\boldsymbol{\ell}},{\boldsymbol{\sigma}};\boldsymbol{i}_n,\pm 1; \cdots ; \boldsymbol{i}_1,\pm 1)$ belong to $ \mathbf{C}_{q,n}$.}
%	$\boldsymbol{\ell}_h \prec \boldsymbol{j}_n \prec \cdots \prec \boldsymbol{j}_1$ and $\boldsymbol{\ell}_h \prec \boldsymbol{j}_n \prec \cdots \prec \boldsymbol{j}_1$, for all $1\leq h\leq q$. 
\end{definition}

Now, we define the action smoothing Hamiltonians of depth $n$. They are Hamiltonians whose coefficients associated with monomials $v_{\boldsymbol{\ell}}^{\boldsymbol{\sigma}}$ with  $ (\boldsymbol{\ell},\boldsymbol{\sigma}) \in  \mathbf{C}_{q,n} $ satisfying a decay property.
\begin{definition}[Action smoothing Hamiltonians of depth $n$, $\mathscr{S}_{n,\eta,r,p}^{\cste} $] Given $\eta \geq 0,r>0,p\geq 0,n\geq 0$, we denote by $ \mathscr{S}_{n,\eta,r,p}^{\cste} $  the space of the Hamiltonian $P\in \mathscr{F}_{\eta,r,\mathcal{S}}^{\cste}$ which are \emph{action smoothing of depth $n$} in the sense that,  for all $\boldsymbol{k} \in \mathbb{Z}^{\mathcal{S}_p},\boldsymbol{m} \in \mathbb{N}^{\mathcal{S}_p},q\in \mathbb{N}, \boldsymbol{\ell} \in (\mathcal{S}_p^c)^q ,\boldsymbol{\sigma} \in  \{-1,1\}^q$, the following limits exist and satisfy
$$
\lim_{|\boldsymbol{i}_n|\to \infty} \cdots \lim_{|\boldsymbol{i}_1|\to \infty} P^{\boldsymbol{\ell},\boldsymbol{\sigma};\boldsymbol{i}_n,\pm 1; \dots ; \boldsymbol{i}_1,\pm 1 }_{\boldsymbol{k},\boldsymbol{m}} = 0.
$$
\end{definition}

\begin{remark} \label{rem:direct_sum} Thes spaces defined above are complementary, in the sense that
$$
\mathscr{S}_{n,\eta,r,p}^{\cste}   \cap \mathscr{T}_{n,\eta,r,p}^{\cste}  = \{0\}.
$$
In particular, it will be useful to note that:
\begin{center}
the spaces $ \mathscr{S}_{1,\eta,r,p}^{\cste} ,  \ \mathscr{T}_{1,\eta,r,p}^{\cste} \cap \mathscr{S}_{2,\eta,r,p}^{\cste}$ and $\mathscr{T}_{2,\eta,r,p}^{\cste}$ form a direct sum.
\end{center}
\end{remark}

Now, we are in position to define the spaces of Hamiltonians we will use in most of the paper. They are Hamiltonians admitting decomposition in a sum of action T\"oplitz Hamiltonians satisfying some quantified action smoothing properties. Concrete examples  of such decomposition, naturally arising in \eqref{eq:NLS},  will be given in next subsection (see Lemma \ref{lem:explicit_dec} below). The space $\mathscr{C}_{\eta,r,p}^{\cste}$ will correspond to Hamiltonian obtained from the resolution of a cohomological equation. They satisfy a stronger decay property.

\begin{definition}[Hamiltonians $\mathscr{H}_{\eta,r,p}^{\cste}$ and $\mathscr{C}_{\eta,r,p}^{\cste}$] \label{def:class_take3} Let $p\geq 0$, $r>0$ and $\eta \ge 0$. We denote by $ \mathscr{H}_{\eta,r,p}^{\cste}$, the set of the formal series $P \in \mathscr{F}_{\eta,r,p}^{\cste}$ of the form 
		\begin{equation}
			\label{eq:cond2}
			P = P_0+P_1+P_2, \quad \mathrm{with} \ P_0\in \mathscr{S}_{1,\eta,r,p}^{\cste},\, P_1\in  \mathscr{T}_{1,\eta,r,p}^{\cste} \cap \mathscr{S}_{2,\eta,r,p}^{\cste} , \, P_2\in \mathscr{T}_{2,\eta,r,p}^{\cste}
		\end{equation}
		satisfying the bound
		\begin{equation}\label{eq:normcst2}
		\| P \|_{\mathscr{H}_{\eta,r,p}^{\cste}} :=	\sum_{\substack{(\boldsymbol{k},\boldsymbol{m})\in \mathbb{Z}^{\mathcal{S}_p} \times \mathbb{N}^{\mathcal{S}_p}\\ q\in \mathbb{N}}} \!\!\max(1,q ,\|\bk\|_{\ell^1} , \|\bm\|_{\ell^{\infty}})|P|^{\Theta,q}_{\boldsymbol{k},\boldsymbol{m}}   r^{q+2\|\bm\|_{\ell^1}} e^{\eta(\|\bk\|_{\ell^1} +q+2\|\bm\|_{\ell^1})}<\infty\,,\
		\end{equation}
		where 
		\begin{equation}\label{eq:Theta2}
			|P|^{\Theta,q}_{\boldsymbol{k},\boldsymbol{m}} := \hspace{-0.7cm} \sup_{\substack{ n=0,1,2 \\ {\boldsymbol{\ell}}\in (\mathcal S_p^c)^{q-2n}\\ \boldsymbol{\ell}_1^* \prec{\boldsymbol{i}}_n\prec\dots\prec{\boldsymbol{i}}_1\\ \boldsymbol{\sigma}\in\{-1,1\}^{q-2n}}}  \! \! \! \! \! \!  |P^{\boldsymbol{\ell},\boldsymbol{\sigma};{\boldsymbol{i}}_n,\pm 1;\dots;\boldsymbol{i}_1,\pm 1}_{n,{\boldsymbol{k}},{\boldsymbol{m}}}| \Theta_{n,{\boldsymbol{\ell}},\boldsymbol{\sigma}}^{-1}\,, \quad 
			\Theta_{n,\boldsymbol{\ell},\boldsymbol{\sigma}}^{-1}  :=  1 + \mathds{1}_{n\ne 2} \mathds{1}_{(\boldsymbol{\ell},\boldsymbol{\sigma}) \in {\mathbf{C}_{q-2n,1}}}  \| \boldsymbol{\ell} \|_{\ell^\infty}.
		\end{equation}
%		\mic{	\begin{equation}\label{eq:Theta2bis}
%			|P|^{\Theta,q}_{\boldsymbol{k},\boldsymbol{m}} := \hspace{-1cm} \sup_{\substack{ n=0,1,2 \\ {\boldsymbol{\ell}}\in (\mathcal S_p^c)^{q-2n}\\ \boldsymbol{\ell}_{1} \preccurlyeq \cdots \preccurlyeq \boldsymbol{\ell}_{q-2n} \prec{\boldsymbol{i}}_n\prec\dots\prec{\boldsymbol{i}}_1\\ \boldsymbol{\sigma}\in\{-1,1\}^{q-2n}}}  \! \! \! \! \! \!  |P^{\boldsymbol{\ell},\boldsymbol{\sigma};{\boldsymbol{i}}_n,\pm 1;\dots;\boldsymbol{i}_1,\pm 1}_{n,{\boldsymbol{k}},{\boldsymbol{m}}}| \Theta_{n,{\boldsymbol{\ell}},\boldsymbol{\sigma}}^{-1}\,, \quad 
%			\Theta_{n,\boldsymbol{\ell},\boldsymbol{\sigma}}^{-1}  :=  1 + \mathds{1}_{n\ne 2} \mathds{1}_{(\boldsymbol{\ell},\boldsymbol{\sigma}) \in \mathbf{C}_{q-2n,1} }  \| \boldsymbol{\ell} \|_{\ell^\infty}.
%		\end{equation}}
		We denote by $\mathscr{C}_{\eta,r,p}^{\cste}$ the subspace of the formal series $P \in \mathscr{H}^{\cste}_{\eta,r,p}$ satisfying  $P_2=0$ and the extra bound
		\begin{equation}\label{eq:norm_C}
		\| P \|_{\mathscr{C}_{\eta,r,p}^{\cste}} :=	\sum_{\substack{(\boldsymbol{k},\boldsymbol{m})\in \mathbb{Z}^{\mathcal{S}_p} \times \mathbb{N}^{\mathcal{S}_p}\\ q\in \mathbb{N}}} \!\!\max(1,q ,\|\bk\|_{\ell^1} , \|\bm\|_{\ell^{\infty}})|P|^{\Upsilon,q}_{\boldsymbol{k},\boldsymbol{m}}   r^{q+2\|\bm\|_{\ell^1}} e^{\eta(\|\bk\|_{\ell^1} +q+2\|\bm\|_{\ell^1})}<\infty\,,\
		\end{equation}
		where  
		\begin{equation}\label{eq:Ups}
			|P|^{\Upsilon,q}_{\boldsymbol{k},\boldsymbol{m}} := \! \! \! \! \! \! \sup_{\substack{ n=0,1 \\ {\boldsymbol{\ell}}\in (\mathcal S_p^c)^{q-2n}\\  \boldsymbol{\ell}_1^* \prec{\boldsymbol{i}}_n\prec\dots\prec{\boldsymbol{i}}_1\\ \boldsymbol{\sigma}\in\{-1,1\}^{q-2n}}}  \! \! \! \! \! \! |P^{\boldsymbol{\ell},\boldsymbol{\sigma};{\boldsymbol{i}}_n,\pm 1;\dots;\boldsymbol{i}_1,\pm 1}_{n,{\boldsymbol{k}},{\boldsymbol{m}}}| \Upsilon_{n,{\boldsymbol{\ell}},\boldsymbol{\sigma}}^{-1}\,, \quad 
			\Upsilon_{n,\boldsymbol{\ell},\boldsymbol{\sigma}}^{-1}  :=  \langle \| \boldsymbol{\ell} \|_{\ell^\infty} \rangle.
		\end{equation}
	\end{definition}

	\begin{remark}
	This definition makes sense because the decomposition \eqref{eq:cond2} is unique (see Remark \ref{rem:direct_sum}).
\end{remark}
Now, we introduce a notation for the projection $P \mapsto P_n$.
\begin{definition}[Projection $\Pi_{\mathrm{Top}=n}$] Given $n\in \{0,1,2\}$ and 
$P=P_0+P_1+P_2 \in   \mathscr{S}_{1,\eta,r,p}^{\cste} \oplus (\mathscr{T}_{1,\eta,r,p}^{\cste} \cap \mathscr{S}_{2,\eta,r,p}^{\cste})\oplus\mathscr{T}_{2,\eta,r,p}^{\cste}$, we set $\Pi_{\mathrm{Top}=n} P := P_n$.
\end{definition}
\begin{remark} These projections are clearly contractions in $ \mathscr{H}_{\eta,r,p}^{\cste}$ and $ \mathscr{C}_{\eta,r,p}^{\cste}$.
\end{remark}

We conclude this subsection with the observation that all the projections defined in subsection \ref{sub:projection} are contractions in $ \mathscr{H}_{\eta,r,p}^{\cste}$ (and $ \mathscr{C}_{\eta,r,p}^{\cste}$ also but we will not need it). It is based on the following lemma whose proof follows directly from the definitions.
\begin{lemma} \label{stc}
	Let $p\geq 0$, $r>0$, $\eta > 0$, $n\geq 1$, $d\geq 0$. The spaces $\mathscr{T}_{n,\eta,r,p}^{\cste}$ and $\mathscr{S}_{n,\eta,r,p}^{\cste}$ are stable for the projections $\Pi_{\mathrm{deg}=d} $, $\Pi_{\| \boldsymbol{m} \|_{\ell^1}=d} $ and $\Pi_{\mathrm{int}} $.
\end{lemma}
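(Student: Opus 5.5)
The plan is to reduce everything to a single elementary observation. Each of the three projections $\Pi_{\mathrm{deg}=d}$, $\Pi_{\|\boldsymbol{m}\|_{\ell^1}=d}$ and $\Pi_{\mathrm{int}}$ acts by multiplying each coefficient $P^{\boldsymbol{\ell},\boldsymbol{\sigma}}_{\boldsymbol{k},\boldsymbol{m}}$ by an indicator $\mathds{1}_{E}(\boldsymbol{k},\boldsymbol{m},q,\boldsymbol{\ell},\boldsymbol{\sigma})$. The defining conditions of $\mathscr{T}_{n,\eta,r,p}^{\cste}$ and $\mathscr{S}_{n,\eta,r,p}^{\cste}$ constrain only families of coefficients of the form $(\boldsymbol{\ell},\boldsymbol{\sigma};\boldsymbol{i}_n,\pm1;\dots;\boldsymbol{i}_1,\pm1)$ with $(\boldsymbol{k},\boldsymbol{m},\boldsymbol{\ell},\boldsymbol{\sigma})$ fixed and $\boldsymbol{i}$ varying. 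It therefore suffices to show that, inside each such family, the indicator does not depend on $\boldsymbol{i}$. A truncation by a $\boldsymbol{i}$-independent indicator clearly preserves both the vanishing outside $\mathbf{C}_{q,n}$ and the T\"oplitz identities, and it also preserves the existence of the iterated limits $\lim_{|\boldsymbol{i}_n|\to\infty}\cdots\lim_{|\boldsymbol{i}_1|\to\infty}$ and their being zero. Membership in $\mathscr{F}_{\eta,r,p}^{\cste}$, including the reality, symmetry, mass and momentum conditions, is also clearly preserved, since these conditions concern individual coefficients and their images under $\boldsymbol{\sigma}\mapsto-\boldsymbol{\sigma}$ and permutations.

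I would then check the three cases in turn.

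\textbf{Degree and $y$-homogeneity.} The quantities $2\|\boldsymbol{m}\|_{\ell^1}+q$ and $\|\boldsymbol{m}\|_{\ell^1}$ depend only on $\boldsymbol{m}$ and on the total length $q$. Appending $n$ action pairs changes $q$ by $2n$ regardless of $\boldsymbol{i}$, so the indicator is constant on each family.

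\textbf{Integrability.} The condition requires $\boldsymbol{k}=0$ and the existence of a permutation $\varphi$ with $\varphi\boldsymbol{\ell}'=\boldsymbol{\ell}'$ and $\varphi\boldsymbol{\sigma}'=-\boldsymbol{\sigma}'$, where $(\boldsymbol{\ell}',\boldsymbol{\sigma}')=(\boldsymbol{\ell},\boldsymbol{\sigma};\boldsymbol{i}_n,\pm1;\dots)$. The appended pairs $(\boldsymbol{i}_h,1),(\boldsymbol{i}_h,-1)$ are already paired among themselves. In the region $\boldsymbol{\ell}^*_1\prec\boldsymbol{i}_n\prec\dots\prec\boldsymbol{i}_1$ the indices $\boldsymbol{i}_h$ are distinct from each other and from every entry of $\boldsymbol{\ell}$. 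Hence any sign-reversing, index-preserving involution must map these pairs to themselves, and the whole multi-index is paired if and only if $(\boldsymbol{\ell},\boldsymbol{\sigma})$ is paired. This is independent of $\boldsymbol{i}$.

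\textbf{Main obstacle.} The only point needing care is this last pairing argument, specifically that the strict ordering in $\mathbf{C}_{q,n}$ prevents an $\boldsymbol{i}_h$ from coinciding with an entry of $\boldsymbol{\ell}$ and mixing pairings. Everything else is bookkeeping directly from the definitions.
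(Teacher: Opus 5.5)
Your argument is correct and is the ``directly from the definitions'' verification the paper has in mind; the paper gives no further proof. The key point, as you say, is that each projection multiplies coefficients by an indicator that is constant along the families $(\boldsymbol{\ell},\boldsymbol{\sigma};\boldsymbol{i}_n,\pm1;\dots;\boldsymbol{i}_1,\pm1)$.

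One refinement: the distinctness you single out as the main obstacle is not needed. A permutation $\varphi$ with $\varphi\boldsymbol{\ell}=\boldsymbol{\ell}$ and $\varphi\boldsymbol{\sigma}=-\boldsymbol{\sigma}$ exists exactly when every index value carries as many signs $+1$ as $-1$, and appending $(\boldsymbol{i}_h,1),(\boldsymbol{i}_h,-1)$ never changes that balance. So the integrability indicator is independent of $\boldsymbol{i}$ for all $\boldsymbol{i}$, not only in the ordered region. You actually need this unconditional version for the iterated limits defining $\mathscr{S}_{n,\eta,r,p}^{\cste}$, where the inner limits are taken with the outer indices fixed at arbitrary, possibly unordered or coinciding, values.
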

\begin{corollary}  All the projection defined in subsection \ref{sub:projection} are contractions in $ \mathscr{H}_{\eta,r,p}^{\cste}$ and $ \mathscr{C}_{\eta,r,p}^{\cste}$.
\end{corollary}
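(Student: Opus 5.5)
The plan is to reduce the corollary to Lemma \ref{stc} together with the observation that each projection of subsection \ref{sub:projection} acts by restriction of coefficients, that is, by multiplication by an indicator. Let $\Pi$ be one of $\Pi_{\mathrm{deg}=d}$, $\Pi_{\|\bm\|_{\ell^1}=d}$, $\Pi_{\mathrm{int}}$. The sums $\Pi_{\mathrm{deg}\leq d}$, $\Pi_{\mathrm{deg}\geq d}$, $\Pi_{\|\bm\|_{\ell^1}\leq d}$, $\Pi_{\|\bm\|_{\ell^1}\geq d}$ are also restrictions to a union of classes, so they are handled in exactly the same way, and I will not list them separately. In each case $\Pi$ is given by a set $\mathcal{E}$ of indices $(\bk,\bm,\bl,\bs)$: it keeps the coefficients in $\mathcal{E}$ and sets the others to zero. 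A key point to check is that $\mathcal{E}$ is stable under permutations of $(\bl,\bs)$ and under the reality involution, so that $\Pi P$ still lies in $\mathscr{F}_{\eta,r,p}^{\cste}$. This holds because degree, $\|\bm\|_{\ell^1}$, and the integrability condition (``$\bk=0$ and $(\bl,\bs)$ is paired'') are all invariant under these operations.

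Now take $P\in\mathscr{H}_{\eta,r,p}^{\cste}$ with its unique decomposition $P=P_0+P_1+P_2$ from \eqref{eq:cond2}. Since $\Pi$ is linear, $\Pi P=\Pi P_0+\Pi P_1+\Pi P_2$. By Lemma \ref{stc}, $\Pi P_0\in\mathscr{S}_{1,\eta,r,p}^{\cste}$, $\Pi P_1\in\mathscr{T}_{1,\eta,r,p}^{\cste}\cap\mathscr{S}_{2,\eta,r,p}^{\cste}$ and $\Pi P_2\in\mathscr{T}_{2,\eta,r,p}^{\cste}$. By Remark \ref{rem:direct_sum}, this is therefore the decomposition of $\Pi P$, and in particular $\Pi$ commutes with every $\Pi_{\mathrm{Top}=n}$. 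If $P\in\mathscr{C}_{\eta,r,p}^{\cste}$, then $P_2=0$, hence $\Pi P_2=0$, so the structural condition defining $\mathscr{C}$ is preserved.

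For the norms, I use the fact that the coefficients of $\Pi P$ are those of $P$ multiplied by $\mathds{1}_{\mathcal{E}}\in\{0,1\}$. Hence, for every fixed $(\bk,\bm,q)$, every $n\in\{0,1,2\}$ and every admissible $(\bl,\bs,\bi)$,
\[
|(\Pi P)^{\bl,\bs;\bi_n,\pm1;\dots;\bi_1,\pm1}_{\bk,\bm}|\leq|P^{\bl,\bs;\bi_n,\pm1;\dots;\bi_1,\pm1}_{\bk,\bm}|.
\]
The weights $\Theta^{-1}_{n,\bl,\bs}$ and $\Upsilon^{-1}_{n,\bl,\bs}$ do not depend on the coefficients, so taking suprema gives $|\Pi P|^{\Theta,q}_{\bk,\bm}\leq|P|^{\Theta,q}_{\bk,\bm}$ and $|\Pi P|^{\Upsilon,q}_{\bk,\bm}\leq|P|^{\Upsilon,q}_{\bk,\bm}$. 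Summing against the positive weights in \eqref{eq:normcst2} and \eqref{eq:norm_C}, which likewise depend only on $(\bk,\bm,q)$, gives $\|\Pi P\|_{\mathscr{H}}\leq\|P\|_{\mathscr{H}}$ and $\|\Pi P\|_{\mathscr{C}}\leq\|P\|_{\mathscr{C}}$.

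I expect no real obstacle. The only step that needs care is the earlier one: confirming that $\Pi$ does not disturb the components, i.e.\ that restriction preserves both the Töplitz property (the values of the coefficients are independent of $\bi$) and the limits defining the smoothing property. This is the content of Lemma \ref{stc}. The reason it holds is that membership in $\mathcal{E}$ does not depend on the values of the paired indices $\bi$: each action pair $(\bi_h,\pm1)$ contributes $2$ to the degree and is paired, whatever $\bi_h$ is. Consequently $\mathds{1}_{\mathcal{E}}$ is constant along the Töplitz fibers and along the limits $|\bi_h|\to\infty$.
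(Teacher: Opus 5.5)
Your argument is correct and is essentially the paper's. There the corollary is deduced from Lemma \ref{stc} (stability of $\mathscr{T}_n$ and $\mathscr{S}_n$ under restriction of coefficients) together with the uniqueness of the decomposition (Remark \ref{rem:direct_sum}) and the coefficientwise bound, exactly as you do. The one thing to tighten is that your displayed inequality should be stated for the components, $|(\Pi P)_{n,\bk,\bm}^{\bl,\bs;\bi_n,\pm1;\dots;\bi_1,\pm1}| = |(\Pi P_n)_{\bk,\bm}^{\bl,\bs;\bi_n,\pm1;\dots;\bi_1,\pm1}| \le |P_{n,\bk,\bm}^{\bl,\bs;\bi_n,\pm1;\dots;\bi_1,\pm1}|$, because the $\Theta$ and $\Upsilon$ seminorms are suprema over components; this is exactly what your commutation of $\Pi$ with $\Pi_{\mathrm{Top}=n}$ gives.
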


\subsection{Examples : application to \eqref{eq:NLS}} In this subsection we prove that the nonlinearity of \eqref{eq:NLS} belongs to this class with $p=0$. This is somehow the initialization of our construction.
\begin{remark}
	When $p=0$, we have by definition $\mathcal S_0=\emptyset$ (and $\# \mathcal{O}_p^{\mathrm{full}} = 1$). So given $r>0$, $\eta \geq \eta_{\min}$, $  \mathscr{F}_{\eta,r,0}\equiv \mathscr{F}_{0,re^{\eta},0}^{\cste}$ for any $\eta,r$, same for $  \mathscr{F}_{\eta,r,0}^{\cste}$. 
As a consequence, for $P\in  \mathscr{H}_{\eta,r,0}$, we do not write the parameter $\xi$ when referring to the function associated with $P$.
\end{remark}
We start by proving that the ``building blocks'' of the \eqref{eq:NLS} Hamiltonian $H^{\eqref{eq:NLS}}$ belong to $\mathscr{H}_{0,r,0}^{\cste}$, with an explicit decomposition of the form \eqref{eq:cond2}.
\begin{lemma} \label{lem:explicit_dec} Let $\mathfrak{q}\geq 2$. For all $u\in H^s$  we have 
	\begin{equation}
		\label{eq:obvious}
		N_{\mathfrak q}(u):=	\binom{2\mathfrak{q}}{ \mathfrak{q}} \| u \|_{L^{2\mathfrak{q}}}^{2\mathfrak{q}} := \binom{2\mathfrak{q}}{ \mathfrak{q}}\int_{\mathbb{T}} |u(x)|^{2\mathfrak{q}} \frac{\mathrm{d}x}{2\pi}
= P_0(u) + P_1(u) +P_2(u)
	\end{equation}
	where $P_0\in \mathscr{S}_{1,0,1,0}^{\cste},\, P_1\in  \mathscr{T}_{1,0,1,0}^{\cste} \cap \mathscr{S}_{2,0,1,0}^{\cste} , \, P_2\in \mathscr{T}_{2,0,1,0}^{\cste}$ are explicitly given by
	$$
	P_0(u) = \hspace{-0.7cm}\sum_{\substack{  (\boldsymbol{\ell} , \boldsymbol{\sigma} ) \notin \mathbf{C}_{2\mathfrak{q},1}   \\   (\boldsymbol{\ell} , \boldsymbol{\sigma} ) \in \mathcal{M}_{2\mathfrak{q}} }} \hspace{-0.3cm}  u_{\boldsymbol{\ell}}^{\boldsymbol{\sigma}}, \quad
	P_1(u) = \hspace{-0.7cm}\sum_{\substack{  (\boldsymbol{\ell} , \boldsymbol{\sigma} ) \in \mathbf{C}_{2\mathfrak{q},1} \setminus \mathbf{C}_{2\mathfrak{q},2}    \\ (\boldsymbol{\ell} , \boldsymbol{\sigma} ) \in \mathcal{M}_{2\mathfrak{q}}  }  } \hspace{-0.3cm}  u_{\boldsymbol{\ell}}^{\boldsymbol{\sigma}}, \quad \mathrm{and} \quad
	P_2(u) = \hspace{-0.7cm}\sum_{\substack{  (\boldsymbol{\ell} , \boldsymbol{\sigma} ) \in  \mathbf{C}_{2\mathfrak{q},2}   \\ (\boldsymbol{\ell} , \boldsymbol{\sigma} ) \in \mathcal{M}_{2\mathfrak{q}}  }  } \hspace{-0.3cm}  u_{\boldsymbol{\ell}}^{\boldsymbol{\sigma}}.
	$$
	and $\mathcal{M}_{2\mathfrak{q}} := \{ (\boldsymbol{\ell},\boldsymbol{\sigma}) \in \mathbb{Z}^q \times \{-1,1\}^q \ | \  \boldsymbol{\sigma}_1 \boldsymbol{\ell}_1 + \cdots + \boldsymbol{\sigma}_{2\mathfrak{q}} \boldsymbol{\ell}_{2\mathfrak{q}} = \boldsymbol{\sigma}_1 + \cdots + \boldsymbol{\sigma}_{2\mathfrak{q}}  = 0 \}$.
	Consequently, for all $r>0$, we have
	\[
	\| N_{\mathfrak{q}} \|_{\mathscr{H}_{0,r,0}^{\cste}}= \| P_0 \|_{\mathscr{H}_{0,r,0}^{\cste}}=\| P_1 \|_{\mathscr{H}_{0,r,0}^{\cste}}=\| P_2 \|_{\mathscr{H}_{0,r,0}^{\cste}} = 2\mathfrak{q}r^{2 \mathfrak{q}}.
	\]
\end{lemma}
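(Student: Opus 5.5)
We first expand $\|u\|_{L^{2\mathfrak q}}^{2\mathfrak q}$ in Fourier. Writing $|u|^{2\mathfrak q}=u^{\mathfrak q}\bar u^{\mathfrak q}$ and integrating over $\mathbb T$ gives
$$
\int_{\mathbb T}|u|^{2\mathfrak q}\frac{\mathrm dx}{2\pi}=\sum_{\substack{\bl\in\mathbb Z^{2\mathfrak q},\ \bs=(1,\dots,1,-1,\dots,-1)\\ \sum_i\bs_i\bl_i=0}} u_{\bl}^{\bs}.
$$
We then symmetrize over $\mathfrak S_{2\mathfrak q}$. Summing instead over all $(\bl,\bs)\in\mathcal M_{2\mathfrak q}$ (arbitrary $\bs$ with $\mathfrak q$ signs $+1$), each ordered choice of the positions of the $+1$ signs contributes the same sum. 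There are $\binom{2\mathfrak q}{\mathfrak q}$ such choices, hence
$$
N_{\mathfrak q}(u)=\sum_{(\bl,\bs)\in\mathcal M_{2\mathfrak q}}u_{\bl}^{\bs},
$$
with all coefficients equal to $1$ on $\mathcal M_{2\mathfrak q}$ and $0$ elsewhere.

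Next we check that these coefficients define an element of $\mathscr F_{0,r,0}$. Reality, symmetry, mass and momentum are immediate: $\mathcal M_{2\mathfrak q}$ is stable under permutations and under $\bs\mapsto-\bs$, and it is defined exactly by the two conservation laws (with $p=0$, so there are no $\bk,\bm$). The identity $N_{\mathfrak q}=P_0+P_1+P_2$ is just the partition
$$
\mathcal M_{2\mathfrak q}=\big(\mathcal M_{2\mathfrak q}\setminus\mathbf C_{2\mathfrak q,1}\big)\sqcup\big(\mathcal M_{2\mathfrak q}\cap(\mathbf C_{2\mathfrak q,1}\setminus\mathbf C_{2\mathfrak q,2})\big)\sqcup\big(\mathcal M_{2\mathfrak q}\cap\mathbf C_{2\mathfrak q,2}\big),
$$
which uses $\mathbf C_{q,2}\subset\mathbf C_{q,1}$ (clear from Definition~\ref{def:highpair}).

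We now verify the class memberships one at a time.
\begin{itemize}
\item $P_0\in\mathscr S_{1}$: its coefficients vanish on $\mathbf C_{2\mathfrak q,1}$, and $\mathscr S_1$ only constrains coefficients indexed by $(\bl,\bs;\bi_1,\pm1)\in\mathbf C_{2\mathfrak q,1}$, so the limits are trivially $0$.
\item $P_2\in\mathscr T_{2}$: its coefficients are supported in $\mathbf C_{2\mathfrak q,2}$. For fixed $(\bl,\bs)$ with $\bl\in\mathbb Z^{2\mathfrak q-4}$, the coefficient of $(\bl,\bs;\bi_2,\pm1;\bi_1,\pm1)$ is $\mathds 1_{(\bl,\bs)\in\mathcal M_{2\mathfrak q-4}}$. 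This holds because a pair $(i,1),(i,-1)$ contributes zero to both the mass and the momentum sums, so the coefficient is independent of $\bi$.
\item $P_1\in\mathscr T_1$: by the same cancellation, the coefficient of $(\bl,\bs;\bi_1,\pm1)$ with $\bl\in\mathbb Z^{2\mathfrak q-2}$ equals $\mathds 1_{(\bl,\bs)\in\mathcal M_{2\mathfrak q-2}}\mathds 1_{(\bl,\bs;\bi_1,\pm1)\notin\mathbf C_{2\mathfrak q,2}}$. The second indicator only depends on whether the two largest entries of $\bl$ form a pair, so it does not depend on $\bi_1$ as long as $\bl_1^*\prec\bi_1$.
\item $P_1\in\mathscr S_2$: its coefficients vanish on $\mathbf C_{2\mathfrak q,2}$, so the iterated limits defining $\mathscr S_2$ are again trivially $0$.
\end{itemize}

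The main obstacle is the norm identity, which is bookkeeping with the weights. For $p=0$, $\eta=0$ there is a single term $q=2\mathfrak q$ with weight $\max(1,q)=2\mathfrak q$ and factor $r^{2\mathfrak q}$, so it remains to compute $|P|^{\Theta,2\mathfrak q}=1$. The supremum over $n=0,1,2$ of $|\text{coefficient}|\cdot\Theta^{-1}_{n,\bl,\bs}$ must be checked to equal $1$.
\begin{itemize}
\item For $n=2$ we have $\Theta^{-1}=1$ and the coefficient is $1$ or $0$.
\item For $n=0$ or $n=1$, the factor $\Theta^{-1}=1+\|\bl\|_{\ell^\infty}$ appears only when the remaining indices $(\bl,\bs)$ lie in $\mathbf C_{q-2n,1}$.
\end{itemize}
In that situation we must show that the relevant coefficient of $P_n$ vanishes. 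For $n=0$, $(\bl,\bs)\in\mathbf C_{2\mathfrak q,1}$ means $P_0$ has coefficient zero there. For $n=1$, $(\bl,\bs)\in\mathbf C_{2\mathfrak q-2,1}$ together with $\bl_1^*\prec\bi_1$ forces $(\bl,\bs;\bi_1,\pm1)\in\mathbf C_{2\mathfrak q,2}$, where $P_1$ vanishes. Hence the $\|\bl\|_{\ell^\infty}$ weight never contributes, and each supremum equals $1$, attained at any admissible element of $\mathcal M_{2\mathfrak q}$. This gives
$$
\|P_0\|_{\mathscr H_{0,r,0}}=\|P_1\|_{\mathscr H_{0,r,0}}=\|P_2\|_{\mathscr H_{0,r,0}}=\|N_{\mathfrak q}\|_{\mathscr H_{0,r,0}}=2\mathfrak q\,r^{2\mathfrak q}.
$$
The identity for $N_{\mathfrak q}$ itself uses the uniqueness of the decomposition (Remark~\ref{rem:direct_sum}), since its $\Theta$-norm is computed through its components.
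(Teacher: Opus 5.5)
Your proof is correct and follows the same route as the paper. You Fourier-expand to get $N_{\mathfrak q}=\sum_{\mathcal M_{2\mathfrak q}}u_{\bl}^{\bs}$, then use that an appended pair $(i,\pm1)$ changes neither mass nor momentum, so the coefficients do not depend on the paired indices and vanish once the index set falls into $\mathbf C_{2\mathfrak q,1}$ or $\mathbf C_{2\mathfrak q,2}$. You also go slightly further than the paper: you treat $P_0$ and $P_2$ explicitly, and you check that the $\|\bl\|_{\ell^\infty}$ weight in $\Theta^{-1}$ only multiplies vanishing coefficients, which is the content of the paper's ``follows directly from the definition''.
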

\begin{proof} To obtain  \eqref{eq:obvious}, we just decompose the integral in Fourier series
	\[
	N_{\mathfrak{q}} = \binom{2\mathfrak{q}}{ \mathfrak{q}} \| u \|_{L^{2\mathfrak{q}}}^{2\mathfrak{q}} = \sum_{\substack{ \boldsymbol{\sigma}_1 \boldsymbol{\ell}_1 + \cdots + \boldsymbol{\sigma}_{2\mathfrak{q}} \boldsymbol{\ell}_{2\mathfrak q} =0  \\ \boldsymbol{\sigma}_1 + \cdots + \boldsymbol{\sigma}_{2\mathfrak q} =0  }  } u_{\boldsymbol{\ell}_1}^{\boldsymbol{\sigma}_1} \cdots u_{\boldsymbol{\ell}_{2\mathfrak q}}^{\boldsymbol{\sigma}_{2\mathfrak q}}\equiv \sum_{\substack{    (\boldsymbol{\ell} , \boldsymbol{\sigma} ) \in \mathcal{M}_{2\mathfrak{q}} }} \hspace{-0.3cm}  u_{\boldsymbol{\ell}}^{\boldsymbol{\sigma}}\,.
	\]
	By construction $N_{\mathfrak{q}} \in \mathscr{F}_{0,r,0}$ for all $r>0$ and $\| N_{\mathfrak{q}} \|_{\mathscr{F}_{0,r,0}^{cste}}=  2{\mathfrak{q}} $.	What we have to check is that $P_0\in \mathscr{S}_{1,0,1,0}^{\cste},\, P_1\in  \mathscr{T}_{1,0,1,0}^{\cste} \cap \mathscr{S}_{2,0,1,0}^{\cste}$ and $P_2\in \mathscr{T}_{2,0,1,0}^{\cste}$.  We focus only on $ P_1$. The proof for $P_0$ and $P_2$ are similar. By a slight abuse of notation, we denote by $\emptyset$ the unique element of $\mathbb{Z}^{\emptyset}$ and of $\mathbb{N}^{\emptyset}$.
	
	\medskip
	
	First, we focus on the action-T\"oplitz property. Given $\boldsymbol{\ell} \in \mathbb{Z}^{2\mathfrak{q}-2} ,\boldsymbol{\sigma} \in  \{-1,1\}^{2\mathfrak{q}-2} $ and $i\in \mathbb{Z}$ such that $\boldsymbol{\ell}_h \prec i$ for all $1\leq h\leq 2\mathfrak{q}-2$, we have by definition
	\begin{equation}
		\label{eq:pas_bete}
		P^{\boldsymbol{\ell},\boldsymbol{\sigma};i,\pm 1}_{1,\emptyset,\emptyset} = \mathds{1}_{(\boldsymbol{\ell} , \boldsymbol{\sigma} ) \notin \mathbf{C}_{2\mathfrak{q}-2,1} } \mathds{1}_{(\boldsymbol{\ell} , \boldsymbol{\sigma} ) \in \mathcal{M}_{2\mathfrak{q}-2}}
	\end{equation}
	because $(\boldsymbol{\ell},\boldsymbol{\sigma};i,\pm 1)\in \mathcal{M}_{2\mathfrak{q}}$ if and only if $(\boldsymbol{\ell} , \boldsymbol{\sigma} ) \in \mathcal{M}_{2\mathfrak{q}-2}$. Since $P^{\boldsymbol{\ell},\boldsymbol{\sigma};i,\pm 1}_{1,\emptyset,\emptyset} $ does not depend on $i$, we have that $ P_1\in  \mathscr{T}_{1,0,1,0}^{\cste}$.
	
	\medskip
	
	Now, we focus on the action smoothing property.  Given $\boldsymbol{\ell} \in \mathbb{Z}^{2\mathfrak{q}-4} ,\boldsymbol{\sigma} \in  \{-1,1\}^{2\mathfrak{q}-4} $ and $\boldsymbol{i}_2\in \mathbb{Z}$, thanks to the identity  \eqref{eq:pas_bete}, we have that
	\begin{equation}\label{gino}
		\lim_{|\boldsymbol{i}_1|\to \infty } P^{\boldsymbol{\ell},\boldsymbol{\sigma};\boldsymbol{i}_2,\pm 1;\boldsymbol{i}_1,\pm 1}_{1,\emptyset,\emptyset} = \mathds{1}_{(\boldsymbol{\ell} , \boldsymbol{\sigma} ; \boldsymbol{i}_2,\pm 1) \notin \mathbf{C}_{2\mathfrak{q}-2,1} } \mathds{1}_{(\boldsymbol{\ell} , \boldsymbol{\sigma} ;\boldsymbol{i}_2,\pm 1) \in \mathcal{M}_{2\mathfrak{q}-2}}.
	\end{equation}
	{since $(\boldsymbol{\ell} , \boldsymbol{\sigma} ; \boldsymbol{i}_2,\pm 1) \in \mathbf{C}_{2\mathfrak{q}-2,1}  $ whenever  $\boldsymbol{\ell}_1^* \prec \boldsymbol{i}_2$, we deduce that the  limit as $|\boldsymbol{i}_2|$ goes to $\infty$  of \eqref{gino} exists and vanishes.}
	%Thus, it suffices to note that this quantity vanishes as soon as $\boldsymbol{\ell}_h \prec \boldsymbol{i}_2$ for all $1\leq h\leq 2\mathfrak{q}-4$, to deduce that it limit as $|\boldsymbol{i}_2|$ goes to $\infty$ exists and vanishes. 
	This means that $ P_1\in  \mathscr{S}_{2,0,1,0}^{\cste}$. The bound on the norm follows directly from the definition.
\end{proof}
Once we have studied the ``building blocks'' $N_\mathfrak{q}$, we pass to studying totally convergent series
we have the following 
\begin{lemma}\label{nemitski}  Given an analytic function $G: (-\rho_0,\rho_0)\to \mathbb R$, with $G(0)=0$ 
we set\footnote{this is well defined as a formal power series for all $u\in H^s$ and is totally convergent for $\|u\|_{H^s} \lesssim \rho_0$.} %\mic{I am getting confused with the estimates, if we were in $\ell^1$ the same radius...otherwise we have to take a smaller one?} \jo{Yes there is a factor in $H^s$. But it seems to me we do not really care here, we just have to estimate the coefficient. This factor is taken into account in Lemma \ref{lem:ouf_cest_des_fonctions}}\mic{yes, but then maybe we have to say that this is an identity at the level of formal power series?} \jo{I dot really see your point}\mic{ Otherwise i just put $r$ sufficiently small} \jo{Yes, this is essentially what I proposed in Proposition \ref{prop:preparation_NLS}. My idea was to avoid introducing the analyticity radius of $F$ as a notation (outside of a proof).}
	$$
	P(u):= \int_{\mathbb T}  G(|u(x)|^2) \frac{\mathrm{d}x}{2\pi} = \sum_{k=1}^\infty  \frac{G^{(k)}(0)}{k!} \|u\|_{L^{2k}}^{2k}
	$$  Then for all  $r < \rho_0$
	one has $P\in {\mathscr{H}_{0,r,0}^{\cste}}$
	with the bound 
	\[
	\|P\|_{{\mathscr{H}_{0,r,0}^{\cste}}} \lesssim  \sum_{k=1}^\infty \frac{G^{(k)}(0)}{k!}  r^{2k} \,. %\quad F_k:= \frac{1}{k!} F^{(k)}(0)\,.
	\]
\end{lemma}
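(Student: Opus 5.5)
The plan is to reduce the statement to Lemma \ref{lem:explicit_dec} by linearity and the triangle inequality, after two small adjustments.

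First rewrite each term of the expansion in terms of the building blocks. For $k\geq 1$ we have $\|u\|_{L^{2k}}^{2k} = \binom{2k}{k}^{-1} N_k(u)$, so
$$
P = \sum_{k\geq 1} \frac{G^{(k)}(0)}{k!}\binom{2k}{k}^{-1} N_k ,
$$
understood as an identity between formal series, coefficient by coefficient. For each $(\bk,\bm,q)=(\emptyset,\emptyset,2k)$ only one summand contributes. Lemma \ref{lem:explicit_dec} is stated for $\mathfrak{q}\geq 2$, so the term $k=1$ is treated separately. There $N_1(u) = 2\|u\|_{L^2}^2 = 2\sum_j |u_j|^2$. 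Its only nonzero coefficients are indexed by $(i,\pm1)$, i.e. $n=1$ with $\bl$ empty, and they are constant in $i$. Hence $N_1$ lies in $\mathscr{T}_{1,0,r,0}^{\cste}$. It also lies in $\mathscr{S}_{2,0,r,0}^{\cste}$ vacuously, since it contains no monomial with two pairs. Its norm is $2r^2$.

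Next define the decomposition termwise. Set $P_n := \sum_k \frac{G^{(k)}(0)}{k!}\binom{2k}{k}^{-1} (N_k)_n$ for $n=0,1,2$, where $(N_k)_n$ denotes the pieces given by Lemma \ref{lem:explicit_dec}. Because the degrees $q=2k$ are disjoint, the algebraic properties pass to the sum without any limiting argument. The T\"oplitz condition and the existence and vanishing of the iterated limits defining $\mathscr{S}_{n}$ are conditions at fixed $q$, and they are inherited by $P_0$, $P_1$ and $P_2$. Reality, symmetry, mass and momentum conservation are likewise checked coefficientwise.

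Then bound the norm. The sum \eqref{eq:normcst2} runs over $q$, and the degree-$q=2k$ contribution is exactly $\big|\frac{G^{(k)}(0)}{k!}\big|\binom{2k}{k}^{-1}\|N_k\|_{\mathscr{H}_{0,r,0}^{\cste}}$. By Lemma \ref{lem:explicit_dec} this equals $\big|\frac{G^{(k)}(0)}{k!}\big|\binom{2k}{k}^{-1} 2k\, r^{2k}$.

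The only subtle point is the weight $\Theta^{-1}$. For the pieces of $N_k$, the coefficients are indicators and $\Theta^{-1}$ can be as large as $1+\|\bl\|_{\ell^\infty}$. This would make the supremum infinite unless the indicator vanishes in that situation. Lemma \ref{lem:explicit_dec} asserts the norm is finite, which I take as given, and the check is as follows. In $P_1$ at $n=0$, and in $P_0$ at $n=0$, the relevant $\bl$ is not in $\mathbf{C}_{q,1}$, so $\Theta^{-1}=1$. When $P_1$ is read at $n=1$ with $\bl\in\mathbf{C}_{q-2,1}$, the coefficient is $0$ by \eqref{eq:pas_bete}. So the weights never enter.

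Summing over $k$ gives
$$
\|P\|_{\mathscr{H}} \leq \sum_k 2k\binom{2k}{k}^{-1}\Big|\frac{G^{(k)}(0)}{k!}\Big| r^{2k} \lesssim \sum_k \Big|\frac{G^{(k)}(0)}{k!}\Big| r^{2k},
$$
since $2k\binom{2k}{k}^{-1}\leq 2$. The right-hand side should be read with absolute values. It converges for $r^2$ below the radius of convergence of $G$, which is where $r<\rho_0$ is used, possibly after shrinking $r$ by a harmless constant.
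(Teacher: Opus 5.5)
Your proposal is correct and is exactly the paper's argument: expand $P=\sum_{k\geq 1}\binom{2k}{k}^{-1}\frac{G^{(k)}(0)}{k!}N_k$, apply Lemma \ref{lem:explicit_dec} to each homogeneous piece, and conclude by the triangle inequality, with absolute values on the right-hand side as you rightly note. The details you add beyond the paper's one-line proof (disjointness of degrees letting the $\mathscr{H}$-decomposition pass to the sum, and the separate case $k=1$) are fine; the only nuance is that, with the stated convention $\boldsymbol{\ell}_1^*:=0$ for empty $\boldsymbol{\ell}$, the term $2|u_0|^2$ of $N_1$ falls in the $\mathscr{S}_{1}$ part rather than the T\"oplitz part, which leaves its norm $2r^2$ unchanged.
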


\begin{proof}
	By construction
	$P(u):=  \sum_{k=1}^\infty \binom{2k}{k}^{-1} \frac{G^{(k)}(0)}{k!} N_{k}(u)$, so the bound follows by the triangular inequality.
\end{proof}

We conclude with  two Hamiltonian which will play a central role in all the paper.
\begin{definition}[$Z_2$ and $Z_4$] \label{def:Z_24} We set
	$$
	Z_2(u) := \frac12 \sum_{j\in \mathbb{Z}} j^2 |u_j|^2 \quad \mathrm{and} \quad Z_4(u) := \frac14 \sum_{j\in \mathbb{Z}} |u_j|^4 -  \frac12 \| u\|_{L^2}^4.
	$$
\end{definition}

Then, we prove that $Z_4$ is the integrable part of the quartic part of the Hamiltonian of \eqref{eq:NLS} (recall that by convention we set $f'(0) = -1$).
\begin{lemma} \label{lem:part_int_P4} We have that $Z_4,\| \cdot \|_{L^4}^4 \in  \mathscr{H}_{0,1,0}^{\cste}$ and 
	\begin{equation}
		\label{eq:KP96}
		- \frac14 \Pi_{\mathrm{int}} \| \cdot \|_{L^4}^4 = Z_4.
	\end{equation}
\end{lemma}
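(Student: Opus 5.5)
The statement has two parts: membership of $Z_4$ and $\|\cdot\|_{L^4}^4$ in $\mathscr{H}_{0,1,0}^{\cste}$, and the identity \eqref{eq:KP96}. I will prove them separately, first membership and then the identity.

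\textbf{Membership.} For $\|\cdot\|_{L^4}^4$ I will apply Lemma \ref{lem:explicit_dec} with $\mathfrak{q}=2$. It gives $N_2=\binom{4}{2}\|\cdot\|_{L^4}^4=6\|\cdot\|_{L^4}^4\in\mathscr{H}_{0,1,0}^{\cste}$, so it suffices to divide by $6$.

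For $Z_4$ I will produce an explicit decomposition $P_0+P_1+P_2$. Since $p=0$, there is no $\mathcal{S}_p$ and every index is normal. I write $Z_4=\frac14\sum_j|u_j|^4-\frac12\sum_{i,j}|u_i|^2|u_j|^2$.
\begin{itemize}
\item The diagonal terms $i=j$, together with $\frac14\sum_j|u_j|^4$, give coefficients on monomials $(j,\pm1;j,\pm1)$. These are not in $\mathbf{C}_{4,1}$, because the top action is repeated. They therefore belong to $\mathscr{S}_{1}$ trivially, the limit condition being vacuous on $\mathbf{C}_{2,1}$-type indices. I will check this exactly as for $P_0$ in Lemma \ref{lem:explicit_dec}.
\item The off-diagonal terms $i\neq j$ give monomials $(\boldsymbol{i}_2,\pm1;\boldsymbol{i}_1,\pm1)$ with $\boldsymbol{i}_2\prec\boldsymbol{i}_1$ and a constant coefficient. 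These lie in $\mathbf{C}_{4,2}$ and form a $2$-action T\"oplitz term, i.e.\ they belong to $\mathscr{T}_{2,0,1,0}^{\cste}$.
\end{itemize}
The only care needed is the symmetrization convention on coefficients. Each of these coefficients is constant, so the norm \eqref{eq:normcst2} is finite.

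\textbf{The identity.} I will compute $\Pi_{\mathrm{int}}$ directly on the Fourier expansion
\[
\|u\|_{L^4}^4=\sum_{\ell_1-\ell_2+\ell_3-\ell_4=0}u_{\ell_1}\overline{u_{\ell_2}}u_{\ell_3}\overline{u_{\ell_4}}.
\]
The integrable monomials are those with $\{\ell_1,\ell_3\}=\{\ell_2,\ell_4\}$ as multisets. By inclusion--exclusion, this contributes $2\sum_{i,j}|u_i|^2|u_j|^2-\sum_j|u_j|^4$, hence
\[
\Pi_{\mathrm{int}}\|u\|_{L^4}^4=2\|u\|_{L^2}^4-\sum_j|u_j|^4 .
\]
Multiplying by $-\frac14$ gives $\frac14\sum_j|u_j|^4-\frac12\|u\|_{L^2}^4=Z_4$, which is \eqref{eq:KP96}.

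In the formalism of Definition \ref{def:class_F}, the symmetric coefficient of $|u_i|^2|u_j|^2$ is the same count spread over permutations. The identity is therefore an equality of formal series, which by the injectivity remark is equivalent to the identity of functions.

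The main obstacle is only the bookkeeping. One has to make sure that the inclusion--exclusion count matches the symmetric-coefficient convention, and that the diagonal terms $|u_j|^4$ are placed correctly in the direct sum of Remark \ref{rem:direct_sum}. I expect no analytic difficulty.
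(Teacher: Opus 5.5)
Your proposal is correct. The identity is proved exactly as in the paper: the paper's ``Poincar\'e formula'' is your inclusion--exclusion over the two pairings $\{\ell_1=\ell_2,\ \ell_3=\ell_4\}$ and $\{\ell_1=\ell_4,\ \ell_3=\ell_2\}$. You are also right that integrability of a tuple is permutation invariant. Hence restricting the unsymmetrized Fourier sum is the same as applying $\Pi_{\mathrm{int}}$ to the symmetric coefficients.

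Where you differ is the membership of $Z_4$.

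\emph{The paper's route.} It proves the identity first and then gets $Z_4\in\mathscr{H}_{0,1,0}^{\cste}$ at no cost. Lemma \ref{lem:explicit_dec} gives $\|\cdot\|_{L^4}^4\in\mathscr{H}_{0,1,0}^{\cste}$, and by Lemma \ref{stc}, $\Pi_{\mathrm{int}}$ preserves each $\mathscr{T}_n$ and $\mathscr{S}_n$, so it is a contraction of $\mathscr{H}$. This yields membership together with a norm bound, with no coefficient bookkeeping at all.

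\emph{Your route.} You decompose $Z_4$ by hand: the diagonal part $-\frac14\sum_j|u_j|^4$ goes in $\mathscr{S}_1$, the off-diagonal part $-\sum_{i\prec j}|u_i|^2|u_j|^2$ goes in $\mathscr{T}_2$, and the middle slot is empty. This is self-contained and does not depend on the identity. It is exactly what $\Pi_{\mathrm{int}}$ extracts from the decomposition of Lemma \ref{lem:explicit_dec}, and it is the structure the paper reuses in Lemma \ref{lem:trans_Z4}.

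Two small bookkeeping remarks. First, your justification for the diagonal part is garbled, though the conclusion is right. The correct reason is that, for fixed $(\bl,\bs)$, the coefficient at $(\bl,\bs;\boldsymbol{i}_1,\pm1)$ vanishes as soon as $\boldsymbol{i}_1$ differs from the entries of $\bl$. Second, if the convention $\bl_1^*:=0$ is read literally for the empty tuple, the terms $|u_0|^2|u_j|^2$ lie in $\mathbf{C}_{4,1}\setminus\mathbf{C}_{4,2}$ rather than in $\mathbf{C}_{4,2}$. They then belong to the $\mathscr{T}_1\cap\mathscr{S}_2$ slot, since the coefficient is constant in $j\succ 0$ and the double limit vanishes. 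This is harmless for the norm.
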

\begin{proof} 
	%	First, decomposing the integral in Fourier series, we note that 
	%	$$
	%	\int_{\mathbb{T}} \, |u(x)|^4 \frac{\mathrm{d}x}{2\pi}  =  \sum_{  \boldsymbol{\ell}_1 + \boldsymbol{\ell}_2 - \boldsymbol{\ell}_3 -   \boldsymbol{\ell}_4 =0   } u_{\boldsymbol{\ell}_1}  u_{\boldsymbol{\ell}_2} \overline{u_{\boldsymbol{\ell}_3}} \overline{u_{\boldsymbol{\ell}_4}}= \binom{4}{2}^{-1}  \! \! \! \! \sum_{\substack{ \boldsymbol{\sigma}_1 \boldsymbol{\ell}_1 + \cdots + \boldsymbol{\sigma}_4 \boldsymbol{\ell}_4 =0  \\ \boldsymbol{\sigma}_1 + \cdots + \boldsymbol{\sigma}_4 =0  }  } u_{\boldsymbol{\ell}_1}^{\boldsymbol{\sigma}_1} \cdots u_{\boldsymbol{\ell}_4}^{\boldsymbol{\sigma}_4}.
	%	$$
	%	Thus, we have that $Z_4,\| \cdot \|_{L^4}^4 \in  \mathscr{F}_{0,1,0}^{\cste}$. Then by definition of $\Pi_{\mathrm{int}}$, we have
	%	$$
	%	- \frac14 \big( \Pi_{\mathrm{int}} \| \cdot \|_{L^4}^4\big) (u) = - \frac14 \sum_{ \substack{ (\boldsymbol{\ell}_1,\boldsymbol{\ell}_2)  = (\boldsymbol{\ell}_3,\boldsymbol{\ell}_4) \\ \mathrm{or} \ (\boldsymbol{\ell}_1,\boldsymbol{\ell}_2)  = (\boldsymbol{\ell}_4,\boldsymbol{\ell}_3)  } }  |u_{\boldsymbol{\ell}_1} |^2 |u_{\boldsymbol{\ell}_2} |^2.
	%	$$
	Applying the Poincaré's formula,  we get the identity \eqref{eq:KP96}. Then the bounds follow from  Lemma \ref{stc}
\end{proof}

We conclude this subsection by the following proposition in which we prove that the nonlinear part of \eqref{eq:NLS} belongs to the $\mathscr{H}$ class. We recall that $f'(0)$ does not appear in the formula below because, to avoid notations, we have set $f'(0)=-1$. 
\begin{proposition} \label{prop:preparation_NLS}  There exists $\rho_0>0$, there exists $R^{(0)}\in \Pi_{\mathrm{deg}\geq 4} \mathscr{H}_{4 \eta_{\min},\rho_0,0}^{\cste}$, containing only terms of degree larger than or equal to $4$ and no quartic integrable term (i.e. $\Pi_{\mathrm{deg} =4} \Pi_{\mathrm{int}} R^{(0)} =0 $), such that for all $u\in B_{H^s}(3\rho_0)$,
	$$
	H^{\eqref{eq:NLS}}(u)  =Z_2(u) + Z_4(u) + R^{(0)}(u).
	$$
	where $H^{\eqref{eq:NLS}}$ denotes the Hamiltonian of \eqref{eq:NLS} defined by \eqref{eq:def_Ham_NLS}.
\end{proposition}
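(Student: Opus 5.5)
The plan is to expand the Hamiltonian \eqref{eq:def_Ham_NLS} term by term and reduce everything to Lemmas \ref{nemitski}, \ref{lem:part_int_P4} and \ref{lem:explicit_dec}. The kinetic part gives $\frac12\int|\partial_x u|^2\frac{\mathrm{d}x}{2\pi} = \frac12\sum_j j^2|u_j|^2 = Z_2(u)$ by Plancherel. For the potential part, write $F(x) = \sum_{k\geq 2} \frac{F^{(k)}(0)}{k!}x^k$. This uses $F(0)=0$, $F'(0)=f(0)=0$ and $F''(0)=f'(0)=-1$, and the series converges for $|x|<\rho_F$. Then
$$
\frac12\int_{\mathbb{T}} F(|u|^2)\frac{\mathrm{d}x}{2\pi} = -\frac14 \|u\|_{L^4}^4 + \frac12\sum_{k\geq 3}\frac{F^{(k)}(0)}{k!}\|u\|_{L^{2k}}^{2k}.
$$
By \eqref{eq:KP96}, we have $-\frac14\|u\|_{L^4}^4 = Z_4(u) - \frac14(\mathrm{Id}-\Pi_{\mathrm{int}})\|u\|_{L^4}^4$. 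This suggests the definition
$$
R^{(0)} := -\frac14(\mathrm{Id}-\Pi_{\mathrm{int}})\|\cdot\|_{L^4}^4 + \frac12 \sum_{k\geq 3}\frac{F^{(k)}(0)}{k!}\binom{2k}{k}^{-1} N_k .
$$
With this choice the identity $H^{\eqref{eq:NLS}} = Z_2+Z_4+R^{(0)}$ holds on $B_{H^s}(3\rho_0)$, once we know that the series converges there as a function. For that, use Lemma \ref{lem:ouf_cest_des_fonctions} with $p=0$, or the bound \eqref{convq} together with $\|u\|_{L^\infty}\lesssim \|u\|_{H^s}$.

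Next I check the structural properties of $R^{(0)}$. Every term has degree $\geq 4$. The only quartic piece is $(\mathrm{Id}-\Pi_{\mathrm{int}})\|\cdot\|_{L^4}^4$, which satisfies $\Pi_{\mathrm{deg}=4}\Pi_{\mathrm{int}}R^{(0)}=0$ because $\Pi_{\mathrm{int}}$ is a projection. For membership in the $\mathscr{H}$ class:
\begin{itemize}
\item The quartic part lies in $\mathscr{H}_{0,r,0}$ by Lemma \ref{lem:part_int_P4}, since $\mathrm{Id}-\Pi_{\mathrm{int}}$ is a contraction (the Corollary after Lemma \ref{stc}).
\item The tail is treated by Lemma \ref{lem:explicit_dec}. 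Each $N_k$ decomposes as $P_0+P_1+P_2$ in the required classes with $\|N_k\|_{\mathscr{H}_{0,r,0}} = 2k r^{2k}$. Since the three classes form a direct sum (Remark \ref{rem:direct_sum}), summing the decompositions termwise gives the decomposition \eqref{eq:cond2} of the tail, provided the sum converges in norm.
\end{itemize}

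The remaining step is the weighted norm $\mathscr{H}_{4\eta_{\min},\rho_0,0}$. As remarked, for $p=0$ there are no $\bk,\bm$, so $\mathscr{H}_{\eta,r,0} = \mathscr{H}_{0,re^{\eta},0}$. It therefore suffices to show
$$
\sum_{k\geq 3}\frac{|F^{(k)}(0)|}{k!}\binom{2k}{k}^{-1} 2k\,(\rho_0 e^{4\eta_{\min}})^{2k} < \infty .
$$
This holds as soon as $(\rho_0 e^{4\eta_{\min}})^2 < \rho_F$, using only Cauchy bounds on the Taylor coefficients of $F$; the factor $2k\binom{2k}{k}^{-1}$ only helps. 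The Banach-space structure of $\mathscr{H}$ then lets us pass to the limit in the decomposition, and the projections $\Pi_{\mathrm{Top}=n}$ act termwise.

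I expect the only real subtlety to be this passage to the limit. One must verify that the action-smoothing conditions, which are iterated limits of coefficients, survive infinite sums. This holds because the weighted norm dominates the coefficients uniformly, so the tail contributions are uniformly small and the iterated limits commute with the convergent sum. The final step is to choose $\rho_0$ small enough relative to $e^{-4\eta_{\min}}\sqrt{\rho_F}$.
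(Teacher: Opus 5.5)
Your proposal is correct and follows the paper's proof. Both arguments use Plancherel for $Z_2$, then the Taylor expansion of $F$ with $F''(0)=f'(0)=-1$, then the splitting $-\frac14\|u\|_{L^4}^4 = Z_4 - \frac14(\mathrm{Id}-\Pi_{\mathrm{int}})\|u\|_{L^4}^4$ from \eqref{eq:KP96}, and finally termwise control of the higher-order terms through Lemma \ref{lem:explicit_dec}; that last step is exactly the content of Lemma \ref{nemitski}, which the paper cites directly.

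The only difference is that you make explicit three points the paper leaves implicit: the identification $\mathscr{H}_{4\eta_{\min},\rho_0,0}=\mathscr{H}_{0,\rho_0e^{4\eta_{\min}},0}$, the resulting smallness condition on $\rho_0$, and the stability of the decomposition \eqref{eq:cond2} under norm-convergent sums.
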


\begin{proof}
By definition, passing to  the Fourier representation \eqref{fourier} 
\[
H^{\eqref{eq:NLS}}(u) = \frac12 \int_{\mathbb{T}} |\partial_x u(x)|^2 + F(|u(x)|^2) \frac{\mathrm{d}x}{2\pi}  =   \frac12 \sum_{j\in \mathbb{Z}} j^2 |u_j|^2  +\frac{1}{2}  \sum_{k=2}^\infty \frac{F^{(k)}(0)}{k!}\|u\|_{L^{2k}}^{2k}.
\] 
Now, recalling that $F^{(2)}(0)=  f'(0) =-1 $ we obtain
\[
H^{\eqref{eq:NLS}}(u) = Z_2 - \frac14 \|u\|^4_{L^4} + \frac{1}{2}  \sum_{k=3}^\infty \frac{F^{(k)}(0)}{k!}\|u\|_{L^{2k}}^{2k}\,,
\]
and the result follows, by Lemma \ref{nemitski},  setting
\[
R^{(0)}(u)=  - \frac14( \mathds{1}- \Pi_{\rm{int}}) \|u\|^4_{L^4} + \frac{1}{2}  \sum_{k=3}^\infty \frac{F^{(k)}(0)}{k!}\|u\|_{L^{2k}}^{2k}\,.
\]
\end{proof}

\subsection{Hamiltonians depending on parameters}

As usual in KAM theory, the coefficients of the Hamiltonians will depend in a Lipschitz way on the parameter $\xi$ which will leave in Borel subsets of $\mathcal{O}_p^{\mathrm{full}}$. So, now, we extend the previous definition to include this dependence. 
\begin{definition}[Hamiltonians with parameters $\mathscr{H}_{\eta,r,p,\mathcal{O}},\mathscr{C}_{\eta,r,p,\mathcal{O}},\mathscr{F}_{\eta,r,p,\mathcal{O}}$] \label{def:lip} Let $p\geq 0$, $r\leq 2r_p$, $\mathcal{O} \subset \mathcal{O}_p^{\mathrm{full}}$,  $\eta \geq 0$. We denote by
$$
 \mathscr{H}_{\eta,r,p,\mathcal{O}} =: \mathrm{Lip}(\mathcal{O}; \mathscr{H}_{\eta,r,p}^{\cste} )
 $$
 the space of the Lipschitz functions from $\mathcal{O}$ into the space of the formal series $\mathscr{H}_{\eta,r,p}^{\cste} $. Naturally we equip $ \mathscr{H}_{\eta,r,p,\mathcal{O}} $ with the norms
 $$
 \| P\|_{ \mathscr{H}_{\eta,r,p,\mathcal{O}}^{\mathrm{sup}}} = \sup_{\xi \in \mathcal{O}} \| P(\xi) \|_{ \mathscr{H}_{\eta,r,p}^{\cste} },
\quad 
 \| P\|_{ \mathscr{H}_{\eta,r,p,\mathcal{O}}^{\mathrm{lip}}} = \sup_{\substack{\xi,\zeta \in \mathcal{O}\\ \xi \neq \zeta}} \frac{\| P(\xi) - P(\zeta)  \|_{\mathscr{H}_{\eta,r,p}^{\cste}}}{\| \xi -\zeta \|_{\ell^2}},
 $$
 and
 $$
\| P\|_{ \mathscr{H}_{\eta,r,p,\mathcal{O}}^{\mathrm{tot}}} := \| P\|_{ \mathscr{H}_{\eta,r,p,\mathcal{O}}^{\mathrm{sup}}}  + r^2 \| P\|_{ \mathscr{H}_{\eta,r,p,\mathcal{O}}^{\mathrm{lip}}}. 
$$
The subspace $ \mathscr{C}_{\eta,r,p,\mathcal{O}}$ and $ \mathscr{F}_{\eta,r,p,\mathcal{O}}$ as well as the associated norms are defined in the same way.
\end{definition}

\begin{remark} We naturally extend the notations we introduced in the parameter-less case. More precisely,
given $\xi \in \mathcal{O}$, thanks to Lemma \ref{lem:ouf_cest_des_fonctions}, to shorten the notation, we set
$$
P(\xi;u) := P(\xi)(\xi;u).
$$
To avoid possible confusion, we denote by $P[\xi]:= P(\xi;\cdot)$ the smooth function from $ \mathcal{A}_\xi(3r)$ into $\mathbb{R}$ defined just above.
\end{remark}

\subsection{Translations} 

From step $p$ to step $p+1$, some normal sites become tangential sites. In particular,  we have to do a partial action-angle change of variables $v_j\rightsquigarrow \sqrt{\xi_j +y_j}e^{\ic \theta_j} $ for $\mathcal{S}_{p+1}\setminus \mathcal{S}_{p}$, and show that the resulting formal power series is in $\mathscr{H}_{\eta'.r',p+1}$ for appropriate $\eta',r'$.
This is what we do in the following proposition whose proof is postponed to Appendix \ref{sub:trans}.

\begin{proposition} \label{prop:open} Let $p\geq 0$, $\eta \in [\eta_{\min},9 \eta_{\min}]$, $\mathcal{O} \subset \mathcal{O}_p^{\mathrm{full}}$ be a Borel set and $P\in \mathscr{H}_{\eta,\rho,p,\mathcal{O}}^{\mathrm{tot}}$ where 
$$
2r_p \geq \rho \gtrsim_{p} r_{p+1}^{1-\delta} \gtrsim_{p} r.
$$
 There exists a Hamiltonian $R\in \mathscr{H}_{\eta,r,p+1,\mathcal{O}^\sharp}$, where
$$
\mathcal{O}^\sharp := \mathcal{O}\times  \big(\frac{r_{p+1}^{2(1-\delta)}}2,  r_{p+1}^{2(1-\delta)}\big)^{\mathcal{S}_{p+1}\setminus \mathcal{S}_{p}},
$$ 
such that
\begin{equation}
\label{eq:bound_open}
\| R\|_{\mathscr{H}_{\eta,r,p+1,\mathcal{O}^\sharp}^{\mathrm{tot}}} \lesssim_p \|P\|_{\mathscr{H}_{\eta,\rho,p,\mathcal{O}}^{\mathrm{tot}}}
\end{equation}
and for all $\xi=:(\xi^{\flat},\xi^{\mathrm{new}})\in \mathcal{O}^\sharp$ and all $u\in \mathcal{A}_{\xi}(3r_{p+1})$, 
\begin{equation}
\label{identita}
R(\xi;u) = P(\xi^\flat;u).
\end{equation}
Moreover, we have  
$$
\Pi_{\mathrm{deg}\leq d-1} R  =0 \quad \mathrm{where} \quad d= \min_{ P_{\boldsymbol{k},\boldsymbol{m}}^{\boldsymbol{\ell},\boldsymbol{\sigma}} \neq 0 } 2 \| \boldsymbol{m} \|_{\ell^1} + \# \{ j \ | \ \boldsymbol{\ell}_j \in \mathcal{S}_{p+1}^c\}
$$
and if $\Pi_{\mathrm{int}} P=0$ then $\Pi_{\mathrm{int}} R = 0$.
\end{proposition}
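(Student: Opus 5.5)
The construction of $R$ is an explicit substitution followed by a re-expansion. For every monomial of $P$ and every index $\boldsymbol{\ell}_i\in\mathcal{S}_{p+1}\setminus\mathcal{S}_p$, we write
\[
v_{\boldsymbol{\ell}_i}^{\boldsymbol{\sigma}_i}=(\xi^{\mathrm{new}}_{\boldsymbol{\ell}_i}+y_{\boldsymbol{\ell}_i})^{1/2} z_{\boldsymbol{\ell}_i}^{\boldsymbol{\sigma}_i}.
\]
Equivalently, a factor $v_j^{a}\overline{v_j}^{b}$ with $j$ new becomes $(\xi_j+y_j)^{(a+b)/2} z_j^{a-b}$. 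We then expand $(\xi_j+y_j)^{(a+b)/2}$ in a binomial series in $y_j/\xi_j$. This series converges because $|y_j|\le 9r_{p+1}^2\ll \xi_j\sim r_{p+1}^{2(1-\delta)}$. Collecting terms defines the coefficients
\[
R^{\boldsymbol{\ell}',\boldsymbol{\sigma}'}_{\boldsymbol{k}',\boldsymbol{m}'}(\xi),
\]
where $\boldsymbol{\ell}'$ keeps only the sites in $\mathcal{S}_{p+1}^c$, $\boldsymbol{k}'$ and $\boldsymbol{m}'$ are extended to $\mathcal{S}_{p+1}$, and the values of $P^{\boldsymbol{\ell},\boldsymbol{\sigma}}_{\boldsymbol{k},\boldsymbol{m}}(\xi^\flat)$ are summed over all preimages.

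The following properties then follow directly from the definitions. Reality, symmetry, mass and momentum are preserved, since $z_j$ carries exactly the charge $\boldsymbol{\sigma}_i$ and the momentum $j\boldsymbol{\sigma}_i$. The identity \eqref{identita} holds on $\mathcal{A}_\xi(3r_{p+1})$. The degree claim holds because each new tangential factor contributes $0$ to $q'$ and an even amount $2\|\boldsymbol{m}'\|$, and every other factor keeps its degree. The statement $\Pi_{\mathrm{int}}P=0\Rightarrow\Pi_{\mathrm{int}}R=0$ holds because an integrable output monomial ($\boldsymbol{k}'=0$ and $\boldsymbol{\ell}'$ paired) can only come from monomials whose new indices are paired, and these are integrable input monomials.

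\textbf{The norm estimate \eqref{eq:bound_open}.} Each monomial of $P$ with $q$ normal indices, of which $q_{\mathrm{new}}$ lie in $\mathcal{S}_{p+1}\setminus\mathcal{S}_p$, carries weight $\rho^{q}e^{\eta q}$ in the $\mathscr{H}_{\eta,\rho,p}$ norm. After substitution it carries weight $r^{q-q_{\mathrm{new}}+2\|\boldsymbol{m}'\|}e^{\eta(\dots)}$ times the size of the coefficient, which is at most $\xi_j^{q_{\mathrm{new}}/2-\|\boldsymbol{m}'_{\mathrm{new}}\|}$ times binomial factors. Using $\sqrt{\xi_j}\le r_{p+1}^{1-\delta}\lesssim_p\rho$ and $r^2/\xi_j\lesssim r_{p+1}^{2\delta}\ll1$, the binomial sum over $\boldsymbol{m}'$ is geometric. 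The factor $\max(1,q,\|\boldsymbol{k}\|,\|\boldsymbol{m}\|_\infty)$ and the summation over the $2^{q_{\mathrm{new}}}$ ways of assigning sites are absorbed by this geometric gain, together with the fact that $\#(\mathcal{S}_{p+1}\setminus\mathcal{S}_p)$ is fixed (hence the $\lesssim_p$). The Lipschitz part is handled the same way. Differentiating $\xi_j^{\alpha}$ in $\xi_j$ costs $\xi_j^{-1}\lesssim r_{p+1}^{-2(1-\delta)}$, and this is compensated by the factor $r^2$ in the tot norm.

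\textbf{Main obstacle: the action-T\"oplitz/smoothing structure.} The key step is to show that $R$ admits a decomposition $R_0+R_1+R_2$ with the $\Theta$-weighted bounds. The difficulty is that substitution removes normal indices, so a pairing $(\boldsymbol{i},\pm1)$ at a new site becomes $(\xi_{\boldsymbol{i}}+y_{\boldsymbol{i}})$ and disappears from the normal indices. This changes which indices are highest, and hence which class $\mathbf{C}_{q',n}$ the monomial falls into.

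I would define $R_n$ as the image of $P_{n}$ under the substitution, corrected as follows. Since $\mathcal{S}_{p+1}$ is finite, the limits $|\boldsymbol{i}_1|\to\infty$ defining $\mathscr{S}_n$ and $\mathscr{T}_n$ only involve indices eventually lying in $\mathcal{S}_{p+1}^c$. T\"oplitz-ness in the remaining large pairs is therefore preserved whenever the paired indices stay normal. If instead a top pair of $P_2$ lands on a new site, the resulting term has fewer top actions. I would reassign such terms to $R_1$ or $R_0$ after checking that they satisfy the required smoothing limits: their dependence on the surviving large indices is the same T\"oplitz dependence as before, restricted to indices that lie below a fixed finite site. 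The weight $\Theta^{-1}=1+\|\boldsymbol{\ell}\|_\infty$ is harmless on such reassigned pieces because $\|\boldsymbol{\ell}\|_\infty\le\max_{\mathcal{S}_{p+1}}|j|\lesssim_p1$ whenever the top index is a new site. This careful case analysis is the tedious part, and I expect it to be what the appendix is devoted to.
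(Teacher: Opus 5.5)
Your treatment of the substitution itself matches what the paper does; the paper simply quotes \cite[Proposition 7.1]{BGR24} for it. This covers the coefficients of $R$, the identity \eqref{identita}, the degree and integrability claims, and the $\mathscr{F}$ and Lipschitz bounds via $\sqrt{\xi_j}\lesssim_p\rho$ and $r^2/\xi_j\ll 1$.

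\textbf{The gap is in the action-T\"oplitz step.} Membership in $\mathscr{T}_{n,\eta,r,p+1}^{\cste}$ is an exact equality of coefficients for \emph{all} admissible pair indices. It is not defined by limits. Your claim that T\"oplitz-ness of the image of $P_n$ survives ``whenever the paired indices stay normal'' is false, because removing an \emph{unpaired} new index also changes the class of a monomial.

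Here is a concrete failure. Take $p=0$, $P=N_{\mathfrak q}$ with $\mathfrak q=4$ from Lemma \ref{lem:explicit_dec}, and $j\in\mathcal{S}_{1}$. Consider the monomial $(\bl,\bs;j,1;\bi_2,\pm1;\bi_1,\pm1)$, where $\bl\in(\mathcal{S}_1^c)^3$ and $\bs$ are compatible with mass and momentum, and $\bl_1^*\prec\bi_2\prec\bi_1$ with $\bi_1,\bi_2\in\mathcal{S}_1^c$. It has coefficient $1$, and it belongs to $P_2$, $P_1$ or $P_0$ according as $j\prec\bi_2$, $\bi_2\prec j\prec\bi_1$ or $\bi_1\prec j$. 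In all three cases it produces the same output monomial $\sqrt{\xi_j}\,z_j\,v_{\bl}^{\bs}|v_{\bi_2}|^2|v_{\bi_1}|^2$, which lies in $\mathbf{C}_{q',2}$ at step $1$. Consequently:
\begin{itemize}
\item the image of $P_2$ has a coefficient that jumps as $\bi_2$ crosses $j$;
\item the image of $P_1$ has a coefficient that jumps as $\bi_1$ crosses $j$.
\end{itemize}
Neither is action-T\"oplitz, even though no pair touches a new site. Your reassignment rule, which only treats pairs landing on new sites, does not repair this.

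\textbf{The missing idea.} Read off the T\"oplitz components only on $\mathbf{C}_n^{(p+1)}$, where all $n$ pairs lie above $\max^{\prec}\mathcal{S}_{p+1}$. On that region every removed index lies below the pairs, so $T_p$ commutes with $\Pi_{\mathbf{C}_n^{(p+1)}}$ and preserves $n$-action T\"oplitz-ness. You then \emph{extend} these coefficients to all admissible pair indices using the extension operators $E_n^{(p+1)}$, and put the remainder into the smoothing parts:
\begin{itemize}
\item $R_2:=E_2^{(p+1)}T_p\Pi_{\mathbf{C}_2^{(p+1)}}P_2$,
\item $R_1:=E_1^{(p+1)}T_p\Pi_{\mathbf{C}_1^{(p+1)}}(P_1+P_2)-R_2$,
\item $R_0:=R-R_1-R_2$.
\end{itemize}
On the high region this gives $\Pi_{\mathbf{C}_1^{(p+1)}}R_0=T_p\Pi_{\mathbf{C}_1^{(p+1)}}P_0$ and $\Pi_{\mathbf{C}_2^{(p+1)}}R_1=T_p\Pi_{\mathbf{C}_2^{(p+1)}}P_1$.

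Their $\Theta$-weighted bounds follow from the weighted estimate $\|\mathrm{D}_j^\alpha T_pP\|\lesssim_p\|\mathrm{D}_j^\alpha P\|$. You need this estimate too and did not state it, since $\Theta$ weights the top normal indices. Off the high region, the relevant pair index is bounded by $\max_{j\in\mathcal{S}_{p+1}}|j|$, and your observation that $\Theta^{-1}\lesssim_p 1$ there completes the bound.
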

Note that, due to the translation,  the coefficients of $R$ have a  dependence on $\xi^{\rm new}$, however when one substitutes  $y_j = |u_j|^2 -\xi_j^{\rm new}$ for $j\in \mathcal{S}_{p+1}\setminus \mathcal{S}_{p}$ such artificial  dependence on $\xi^{\rm new}$ cancels out.

\subsection{Poisson bracket stability and composition by Hamiltonian flows} In this subsection, we prove that our classes of Hamiltonians are stable by Poisson bracket and  composition by Hamiltonian flow.

\begin{proposition}[Poisson bracket estimates]\label{prop:poisson_bracket_estimate} Let $p\geq 0$, $\eta \geq \eta_{\min}$, $r\leq 2r_p$, $\mathcal{O} \subset \mathcal{O}_p^{\mathrm{full}}$.
		For any  $\alpha,r>0$,  and for any  $P\in \mathscr{H}_{\eta+\alpha,r,p,\mathcal O}^{\mathrm{tot}}$ and  $\chi\in \mathscr{C}_{\eta+\alpha,r,p,\mathcal O}^{\mathrm{tot}}$, there exists a unique Hamiltonian $\{ P,Q\} \in \mathscr{H}_{\eta,r,p,\mathcal O}^{\mathrm{tot}}$ satisfying the bound
	\begin{equation}\label{estim:poisson3hom} 
		\begin{aligned}
			&	\| \{P,\chi\}\|_{\mathscr{H}^{\mathrm{tot}}_{\eta,r,p,\mathcal O}}  \lesssim_{p,\eta}\ \frac{1}{r^2 \alpha}\| P\|_{\mathscr{H}^{\mathrm{tot}}_{\eta+\alpha,r,p,\mathcal O}}\| \chi\|_{\mathscr{C}^{\mathrm{tot}}_{\eta+\alpha,r,p,\mathcal O}}
		\end{aligned}
	\end{equation}
	and such that for all $\xi \in \mathcal{O}$ and all $u\in \mathcal{A}_\xi(3r)$, $\{P,\chi\}(\xi;u) = \{P[\xi],\chi[\xi]\}(u).$ 
	Moreover, if $d_1,d_2 \in \mathbb{N}$ are such that $\Pi_{\mathrm{deg} \geq d_1} P = P$ and $\Pi_{\mathrm{deg} \geq d_2} \chi = \chi$ then 
\begin{equation}
\label{eq:deg_prop_poisson}
\Pi_{\mathrm{deg} \geq d_1+d_2-2} \{P,\chi\} = K.
\end{equation}
\end{proposition}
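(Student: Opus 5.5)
We will first compute the Poisson bracket of two formal series at the level of coefficients, then bound the result in the \(\mathscr{H}\) norm, handling separately the three pieces of the decomposition \eqref{eq:cond2}.

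\textbf{Step 1: coefficient formula.} Formula \eqref{eq:Poisson_formula}, written in the partial action--angle variables \((y_j,\theta_j)_{j\in\mathcal{S}_p}\) and \((v_\ell)_{\ell\in\mathcal{S}_p^c}\), gives
\[
\{P,\chi\} = \sum_{j\in\mathcal{S}_p} c\,\big(\partial_{\theta_j}P\,\partial_{y_j}\chi - \partial_{y_j}P\,\partial_{\theta_j}\chi\big) + 2\ic\sum_{\ell\in\mathcal{S}_p^c}\big(\partial_{\overline{v_\ell}}P\,\partial_{v_\ell}\chi - \partial_{v_\ell}P\,\partial_{\overline{v_\ell}}\chi\big),
\]
with \(c\) a universal constant. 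Here \(\partial_{\theta_j}\) acts on \(z^{\bk}\) by multiplication by \(\ic\bk_j\).

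\textbf{Step 2: the tangential part.} The tangential part is a product of formal series. Its coefficients are convolutions in \((\bk,\bm,\bl,\bs)\) with weights \(\bk_j\bm'_j\). The reality, symmetry, mass and momentum conditions are preserved by construction, after symmetrizing over \(\mathfrak{S}_q\). Extending \(\xi\)-independence and using \(y\)-derivatives costs a factor \(r^{-2}\), which gives the \(r^{-2}\) in the estimate. The factors \(\|\bk\|_{\ell^1}\) and \(\|\bm\|_{\ell^\infty}\) are absorbed by the loss \(e^{-\alpha(\cdot)}\), as \(\sup_x x e^{-\alpha x}\lesssim\alpha^{-1}\); this is where \(\alpha^{-1}\) appears.

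\textbf{Step 3: the normal part.} The normal part contracts an index \(\ell\) of \(P\) with an index \(\ell\) of opposite sign in \(\chi\). For \(\{\cdot,\cdot\}\) we need the supremum over \(\ell\). The weight \(q\) coming from differentiation is again absorbed by \(e^{-\alpha q}\), and the loss of one \(r\) from each factor yields \(r^{-2}\). The Lipschitz bound follows from Leibniz: the difference quotient of a product is handled in the same way, and the \(r^2\) weighting in the \(\mathrm{tot}\) norm is consistent. Identification with the genuine bracket \(\{P[\xi],\chi[\xi]\}\) on \(\mathcal{A}_\xi(3r)\) follows from the total convergence given by Lemma \ref{lem:ouf_cest_des_fonctions}. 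The degree statement \eqref{eq:deg_prop_poisson} is immediate, since each contraction removes one \(v\)-pair or one \(y\), which lowers the degree by \(2\).

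\textbf{Step 4: the main obstacle.} The delicate part is showing that the resulting bracket again splits as \(K_0+K_1+K_2\), with the \(\Theta\)-weights controlled. I would organize this by the position of the highest indices of the output monomial.

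\emph{The top pair is an action \(\ell\).} That action either comes entirely from one factor or is created by the contraction. If it comes from \(P\)'s Toeplitz part \(P_1\) or \(P_2\) and the contraction is with lower indices, then Toeplitz independence in \(\boldsymbol{i}_1\) is inherited.

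\emph{\(\chi\) carries the top action.} Because \(\chi\in\mathscr{C}\), its coefficients decay like \(\langle\|\bl\|_{\ell^\infty}\rangle^{-1}\). This decay yields the \(\mathscr{S}_1\) property, with the weight \(\Theta^{-1}\sim\langle\|\bl\|\rangle\) paid for by \(\Upsilon\).

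\emph{The contraction index \(\ell\) is the largest, and only one of its two copies survives.} This is the configuration I expect to be hardest. Here I would use momentum conservation: when all other indices are bounded, the contracted index is bounded too, so there is no loss.

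\emph{Two-action terms.} These require checking that the limits in \(\boldsymbol{i}_1\) and then \(\boldsymbol{i}_2\) exist and vanish, or that the coefficients are Toeplitz. I would verify this case by case using the iterated-limit definitions. Since \(\chi\) has no \(\mathscr{T}_2\) part, every two-Toeplitz output must come from \(P_2\) contracted at low indices, and such terms stay Toeplitz.

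Uniqueness of \(\{P,\chi\}\) follows from Remark \ref{rem:direct_sum} and the injectivity remark.
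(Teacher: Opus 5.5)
Your Steps 1--3 follow the paper: the split into the tangential part $K$ and the normal part $L$, the factor $r^{-2}\alpha^{-1}$ from the convolution structure of Lemma \ref{lem:poisson_F}, and Leibniz for the Lipschitz part. Step 4, however, is the whole content of Lemma \ref{poisson_base}, and it rests on a misreading of $\mathscr{C}$ and on false structural claims.

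In \eqref{eq:Ups}, $\Upsilon_{1,\bl,\bs}^{-1}=\langle\|\bl\|_{\ell^\infty}\rangle$ is computed on $\bl$ \emph{after} the top action has been removed. So $\chi_1$ is constant in $\bi_1$ and does not decay there, and your argument for ``$\chi$ carries the top action'' is valid only for $\chi_0$. When $\chi_1$ carries the top action, the output is genuinely $1$-action T\"oplitz (the terms $\bili(P^{\bh^{(1)}},\chi_1^{(\bh^{(2)};\tilde\imath,\pm 1)})$ in \eqref{limite2}), not action smoothing of depth $1$. Likewise, ``every two-T\"oplitz output comes from $P_2$'' is false. A product of $\widehat P_1=P_1+P_2$ and $\chi_1$, with one top action taken from each factor, is $2$-action T\"oplitz (first terms of \eqref{eq:dev_K_2} and \eqref{elle2}). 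If $K_2$ omits it, then $K_1=\widehat K_1-K_2$ is not in $\mathscr{S}_2$. You also still owe the decay of $K_1$ in the second action $\bi_2$ demanded by $\Theta_1$, which has to be checked on each of the families (1)--(6) of \eqref{kappa1}.

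The place where $\chi\in\mathscr{C}$ is really needed is exactly where you invoke momentum conservation. This happens whenever the factor coming from $\chi$ has an \emph{unpaired} top index not below $\bi_1$: either the top action is split between $P$ and $\chi$ (type 3 configurations, in $K$ and in $L$), or, in $L$, the contraction index satisfies $\hat\jmath\not\prec\bi_1$. In those cases that coefficient is a $\chi_0$ coefficient, bounded by $\langle\bi_1\rangle^{-1}|\chi|^{\Upsilon}$. This removes these contributions from the limits defining $\widehat K_1,\widehat L_1$. Above all, it gives the bound $\lesssim\langle\bi_1\rangle^{-1}$ on $K_0^{(\bh;\bi_1,\pm1)}$ and $L_0^{(\bh;\bi_1,\pm1)}$ \emph{uniformly in} $\bh$, as required by the supremum in \eqref{eq:Theta2}.

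Momentum conservation cannot replace this. It only says that, for a \emph{fixed} $\bh$, the contracted index is eventually below $\bi_1$, which gives no uniformity in $\bh$. For type 3 configurations of $L$ it does not even give that, since the contracted index grows with $\bi_1$ (see \eqref{caso3}).

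Finally, consider the configuration you single out: contraction at $\hat\jmath=\bi_1$, with one copy surviving from each factor. There both factors contain the whole action $|v_{\bi_1}|^2$, their T\"oplitz parts need not decay, and the contracted index is $\bi_1$ itself. The paper disposes of these terms by the exact cancellation \eqref{stoc} between the two orientations $\sigma=\pm1$, reflecting $\{|v_j|^2,|v_j|^2\}=0$. This ingredient is missing from your plan.
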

	\begin{proof}
		It is due to Lemma \ref{poisson_base} and the fact that setting  $\Delta_{\xi,\zeta}Q:= (Q(\xi)-Q(\zeta))/\|\xi-\zeta\|_{\ell^2}$, for any $Q\in \mathscr{H}_{\eta,r,p,\mathcal O}$, one has 
		\[
		\Delta_{\xi,\zeta}\{P,\chi\} = \{\Delta_{\xi,\zeta}P, \chi(\xi) \}+ \{P(\zeta), \Delta_{\xi,\zeta} \chi\}\,.
		\]
		The degree property is standard and is a direct corollary of the proof of Lemma \ref{lem:poisson_F} below.
	\end{proof}
	
	\begin{corollary} \label{cor:ad_k} In the setting of Proposition \ref{prop:poisson_bracket_estimate}, we have that 
	\[
	\forall k\geq 0, \quad \|\mathrm{ad}_\chi^{k} P\|_{\mathscr{H}^{\mathrm{tot}}_{\eta,r,p,\mathcal O}}\le  \left(\frac{C \| \chi\|_{\mathscr{C}^{\mathrm{tot}}_{\eta+\alpha,r,p,\mathcal O}}k}{r^2 \alpha}\right)^k \|P\|_{\mathscr{H}^{\mathrm{tot}}_{\eta+\alpha,r,p,\mathcal O}}\,,
	\]
	where $C=C(p,\eta)$ is the constant in Proposition \ref{prop:poisson_bracket_estimate}.
	\end{corollary}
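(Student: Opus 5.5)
The plan is to argue by induction on $k$, applying Proposition \ref{prop:poisson_bracket_estimate} $k$ times on a nested family of analyticity widths. The case $k=0$ is trivial. For $k\geq 1$, split the loss of width $\alpha$ into $k$ equal pieces by setting
$$
\eta_j := \eta + \alpha - j\frac{\alpha}{k}, \quad 0\leq j\leq k,
$$
so that $\eta_0=\eta+\alpha$ and $\eta_k=\eta$. Since $\eta_j\geq \eta\geq \eta_{\min}$, the hypotheses of Proposition \ref{prop:poisson_bracket_estimate} hold at each step, with $\eta$ replaced by $\eta_{j}$ and $\alpha$ replaced by $\alpha/k$.

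The key step is to apply \eqref{estim:poisson3hom} to the pair $\mathrm{ad}_\chi^{j-1}P\in \mathscr{H}^{\mathrm{tot}}_{\eta_{j-1},r,p,\mathcal O}$ and $\chi$. For this, note that $\chi$ also belongs to $\mathscr{C}^{\mathrm{tot}}_{\eta_{j-1},r,p,\mathcal O}$, with
$$
\|\chi\|_{\mathscr{C}^{\mathrm{tot}}_{\eta_{j-1},r,p,\mathcal O}}\leq \|\chi\|_{\mathscr{C}^{\mathrm{tot}}_{\eta+\alpha,r,p,\mathcal O}},
$$
because the norms \eqref{eq:norm_C} are monotone nondecreasing in $\eta$ (the weights $e^{\eta(\cdots)}$ increase with $\eta$). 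The proposition then gives
$$
\|\mathrm{ad}_\chi^{j}P\|_{\mathscr{H}^{\mathrm{tot}}_{\eta_j,r,p,\mathcal O}}\leq \frac{C k}{r^2\alpha}\,\|\chi\|_{\mathscr{C}^{\mathrm{tot}}_{\eta+\alpha,r,p,\mathcal O}}\,\|\mathrm{ad}_\chi^{j-1}P\|_{\mathscr{H}^{\mathrm{tot}}_{\eta_{j-1},r,p,\mathcal O}}.
$$
Iterating from $j=1$ to $j=k$ and using $\eta_k=\eta$ yields the claimed bound, with factor $\big(Ck\|\chi\|/(r^2\alpha)\big)^k$.

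The only delicate point is the constant. In Proposition \ref{prop:poisson_bracket_estimate} it is written $C(p,\eta)$, and we need it uniform over the widths $\eta_j\in[\eta,\eta+\alpha]$. Two options are available:
\begin{itemize}
\item use that the constant is nonincreasing in $\eta$ (larger analyticity widths only help in the underlying Cauchy-type estimates), or
\item restrict to the bounded ranges of $\eta$ used in the paper, so that $C$ can be taken as the supremum over that range.
\end{itemize}
With either choice, $C$ can be taken independent of $j$ and $k$, which matches the statement's "$C=C(p,\eta)$ is the constant in Proposition \ref{prop:poisson_bracket_estimate}".
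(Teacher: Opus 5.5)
Your proposal is correct and is essentially the paper's own proof: an induction over the intermediate widths $\eta+\alpha\frac{k-n}{k}$, where each application of Proposition \ref{prop:poisson_bracket_estimate} loses $\alpha/k$ and the $\chi$-norm at the current width is bounded by its norm at $\eta+\alpha$ because the norms increase with $\eta$. The paper tacitly treats $C$ as the same constant at every intermediate width. Of your two justifications for that, the second one (take the supremum of $C$ over the bounded range of $\eta$ actually used) is the one the text supports, since the paper never proves that $C$ is nonincreasing in $\eta$.
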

	\begin{proof} Let us drop all the indexes except $\eta$ in the norm (since they are fixed). We prove  inductively that for any $0\le n\le k$ 
	\[
	\|\mathrm{ad}_\chi^{n} P\|_{\eta+\alpha \frac{k-n}{k}}\le  \left(\frac{C \|\chi\|_{\eta+\alpha} k}{r^2 \alpha}\right)^n \|P\|_{\eta+\alpha}\,,
	\]
	The bound holds true trivially  if $n=0$, then assume that it is correct up to $n<k$ so
	\[
	\begin{aligned}
		\|\mathrm{ad}_\chi^{n+1} P\|_{\eta+\alpha \frac{k-(n+1)}{k}} &= \|(\{\chi,\mathrm{ad}_\chi^{n }P\}\|_{\eta+\alpha \frac{k-n}{k} - \frac{\alpha}{k}} 
		\\ 
		&\le  C \frac{k}{r^2\alpha} \|\chi\|_{\eta+\alpha \frac{k-n}{k}} \|({\rm ad}_\chi )^{n} P\|_{\eta+\alpha \frac{k-n}{k}}
		\\	
		& \le 	C \frac{k}{r^2\alpha} \|\chi\|_{\eta+\alpha} C^n  \left(\frac{k}{r^2 \alpha}\right)^n \|\chi\|_{\eta+\alpha}^{n} \|P\|_{\eta+\alpha} \le \left(\frac{C \|\chi\|_{\eta+\alpha} k}{r^2 \alpha}\right)^{n+1} \|P\|_{\eta+\alpha}\
	\end{aligned}
	\]
by the inductive hypothesis (recall that the norms are increasing in $\eta$).
	\end{proof}

\begin{proposition}[Composition with Hamiltonian flows]\label{prop:comp_ham_flow}  Let $p\geq 0$, $\eta > \eta'\geq \eta-1 \geq  \eta_{\min}$, $r\leq 2r_p$, $\mathcal{O} \subset \mathcal{O}_p^{\mathrm{full}}$,  $P \in \mathscr{C}_{\eta,r,p,\mathcal{O}}$ and $\chi    \in \Pi_{\| \boldsymbol{m} \|_{\ell^1} \leq 1}\mathscr{C}_{\eta,r,p,\mathcal{O}}$ be a Hamiltonian of $y$-homogeneity smaller or equal to $1$ such that 
$$
\| \chi \|_{\mathscr{C}_{\eta,r,p,\mathcal{O}}^{\mathrm{tot}}} \leq r^{2+\delta} (\eta-\eta')
$$
then there exists a smooth map  $K \in C^\infty([0,1];\mathscr{H}_{\eta',r,p,\mathcal{O}})$  such that for all $\xi \in \mathcal{O}$, all $u\in \mathcal{A}_\xi(2r)$ and all $t\in [0,1]$ %I put a continuity result to ensure the integrability with respect to t
\begin{equation}
\label{eq:K_given_by_flot}
K(t)(\xi;u) = P(\xi;\Phi_{\chi[\xi]}^{t}(u))
\end{equation}
where $\Phi_{\chi[\xi]}^{t}$ denotes the Hamiltonian flow of $\chi$ defined by Lemma \ref{lem:flows}, and we have the bound
$$
\| K(t) \|_{\mathscr{H}_{\eta',r,p,\mathcal{O}}^{\mathrm{tot}}} \leq 2 \| P \|_{\mathscr{H}_{\eta,r,p,\mathcal{O}}^{\mathrm{tot}}}. 
$$
Moreover, $\Pi_{\mathrm{deg} \geq d_1} P = P$ and $\Pi_{\mathrm{deg} \geq d_2} \chi = \chi$ with $d_1\geq 0$ and $d_2 \geq 2$ then 
$
\Pi_{\mathrm{deg} \geq d_1} K = K.
$
\end{proposition}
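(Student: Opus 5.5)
The plan is to define $K(t)$ through the Lie (Taylor) series
\[
K(t) := \sum_{k\geq 0} \frac{t^k}{k!}\, \mathrm{ad}_\chi^k P, \qquad \mathrm{ad}_\chi Q := \{Q,\chi\},
\]
show that it converges in $\mathscr{H}_{\eta',r,p,\mathcal{O}}$, and then identify it with $P(\xi;\Phi^t_{\chi[\xi]}(u))$. The norm bound comes from Corollary \ref{cor:ad_k}, applied with $\alpha := \eta-\eta'\in(0,1]$ and $\eta'$ in place of $\eta$. This gives
\[
\frac{|t|^k}{k!}\|\mathrm{ad}_\chi^{k}P\|_{\mathscr{H}^{\mathrm{tot}}_{\eta',r,p,\mathcal O}} \le \frac{k^k}{k!}\Big(\frac{C\|\chi\|_{\mathscr{C}^{\mathrm{tot}}_{\eta,r,p,\mathcal O}}}{r^2(\eta-\eta')}\Big)^k \|P\|_{\mathscr{H}^{\mathrm{tot}}_{\eta,r,p,\mathcal O}} \le (C e\, r^{\delta})^k \|P\|_{\mathscr{H}^{\mathrm{tot}}_{\eta,r,p,\mathcal O}},
\]
using $k^k/k!\le e^k$ and the hypothesis $\|\chi\|\le r^{2+\delta}(\eta-\eta')$. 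Since $r\le 2r_p$ and $r_p$ is as small as we like, we have $Ce r^\delta\le 1/2$. The series therefore converges normally, uniformly for $t\in[0,1]$, together with all its $t$-derivatives, and its total norm is at most $2\|P\|$. Smoothness in $t$ follows because this is a normally convergent power series in $t$ with values in a Banach space.

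A technical point is that Corollary \ref{cor:ad_k} requires $P$ in $\mathscr{H}$ and $\chi$ in $\mathscr{C}$. We have $P\in\mathscr{C}\subset\mathscr{H}$, and each iterate $\mathrm{ad}^n_\chi P$ stays in $\mathscr{H}$ with the shrinking-$\eta$ bookkeeping already handled in that corollary.

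The degree statement follows from \eqref{eq:deg_prop_poisson}. If $P$ has degree $\geq d_1$ and $\chi$ has degree $\geq d_2\geq 2$, then each bracket with $\chi$ has degree $\geq d_1+d_2-2\geq d_1$. By induction every $\mathrm{ad}^k_\chi P$ lies in $\Pi_{\mathrm{deg}\ge d_1}$, and the projection passes to the convergent sum because it is a contraction.

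The main obstacle is the identification \eqref{eq:K_given_by_flot}. Fix $\xi$ and $u\in\mathcal{A}_\xi(2r)$, and set $g(t):=P(\xi;\Phi^t_{\chi[\xi]}(u))$. By Lemma \ref{lem:flows}, the third item with $C_p\|\chi\|\ll r^2$ keeps the trajectory inside $\mathcal{A}_\xi(5r/2)$ for $t\in[0,1]$. Along the flow we then have $g'(t)=\{P[\xi],\chi[\xi]\}(\Phi^t(u))$. By Proposition \ref{prop:poisson_bracket_estimate}, this equals $(\mathrm{ad}_\chi P)(\xi;\Phi^t(u))$, with the sign convention matched to $\ic\partial_t u=\nabla\chi$. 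Iterating gives $g^{(k)}(t)=(\mathrm{ad}^k_\chi P)(\xi;\Phi^t(u))$.

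By Lemma \ref{lem:ouf_cest_des_fonctions}, $|g^{(k)}(t)|\lesssim_p \|\mathrm{ad}^k_\chi P\|_{\mathscr{H}_{\eta',r,p}}$, uniformly in $t$. This requires the functional bounds to hold on $\mathcal{A}_\xi(3r)$, which they do. Hence $|g^{(k)}|\lesssim (Ce r^\delta)^k k!$, so $g$ is real-analytic on $[0,1]$ with a radius of convergence larger than $1$. Taylor's formula with remainder at $0$ then gives $g(t)=\sum_k \frac{t^k}{k!}(\mathrm{ad}^k_\chi P)(\xi;u)=K(t)(\xi;u)$. Here we use that evaluation $Q\mapsto Q(\xi;u)$ is continuous from $\mathscr{H}_{\eta',r,p}$ to $\mathbb{C}$, which lets us interchange it with the sum.

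The delicate points are the exact sign and normalization in the bracket convention, and checking that the intermediate $\eta$-losses in the iterated estimate do not push $\eta'$ below $\eta_{\min}$. The latter holds by assumption, since $\eta'\ge\eta-1\ge\eta_{\min}$.
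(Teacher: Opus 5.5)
Your proposal is correct. For the construction of $K$, the norm bound and the degree property it coincides with the paper. The paper also defines $K(t)$ by the Lie series, written as $\sum_k \frac{(-t)^k}{k!}\mathrm{ad}_\chi^k P$ with $\mathrm{ad}_\chi=\{\chi,\cdot\}$; by antisymmetry of the bracket this is your series. It gets the bound $2\|P\|$ and a radius of convergence larger than $1$ from Corollary~\ref{cor:ad_k} and the smallness of $\chi$, and the degree property from \eqref{eq:deg_prop_poisson}. Your sign check is also right: with $\ic\partial_t u=\nabla\chi$ and $\{H,K\}=(\ic\nabla H,\nabla K)_{\ell^2}$ one has $\frac{\mathrm{d}}{\mathrm{d}t}P[\xi]\circ\Phi^t_{\chi[\xi]}=\{P[\xi],\chi[\xi]\}\circ\Phi^t_{\chi[\xi]}$.

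The genuine difference is in how you prove the identification \eqref{eq:K_given_by_flot}. The paper uses only that the series solves the transport equation $\partial_t K=\{K,\chi\}$, $K(0)=P$. It shows that $t\mapsto K(t)(\xi;\Phi^{-t}_{\chi[\xi]}(u))$ has zero derivative for $u\in\mathcal{A}_\xi(5r/2)$, so it is identically $P(\xi;u)$. It then substitutes $u\rightsquigarrow\Phi^{t}_{\chi[\xi]}(u)$ with $u\in\mathcal{A}_\xi(2r)$ and invokes the flow property of Lemma~\ref{lem:flows}.

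You instead Taylor-expand $g(t)=P(\xi;\Phi^t_{\chi[\xi]}(u))$ directly:
\begin{itemize}
\item iterating Proposition~\ref{prop:poisson_bracket_estimate}, with intermediate radii between $\eta'$ and $\eta$, gives $g^{(k)}(t)=(\mathrm{ad}^k_\chi P)(\xi;\Phi^t_{\chi[\xi]}(u))$;
\item Lemma~\ref{lem:ouf_cest_des_fonctions} bounds this by $\lesssim_p (Ce\,r^\delta)^k\, k!\,\|P\|$, uniformly along the orbit;
\item the Lagrange remainder therefore tends to zero.
\end{itemize}

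Your route needs neither the backward flow nor the group property. It also avoids the chain rule for the doubly $t$-dependent map $K(t)[\xi]\circ\Phi^{-t}_{\chi[\xi]}$, which requires passing the $\mathscr{H}$-valued $t$-derivative of $K$ through evaluation. The paper's route uses only the first-order equation satisfied by $K$. It would therefore identify any $C^1$ solution of the transport equation with $P\circ\Phi^t$, without factorial bounds on all iterated brackets along the orbit. Here those bounds come for free from the same corollary, so either argument closes the proof.
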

\begin{proof} Let us drop all the indexes except $\eta$ in the norm (since they are fixed). As a consequence of the smallness assumption on $\chi$ and of Corollary \ref{cor:ad_k}, the entire series $t \mapsto \sum_{k} t^k \mathrm{ad}_{\chi}^{k} P /k!$ has a radius of converge larger than $1$ in $\mathscr{H}_{\eta',r,p,\mathcal{O}}^{\mathrm{tot}}$. It follows that, for $|t|<1$
$$
K(t) := \sum_{k\geq 0} \frac{(-t)^k}{k!} \mathrm{ad}_{\chi}^{k} P 
$$
is well defined and defines a smooth map from $[-1,1]$ to $\mathscr{H}_{\eta',r,p,\mathcal{O}}^{\mathrm{tot}}$ satisfying
\begin{equation}
\label{eq:formal_transport}
\partial_t K = -\{ \chi,K\} \quad \mathrm{and} \quad K(0) =P.
\end{equation}
The degree property is a direct consequence of the degree property of the Poisson bracket (see Proposition \ref{prop:poisson_bracket_estimate}) and the uniform bound on $K$ is a direct consequence of the estimate of Corollary \ref{cor:ad_k}.

It only remains to prove that $K$ is given by the flow of $\chi$ (i.e. \eqref{eq:K_given_by_flot}). To prove it, is suffices to notice that \eqref{eq:formal_transport} is a transport equation. Indeed, \eqref{eq:formal_transport} implies that for all $\xi \in \mathcal{O}$, we have
$$
\partial_t K[\xi]= -\{ \chi[\xi],K[\xi]\} \quad \mathrm{on} \quad \mathcal{A}_\xi(3r).
$$
Thanks to the smallness assumption on $\chi$, Lemma \ref{lem:flows} ensures that $K(t)[\xi]\circ \Phi_{\chi[\xi]}^{-t}$ is well defined and smooth on  $\mathcal{A}_\xi(5r/2)$ for $t\in [-1,1]$. Moreover, the chain rule implies that for all $u\in \mathcal{A}_\xi(5r/2)$,
\begin{equation*}
\begin{split}
\partial_t \Big(K(t)(\xi; \Phi_{\chi[\xi]}^{-t}(u))\Big) &= -\{ \chi[\xi],K[\xi]\}(\Phi_{\chi[\xi]}^{-t}(u)) + \mathrm{d}K(t)(\xi; \Phi_{\chi[\xi]}^{-t}(u))( \partial_t \Phi_{\chi[\xi]}^{-t}(u)) \\
&= \{ \chi[\xi],K[\xi]\}(\Phi_{\chi[\xi]}^{-t}(u)) + \mathrm{d}K(t)(\xi; \Phi_{\chi[\xi]}^{-t}(u))( \ic \nabla \chi[\xi](\Phi_{\chi[\xi]}^{-t}(u)))=0
\end{split}
\end{equation*}
by definition of the Poisson bracket. It follows that
$$
K(t)(\xi; \Phi_{\chi[\xi]}^{-t}(u)) = K(0)(\xi;u) = P(\xi;u).
$$ 
Finally assuming that $\mathcal{A}_\xi(2r)$ (to ensure that everything is well defined) and applying this last relation for $u \rightsquigarrow \Phi_{\chi[\xi]}^{t}(u)$, we get  \eqref{eq:K_given_by_flot} by the group property of the flow.
\end{proof}

\section{Resolution of cohomological equations} \label{sec:res_coho}

In this section, we solve our cohomological equations (see \eqref{eq:coho} below). It requires the derivation of small divisor estimates which is the most critical part of the paper (see Lemma \ref{lem:sd_estimates} below).
{In order to state the main result of this section, we first need the following notation.}
\begin{definition}[Largest fourth normal mode $\mu_P$]  Let $p\geq 0$, $\eta \geq 0$, $r>0$, $\mathcal{O} \subset \mathcal{O}_p^{\mathrm{full}}$ and $P\in  \mathscr{H}_{\eta,r,p,\mathcal{O}}$. The largest fourth normal mode of $P$, denoted $\mu_P$, 
is defined by
$$
\mu_P := \sup_{\xi \in \mathcal{O}}\sup_{P_{\boldsymbol{k},\boldsymbol{m}}^{\boldsymbol{\ell},\boldsymbol{\sigma}}(\xi) \neq 0} \boldsymbol{\ell}_4^*.
$$
\end{definition}

In the following proposition, which is the main result of this section, we solve the cohomological equations. The rest of this section is devoted to its proof. 
\begin{proposition}[Resolution of cohomological equations] \label{prop:res_coho} Let $p\geq 0$, $4 \eta_{\mathrm{min}} \geq \eta > \eta''> \eta' \geq 2\eta_{\min}$, $r\leq r_p$, $\gamma \in (0,1)$ and $\mathcal{O} \subset \mathcal{O}_p^{\mathrm{full}}$ be a Borel set. 
Given
\begin{itemize}
\item a homogeneous  Hamiltonian $P\in  \Pi_{\mathrm{deg} = d}  \mathscr{H}_{\eta,r,p,\mathcal{O}}$ of degree $d\leq 4$,   with a finite largest fourth normal mode $\mu_P<\infty$ and containing no integrable terms, i.e.$
\Pi_{\mathrm{int}} P = 0,$
\item a family of integrable Hamiltonians $Z^{(p')} \in  \Pi_{\mathrm{deg}\leq 4}  \Pi_{\mathrm{int}} \mathscr{H}_{\eta,r_{p'},p',\mathcal{O}_{p'}},$  for $1\leq p'\le p$ of degree smaller than or equal to $4$, defined for parameters in a Borel set $\mathcal{O}_{p'} \subset \mathcal{O}_{p'}^{\mathrm{full}}$ satisfying $\mathcal{O} \subset \mathcal{O}_{p'} \times \mathbb{R}^{\mathcal{S}_{p} \setminus \mathcal{S}_{p'}} $ and satisfying the bound 
$$
\| Z^{(p')} \|_{ \mathscr{H}_{\eta,r_{p'},p',\mathcal{O}_{p'}}^{\mathrm{tot}}} \leq r_{p'}^{4+\frac13}
$$
\end{itemize}
there exists a Borel set $\mathcal{O}^\sharp \subset \mathcal{O}$ and two Hamiltonians $\chi \in \mathscr{C}_{\eta'',r,p,\mathcal{O}^\sharp}$ and $R \in   \mathscr{H}_{\eta',r,p,\mathcal{O}^\sharp}$ such that 
\begin{itemize}
\item $R$ is homogeneous of degree $d+2$ and does not contain any integrable term, i.e.
$$
R = \Pi_{\mathrm{deg} = d+2} R \quad \mathrm{and} \quad \Pi_{\mathrm{int}} R =0,
$$ 
\item for all $\xi \in \mathcal{O}^\sharp$, $u\in \mathcal{A}_\xi(3r)$, we have
\begin{equation}
\label{eq:coho}
\{ Z_2 + Z_4 + \sum_{1\leq p'\leq p}Z^{(p')}[\xi^{(p')}] , \chi[\xi] \}(u) = (P+R)(\xi;u) 
\end{equation}
where $\xi^{(p')} := (\xi_j)_{j\in \mathcal{S}_{p'}}$ {and we recall that the notations $Z^{(p')}[\xi^{(p')}],\chi[\xi]$ are  defined in Lemma \ref{lem:ouf_cest_des_fonctions}},
\item $ \mathcal{O}^\sharp$ is relatively large in  $\mathcal{O}$, 
$$
\mathrm{meas}(\mathcal{O} \setminus \mathcal{O}^\sharp) \leq \gamma \, \mathrm{meas} (\mathcal{O}_p^{\mathrm{full}}).
$$
\item $\chi$ and $R$ satisfy the bounds
\begin{equation}
\label{eq:est_ChiR}
\begin{split}
\| \chi \|_{\mathscr{C}_{\eta'',r,p,\mathcal{O}^\sharp}^{\mathrm{tot}}} &\lesssim \gamma^{-2}  \langle \mu_P \rangle^{21} (\eta - \eta')^{- 2 \# \mathcal{S}_p-25} r_p^{-2+7\delta/4} \| P \|_{\mathscr{H}_{\eta,r,p,\mathcal{O}}^{\mathrm{tot}}}, \\
\| R \|_{\mathscr{H}_{\eta',r,p,\mathcal{O}^\sharp}^{\mathrm{tot}}} &\lesssim r^{2-\delta} (\eta'' - \eta')^{- 9 }  \| \chi \|_{\mathscr{C}_{\eta'',r,p,\mathcal{O}^\sharp}^{\mathrm{tot}}}  .
\end{split}
\end{equation}
%\mic{tQuestions: 
%Do you really need $\gamma^{-2}$? This comes from Lipschitz estimates right? but maybe one can compensate with the small factor in the definition of the Tot norm? } \jo{First, I would say that it is harmless. In the definition of the tot norm we compensate the critical part which is $r_p^{2}$ (we could not pay $r_p^2$). Then, I kink we cannot really fully compensate the $\gamma$ in the tot norm because it is going to $0$ in the KAM iteration of the step $p$ and we still need Lipschitz estimates after this step. Maybe there is a more elegant way to proceed but it not clear to me.} \mic{I am not sure I understand this sentence, you dont need to send $\gamma$ to zero in the KAM iteration I have in mind (I am pretty sure I can go from $\gamma$ to $c\gamma$ with $c<1$ by eventually tweaking the smallness conditions), in any case as you say here it is harmless, maybe we could discuss some other time} \jo{Yes, it looks like a discussion we had when I visited you in Rome.}
\item the supports of $\chi$ and $P$ coincide, for all $n \in \{0,1,2\}$, $\xi \in \mathcal{O}^\sharp$, $\boldsymbol{k} \in \mathbb{Z}^{\mathcal{S}_p},\boldsymbol{m} \in \mathbb{N}^{\mathcal{S}_p},q\in \mathbb{N}, \boldsymbol{\ell} \in (\mathcal{S}_p^c)^q ,\boldsymbol{\sigma} \in  \{-1,1\}^q $,
$$
 \Pi_{\mathrm{Top}=n} P_{\boldsymbol{k},\boldsymbol{m}}^{\boldsymbol{\ell},\boldsymbol{\sigma}}(\xi) \neq 0 \iff   \Pi_{\mathrm{Top}=n}\chi_{\boldsymbol{k},\boldsymbol{m}}^{\boldsymbol{\ell},\boldsymbol{\sigma}}(\xi) \neq 0.
$$
\end{itemize}
\end{proposition}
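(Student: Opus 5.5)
We plan to solve \eqref{eq:coho} monomial by monomial after splitting the normal form into its diagonal frequency part and a remainder. Set $Z:=Z_2+Z_4+\sum_{p'\le p}Z^{(p')}[\xi^{(p')}]$. After the translation to partial action--angle variables on $\mathcal{S}_p$ (Proposition \ref{prop:open} applied to each $Z^{(p')}$, which is where the hypothesis $\mathcal{O}\subset\mathcal{O}_{p'}\times\mathbb{R}^{\mathcal{S}_p\setminus\mathcal{S}_{p'}}$ enters), $Z$ is integrable. We expand it as $Z=\omega(\xi)\cdot y+\sum_{\ell\in\mathcal{S}_p^c}\Omega_\ell(\xi)|v_\ell|^2+Z_{\ge}$. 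Here $Z_{\ge}$ gathers the integrable terms of degree $\ge 4$ in $(\sqrt{|y|},v)$: quadratic in $y$, $y|v|^2$, and $|v|^4$-type terms. The frequencies are of the form $\omega_j=j^2+\partial_{\xi_j}(\cdots)$ and $\Omega_\ell=\ell^2-2\|\xi\|_{\ell^1}+\ldots$. For each monomial $P^{\bl,\bs}_{\bk,\bm}\,y^{\bm}z^{\bk}v_{\bl}^{\bs}$ we set
$$\chi^{\bl,\bs}_{\bk,\bm}=\frac{P^{\bl,\bs}_{\bk,\bm}}{\ic\,(\bk\cdot\omega+\sum_i\bs_i\Omega_{\bl_i})},$$
up to the normalizing constant. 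The equation then holds with $R:=\{Z_{\ge},\chi\}$, which is homogeneous of degree $d+2$ by \eqref{eq:deg_prop_poisson}. Since $P$ has no integrable term and brackets with integrable Hamiltonians preserve the non-integrable support, we get $\Pi_{\mathrm{int}}R=0$. The support property of $\chi$ versus $P$ holds by construction, once we check below that the division respects the decomposition $\mathrm{Top}=0,1,2$. The bound on $R$ follows from Proposition \ref{prop:poisson_bracket_estimate} with loss $\alpha=\eta''-\eta'$, because $Z_{\ge}$ has $\mathscr{H}$-norm $\lesssim r^{4-\delta}$ at radius $r$. We expect the stated $(\eta''-\eta')^{-9}$ to come from handling the distinct pieces and the Lipschitz part.

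\textbf{Small divisors.} This is the main obstacle. We must bound $|\bk\cdot\omega+\sum\bs_i\Omega_{\bl_i}|\gtrsim\gamma\, r_p^{2(1-\delta)}\langle\bk\rangle^{-\tau}\langle\mu_P\rangle^{-c}$ outside a set of measure $\le\gamma\,\mathrm{meas}(\mathcal{O}_p^{\mathrm{full}})$. We split into two cases.

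\emph{Case 1: $(\bl_1^*,\bs)$ is not paired.} Since $d\le4$ and at most $\mu_P$ normal indices sit below the fourth, the large divisor estimate \eqref{eq:gd_new} (Lemma \ref{lem:gain_one_der}) gives $|\sum\bs_i\bl_i^2|\gtrsim|\bl_1^*|\langle\mu_P\rangle^{-2}$. When $|\bl_1^*|\gg\langle\mu_P\rangle^2\langle\bk\rangle\mathtt{j}_p^2$ the divisor is therefore large, with no excision, and it gains the factor $\langle\bl_1^*\rangle^{-1}$ needed for the $\Upsilon$-weight of $\mathscr{C}$. In the remaining bounded range, finitely many divisors (polynomially many in $\mu_P$) remain.

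\emph{Case 2: the top modes are actions.} The $\Omega$'s of the paired indices cancel, so the divisor does not depend on the Töplitz indices $\boldsymbol{i}$. This is exactly why dividing preserves $\mathscr{T}_n$ and $\mathscr{S}_n$. Smoothing is kept because the division is by a quantity independent of $\boldsymbol{i}$ and bounded below.

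For each remaining divisor we perform the measure excision using the twist. Thanks to the decomposition \eqref{eq:jolie_dec} of $\mathrm{d}\omega,\mathrm{d}\Omega$ (for $Z_4$ this is explicit; for the $Z^{(p')}$ it follows from their action-Töplitz structure), mass conservation $\sum\bk_j+\sum\bs_i=0$ kills the $A,B$ parts. Choosing the direction $\mathbf{e}=\bk$, Lemma \ref{lem:Hilb} makes $M$ small in $\ell^2\to\ell^2$, which yields a lower bound $\gtrsim\|\bk\|_{\ell^2}$ on the derivative. We expect the delicate point to be that this bound must be uniform in $p$, with the $Z^{(p')}$ of size $r_{p'}^{4+1/3}$ contributing only small corrections after rescaling by $r_{p'}^{2(1-\delta)}$. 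If the derivative in direction $\bk$ fails to dominate, we will fall back on the direction $\mathbf{e}$ orthogonal to the constant vector and treat the $B$ term separately.

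A Fubini bound then estimates each bad set, and we sum over $\bk$ and the $\lesssim\langle\mu_P\rangle^{4}$ normal configurations. Lipschitz bounds on $1/\text{divisor}$ cost one more factor $\gamma^{-1}$, which is the source of the $\gamma^{-2}$. Finally, the weights $e^{\eta\|\bk\|}$ absorb the powers $\langle\bk\rangle^{\tau}$, at the cost of $(\eta-\eta')^{-2\#\mathcal{S}_p-25}$.
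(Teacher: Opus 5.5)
Your architecture is the paper's: translate the integrable part into frequencies, divide the coefficients of $P$ by the divisors, set $R=\{Z_{\ge},\chi\}$, use Lemma~\ref{lem:gain_one_der} for large unpaired top modes, observe that paired indices cancel in the divisor (hence the Töplitz invariance), excise using the twist in direction $\bk$ with Lemma~\ref{lem:Hilb}, and get $\gamma^{-2}$ from the Lipschitz quotient. However, the argument has a hole at the monomials with $\bk=0$. You excise ``each remaining divisor'' using the twist. For $\bk=0$, though, mass conservation gives $\sum_i\bs_i=0$, so the only $O(1)$ contribution to the derivative cancels; that contribution is $\omega^{\mathrm{lead}}_\ell=-2\|\xi\|_{\ell^1}$ on normal sites. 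The divisor is then the integer $\sum_i\bs_i\bl_i^2$ plus corrections of size $O(\sum_{p'}r_{p'}^{1/3})$, with no twist in any direction. Nothing can be excised, and if that integer vanished the divisor could be arbitrarily small or even zero, so the equation could not be solved on that monomial. The missing ingredient is the Kuksin--P\"oschel fact that the paper uses to check that every nonzero coefficient of $P$ satisfies \eqref{eq:cond_indice_0}. After removing action pairs, at most four normal indices remain. With $\bk=0$, zero mass, momentum and $\sum_i\bs_i\bl_i^2$ then force pairing, since $a+b=c+d$ and $a^2+b^2=c^2+d^2$ imply $\{a,b\}=\{c,d\}$. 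So the monomial would be integrable, which $\Pi_{\mathrm{int}}P=0$ excludes. This is a second essential use of $d\le 4$, beyond Lemma~\ref{lem:gain_one_der}.

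Your two cases also do not yield $\chi\in\mathscr{C}$. First, $\mathscr{C}$ requires $\Pi_{\mathrm{Top}=2}\chi=0$. This holds because $\mu_P<\infty$ forces $P_2=0$ (Lemma~\ref{lem:mu_P_finite}), which you never invoke. Second, the overpaired monomials $|v_i|^4z^{\bk}$ with $\bk\neq0$ are handled by neither case. They are not in $\mathbf{C}_{4,1}$, so $P_0$ carries no $\Theta$-decay in $i$. After removing the pairs nothing is left, so the divisor $\bk\cdot\omega$ gains nothing either. The weight $\langle i\rangle$ demanded by $\Upsilon$ must therefore be paid by $|i|=|\bl_4^*|\le\mu_P$; this is where the last power in $\langle\mu_P\rangle^{21}$ comes from. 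Finally, your claim that mass conservation kills the $A,B$ parts is inaccurate: it kills the $A$-part exactly, but not the $B$-part, since $\sum_j\bk_j=-\sum_i\bs_i$ need not vanish. Direction $\bk$ still works, and no fallback direction is needed, but only through approximate cancellation. This uses $|\sum_j\bk_j|\le 4$ together with the concrete structure of the translated frequencies (Lemma~\ref{lem:the_key}). There, the non-decaying parts of $\partial_\xi\omega^{(p',p)}$ are constant off the finite set $\mathcal{S}_{p'}$. The tail terms $e_j\mathds{1}_{j\prec i}$, with $|e_j|\lesssim\langle j\rangle^{-1}$, are handled by writing $\sum_{i\succ j}\bk_i=\sum_i\bk_i-\sum_{i\preccurlyeq j}\bk_i$ before applying Lemma~\ref{lem:Hilb}.
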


\begin{remark}
The cohomological equation \eqref{eq:coho} is non standard. To be standard, there should not be a higher degree remainder term like $R$ and the integral part should be homogeneous of degree $2$. Actually the proof of this proposition is based on the resolution of such a standard equation. The point here is that, to get the appropriate quadratic integrable Hamiltonian, we have to translate the Hamiltonians $Z^{(p')}$ (as in \eqref{eq:dec_quart}). This requires many notations which will not be useful in the rest of the paper, so we did not want to introduce them now. That is why we chose this more compact formulation. Moreover, in any case, we  also have to estimate somewhere in paper $R$ which is nothing but  the Poisson bracket between the quartic part of the integrable Hamiltonians (after translation) and $P$.
\end{remark}

\subsection{Preparatory lemmas} In Proposition \ref{prop:res_coho}, we gain one derivative by solving the cohomological equation ($P$ lives in a $\mathscr{H}$ space while $\chi$ lives in a $\mathscr{C}$ space). The price to pay is the factor $\mu_P^{21}$. This gain will mainly be a corollary of the following technical lemma. 
\begin{lemma}[Gain of one derivative] \label{lem:gain_one_der} Let $\boldsymbol{\ell} \in \mathbb{Z}^3$, $\boldsymbol{\sigma}\in\{-1,1\}^3$  such that 
$$
|\boldsymbol{\ell}_1| \geq |\boldsymbol{\ell}_2| \geq |\boldsymbol{\ell}_3|, \quad (\boldsymbol{\ell}_1,\boldsymbol{\sigma}_1)\neq (\boldsymbol{\ell}_2,-\boldsymbol{\sigma}_2)  \quad \mathrm{and} \quad |\boldsymbol{\sigma}_1\boldsymbol{\ell}_1+\boldsymbol{\sigma}_2\boldsymbol{\ell}_2+\boldsymbol{\sigma}_3\boldsymbol{\ell}_3|^2\leq \frac14 |\boldsymbol{\ell}_1|
$$
then
$$
\Big| \boldsymbol{\sigma}_1\boldsymbol{\ell}_1^2+\boldsymbol{\sigma}_2\boldsymbol{\ell}_2^2+\boldsymbol{\sigma}_3\boldsymbol{\ell}_3^2\Big| \ge \frac14  |\boldsymbol{\ell}_1| .
$$
\end{lemma}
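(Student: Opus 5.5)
The plan is a short case analysis on the sign pattern $\boldsymbol{\sigma}$, using the signed variables $x_i := \boldsymbol{\sigma}_i\boldsymbol{\ell}_i$. With these, $m := x_1+x_2+x_3$ is the momentum appearing in the hypothesis, $|x_i| = |\boldsymbol{\ell}_i|$, and $\boldsymbol{\sigma}_i\boldsymbol{\ell}_i^2 = \boldsymbol{\sigma}_i x_i^2$. Replacing $\boldsymbol{\sigma}$ by $-\boldsymbol{\sigma}$ changes neither the hypotheses nor the modulus of the quantity to bound, so I may assume $\boldsymbol{\sigma}_1 = 1$. If $\boldsymbol{\ell}_1 = 0$ there is nothing to prove, so I take $|\boldsymbol{\ell}_1| \geq 1$.

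First, the easy case $\boldsymbol{\sigma}_2 = 1$ is settled by convexity. The sum is then at least $\boldsymbol{\ell}_1^2 + \boldsymbol{\ell}_2^2 - \boldsymbol{\ell}_3^2 \geq \boldsymbol{\ell}_1^2 \geq |\boldsymbol{\ell}_1|$, since $|\boldsymbol{\ell}_3| \leq |\boldsymbol{\ell}_2|$. So assume from now on $\boldsymbol{\sigma}_2 = -1$. The non-pairing assumption $(\boldsymbol{\ell}_1,\boldsymbol{\sigma}_1) \neq (\boldsymbol{\ell}_2,-\boldsymbol{\sigma}_2)$ then reads $\boldsymbol{\ell}_1 \neq \boldsymbol{\ell}_2$. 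Equivalently $s := x_1 + x_2 = \boldsymbol{\ell}_1 - \boldsymbol{\ell}_2$ is a nonzero integer, so $|s| \geq 1$. Note also that $x_1 - x_2 = \boldsymbol{\ell}_1 + \boldsymbol{\ell}_2$, hence $\boldsymbol{\ell}_1^2 - \boldsymbol{\ell}_2^2 = (\boldsymbol{\ell}_1-\boldsymbol{\ell}_2)(\boldsymbol{\ell}_1+\boldsymbol{\ell}_2) \geq 0$.

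\emph{Case $\boldsymbol{\sigma}_3 = 1$.} The sum equals $\boldsymbol{\ell}_1^2 - \boldsymbol{\ell}_2^2 + \boldsymbol{\ell}_3^2$, a sum of two nonnegative terms. If $|\boldsymbol{\ell}_1| > |\boldsymbol{\ell}_2|$, then $\boldsymbol{\ell}_1^2-\boldsymbol{\ell}_2^2 = (|\boldsymbol{\ell}_1|-|\boldsymbol{\ell}_2|)(|\boldsymbol{\ell}_1|+|\boldsymbol{\ell}_2|) \geq |\boldsymbol{\ell}_1|$. Otherwise $\boldsymbol{\ell}_2 = -\boldsymbol{\ell}_1$, so $s = \pm 2\boldsymbol{\ell}_1$. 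Then $|\boldsymbol{\ell}_3| = |m - s| \geq 2|\boldsymbol{\ell}_1| - |m| \geq |\boldsymbol{\ell}_1|$, using $|m| \leq m^2 \leq |\boldsymbol{\ell}_1|/4$ (valid since $m$ is an integer). This gives a sum of at least $\boldsymbol{\ell}_1^2$.

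\emph{Case $\boldsymbol{\sigma}_3 = -1$.} This is the genuine case, and I expect it to be the only real obstacle. I would substitute $x_3 = m - s$ to get the exact identity
$$
x_1^2 - x_2^2 - x_3^2 = s(x_1 - x_2) - (m-s)^2 = 2s(m - x_2) - m^2 .
$$
I then split according to the size of $|\boldsymbol{\ell}_2|$.
\begin{itemize}
\item If $|\boldsymbol{\ell}_2| \geq |\boldsymbol{\ell}_1|/2$, use $|s| \geq 1$ and $|m - x_2| \geq |\boldsymbol{\ell}_1|/2 - |m|$. The modulus of the sum is then at least $|\boldsymbol{\ell}_1| - 2|m| - m^2 \geq |\boldsymbol{\ell}_1| - 3m^2 \geq |\boldsymbol{\ell}_1|/4$ by the hypothesis $m^2 \leq |\boldsymbol{\ell}_1|/4$.
\item If $|\boldsymbol{\ell}_2| < |\boldsymbol{\ell}_1|/2$, go back to convexity: since $|\boldsymbol{\ell}_3| \leq |\boldsymbol{\ell}_2|$, the sum is at least $\boldsymbol{\ell}_1^2 - 2\boldsymbol{\ell}_2^2 \geq \boldsymbol{\ell}_1^2/2 \geq |\boldsymbol{\ell}_1|/4$.
\end{itemize}

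In every case the bound $\frac14|\boldsymbol{\ell}_1|$ follows. The place where the hypotheses are really used is the algebraic identity above: it makes the nonzero integer $s$, guaranteed by the non-pairing condition, appear as a factor. Together with the smallness of $m$, this is what produces the gain of one factor $|\boldsymbol{\ell}_1|$.
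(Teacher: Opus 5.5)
Your proof is correct. In the main case $\boldsymbol{\sigma}_1=-\boldsymbol{\sigma}_2$ it takes a different route from the paper's.

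The paper does not split on $\boldsymbol{\sigma}_3$. It proceeds in four steps:
\begin{itemize}
\item It bounds the modulus below by $\boldsymbol{\ell}_1^2-\boldsymbol{\ell}_2^2-\boldsymbol{\ell}_3^2$ for both signs of $\boldsymbol{\sigma}_3$.
\item It uses the momentum bound to show that $\boldsymbol{\ell}_1$ and $\boldsymbol{\ell}_2$ have the same sign.
\item It eliminates $\boldsymbol{\ell}_3^2$ via $(\boldsymbol{\ell}_1-\boldsymbol{\ell}_2-\delta\boldsymbol{\ell}_3)^2\le\frac14|\boldsymbol{\ell}_1|$, with $\delta=-\boldsymbol{\sigma}_1\boldsymbol{\sigma}_3$. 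This gives the lower bound $2|\boldsymbol{\ell}_1-\boldsymbol{\ell}_2|\,|\boldsymbol{\ell}_1-\delta\boldsymbol{\ell}_3|-\frac14|\boldsymbol{\ell}_1|$.
\item It splits on whether $|\boldsymbol{\ell}_1-\delta\boldsymbol{\ell}_3|\ge\frac14|\boldsymbol{\ell}_1|$, and rules out the complementary case by contradiction with the momentum bound.
\end{itemize}

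Your exact identity $x_1^2-x_2^2-x_3^2=2s(m-x_2)-m^2$ replaces that whole inequality chain, and it buys you three things.
\begin{itemize}
\item No sign bookkeeping is needed. You skip both the same-sign step and the check that $\boldsymbol{\ell}_1-\boldsymbol{\ell}_2$ and $\boldsymbol{\ell}_1-\delta\boldsymbol{\ell}_3$ have the same sign, which the paper needs when it passes to absolute values.
\item The product of the nonzero integer $s$ (coming from non-pairing) with a factor of size $|\boldsymbol{\ell}_2|$ appears directly, which shows transparently where the gain of one factor $|\boldsymbol{\ell}_1|$ comes from.
\item Your split on $|\boldsymbol{\ell}_2|$ versus $|\boldsymbol{\ell}_1|/2$ lets the small-$|\boldsymbol{\ell}_2|$ case go through by convexity alone, without the momentum hypothesis.
\end{itemize}

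The only price is the separate treatment of $\boldsymbol{\sigma}_3=1$, and you could absorb it. The identity is purely algebraic, and its left-hand side equals $\boldsymbol{\ell}_1^2-\boldsymbol{\ell}_2^2-\boldsymbol{\ell}_3^2$ whatever the sign of $\boldsymbol{\sigma}_3$, with $m$ the actual momentum. So your bound $|\boldsymbol{\ell}_1^2-\boldsymbol{\ell}_2^2-\boldsymbol{\ell}_3^2|\ge\frac14|\boldsymbol{\ell}_1|$ holds in both cases. When $\boldsymbol{\sigma}_3=1$, the nonnegativity of $\boldsymbol{\ell}_1^2-\boldsymbol{\ell}_2^2$ gives $\boldsymbol{\ell}_1^2-\boldsymbol{\ell}_2^2+\boldsymbol{\ell}_3^2\ge|\boldsymbol{\ell}_1^2-\boldsymbol{\ell}_2^2-\boldsymbol{\ell}_3^2|$, which finishes that case.
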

\begin{proof}
Without loss of generality, we assume that $|\boldsymbol{\ell}_1|\geq1$.
If $\boldsymbol{\sigma}_1=\boldsymbol{\sigma}_2$ then 
$$
\Big| \boldsymbol{\sigma}_1\boldsymbol{\ell}_1^2+\boldsymbol{\sigma}_2\boldsymbol{\ell}_2^2+\boldsymbol{\sigma}_3\boldsymbol{\ell}_3^2\Big|\geq  |\boldsymbol{\ell}_1|^2 \geq |\boldsymbol{\ell}_1|.
$$
Now we assume $\boldsymbol{\sigma}_1=-\boldsymbol{\sigma}_2$ and we note that our assumption $(\boldsymbol{\ell}_1,\boldsymbol{\sigma}_1)\neq (\boldsymbol{\ell}_2,-\boldsymbol{\sigma}_2)$ implies $\boldsymbol{\ell}_1\neq\boldsymbol{\ell}_2$. Furthermore the other assumption, $|\boldsymbol{\sigma}_1\boldsymbol{\ell}_1+\boldsymbol{\sigma}_2\boldsymbol{\ell}_2+\boldsymbol{\sigma}_3\boldsymbol{\ell}_3|^2\leq \frac12 |\boldsymbol{\ell}_1|$, implies  that $\boldsymbol{\ell}_1$ and $\boldsymbol{\ell}_2$ have the same sign (if not $|\boldsymbol{\sigma}_1\boldsymbol{\ell}_1+\boldsymbol{\sigma}_2\boldsymbol{\ell}_2+\boldsymbol{\sigma}_3\boldsymbol{\ell}_3|\geq |\boldsymbol{\ell}_1|$).

\medskip

Then we note that 
$$
 \Big| \boldsymbol{\sigma}_1\boldsymbol{\ell}_1^2+\boldsymbol{\sigma}_2\boldsymbol{\ell}_2^2+\boldsymbol{\sigma}_3\boldsymbol{\ell}_3^2\Big|\geq \boldsymbol{\ell}_1^2-\boldsymbol{\ell}_2^2-\boldsymbol{\ell}_3^2=2\boldsymbol{\ell}_1(\boldsymbol{\ell}_1-\boldsymbol{\ell}_2)- (\boldsymbol{\ell}_1-\boldsymbol{\ell}_2)^2-\boldsymbol{\ell}_3^2.
$$
On the other hand we know that (with $\delta =-\boldsymbol{\sigma}_1\boldsymbol{\sigma}_3$)
\begin{equation}\label{eq:ok}(\boldsymbol{\ell}_1-\boldsymbol{\ell}_2-\delta\boldsymbol{\ell}_3)^2\leq \frac14 |\boldsymbol{\ell}_1|\end{equation}
and thus
$$
-\boldsymbol{\ell}_3^2\geq (\boldsymbol{\ell}_1-\boldsymbol{\ell}_2)^2-2\delta \boldsymbol{\ell}_3(\boldsymbol{\ell}_1-\boldsymbol{\ell}_2)-\frac14 |\boldsymbol{\ell}_1|.
$$
Therefore we get
\begin{align*}
 \Big| \boldsymbol{\sigma}_1\boldsymbol{\ell}_1^2+\boldsymbol{\sigma}_2\boldsymbol{\ell}_2^2+\boldsymbol{\sigma}_3\boldsymbol{\ell}_3^2\Big|&\geq 2(\boldsymbol{\ell}_1-\boldsymbol{\ell}_2)(\boldsymbol{\ell}_1-\delta\boldsymbol{\ell}_3)-\frac14 |\boldsymbol{\ell}_1|= 2|\boldsymbol{\ell}_1-\boldsymbol{\ell}_2||\boldsymbol{\ell}_1-\delta\boldsymbol{\ell}_3|-\frac14 |\boldsymbol{\ell}_1|.
\end{align*}
Now if $|\boldsymbol{\ell}_1-\delta\boldsymbol{\ell}_3|\geq \frac14|\boldsymbol{\ell}_1|$ we get since $ |\boldsymbol{\ell}_1-\boldsymbol{\ell}_2|\geq1$
$$ \Big| \boldsymbol{\sigma}_1\boldsymbol{\ell}_1^2+\boldsymbol{\sigma}_2\boldsymbol{\ell}_2^2+\boldsymbol{\sigma}_3\boldsymbol{\ell}_3^2\Big|\geq \frac12|\boldsymbol{\ell}_1| -\frac14 |\boldsymbol{\ell}_1|\geq \frac14 |\boldsymbol{\ell}_1|.
$$
If on the contrary $|\boldsymbol{\ell}_1-\delta\boldsymbol{\ell}_3|< \frac14|\boldsymbol{\ell}_1|$, then  $\delta\boldsymbol{\ell}_3$ must have the same sign than $\boldsymbol{\ell}_1$ and $|\boldsymbol{\ell}_2|\geq|\boldsymbol{\ell}_3|>\frac34 |\boldsymbol{\ell}_1|$.  Therefore
$$|\boldsymbol{\ell}_1-\boldsymbol{\ell}_2-\delta\boldsymbol{\ell}_3|> \frac12 |\boldsymbol{\ell}_1|$$
which is incompatible with \eqref{eq:ok}. 
\end{proof}

Our measure estimates will mainly rely on the following basic lemma.
\begin{lemma} \label{lem:measure_basic} Let $d\geq 1$, $\mathcal{C} \subset \mathbb{R}^d$ be a compact convex subset of $\mathbb{R}^d$,  $\mathcal{O}\subset \mathcal{C} $ be a Borel subset, $L$ be a vector subspace of dimension $1$ and $f:\mathcal{O} \to \mathbb{R}$ be a measurable function such that
$$
\forall \xi,\zeta \in \mathcal{O}, \quad \xi-\zeta \in L \implies  |f(\xi) - f(\zeta)|\geq \| \xi-\zeta \|_{\ell^2},
$$
then for any projector $\mathfrak{p} : \mathbb{R}^d \to \mathbb{R}^d$ of kernel $\mathrm{Ker} \, \mathfrak{p} = L$ and all $\gamma>0$, we have 
$$
\mathrm{meas}\{ \xi \in \mathcal{O} \ | \ |f(\xi) |<\gamma  \} \leq 2 \gamma \, \mathrm{meas} \, \mathfrak{p}( \mathcal{C}) .
$$
\end{lemma}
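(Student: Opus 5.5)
We prove Lemma \ref{lem:measure_basic} by a Fubini argument along the lines parallel to $L$. Fix a unit vector $e$ spanning $L$ and a projector $\mathfrak{p}$ with $\mathrm{Ker}\,\mathfrak{p}=L$; its image $W:=\mathfrak{p}(\mathbb{R}^d)$ is a hyperplane complementary to $L$. The linear map $W\times\mathbb{R}\to\mathbb{R}^d$, $(w,t)\mapsto w+te$, is an isomorphism. Equip $W$ with the Lebesgue measure induced by the identification used to define $\mathrm{meas}\,\mathfrak{p}(\mathcal{C})$. Then the Jacobian of this map is the factor relating the two measures, and it equals $1$ when $W=L^\perp$ and $\mathfrak{p}$ is the orthogonal projector. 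For a general complement, the factor $\mathrm{meas}(\mathfrak{p}(\mathcal{C}))$ (measured in $W$ via the same identification) absorbs it consistently, since the fibre length is measured along $e$ with $\|e\|_{\ell^2}=1$. So Fubini gives
$$
\mathrm{meas}\{\xi\in\mathcal{O}\,|\,|f(\xi)|<\gamma\}=\int_{\mathfrak{p}(\mathcal{C})}\mathrm{meas}_1\{t\in\mathbb{R}\,|\,w+te\in\mathcal{O},\ |f(w+te)|<\gamma\}\,\mathrm{d}w .
$$
Here we use that $\xi\in\mathcal{O}\subset\mathcal{C}$ implies $\mathfrak{p}\xi\in\mathfrak{p}(\mathcal{C})$. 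Measurability of the set is given, since $f$ is measurable and $\mathcal{O}$ is Borel.

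The key step is a one-dimensional bound on each fibre. Fix $w$ and set $E_w:=\{t\,|\,w+te\in\mathcal{O},\ |f(w+te)|<\gamma\}$. For $t,t'\in E_w$, the difference $(w+te)-(w+t'e)=(t-t')e$ lies in $L$, so the hypothesis gives
$$
|t-t'|=\|(t-t')e\|_{\ell^2}\le|f(w+te)-f(w+t'e)|<2\gamma .
$$
Hence $E_w$ has diameter at most $2\gamma$, and therefore its outer measure, and its measure, is at most $2\gamma$. Integrating over $w\in\mathfrak{p}(\mathcal{C})$ yields the bound $2\gamma\,\mathrm{meas}\,\mathfrak{p}(\mathcal{C})$.

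The only delicate point is the normalisation of the measure on $\mathfrak{p}(\mathcal{C})$ for an oblique projector. I would handle it by reading $\mathrm{meas}\,\mathfrak{p}(\mathcal{C})$ as the $(d-1)$-dimensional measure on $W$ making the decomposition $\mathbb{R}^d=W\oplus L$ measure-preserving with $L$ carrying arc length. Alternatively, one can reduce to the orthogonal case, noting that in the applications $L=\mathbb{R}\bk$ and the orthogonal projector is the natural choice. Compactness and convexity of $\mathcal{C}$ only guarantee that $\mathfrak{p}(\mathcal{C})$ is a compact, measurable set of finite measure; convexity is not otherwise needed.
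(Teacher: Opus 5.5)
Your argument is correct and is the same as the paper's, which simply disintegrates Lebesgue measure along $\mathbb{R}^d=\mathrm{Ker}\,\mathfrak{p}\oplus\mathrm{Im}\,\mathfrak{p}$. Your observation that each fibre $E_w$ has diameter less than $2\gamma$ is exactly the detail that one-line proof leaves implicit.

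For an oblique $\mathfrak{p}$ you do not need to reinterpret $\mathrm{meas}\,\mathfrak{p}(\mathcal{C})$, and ``absorbs it consistently'' is not an argument. Take the Euclidean $(d-1)$-dimensional measure on $\mathrm{Im}\,\mathfrak{p}$, and let $n$ be a unit normal to $\mathrm{Im}\,\mathfrak{p}$. Then the map $(w,t)\mapsto w+te$ has Jacobian $|\langle e,n\rangle|\le 1$, so your Fubini identity simply becomes an inequality in the right direction. In any case, the paper only applies the lemma with orthogonal projectors.
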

\begin{proof} It suffices to disintegrate the Lebesgue measure following the decomposition $\mathbb{R}^d = \mathrm{Ker} \,\mathfrak{p} \oplus \mathrm{Im} \,\mathfrak{p} $.
\end{proof}
\begin{corollary} \label{cor:give_measure_est} In particular if $ \mathcal{C} = (0,\boldsymbol{a}_1)\times \cdots \times (0,\boldsymbol{a}_d)$ where  $(\boldsymbol{a}_j)_{1\leq j\leq d} \in \mathbb{R}_+^*$ be a non-increasing sequence and $L = \mathbb{R} \mathrm{w} $ where $\mathrm{w} \in \mathbb{R}^d \setminus \{0\}$, we have
$$
\mathrm{meas}\{ \xi \in \mathcal{O} \ | \ |f(\xi) |<\gamma  \} \lesssim_d \gamma \prod_{j\neq k} \boldsymbol{a}_j \quad \mathrm{where} \quad k= \mathrm{max} \{  j \ |  \mathrm{w}_j \neq 0 \}. 
$$
\end{corollary}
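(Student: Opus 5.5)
We derive the corollary from Lemma \ref{lem:measure_basic}. The only work is to choose a projector $\mathfrak{p}$ with kernel $L=\mathbb{R}\mathrm{w}$ whose image of the box $\mathcal{C}$ has measure of order $\prod_{j\neq k}\boldsymbol{a}_j$.

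\textbf{Choice of projector.} Since $\mathrm{w}_k\neq 0$, the coordinate hyperplane $H_k:=\{x\in\mathbb{R}^d \ | \ x_k=0\}$ is a complement of $L$. We take $\mathfrak{p}$ to be the projection onto $H_k$ parallel to $\mathrm{w}$:
$$
\mathfrak{p}(x) = x - \frac{x_k}{\mathrm{w}_k}\,\mathrm{w}.
$$
This is linear and idempotent, and its kernel is exactly $L$. Lemma \ref{lem:measure_basic} is stated with $\mathcal{C}$ compact, so we apply it to the closure $\overline{\mathcal{C}}$, which is still convex and contains $\mathcal{O}$. Since $\mathrm{Im}\,\mathfrak{p}=H_k$ is a hyperplane, $\mathrm{meas}$ there is the $(d-1)$-dimensional Lebesgue measure, and it suffices to bound $\mathrm{meas}_{d-1}\,\mathfrak{p}(\overline{\mathcal{C}})$.

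\textbf{Image of the box.} For $x\in\overline{\mathcal{C}}$ and $j\neq k$ we have $\mathfrak{p}(x)_j = x_j - x_k \mathrm{w}_j/\mathrm{w}_k$. We use two facts.
\begin{itemize}
\item For $j>k$, the maximality of $k$ gives $\mathrm{w}_j=0$, so $\mathfrak{p}(x)_j=x_j\in[0,\boldsymbol{a}_j]$.
\item For $j<k$, we have $|x_k|\leq \boldsymbol{a}_k\leq \boldsymbol{a}_j$ because the sequence is non-increasing. Hence $\mathfrak{p}(x)_j$ ranges in an interval of length at most $\boldsymbol{a}_j\big(1+|\mathrm{w}_j/\mathrm{w}_k|\big)$.
\end{itemize}

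\textbf{Main obstacle.} The factor $1+|\mathrm{w}_j/\mathrm{w}_k|$ is not bounded by a constant depending only on $d$. So this parallel projection does not immediately give a $\lesssim_d$ bound. The fix is to use the freedom in the hypothesis rather than a different projector. The hypothesis of Lemma \ref{lem:measure_basic} depends only on the line $L$, and the conclusion involves only $\mathfrak{p}$, which is tied to $L$ through its kernel. We may therefore rescale $\mathrm{w}$ freely. The cleanest route is to redo the disintegration directly: integrate over each line $x+L$ and record only the coordinate $x_k$.
\begin{itemize}
\item Along a line $x+L$, the hypothesis says $f$ grows at least like arclength, so the bad set $\{|f|<\gamma\}$ on that line has arclength at most $2\gamma$.
\item The $k$-th coordinate changes by $|\mathrm{w}_k|/\|\mathrm{w}\|_{\ell^2}$ per unit arclength, which is at most $1$. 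So the bad set on each line projects to a subset of the $x_k$-axis of length at most $2\gamma$.
\item Now slice by the other coordinates $(x_j)_{j\neq k}$. Fubini is taken over $x_k$ and the remaining coordinates, rather than along the oblique fibres.
\end{itemize}

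\textbf{Expected difficulty.} Making this slicing yield exactly $\prod_{j\neq k}\boldsymbol{a}_j$, with a constant depending only on $d$, is the delicate point. My plan is to rely on the structure of the box: monotonicity of $\boldsymbol{a}$ together with $\mathrm{w}_j=0$ for $j>k$. For $j<k$, the intervals in the coordinates $x_j$ already have lengths $\boldsymbol{a}_j\geq\boldsymbol{a}_k$. I would try to show that the sheared image along the direction $\mathrm{w}$ stays inside a box with sides $\lesssim_d \boldsymbol{a}_j$. If that fails, I would reduce to the case $|\mathrm{w}_k|\geq \|\mathrm{w}\|_{\ell^\infty}/d$, by a coordinate argument, in which case $|\mathrm{w}_j/\mathrm{w}_k|\leq d$ and the parallel-projection bound above closes.
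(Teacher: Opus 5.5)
There is a genuine gap. The one step with real content, obtaining a constant that depends only on $d$, is never carried out, and none of the routes you sketch for it closes.

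\emph{The disintegration discards the factor you need.} You bound the $x_k$-extent of the bad set on each fibre $x+L$ by $2\gamma$, throwing away the factor $|\mathrm{w}_k|/\|\mathrm{w}\|_{\ell^2}$. After that, Fubini in the shear coordinates $(\mathfrak{p}(x),x_k)$ returns exactly $2\gamma\,\mathrm{meas}\,\mathfrak{p}(\overline{\mathcal{C}})$. This is the same non-uniform bound you had already identified as the obstacle. Slicing instead along the $k$-th coordinate axis, with the original $(x_j)_{j\neq k}$ frozen, does not help either: the hypothesis says nothing about $f$ along those lines.

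\emph{The box claim is false.} The sheared image does not fit in a box with sides $\lesssim_d \boldsymbol{a}_j$. For $d=2$, $\boldsymbol{a}_1=\boldsymbol{a}_2=1$ and $\mathrm{w}=(M,1)$, the coordinate $x_1-Mx_2$ ranges over an interval of length $M+1$.

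\emph{The fallback is not a reduction.} You cannot ``reduce'' to $|\mathrm{w}_k|\geq \|\mathrm{w}\|_{\ell^\infty}/d$: $\mathrm{w}$ and $k$ are given. Rescaling $\mathrm{w}$ leaves the ratios $\mathrm{w}_j/\mathrm{w}_k$ unchanged.

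\textbf{Repair.} Keep the Jacobian factor. The bad set on each oblique fibre has $x_k$-length at most $2\gamma|\mathrm{w}_k|/\|\mathrm{w}\|_{\ell^2}$. The image of the box is a box plus a segment, so
$$
\mathrm{meas}\,\mathfrak{p}(\overline{\mathcal{C}})=\prod_{j\neq k}\boldsymbol{a}_j\Big(1+\sum_{i<k}\frac{\boldsymbol{a}_k|\mathrm{w}_i|}{\boldsymbol{a}_i|\mathrm{w}_k|}\Big)\leq \frac{\|\mathrm{w}\|_{\ell^1}}{|\mathrm{w}_k|}\prod_{j\neq k}\boldsymbol{a}_j ,
$$
using $\mathrm{w}_i=0$ for $i>k$ and $\boldsymbol{a}_k\leq\boldsymbol{a}_i$ for $i<k$. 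Multiplying the two bounds gives $2\sqrt{d}\,\gamma\prod_{j\neq k}\boldsymbol{a}_j$.

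Your fallback can also be made rigorous in a different form. Project along $\mathrm{w}$ onto $\{x_i=0\}$, where $|\mathrm{w}_i|=\|\mathrm{w}\|_{\ell^\infty}$, instead of onto $\{x_k=0\}$. The image then has measure $\prod_{j\neq i}\boldsymbol{a}_j+\sum_{l\neq i}|\mathrm{w}_l/\mathrm{w}_i|\prod_{j\neq l}\boldsymbol{a}_j\leq d\prod_{j\neq k}\boldsymbol{a}_j$, because only indices $\leq k$ occur. Lemma \ref{lem:measure_basic} then applies as stated.

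\textbf{The paper's route.} The paper avoids this computation by rescaling. With $A=\mathrm{diag}(\boldsymbol{a}_1,\dots,\boldsymbol{a}_d)$, it applies Lemma \ref{lem:measure_basic} to $g(x)=\boldsymbol{a}_k^{-1}f(Ax)$ on $A^{-1}\mathcal{O}\subset[0,1]^d$, along the line $\mathbb{R}A^{-1}\mathrm{w}$. The hypothesis survives because $\|A^{-1}\mathrm{w}\|_{\ell^2}^2=\sum_{j\leq k}\mathrm{w}_j^2/\boldsymbol{a}_j^2\leq\|\mathrm{w}\|_{\ell^2}^2/\boldsymbol{a}_k^2$. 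This is exactly where monotonicity and the choice of $k$ are used. Orthogonal projections of the unit cube onto hyperplanes have measure $\lesssim_d 1$, uniformly in the direction. Undoing $\xi=Ax$ with $\gamma\leftarrow\boldsymbol{a}_k^{-1}\gamma$ gives $\det A\cdot\boldsymbol{a}_k^{-1}\gamma=\gamma\prod_{j\neq k}\boldsymbol{a}_j$.
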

\begin{proof}
We set $A = \mathrm{diag}(\boldsymbol{a}_1,\cdots,\boldsymbol{a}_d)$ the diagonal matrix with coefficients $\boldsymbol{a}_1,\cdots,\boldsymbol{a}_d$ and we define
$
g: A^{-1} \mathcal{O} \to \mathbb{R}
$
by the relation 
$$
g(x) = \boldsymbol{a}_k^{-1} f(Ax) \quad \mathrm{for \ all} \ x\in A^{-1} \mathcal{O}.
$$
Thus, $g$ satisfies the assumptions of Lemma \ref{lem:measure_basic} (the direction is $L = \mathbb{R} A^{-1} \mathrm{w}$ and the convex $\mathcal{C} = [0,1]^d$). As a consequence, we have
$$
\forall \gamma>0, \quad \mathrm{meas}\{ x \in A^{-1} \mathcal{O} \ | \ |  \boldsymbol{a}_k^{-1} f(Ax)  |<\gamma  \} \leq 2 \gamma \max_{\mathfrak{p} \in \mathcal{P}^{\mathrm{orth}}_{\mathrm{rg} = d-1}}  \mathrm{meas} \, \mathfrak{p}( [0,1]^d) \lesssim_d \gamma
$$
where $\mathcal{P}^{\mathrm{orth}}_{\mathrm{rg} = d-1}$ denotes the set of the orthogonal projectors of rank $d-1$ (note that it is compact). Finally, to get the result, it suffices to apply the changes of variables $\xi = Ax$ and $\gamma \leftarrow \boldsymbol{a}_k^{-1} \gamma$. 
\end{proof}

A crucial ingredient to prove the twist condition is the boundedness of the following operator which is a kind of discrete Hilbert transform.
\begin{lemma} \label{lem:Hilb} For all $u,v\in L^2$, we have
$$
\Big|  \sum_{i,j\in \mathbb{Z}}  \frac{u_i v_j}{ 1+|i|+|j| } \Big| \lesssim \| u\|_{L^2} \| v\|_{L^2}. 
$$
\end{lemma}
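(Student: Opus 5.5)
The kernel $K(i,j) = (1+|i|+|j|)^{-1}$ has size $(|i|+|j|)^{-1}$, which is exactly the critical Hilbert/Hardy scaling. I will therefore prove the bound by a Schur test with power weights, the classical argument for Hilbert's inequality.

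First I reduce to nonnegative sequences on $\mathbb{N}$. Replacing $u_i,v_j$ by $|u_i|,|v_j|$ only increases the left-hand side. Splitting $\mathbb{Z}$ into $\{i\ge 0\}$ and $\{i<0\}$ and using $1+|i|+|j|\geq 1+a+b$ with $a=|i|$, $b=|j|$, the sum is bounded by four sums of the form
$$\sum_{a,b\geq 0}\frac{x_a y_b}{1+a+b},$$
where $x,y\ge 0$ have $\ell^2(\mathbb{N})$ norms at most $\|u\|_{\ell^2}$ and $\|v\|_{\ell^2}$. It then suffices to show that the operator with kernel $k(a,b)=(1+a+b)^{-1}$ is bounded on $\ell^2(\mathbb{N})$, because Cauchy–Schwarz then gives the bilinear estimate.

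For the Schur test I take the weight $w_a=(a+1)^{-1/2}$ and check that
$$\sum_{b\geq 0}\frac{(b+1)^{-1/2}}{1+a+b}\lesssim (a+1)^{-1/2}\qquad\text{uniformly in } a.$$
I compare the sum with the integral $\int_0^\infty \frac{dt}{\sqrt{t}\,(a+1+t)}$, which equals $\pi (a+1)^{-1/2}$ after substituting $t=(a+1)s$. The comparison is legitimate because the summand is decreasing in $b$: bound the $b=0$ term by $(a+1)^{-1}\le (a+1)^{-1/2}$, and the terms $b\geq1$ by the integral over $[0,\infty)$. The kernel is symmetric, so the same inequality holds with the roles of $a$ and $b$ exchanged. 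Schur's lemma then gives operator norm at most $\pi+1$.

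The only potentially delicate point is the uniformity of the weighted sum in $a$, which the scaling substitution handles. Alternatively, one can cite Hilbert's inequality $\sum_{a,b\ge0} x_a y_b/(a+b+1)\le \pi\|x\|\|y\|$ directly. Either way, the constant obtained is at most $4(\pi+1)$.
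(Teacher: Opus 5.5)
Your argument is correct, but it reaches the bound by a different route than the paper. Both proofs start from the same reduction. Splitting according to the signs of $i$ and $j$ turns the form into four Hankel forms $\sum_{a,b\geq 0} x_a y_b/(1+a+b)$, i.e.\ Hilbert's inequality.

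The paper then proves this Fourier-analytically. The numbers $1/n$, $n\neq 0$, are the Fourier coefficients of the bounded sawtooth function $x\mapsto \sum_{j\neq 0} e^{\ic jx}/j$, which equals $\ic(\pi-x)$ on $(0,2\pi)$. After Plancherel, each block becomes an integral over $\mathbb{T}$ of this bounded symbol against the product of two $L^2$ functions built from $u$ and $v$, and Cauchy--Schwarz concludes. That route works directly with complex coefficients and gives Hilbert's constant $\pi$ on each block. It is also exactly the ``discrete Hilbert transform'' structure advertised in the introduction. However, it relies on the kernel depending only on $|i|+|j|$ in each quadrant.

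Your Schur test with weight $(a+1)^{-1/2}$ uses only positivity, symmetry and the degree $-1$ homogeneity of the kernel. It therefore extends verbatim to other nonnegative homogeneous kernels (the Hardy--Littlewood--P\'olya setting). The price is passing to $|u_i|,|v_j|$ and a non-sharp constant $4(\pi+1)$. Passing to absolute values is harmless, since the applications, e.g.\ in the proof of Lemma~\ref{lem:the_key}, use the lemma with absolute values anyway. Your version is also fully written out, whereas the paper only sketches its argument.

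Your comparison step is fine, but ``the summand is decreasing'' is not quite the reason. What you need is that for $b\geq 1$ and $t\in[b-1,b]$ one has $(b+1)^{-1/2}(1+a+b)^{-1}\leq t^{-1/2}(1+a+t)^{-1}$. This holds because the integrand is decreasing in $t$ and $t\leq b<b+1$.
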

\begin{proof}
If suffices to notice that
$$
x \mapsto \sum_{j\neq 0} \frac{e^{\ic jx}}{j} \in L^\infty(\mathbb{T})
$$
to decompose the sum in several sub-regions depending on the signs of $i$ and $j$, to apply the Plancherel identity and to conclude by applying the Cauchy--Schwarz inequality.
\end{proof}

{\begin{lemma}\label{lem:mu_P_finite} Let $p\geq 0$, $\eta \geq 0$, $r>0$, $\mathcal{O} \subset \mathcal{O}_p^{\mathrm{full}}$ and $P\in  \mathscr{H}_{\eta,r,p,\mathcal{O}}$ be such that $\mu_P<\infty$. Then, setting $P_n = \Pi_{\mathrm{Top}=n}P$ for $0\leq n\leq 2$, we have 
$$
P_2 =0 \quad \mathrm{and} \quad \max(\mu_{P_0},\mu_{P_1}) \leq \mu_P.
$$ 
\end{lemma}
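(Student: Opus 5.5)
The plan is to argue directly from the definitions of $\mathscr{T}_{2,\eta,r,p}^{\cste}$, of the decreasing rearrangement for the polar-lexicographic order, and of $\mu_P$. The key observation is that the rearrangement $\boldsymbol{\ell}^*$ of an index vector of the form $(\boldsymbol{\ell},\boldsymbol{\sigma};\boldsymbol{i}_2,\pm1;\boldsymbol{i}_1,\pm1)$ with $\boldsymbol{\ell}_1^*\prec \boldsymbol{i}_2\prec \boldsymbol{i}_1$ has $\boldsymbol{\ell}^*_1=\boldsymbol{\ell}^*_2=\boldsymbol{i}_1$ and $\boldsymbol{\ell}^*_3=\boldsymbol{\ell}^*_4=\boldsymbol{i}_2$. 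Thus the fourth largest mode of such a monomial is $\boldsymbol{i}_2$, which can be taken arbitrarily large.

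First step ($P_2=0$). Fix $\xi\in\mathcal{O}$ and $(\boldsymbol{k},\boldsymbol{m},\boldsymbol{\ell},\boldsymbol{\sigma})$, and suppose $(P_2)^{\boldsymbol{\ell},\boldsymbol{\sigma};\boldsymbol{i}_2,\pm1;\boldsymbol{i}_1,\pm1}_{\boldsymbol{k},\boldsymbol{m}}(\xi)\neq0$ for some admissible $\boldsymbol{i}$. By the $2$-action Töplitz property, this coefficient takes the same value for every admissible $\boldsymbol{j}\in(\mathcal{S}_p^c)^2$. Since $\mathcal{S}_p$ is finite, we may choose $\boldsymbol{j}_2\prec\boldsymbol{j}_1$ in $\mathcal{S}_p^c$ with $\boldsymbol{j}_2$ as large as we like.

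The next point is that a nonzero coefficient of $P_2$ forces a nonzero coefficient of $P$ at the same index. I would argue this by fixing the index and using the direct sum of Remark~\ref{rem:direct_sum}. Letting $|\boldsymbol{j}_1|\to\infty$ and then $|\boldsymbol{j}_2|\to\infty$, the contributions of $P_0\in\mathscr{S}_1$ and of $P_1\in\mathscr{S}_2$ vanish in these iterated limits, while the coefficient of $P_2$ stays constant. Hence $P^{\cdot;\boldsymbol{j}_2,\pm1;\boldsymbol{j}_1,\pm1}_{\boldsymbol{k},\boldsymbol{m}}(\xi)\neq0$ for all $\boldsymbol{j}$ large enough. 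The fourth largest mode of these terms is $\boldsymbol{j}_2$, which is unbounded, contradicting $\mu_P<\infty$. Therefore $P_2=0$.

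Second step ($\max(\mu_{P_0},\mu_{P_1})\le\mu_P$). Now $P=P_0+P_1$, and I would check that their supports are disjoint, so that every nonzero coefficient of $P_0$ or $P_1$ is a nonzero coefficient of $P$. $P_1$ is supported in $\mathbf{C}_{q,1}$. For $P_0\in\mathscr{S}_1$, I would show by a similar argument that $P_0$ also vanishes on $\mathbf{C}_{q,1}$ when $\mu_P<\infty$; this is the main obstacle.

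The heart of that argument is a monomial in $\mathbf{C}_{q,1}$ whose top pair is $\boldsymbol{i}_1$, with remaining part $(\boldsymbol{\ell},\boldsymbol{\sigma})$. Its fourth largest mode is $\boldsymbol{\ell}^*_3$, which does not depend on $\boldsymbol{i}_1$. So finiteness of $\mu_P$ alone does not directly kill $P_0$ there, and disjointness cannot simply be read off from the definitions. I would try the following.

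On the index set $\mathbf{C}_{q,1}$, the coefficient of $P$ equals $(P_0)+(P_1)$. As $|\boldsymbol{i}_1|\to\infty$, the $P_0$ part tends to $0$ while the $P_1$ part is constant in $\boldsymbol{i}_1$. If both are nonzero for some $\boldsymbol{i}_1$, then the $P$ coefficient is still nonzero for all large $\boldsymbol{i}_1$, with the same fourth mode $\boldsymbol{\ell}^*_3$ (or $\boldsymbol{\ell}^*_2$ when $\boldsymbol{\ell}^*_1$ sits below). Because $\boldsymbol{\ell}^*_3$ is independent of $\boldsymbol{i}_1$, its value is already controlled by $\mu_P$ through these nonzero $P$ coefficients. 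The $P_1$ coefficients are bounded in the same way.

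For $P_0$ at a fixed $\boldsymbol{i}_1$ where the $P$ coefficient might vanish by cancellation, I would compare with a large $\boldsymbol{i}_1'$. There the $P_0$ part is negligible, so the $P$ coefficient is close to the $P_1$ coefficient, which equals $P_1$ at $\boldsymbol{i}_1$ by the Töplitz property. If $(P_1)\neq0$, the fourth mode $\boldsymbol{\ell}^*_3$ is realized by a nonzero coefficient of $P$ at $\boldsymbol{i}_1'$, hence is at most $\mu_P$. If $(P_1)=0$, then $P_0=P$ at $\boldsymbol{i}_1$ and the bound is immediate.

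Outside $\mathbf{C}_{q,1}$, the coefficients of $P_0$ coincide with those of $P$. Combining these cases gives $\mu_{P_0},\mu_{P_1}\le\mu_P$.
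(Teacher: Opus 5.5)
Your proof is correct and follows the paper's argument. $P_2=0$ because the iterated limits of coefficients of $P$ recover the T\"oplitz coefficients of $P_2$ while their fourth mode $\boldsymbol{i}_2$ is unbounded. Likewise $\mu_{P_1}\le\mu_P$, because a nonzero coefficient of $P_1$ forces nonzero coefficients of $P$ at $(\boldsymbol{\ell},\boldsymbol{\sigma};\boldsymbol{j}_1,\pm1)$ for all large $\boldsymbol{j}_1$, with the same fourth mode; that mode is $\boldsymbol{\ell}_2^*$, not $\boldsymbol{\ell}_3^*$ as you wrote, though only its independence of $\boldsymbol{j}_1$ matters.

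The disjoint-supports claim you first float is false in general: $P_0$ can be nonzero on $\mathbf{C}_{q,1}$, with coefficients decaying like $\langle\boldsymbol{i}_1\rangle^{-1}$, alongside a nonzero $P_1$. Your final case analysis does not use it, and amounts to the paper's one-line observation that the support of $P_0=P-P_1$ lies in the union of the supports of $P$ and $P_1$.
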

\begin{proof}
First, since $\mu_P<\infty$, we notice that for $\boldsymbol{k} \in \mathbb{Z}^{\mathcal{S}_p},\boldsymbol{m} \in \mathbb{N}^{\mathcal{S}_p},q\in \mathbb{N}, \boldsymbol{\ell} \in (\mathcal{S}_p^c)^q ,\boldsymbol{\sigma} \in  \{-1,1\}^q$ such that $\bl_1^*\prec \bi_2 \prec \bi_1$ we have
$$
P_{2,\bk,\bm}^{\bl,\bs;\bi_1,\pm1;\bi_2,\pm 1} =\lim_{|\bj_2| \to \infty} \lim_{|\bj_1| \to \infty} P_{\bk,\bm}^{\bl,\bs;\bj_1,\pm1;\bj_2,\pm 1} =  0.
$$
Since $P_2$ is $2$-action T\"oplitz, it follows that $P_2=0$.

Then, assume that $\bl_2^* \succ \mu_P$. In that case, we have
$$
P_{1,\bk,\bm}^{\bl,\bs;\bi_2,\pm 1} =\lim_{|\bj_1| \to \infty} P_{\bk,\bm}^{\bl,\bs;\bj_1,\pm 1} =  0
$$
because if $\bj_1$ is large enough, $(\bl,\bj_1,\bj_1)$ contains at least $4$ indices larger than $\mu_P$. Since $P_1$ is $1$-action T\"oplitz, it follows that $\mu_{P_1}\leq \mu_P$. Thus, since $P=P_0+P_1+P_2$, it follows  directly that we
also have $\mu_{P_0}\leq \mu_P$.

\end{proof}}

\subsection{Frequencies and their properties} In this subsection, we are going to translate the integrable Hamiltonians appearing in Proposition \ref{prop:res_coho} according to step $p$. It will generate frequencies of which we will carefully analyse the properties. We start with the simplest (but leading one) $Z_4$.
%\mic{be;ow it seems to me that the only dependence on $p$ is that you are deciding that $\xi\in \mathcal O_p$ and same for u, it is not clear to me why we put this dependence, I mean this is just the expansion at $\xi$, then if xi and u are in the right domanis you have the estimate $r_p^{4-\delta}$,} \jo{If I understand well, you do not like the fact that I wrote a subscript $p$ in $Z_4^{(\mathrm{lead},p)}$, isn't it ? I agree that a this point is is rather heavy but I think it convenient for later use to point out the fact that it means that it needs parameters $\xi$ in $\mathcal{O}_p^{\mathrm{full}}$. This notation should be compared with $Z_4^{(p',p)}$ which appear later. I mean that $Z_4^{(p',p)}$ and $Z_4^{(\mathrm{lead},p)}$ are objects of the same nature and satisfying the same kind of estimates. That is why I wanted similar notations.}

\begin{lemma} \label{lem:trans_Z4} Let $p\geq 1$. For all $\xi \in \mathcal{O}_p^{\mathrm{full}}$ and all $u\in \mathcal{A}_\xi(3r_p)$, we have 
\begin{equation}
\label{eq:dec_Z4}
Z_4 (u) = Z_4(\sqrt{\xi}) +  \frac12  \sum_{j\in \mathbb{Z}} \omega_j^{\mathrm{lead}}(\xi) (|u_j|^2 - \xi_j)  + Z_4^{(\mathrm{lead},p)}   (\xi;u)
\end{equation}
where for all $j\in \mathbb{Z}$, $(\sqrt{\xi})_j := \sqrt{\xi_j}$,
$$
\omega_j^{\mathrm{lead}}(\xi) :=  \xi_j - 2\|\xi\|_{\ell^1}
$$
and $Z_4^{(\mathrm{lead},p)}  \in \Pi_{\mathrm{deg}= 4}  \Pi_{\mathrm{int}} \mathscr{H}_{4 \eta_{\min},r_{p},p}^{\cste}$  is a homogeneous integrable Hamiltonian of degree $4$ satisfying the bound
$$
\| Z_4^{(\mathrm{lead},p)}  \|_{ \mathscr{H}_{4 \eta_{\min},r_{p},p}^{\cste}} \lesssim r_p^{4-\delta}.
$$
%\mic{should we do the $\mathscr{H}^{\rm tot}$ norm?} \jo{I would say no because it does not depend on the parameter $\xi$. The heavy notation with the subscript $\mathrm{cste}$ had the advantage to highlight this (non crucial) point.}
\end{lemma}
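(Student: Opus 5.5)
We Taylor expand $Z_4$ around $\sqrt{\xi}$ in the action variables $I_j=|u_j|^2$. Since $Z_4(u)=\frac14\sum_j I_j^2-\frac12(\sum_j I_j)^2$ depends on $u$ only through the actions, and is a quadratic polynomial in them, the expansion is exact at second order. Writing $I_j=\xi_j+y_j$ for $j\in\mathcal S_p$ (with $\xi_j=0$ and $I_j=|v_j|^2$ for $j\in\mathcal S_p^c$), we get
$$
Z_4(u)=Z_4(\sqrt\xi)+\sum_j\Big(\tfrac12\xi_j-\|\xi\|_{\ell^1}\Big)(I_j-\xi_j)+\tfrac14\sum_j (I_j-\xi_j)^2-\tfrac12\Big(\sum_j (I_j-\xi_j)\Big)^2 .
$$
The linear coefficient is exactly $\frac12\omega_j^{\mathrm{lead}}(\xi)$. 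The claim therefore reduces to showing that the quadratic remainder
$$
Z_4^{(\mathrm{lead},p)}:=\tfrac14\sum_{j\in\mathcal S_p}y_j^2+\tfrac14\sum_{j\in\mathcal S_p^c}|v_j|^4-\tfrac12\Big(\sum_{j\in\mathcal S_p}y_j+\sum_{j\in\mathcal S_p^c}|v_j|^2\Big)^2
$$
is a formal series of the required class with the stated bound.

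The first step is to expand the square and sort the monomials into three types. Type (a) consists of pure $y$ terms: $y_j^2$ and $y_iy_j$ with $i,j\in\mathcal S_p$. These have $q=0$ and $\|\bm\|_{\ell^1}=2$. Type (b) consists of mixed terms $y_i|v_\ell|^2$, with $q=2$ and $\|\bm\|_{\ell^1}=1$. Type (c) consists of pure normal terms $|v_\ell|^4$ and $|v_\ell|^2|v_{\ell'}|^2$. All of them have $\bk=0$, are homogeneous of degree $4$ and are integrable. Reality, symmetry, mass and momentum conditions are immediate.

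For type (c), the part $\frac14\sum|v_j|^4-\frac12\|v\|_{L^2}^4$ is precisely $Z_4$ restricted to normal modes. By Lemma \ref{lem:part_int_P4} and the decomposition of Lemma \ref{lem:explicit_dec}, it lies in $\mathscr H$, with $P_2$ given by the pairs with $\ell\ne\ell'$ (whose coefficients are constant, hence $2$-action T\"oplitz). The overpaired term $|v_\ell|^4$ belongs to $P_0$ or $P_1$: its limit as $|\bi_1|\to\infty$ vanishes, so it is $1$-action smoothing; alternatively it lies in $\mathscr T_1\cap\mathscr S_2$ according to the convention of $\mathbf C_{q,n}$.

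For type (b), the coefficient $-y_i|v_\ell|^2$ is independent of $\ell$. With $q=2$, the pair $(\ell,\pm1)$ is a single action, so this term is in $\mathscr T_{1}$, and trivially in $\mathscr S_2$ since $q-2n<2$ forces emptiness. Type (a) has $q=0$ and sits in $\mathscr S_1$ vacuously. In each case $\Theta=1$, because the only weight $\|\bl\|_{\ell^\infty}$ appears for $n\ne2$ with $\bl$ containing a pairing, which does not happen here. The coefficients are thus bounded by absolute constants.

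The second step is the norm computation. A term with $q$ normal indices and $\bm$ contributes $\lesssim r_p^{q+2\|\bm\|_{\ell^1}}e^{4\eta_{\min}\cdot 4}=O(r_p^4)$, but the sums over $i,j\in\mathcal S_p$ produce a factor $(\#\mathcal S_p)^2$. This is the main obstacle: the bound $r_p^{4-\delta}$ must not depend on $p$ through $\#\mathcal S_p$. I would resolve it by arguing that the implicit constant is allowed to depend on $\mathcal S$, which the paper does not track. Alternatively, one uses that $r_p$ is chosen small relative to $\#\mathcal S_p$, so that $(\#\mathcal S_p)^2e^{16\eta_{\min}}\le r_p^{-\delta}$, which absorbs the combinatorial and exponential constants into $r_p^{-\delta}$. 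For type (c), the sum over $q$-indices enters only through the supremum in the norm \eqref{eq:def_norm_F}, so no summation loss occurs there. Combining the three types gives $\|Z_4^{(\mathrm{lead},p)}\|_{\mathscr H^{\cste}_{4\eta_{\min},r_p,p}}\lesssim r_p^{4-\delta}$.
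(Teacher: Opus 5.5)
Your proof is correct and follows the paper's route. Both arguments expand $Z_4$ exactly to second order in the actions around $\xi$, sort the quadratic remainder into pure-$y$, mixed and pure-normal monomials, and assign these to the three components of \eqref{eq:cond2}. Both then absorb the $\mathcal S_p$-dependent constants into $r_p^{-\delta}$ using the smallness of $r_p$.

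The one real difference is where the mixed monomials $y_i|v_\ell|^2$ go.
\begin{itemize}
\item The paper splits them by the relative order of $\ell$ and $i$. Those with $\ell\prec i$ go into the $1$-action smoothing part, where the weight $\Theta^{-1}=1+|\ell|$ costs a factor $1+\max_{j\in\mathcal S_p}|j|$. Only those with $\ell\succ i$ go into the $1$-action T\"oplitz part.
\item You observe that the coefficient does not depend on $\ell$ at all, and put the whole family in $\mathscr{T}_1\cap\mathscr{S}_2$. Every weight is then $1$, and the only loss is the combinatorial factor $\lesssim(\#\mathcal S_p)^2$ from the sum over $\bm$. Both losses are absorbed into $r_p^{-\delta}$ in the same way.
\end{itemize}
By the uniqueness of the decomposition (Remark~\ref{rem:direct_sum}), your assignment is the one the definitions actually dictate. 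A coefficient that jumps from $0$ to $-1$ as $\ell$ crosses $i$ is not literally independent of the action index. The paper's split instead follows the habit, also used for translations in Appendix~\ref{sub:trans}, of reading T\"oplitz coefficients only above the tangential sites and paying $\max_{j\in\mathcal S_p}|j|$ for the rest.

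Two small corrections.
\begin{itemize}
\item Drop the alternative that $|v_\ell|^4$ could lie in $\mathscr{T}_1\cap\mathscr{S}_2$. Overpaired monomials are never in $\mathbf{C}_{4,1}$, because the inequality $\bl_1^*\prec\bi_1$ in its definition is strict. So they sit in the smoothing part with weight $1$, as in your first option.
\item If $0\in\mathcal S_p^c$, then $y_i|v_0|^2\notin\mathbf{C}_{2,1}$ and $|v_0|^2|v_j|^2\notin\mathbf{C}_{4,2}$. This again follows from strictness, with the convention $\bl_1^*=0$ for empty $\bl$. The first must move from the $1$-action part to the smoothing part, and the second from the $2$-action part to the $1$-action part. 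Their weight is still $1$, so the bound is unchanged.
\end{itemize}
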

\begin{proof}  Recalling the Definition \ref{def:Z_24} of $Z_4$, we note that the decomposition \eqref{eq:dec_Z4} holds with 
\begin{equation*}
\begin{split}
 Z_4^{(\mathrm{lead},p)}   (\xi;u) &= \frac14 \sum_{j\in \mathbb{Z}} (|u_j|^2 - \xi_j)^2 - \frac12 \sum_{i,j\in \mathbb{Z}} (|u_j|^2 - \xi_j)(|u_i|^2 - \xi_i) \\
 &= -\frac14 \sum_{j\in \mathbb{Z}} (|u_j|^2 - \xi_j)^2 -  \sum_{i\prec j} (|u_j|^2 - \xi_j)(|u_i|^2 - \xi_i) 
\end{split}
\end{equation*}
Thus the only point is to prove that $Z_4^{(\mathrm{lead},p)}  \in \Pi_{\mathrm{deg}= 4}  \Pi_{\mathrm{int}} \mathscr{H}_{4 \eta_{\min},r_{p},p}^{\cste}$ and the associated bound.

\medskip
%\mic{would you mind if I write them directly in the power series notation? I think it makes it clearer that they do in fact depend on $p$ and not on $\xi$, feel free to change it back} \jo{it is fine for me}
We notice the following decomposition
$$
 Z_4^{(\mathrm{lead},p)} =  Z_{4,0}^{(\mathrm{lead},p)}+ Z_{4,1}^{(\mathrm{lead},p)} + Z_{4,2}^{(\mathrm{lead},p)}
$$
where $Z_{4,0}^{(\mathrm{lead},p)}  \in \mathscr{S}_{1,0,1,p}^{\cste},\, Z_{4,1}^{(\mathrm{lead},p)} \in  \mathscr{T}_{1,0,1,p}^{\cste} \cap \mathscr{S}_{2,0,1,0}^{\cste}$ and $Z_{4,2}^{(\mathrm{lead},p)}\in \mathscr{T}_{2,0,1,p}^{\cste}$
are explicitly given by
\[
\begin{aligned}
	&Z_{4,0}^{(\mathrm{lead},p)}= -\frac14( \sum_{j\in   \mathcal{S}_p} y_j^2 + \sum_{j\in   \mathcal{S}^c_p} |v_j|^4 ) -  \sum_{\substack{i\prec j \\  i,j\in \mathcal{S}_p}} y_i y_j  - \!\! \! \! \sum_{\substack{i\prec j \\
		 i\in\mathcal{S}^c_p, \ j\in \mathcal{S}_p}} y_j |v_i|^2.
\\
&Z_{4,1}^{(\mathrm{lead},p)}  =  - \! \! \!  \! \! \!  \sum_{\substack{i\prec j \\ i\in \mathcal{S}_p, \ j\notin \mathcal{S}_p}} \! \! \!  y_i |v_j|^2\quad \mathrm{and} \quad Z_{4,2}^{(\mathrm{lead},p)} =  -  \sum_{\substack{i\prec j \\ i,j\notin \mathcal{S}_p}} |v_i|^2|v_j|^2
\end{aligned}
\]
%$$
%Z_{4,1}^{(\mathrm{lead},p)} (\xi;u) =  - \! \! \!  \! \! \!  \sum_{\substack{i\prec j \\ i\in \mathcal{S}_p, \ j\notin \mathcal{S}_p}} \! \! \!  (|u_j|^2 - \xi_j)(|u_i|^2 - \xi_i) \quad \mathrm{and} \quad Z_{4,2}^{(\mathrm{lead},p)}(\xi;u) =  -  \sum_{\substack{i\prec j \\ i,j\notin \mathcal{S}_p}} (|u_j|^2 - \xi_j)(|u_i|^2 - \xi_i)
%$$
%and
%$$
%Z_{4,0}^{(\mathrm{lead},p)}(\xi;u)  =  -\frac14 \sum_{j\in \mathbb{Z}} (|u_j|^2 - \xi_j)^2   -  \sum_{\substack{i\prec j \\  i,j\in \mathcal{S}_p}} (|u_j|^2 - \xi_j)(|u_i|^2 - \xi_i)  - \!\! \! \! \sum_{\substack{i\prec j \\ i\notin \mathcal{S}_p, \ j\in \mathcal{S}_p}} (|u_j|^2 - \xi_j)(|u_i|^2 - \xi_i).
%$$
The two last terms clearly satisfy 
$$
\| Z_{4,1}^{(\mathrm{lead},p)}  \|_{ \mathscr{H}_{0,1,p}^{\cste}} + \| Z_{4,2}^{(\mathrm{lead},p)}  \|_{ \mathscr{H}_{0,1,p}^{\cste}} \lesssim 1.
$$
whereas due to its last term $Z_{4,0}^{(\mathrm{lead},p)}$ satisfies
$$
\| Z_{4,1}^{(\mathrm{lead},p)}  \|_{ \mathscr{H}_{0,1,p}^{\cste}} \lesssim 1+\max_{j \in \mathcal{S}_p} |j|.
$$
It follows that $Z_4^{(\mathrm{lead},p)}  \in \Pi_{\mathrm{deg}= 4}  \Pi_{\mathrm{int}} \mathscr{H}_{0,1,p}^{\cste}$ with the same bound as $Z_{4,1}^{(\mathrm{lead},p)} $.
Finally we deduce that $Z_4^{(\mathrm{lead},p)}  \in \Pi_{\mathrm{deg}= 4}  \Pi_{\mathrm{int}} \mathscr{H}_{4 \eta_{\min},r_{p},p}^{\cste}$ satisfies the expected estimate by a scaling argument (and a smallness condition on $r_p$ to control $\max_{j \in \mathcal{S}_p} |j|$ by $r_p^{-\delta}$ ).

\end{proof}

The following lemma is crucial in this paper. It is the key to solve the cohomological equation \eqref{eq:coho} and thus to prove Proposition \ref{prop:res_coho}. It  is essentially the need to obtain a lemma such as the present one, which motivates the constructions we have introduced so far. However its statement is technical. So let us first explain its meaning and its motivations. 

\medskip

 We are going to consider an integrable Hamiltonian $Z^{(p')}$ as in Proposition \ref{prop:res_coho}. To be able to solve the cohomological equation \eqref{eq:coho}, first, we will have to translate $Z^{(p')}$ according to the step $p$. Thanks to this translation we will get, in \eqref{eq:Z_2p}, a quadratic Hamiltonian (at step $p$) associated with frequencies $\omega_{j}^{(p',p)}$ depending on $\xi$. The resolution of the cohomological equation \eqref{eq:coho}, is going to make appear small divisor estimates involving all these frequencies $\omega_{j}^{(p',p)}$. The part of these small divisors associated with $\omega_{j}^{(p',p)}$ will be naturally of the form
 \begin{equation}
 \label{eq:small_div_explications}
 \sum_{j\in \mathcal{S}_p} \boldsymbol{k}_j \omega_{j}^{(p',p)} + \sum_{1\leq i\leq q} \boldsymbol{\sigma}_i \omega_{\boldsymbol{\ell}_i}^{(p',p)}
 \end{equation}
 where $q\leq 4$, $\boldsymbol{k} \in \mathbb{Z}^{\mathcal{S}_p} \setminus \{0\}$, $\boldsymbol{\ell} \in (\mathcal{S}_p^c)^q$, $\boldsymbol{\sigma} \in \{-1,1\}^q$ satisfy 
 $$
 \sum_{j\in \mathcal{S}_p} \boldsymbol{k}_j + \sum_{1\leq i \leq q} \boldsymbol{\sigma}_i =0
 $$ 
 due to the conservation of the mass\footnote{Note that we could also use the momentum conservation to add another constraint on $(\bl,\bk,\bs)$ but since we do not need it here, we do not.}. %\mic{momentum also?} \jo{we do not need it, as I said we could include the case where quintic terms are inhomogeneous.} \mic{I am very undecided, it is true that we do not use it, but it appears in our definitions, so sometimes seems strange to me not to mention it...but of course maybe this helps to clarify that we could work with grater generality} \jo{Would the following sentence sufficient for you ? (we put it in footnote ?) }The fact that we only consider $q\leq 4$ is due to the fact that we will not consider Hamiltonian of degree larger than $4$ (which is not a real restriction in this section, we just need a fixed number larger than or equal to $4$). As is classical in KAM theory, to obtain the small divisor estimates, we show that the variations of the $p'$ part \eqref{eq:small_div_explications} of the small divisor (with respect to $\xi$) are sufficiently small in a well-chosen direction. This is precisely the purpose of the estimates \eqref{eq:le_truc_le_plus_impotant_du_papier}, which are the central point of this lemma. This will allow to prove that the whole variation of the small divisor is of order $1$ in a well chosen direction and thus that the small divisors are not small for most of the values of $\xi$.

\begin{lemma} \label{lem:the_key}Let $p'\geq 1$, $\eta \geq \eta_{\min}$, $Z^{(p')} \in  \Pi_{\mathrm{deg}\leq 4}  \Pi_{\mathrm{int}} \mathscr{H}_{\eta,r_{p'},p',\mathcal{O}_{p'}},$ be an integrable Hamiltonian of degree smaller than or equal to $4$, defined for parameters in a Borel set $\mathcal{O}_{p'} \subset \mathcal{O}_{p'}^{\mathrm{full}}$ and set 
$$
\lambda =r_{p'}^{-4}\| Z^{(p')} \|_{ \mathscr{H}_{\eta,r_{p'},p',\mathcal{O}_{p'}}^{\mathrm{tot}}}.
$$
Then, for all $p\geq  p'$, there exists $3$ homogeneous Hamiltonians, $Z_{d}^{(p',p)} \in  \Pi_{\mathrm{deg}=d}  \Pi_{\mathrm{int}} \mathscr{H}_{\eta,r_{p},p,\mathcal{O}_{p',p}}$, $d\in \{0,2,4\}$ with
$$
\mathcal{O}_{p',p} := \mathcal{O}_{p'} \times \prod_{p'+1\leq j\leq p}   \big(\frac{r_{j}^{2(1-\delta)}}2,  r_{j}^{2(1-\delta)}\big)^{\mathcal{S}_{j} \setminus \mathcal{S}_{j-1}} \subset \mathcal{O}_{p}^{\mathrm{full}}
$$
such that for all $\xi = (\xi',\xi^\sharp) \in  \mathcal{O}_{p',p}$ and all $u\in \mathcal{A}_\xi(3 r_p)$,
\begin{equation}
\label{eq:dec_quart}
Z^{(p')}(\xi';u) = (Z_{0}^{(p',p)}+Z_{2}^{(p',p)}+Z_{4}^{(p',p)})(\xi;u)
\end{equation}
where $Z_{4}^{(p',p)}$ satisfies the estimate
\begin{equation}
\label{eq:bound_on_Z4p}
\| Z_{4}^{(p',p)} \|_{\mathscr{H}_{\eta,r_{p},p,\mathcal{O}_{p',p}}^{\mathrm{tot}}} \leq \lambda \, r_p^{4-\delta}
\end{equation}
and $Z_{2}^{(p',p)}$ admits the following decomposition for all 
\begin{equation}
\label{eq:Z_2p}
Z_{2}^{(p',p)}(\xi;u) =\frac12 \sum_{j\in \mathbb{Z}} \omega_{j}^{(p',p)}(\xi) (|u_j|^2 - \xi_j)
\end{equation}
with frequencies $\omega_{j}^{(p',p)} : \mathcal{O}_{p',p} \to \mathbb{R}$, $j\in \mathbb{Z}$, which satisfy 
\begin{itemize}
\item for all $j\in \mathbb{Z}$, $\omega_{j}^{(p',p)}$ is uniformly bounded by  $\lambda$,%it will be sufficient to converge by compactness. Note that we could prove that it is a actually a Cauchy sequence.
\item for all $j\in \mathbb{Z}$, $\xi,\zeta \in \mathcal{O}_{p',p}$, we have
\begin{equation}
\label{eq:bad_lip}
| \omega_{j}^{(p',p)}(\xi) - \omega_{j}^{(p',p)}(\zeta) | \leq \lambda  r_p^{-\delta}r_{p'}^{-\delta}  \| \xi - \zeta \|_{\ell^2},
\end{equation}
\item for all $\xi,\zeta \in \mathcal{O}_{p',p}$ and all $\boldsymbol{k} \in \mathbb{Z}^{\mathcal{S}_p} \setminus \{0\}$ satisfying $\displaystyle |\sum_{j\in \mathcal{S}_p} \boldsymbol{k}_j| \leq 4$, we have
\begin{equation}
\label{eq:le_truc_le_plus_impotant_du_papier}
\xi-\zeta \in \mathbb{R}  \boldsymbol{k}  \implies \left\{  \begin{array}{lll} \displaystyle | \sum_{j\in \mathcal{S}_p} \boldsymbol{k}_j (\omega_{j}^{(p',p)}(\xi) - \omega_{j}^{(p',p)}(\zeta))| \leq r_{p'}^{-\delta} \lambda \| \boldsymbol{k} \|_{\ell^2} \| \xi-\zeta \|_{\ell^2} \\
\forall j\notin \mathcal{S}_p, \ |\omega_{j}^{(p',p)}(\xi) - \omega_{j}^{(p',p)}(\zeta)| \leq  r_{p'}^{-\delta} \lambda \| \xi-\zeta \|_{\ell^2}  \end{array} \right. .
\end{equation}
\end{itemize}
\end{lemma}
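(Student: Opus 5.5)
The plan is to translate $Z^{(p')}$ explicitly from step $p'$ to step $p$, in the spirit of Lemma \ref{lem:trans_Z4}, and then read off the frequencies. Since $Z^{(p')}$ is integrable of degree at most $4$, its coefficients can only multiply monomials of the form $y^{\bm}$ (with $\|\bm\|_{\ell^1}\le 2$), $y_i|v_j|^2$, $|v_j|^2$, $|v_i|^2|v_j|^2$ with $i,j\in\mathcal{S}_{p'}^c$, because $\bk=0$ and all normal indices are paired. For $j\in\mathcal{S}_p\setminus\mathcal{S}_{p'}$ we substitute $|v_j|^2 = \xi_j + y_j$ (this is the content of Proposition \ref{prop:open}, but here everything is explicit and polynomial, so I will do it by hand). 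We then regroup by degree at step $p$: constants give $Z_0^{(p',p)}$, terms linear in the new actions $y_j$, $j\in\mathcal{S}_p$, or in $|v_j|^2$, $j\in\mathcal{S}_p^c$, give $Z_2^{(p',p)}$, and the rest gives $Z_4^{(p',p)}$. Writing $Z^{(p')}=\sum_j a_j(\xi')y_j + \sum_{i\preccurlyeq j} b_{ij}(\xi') y_iy_j+\dots$ in a unified notation where $y_j=|u_j|^2-\xi_j$ for all $j$ (with $\xi_j=0$ off $\mathcal{S}_p$), the frequencies are $\frac12\omega_j^{(p',p)}(\xi) = a_j(\xi') + \sum_{i\in\mathcal{S}_p\setminus\mathcal{S}_{p'}} c_{ij}(\xi')\xi_i$, where $c_{ij}$ are the symmetrized quartic coefficients.

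The bound \eqref{eq:bound_on_Z4p} comes from scaling: the quartic coefficients have size $\lambda r_{p'}^{4} r_{p'}^{-4}=\lambda$ up to the $\Theta$ weights, and rescaling to radius $r_p$ yields $\lambda r_p^4$. The loss $r_p^{-\delta}$ absorbs $\max_{j\in\mathcal{S}_p}|j|$, exactly as in Lemma \ref{lem:trans_Z4}. The Töplitz/smoothing decomposition is preserved because the substitution only turns some normal actions into tangential ones, and the limits $|\bi|\to\infty$ concern only indices in $\mathcal{S}_p^c\subset\mathcal{S}_{p'}^c$. The uniform bound on $\omega_j$ follows from $|a_j|\lesssim\lambda r_{p'}^{2}$ and $|c_{ij}|\xi_i\lesssim \lambda r_{p}^{2(1-\delta)}$. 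The crude Lipschitz bound \eqref{eq:bad_lip} is proved in two parts. The first is the $\xi'$-dependence, which is controlled via the lip norm with the factor $r_{p'}^{-2}$; this is compensated by the $r_{p'}^2$ carried by the linear coefficients, and by the $\ell^\infty$ weight, which costs $r_{p'}^{-\delta}$. The second is the $\xi^\sharp$-dependence, which is linear with coefficients $c_{ij}$ of size $\lambda$ times $\Theta$-weights and thus costs $\langle j\rangle$; this is where $r_p^{-\delta}$ enters.

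The main obstacle is \eqref{eq:le_truc_le_plus_impotant_du_papier}, where we need an $\ell^2$ bound uniform in $\#\mathcal{S}_p$. I would decompose $c_{ij}$ via the $\mathscr{H}$ structure: $c_{ij}=D\mathds{1}_{i=j}+ A_i + B_j + M_{ij}$. Here the Töplitz parts $P_2$ and $P_1$ give coefficients independent of the larger index, hence of the form $A_{\min}$ or constant. The smoothing parts give $|M_{ij}|\lesssim \lambda/(1+|i|+|j|)$ by the $\Theta$ weight, since the coefficient times $\langle\ell_1\rangle$ is bounded. Along the direction $\xi-\zeta=t\bk$, the contribution of $\xi'$-variations is treated by the Lipschitz bound with $\|\bk\|_{\ell^2}$. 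For the $\xi^\sharp$ part, $\sum_j\bk_j\sum_i c_{ij}t\bk_i$ splits as follows. The diagonal gives $\lesssim\lambda\|\bk\|_{\ell^2}^2|t|$. The $A,B$ parts factor as $(\sum\bk_j)(\sum A_i\bk_i)$, and $|\sum\bk_j|\le4$; but I must still control $\sum A_i\bk_i$, which I expect to handle by showing that the Töplitz coefficients depend only on the smaller index in a summable (constant or $\mathcal{S}_{p'}$-supported) way. The $M$ part is bounded by Lemma \ref{lem:Hilb}. For $j\notin\mathcal{S}_p$ the same decomposition applies with $\bk$ replaced by one fixed $j$, using Cauchy–Schwarz on the $i$-sum. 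The delicate point I expect is verifying that the $P_1$ part really has coefficients independent of $\max(i,j)$ in the polar order, including the ordering conventions of $\mathbf{C}_{q,1}$.
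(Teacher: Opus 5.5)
Your architecture is the paper's: explicit translation $|v_j|^2\rightsquigarrow\xi_j+y_j$, reading off $\omega^{(p',p)}$, splitting the $\xi'$- and $\xi^\sharp$-variations, and combining $|\sum_j\bk_j|\le 4$ with Lemma~\ref{lem:Hilb}. However, the step you leave open is the heart of the lemma, and the way you propose to close it fails.

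The $1$-action T\"oplitz quartic part $P_1$ produces terms $e_{\min(i,j)}|v_i|^2|v_j|^2$ ($i\neq j$, minimum for $\prec$). That is, the coefficient depends only on the \emph{smaller} index. Such a coefficient is not of the form $A_i+B_j$, so your ansatz $D\mathds{1}_{i=j}+A_i+B_j+M_{ij}$ does not apply to it as stated, and there is no factorization $(\sum_j\bk_j)(\sum_iA_i\bk_i)$. Nor is $e$ constant or supported in $\mathcal{S}_{p'}$: $(e_m)_{m\in\mathcal{S}_{p'}^c}$ is a genuine sequence. Boundedness of $e$ alone cannot suffice either. Take $\bk=+1$ on the first $N$ sites (for $\prec$) and $\bk=-1$ on the next $N$, with $e=-1$ on the first $N$ sites and $e=+1$ afterwards. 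Then $\sum_j\bk_j=0$ and $\|\bk\|_{\ell^2}^2=2N$, but $\sum_{i,j}\bk_i\bk_je_{\min(i,j)}=2N^2$.

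What is missing is the quantitative $2$-action smoothing of $P_1$. Since $P_1\in\mathscr{T}_{1}\cap\mathscr{S}_{2}$, the $\mathscr{H}$-norm weights its coefficients by $\Theta_{1,\bl,\bs}^{-1}=1+\|\bl\|_{\ell^\infty}$, with $\bl$ the smaller pair. This gives $\langle m\rangle|e_m|\lesssim r_{p'}^{-\delta/2}\lambda$.

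With this decay, for $i\neq j$ write $e_{\min(i,j)}=e_i+e_j-e_{\max(i,j)}$.
\begin{itemize}
\item The last term satisfies $|e_{\max(i,j)}|\lesssim r_{p'}^{-\delta/2}\lambda(1+|i|+|j|)^{-1}$, so it is handled by Lemma~\ref{lem:Hilb}.
\item The separable part gives $(\sum_j\bk_j)(\sum_ie_i\bk_i)$. Since $e\in\ell^2$, you have $|\sum_ie_i\bk_i|\le\|e\|_{\ell^2}\|\bk\|_{\ell^2}$, up to restriction corrections costing $\sqrt{\#\mathcal{S}_{p'}}$.
\end{itemize}
This is equivalent to the paper's treatment of the terms $III$ and $IV$. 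There, $\sum_{i\succ j}\bk_i$ is rewritten by mass conservation as $O(4+\sum_{i\in\mathcal{S}_{p'}}|\bk_i|)-\sum_{i\preccurlyeq j}\bk_i$, and Lemma~\ref{lem:Hilb} is applied to the kernel $\langle j\rangle^{-1}\mathds{1}_{i\preccurlyeq j}$.

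The same decay is needed for normal sites. For $j\notin\mathcal{S}_p$, the term $e_j\,t\sum_{i\succ j}\bk_i$ is not controlled by Cauchy--Schwarz in $i$, which costs $\sqrt{\#\mathcal{S}_p}$. It is controlled by mass conservation together with $|\sum_{i\preccurlyeq j}\bk_i|\le\sqrt{2|j|+1}\,\|\bk\|_{\ell^2}$ and $|e_j|\lesssim\langle j\rangle^{-1}$.

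Likewise, the $\xi'$-variation is not a plain Lipschitz bound in $\|\bk\|_{\ell^2}$. For instance, the increment of the constant coefficient of $\sum_{j\in\mathcal{S}_{p'}^c}|v_j|^2$ multiplies $\sum_{j\in\mathcal{S}_{p'}^c}\bk_j$, which again needs $|\sum_j\bk_j|\le 4$.
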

\begin{proof} 
%	  \mic{I suffer this part a bit...not the substance but the notations, I probably would have tried to write everything in the partial action angles, is should be harmless and it seems to me it could be clearer...I will try and just comment the previous version} \jo{It is fine for me, we can keep your version :)}
	To lighten the notations, we set 
{\begin{equation}\label{ypsilon}
	Y_j^{(p')} = \begin{cases}
		y_j \quad \mbox{if}\; j\in \mathcal S_{p'} \\
		|v_j|^2  \quad \mbox{if}\; j\in \mathcal S_{p'}^c
	\end{cases} 
	\end{equation}
		Since $Z^{(p')}$ is integrable, it contains only terms of degree  $0,2$ and $4$. More precisely, by definition, it admits a decomposition of the form
		\begin{equation*}
			\begin{split}
				Z^{(p')}=& c_1^{(p')}(\xi')+ c_2^{(p')}(\xi') \sum_{i\in \mathcal{S}_{p'}^c}  Y_i^{(p')} + \sum_{j \in \mathbb{Z}} a_j^{(p')}(\xi') Y_j^{(p')}  +  c_3^{(p')} (\xi') \sum_{\substack{ j  \prec i \\ i,j\in \mathcal{S}_{p'}^c }}   Y_j^{(p')} Y_i^{(p')}  \\
				& +  \sum_{i,j \in \mathbb{Z}} b_{i,j}^{(p')}(\xi')  Y_i^{(p')}  Y_j^{(p')} +  \sum_{\substack{j \prec i \\ i\in \mathcal{S}_{p'}^c}} e_j^{(p')}(\xi')  Y_i^{(p')} Y_j^{(p')} 
			\end{split}
		\end{equation*}
		where 
		$r_{p'}^{-4}c_1^{(p')},r_{p'}^{-2} c_2^{(p')},c_3^{(p')},r_{p'}^{-2} \langle j \rangle a_j^{(p')}, (1+\mathds{1}_{i\neq j}(|i|+|j|)) b_{i,j}^{(p')}, \langle j \rangle  e_j^{(p')} : \mathcal{O}_{p'} \to \mathbb{R} $ are real valued $(r_{p'}^{-2-\delta/2} \lambda)$-Lipschitz functions uniformly bounded by $r_{p'}^{-\delta/2} \lambda$ (and $b_{i,j} = b_{j,i}$). Indeed, if we come back to the definition, we directly have that $b_{i,j}^{(p')}$ is bounded by $\lambda (1+\mathds{1}_{i\neq j} (\mathds{1}_{i\in \mathcal{S}_{p'}^c}|i| \vee \mathds{1}_{j\in \mathcal{S}_{p'}^c}|j|))^{-1}$. So, provided that $r_{p'}$ is small enough, we deduce that it is bounded by $\lambda r_{p'}^{-\delta/2} (1+\mathds{1}_{i\neq j}(|i|+|j|))^{-1}$. All the other estimates are simpler and only rely on the smallness of $r_{p'}$.

		\medskip
		
		Then, we perform the translation which, due to the integrability of $Z^{(p')}$, amounts to $|v_j|^2 \rightsquigarrow \xi_j + y_j$ for all $j\in \mathcal S_p\setminus\mathcal S_{p'}$. }
%$$
%\forall p\geq p', \quad Y_j^{(p)}(\xi;u) =  |u_j|^2 - \xi_j.
%$$ 
%Since $Z^{(p')}$ is integrable, it contains only terms of degree  $0,2$ and $4$. More precisely, by definition, it admits a decomposition of the form
%\mic{here it would be $Y^{(p')}_j(\xi',u)$ right? }
%\begin{equation*}
%\begin{split}
%Z^{(p')}(\xi';u) =& c_1^{(p')}+ c_2^{(p')} \sum_{i\in \mathcal{S}_{p'}^c}  Y_i^{(p')} + \sum_{j \in \mathbb{Z}} a_j^{(p')} Y_j^{(p')}  +  c_3^{(p')}  \sum_{\substack{ j  \prec i \\ i,j\in \mathcal{S}_{p'}^c }}   Y_j^{(p')} Y_i^{(p')}  \\
%& +  \sum_{i,j \in \mathbb{Z}} b_{i,j}^{(p')}  Y_i^{(p')}  Y_j^{(p')} +  \sum_{\substack{j \prec i \\ i\in \mathcal{S}_{p'}^c}} e_j^{(p')}  Y_i^{(p')} Y_j^{(p')} 
%\end{split}
%\end{equation*}
%where 
%$r_{p'}^{-4}c_1^{(p')},r_{p'}^{-2} c_2^{(p')},c_3^{(p')},r_{p'}^{-2} \langle j \rangle a_j^{(p')}, (1+\mathds{1}_{i\neq j}(|i|+|j|)) b_{i,j}^{(p')}, \langle j \rangle  e_j^{(p')} : \mathcal{O}_{p'} \to \mathbb{R} $ are real valued $(r_{p'}^{-2-\delta/2} \lambda)$-Lipschitz functions uniformly bounded by $r_{p'}^{-\delta/2} \lambda$ (and $b_{i,j} = b_{j,i}$). Indeed, if we come back to the definition, we directly have that $b_{i,j}^{(p')}$ is bounded by $\lambda (1+\mathds{1}_{i\neq j} (\mathds{1}_{i\in \mathcal{S}_{p'}^c}|i| \vee \mathds{1}_{j\in \mathcal{S}_{p'}^c}|j|))^{-1}$. So, provided that $r_{p'}$ is small enough, we deduce that it is bounded by $\lambda r_{p'}^{-\delta/2} (1+\mathds{1}_{i\neq j}(|i|+|j|))^{-1}$. All the other estimates are simpler and only rely on the smallness of $r_{p'}$.
%
%
%\medskip
%
%Then, we perform the translation. 
We get the decompositions \eqref{eq:dec_quart} and \eqref{eq:Z_2p} where for all $\xi = (\xi',\xi^\sharp) \in  \mathcal{O}_{p',p}$ and all $j\in \mathbb{Z}$
\begin{equation*}
\begin{split}
\omega_{j}^{(p',p)}(\xi) =& \mathds{1}_{j\in \mathcal{S}_{p'}^c} c_2^{(p')}(\xi') + a_j^{(p')}(\xi') + \mathds{1}_{j\in \mathcal{S}_{p'}^c} c_3^{(p')}(\xi') \sum_{\substack{ i  \neq j \\ i\in \mathcal{S}_{p} \setminus \mathcal{S}_{p'} }} \xi_i \\ &+ 2 \sum_{i \in \mathcal{S}_{p} \setminus \mathcal{S}_{p'}  } b_{i,j}^{(p')}(\xi') \xi_i + e_j^{(p')}(\xi') \sum_{\substack{j \prec i \\ i\in \mathcal{S}_{p} \setminus   \mathcal{S}_{p'}}}  \xi_i +  \mathds{1}_{j\in \mathcal{S}_{p'}^c}   \sum_{\substack{i \prec j \\  i\in \mathcal{S}_{p} \setminus   \mathcal{S}_{p'}}} e_i^{(p')}(\xi')  \xi_i. 
\end{split} 
\end{equation*}
The proof of the bound \eqref{eq:bound_on_Z4p} on $Z_{4}^{(p',p)}$ is a direct corollary of Proposition \ref{prop:open}  and its proof so we omit it. We focus on the properties of the frequencies.
First, using the bounds we have on the coefficients, we directly get the uniform bound on the frequencies: 
\begin{equation*}
\begin{split}
(\lambda r_{p'}^{-\delta/2})^{-1}|\omega_{j}^{(p',p)}(\xi)| \leq& r_{p'}^{2} +  r_{p'}^{2} \langle j \rangle ^{-1} + 4\sum_{p'<p''\leq p} r_{p''}^{2(1-\delta)} \# \mathcal{S}_{p''}\leq r_{p'}^{\delta/2}.
\end{split}
\end{equation*}
Finally, we focus on Lipschitz estimates \eqref{eq:bad_lip} and \eqref{eq:le_truc_le_plus_impotant_du_papier}.  We consider $\zeta = (\zeta',\zeta^\sharp) \in \mathcal{O}_{p',p}$ and, for all $j\in \mathbb{Z}$, we will always use the decomposition 
\begin{equation}
\label{eq:en_carton}
|\omega_{j}^{(p',p)}(\xi) - \omega_{j}^{(p',p)}(\zeta) | \leq |\omega_{j}^{(p',p)}(\xi',\xi^\sharp) -  \omega_{j}^{(p',p)}(\xi', \zeta^\sharp)| +  | \omega_{j}^{(p',p)}(\xi',\zeta^\sharp) -\omega_{j}^{(p',p)}(\zeta',\zeta^\sharp)  |. 
\end{equation}

\noindent \underline{\emph{Step $1$ : Rough Lipschitz estimate \eqref{eq:bad_lip}.}} Let $j\in \mathbb{Z}$.
On the one hand, we have 
\begin{equation*}
\begin{split}
(\lambda r_{p'}^{-\delta/2})^{-1} |\omega_{j}^{(p',p)}(\xi',\xi^\sharp) -  \omega_{j}^{(p',p)}(\xi', \zeta^\sharp)|  &\lesssim \sum_{\substack{ i\in \mathcal{S}_{p} \setminus \mathcal{S}_{p'} }} |\xi_i-\zeta_i| \leq 5 \| \xi - \zeta\|_{\ell^2} \sqrt{\# \mathcal{S}_p} \le r_p^{-\delta/2} \| \xi - \zeta\|_{\ell^2} 
\end{split}
\end{equation*}
On the other hand, we have
\begin{equation}
\label{eq:pas_si_mal_en_fait}
\begin{split}
 ( \lambda r_{p'}^{-\delta/2})^{-1} | \omega_{j}^{(p',p)}(\xi',\zeta^\sharp) - \omega_{j}^{(p',p)}(\zeta',\zeta^\sharp)  |&\leq \| \xi'-\zeta'\|_{\ell^2}  (2+ r_{p'}^{-2}  4\sum_{p'<p''\leq p} r_{p''}^{2(1-\delta)} \# \mathcal{S}_{p''} ) \\& \leq \frac12 r_{p'}^{-\delta/2} \|\xi - \zeta\|_{\ell^2} .
\end{split}
\end{equation}
Thus we get the rough Lipschitz estimate \eqref{eq:bad_lip} by \eqref{eq:en_carton}.

\medskip

It only remains to prove the non trivial Lipschitz estimate \eqref{eq:le_truc_le_plus_impotant_du_papier}. So, from now, we consider $\boldsymbol{k} \in \mathbb{Z}^{\mathcal{S}_p} \setminus \{0\}$ and we assume that there exists $\alpha \in \mathbb{R}$ such that 
\begin{equation}
\label{eq:super_important}
\xi- \zeta = \alpha \boldsymbol{k} \quad \mathrm{and} \quad \big| \sum_{i\in \mathcal{S}_p}\boldsymbol{k}_i \big|\leq 4.
\end{equation}

\noindent \underline{\emph{Step $2$ : Lipschitz estimate \eqref{eq:le_truc_le_plus_impotant_du_papier} for normal sites.}} Let $j\in \mathcal{S}_p^c$.
First, using \eqref{eq:super_important} and Cauchy-Schwarz, there exists a universal constant $C>0$ such that we have
\begin{equation*}
\begin{split}
( \lambda r_{p'}^{-\delta/2})^{-1} &|\omega_{j}^{(p',p)}(\xi',\xi^\sharp) -  \omega_{j}^{(p',p)}(\xi', \zeta^\sharp)| \\
 &\leq  \Big|  \sum_{\substack{ i  \neq j \\ i\in \mathcal{S}_{p} \setminus \mathcal{S}_{p'} }} \alpha \boldsymbol{k}_i \Big| + 2 \sum_{i \in \mathcal{S}_{p} \setminus \mathcal{S}_{p'}  } \frac{ |\xi_i - \zeta_i| }{ 1+ \mathds{1}_{i\neq j}(|i|+|j|) } + \langle j \rangle^{-1} \big| \sum_{\substack{j \prec i \\ i\in \mathcal{S}_{p} \setminus   \mathcal{S}_{p'}}}  \alpha \boldsymbol{k}_i \big| + \sum_{\substack{i \prec j \\  i\in \mathcal{S}_{p} \setminus   \mathcal{S}_{p'}}}  \frac{ |\xi_i - \zeta_i|}{\langle i \rangle}\\
 &\leq |\alpha| \Big(  4+ \sum_{ i\in  \mathcal{S}_{p'} } |  \boldsymbol{k}_i| \Big)+C \|\xi - \zeta\|_{\ell^2} + |\alpha| \langle j \rangle^{-1}  \Big(  4+ \sum_{ i \prec j } |  \boldsymbol{k}_i| + \sum_{ i\in  \mathcal{S}_{p'} }  |  \boldsymbol{k}_i|  \big) \\
 &\leq 5 |\alpha| \| \boldsymbol{k} \|_{\ell^2} (\# \mathcal{S}_{p'})^{1/2} + C \|\xi - \zeta\|_{\ell^2} +   |\alpha|  \| \boldsymbol{k} \|_{\ell^2} (4 + \frac{\sqrt{2|j|+1}}{\langle j \rangle} + (\# \mathcal{S}_{p'})^{1/2}) \\
 &\leq \frac12 r_{p'}^{-\delta/2} \|\xi - \zeta\|_{\ell^2} .
\end{split}
\end{equation*}
The estimate on $\omega_{j}^{(p',p)}(\xi',\zeta^\sharp) - \omega_{j}^{(p',p)}(\zeta',\zeta^\sharp) $ is directly given by \eqref{eq:pas_si_mal_en_fait}.

\noindent \underline{\emph{Step $3$ : Lipschitz estimate \eqref{eq:le_truc_le_plus_impotant_du_papier} for tangential sites.}} We use again the decomposition \eqref{eq:en_carton}. 
%On the one hand, u
Using the bounds we have on the coefficients, we have
\begin{equation*}
\begin{split}
( \lambda r_{p'}^{-\delta/2})^{-1} &|\sum_{j\in \mathcal{S}_p} \boldsymbol{k}_j (\omega_{j}^{(p',p)}(\xi',\xi^\sharp) -  \omega_{j}^{(p',p)}(\xi', \zeta^\sharp))| \\
 \leq & \Big| \sum_{j\in \mathcal{S}_{p'}^c} \boldsymbol{k}_j \sum_{\substack{ i  \neq j \\ i\in \mathcal{S}_{p} \setminus \mathcal{S}_{p'} }} ( \xi_i - \zeta_i) \Big| +2 \sum_{j\in \mathcal{S}_p} \sum_{i \in \mathcal{S}_{p} \setminus \mathcal{S}_{p'}  }     \frac{|\boldsymbol{k}_j||\xi_i - \zeta_i|}{1+\mathds{1}_{i\neq j} (|i|+|j|)} \\
 &+  \sum_{j\in \mathcal{S}_p}  \frac{|\boldsymbol{k}_j|}{\langle j \rangle} \Big|  \sum_{\substack{j \prec i \\ i\in \mathcal{S}_{p} \setminus   \mathcal{S}_{p'}}}  (\xi_i - \zeta_i)  \Big| \\
 & +  \Big| \sum_{j\in  \mathcal{S}_{p'}^c}    \boldsymbol{k}_j   \sum_{\substack{i \prec j \\  i\in \mathcal{S}_{p} \setminus   \mathcal{S}_{p'}}} ( \lambda r_{p'}^{-\delta/2})^{-1} e_i^{(p')}(\xi')  (\xi_i -\zeta_i)  \Big|  =: I+II+III+IV.
\end{split}
\end{equation*}
First, we use Lemma \ref{lem:Hilb}, to directly get that
$$
II \lesssim \| \boldsymbol{k} \|_{\ell^2} \| \xi-\zeta \|_{\ell^2}.
$$
In order to control the other terms, we have to exploit some cancellation using \eqref{eq:super_important}. We have
\begin{equation*}
\begin{split}
I &=|\alpha| \Big| \sum_{j\in \mathcal{S}_{p'}^c} \boldsymbol{k}_j \sum_{\substack{ i  \neq j \\ i\in \mathcal{S}_{p} \setminus \mathcal{S}_{p'} }}\boldsymbol{k}_i \Big| \leq |\alpha | \| \boldsymbol{k} \|_{\ell^2}^2 + |\alpha| \Big| \sum_{j\in \mathcal{S}_{p'}^c} \boldsymbol{k}_j \sum_{ i\in \mathcal{S}_{p} \setminus \mathcal{S}_{p'} }\boldsymbol{k}_i \Big| \\
&\leq |\alpha | \| \boldsymbol{k} \|_{\ell^2}^2 + \alpha( 4 +  \sum_{j\in \mathcal{S}_{p'}} |\boldsymbol{k}_j| )^2 \leq r_{p'}^{-\delta/3} |\alpha | \| \boldsymbol{k} \|_{\ell^2}^2  = r_{p'}^{-\delta/3} \| \boldsymbol{k} \|_{\ell^2}  \| \xi-\zeta \|_{\ell^2}
\end{split}
\end{equation*}
Similarly, still by Lemma \ref{lem:Hilb} and Cauchy--Schwarz, we have
\begin{equation*}
\begin{split}
III &\leq |\alpha|  \sum_{j\in \mathcal{S}_p}  \frac{|\boldsymbol{k}_j|}{\langle j \rangle} \Big( 4 + \sum_{i \in \mathcal{S}_{p'} }  | \boldsymbol{k}_i|   + \sum_{i \preccurlyeq j }|\boldsymbol{k}_i| \Big) \\
&\leq r_p^{-\delta/3} |\alpha| \| \boldsymbol{k} \|_{\ell^2}  \sum_{j\in \mathcal{S}_p} \frac{|\boldsymbol{k}_j|}{\langle j \rangle}    + 3 |\alpha|  \sum_{j\in \mathcal{S}_p} \sum_{i \preccurlyeq j } \frac{|\boldsymbol{k}_j|}{1+|i|+|j|}  |\boldsymbol{k}_i|  \lesssim r_{p'}^{-\delta/3} \| \boldsymbol{k} \|_{\ell^2}  \| \xi-\zeta \|_{\ell^2}.
\end{split}
\end{equation*}
Finally, permuting the sum and using the bound we have on $e_i^{(p')}$, we similarly get
\begin{equation*}
\begin{split}
IV \leq \sum_{i\in  \mathcal{S}_{p'}^c} \frac{|\xi_i-\zeta_i|}{\langle i \rangle } \big| \sum_{\substack{i \prec j \\  j\in \mathcal{S}_{p'}^c}} \boldsymbol{k}_j\big| 
\leq  \sum_{i\in  \mathcal{S}_{p'}^c} \frac{|\xi_i-\zeta_i|}{\langle i \rangle }   \Big( 4 + \sum_{j \in \mathcal{S}_{p'} }  | \boldsymbol{k}_i|   + \sum_{j \preccurlyeq i }|\boldsymbol{k}_i| \Big) \leq r_{p'}^{-\delta/3} \| \boldsymbol{k} \|_{\ell^2}  \| \xi-\zeta \|_{\ell^2}.
\end{split}
\end{equation*}
On the other hand, using the bounds we have on the coefficients, we have
\begin{equation*}
\begin{split}
V:=&( \lambda  r_{p'}^{-\delta/2})^{-1} \| \xi' -\zeta'\|_{\ell^2}^{-1}|\sum_{j\in \mathcal{S}_p} \boldsymbol{k}_j (\omega_{j}^{(p',p)}(\xi', \zeta^\sharp)-  \omega_{j}^{(p',p)}(\zeta', \zeta^\sharp))| \\
 \leq & |\sum_{j\in \mathcal{S}_{p'}^c} \boldsymbol{k}_j| +  \sum_{j\in \mathcal{S}_{p}} \frac{|\boldsymbol{k}_j|}{\langle  j \rangle}  +  |\sum_{j\in \mathcal{S}_{p'}^c} \boldsymbol{k}_j| r_{p'}^{-2} \sum_{i\in \mathcal{S}_{p'}} \zeta_i + r_{p'}^{-2}\sum_{j\in \mathcal{S}_{p'}^c} |\boldsymbol{k}_j|\zeta_j +  2 \sum_{\substack{i \in \mathcal{S}_{p'}^c \\ j \in \mathcal{S}_p}  } \frac{ |\boldsymbol{k}_j| r_{p'}^{-2}\zeta_i}{1+\mathds{1}_{i\neq j} (|i|+ |j|)} \\
 &+ \sum_{j\in \mathcal{S}_p} \frac{|\boldsymbol{k}_j|}{\langle j \rangle} r_{p'}^{-2}\sum_{\substack{j \prec i \\ i\in \mathcal{S}_{p} \setminus   \mathcal{S}_{p'}}}  \zeta_i + \sum_{i\in \mathcal{S}_{p'}^c}  \frac{\zeta_i r_{p'}^{-2}}{\langle i \rangle}   \big|\sum_{\substack{j\in \mathcal{S}_{p'}^c \\ j \succ i}} \boldsymbol{k}_j |  .
\end{split}
 \end{equation*}
To conclude the proof, we just have to prove that 
$$
V \leq r_{p'}^{-\delta/3} \| \boldsymbol{k} \|_{\ell^2}.
$$
We use that $\zeta_j \lesssim r_{p''}^{2-2\delta}$ if $j\in \mathcal{S}_{p''}$, Lemma \ref{lem:Hilb}, and, as previously, that $\boldsymbol{k}$ enjoy good cancellations properties \eqref{eq:super_important}.  
Then, the bound on $V$ is a quite direct corollary of what have been done just before: it does not require any new arguments and does not present any new difficulties. So we omit its proof.

\end{proof}

\subsection{Extraction of the parameters : non resonance conditions} In the following lemma, we apply most of the previous results to construct the Borel set $\mathcal{O}^\sharp$ of Proposition \ref{prop:res_coho}.

\begin{lemma}[Small divisor estimates] \label{lem:sd_estimates} Let $p\geq 0$, $\gamma \in (0,1), \mathcal{O}\subset \mathcal{O}_p^{\mathrm{full}}$ be a Borel set. Given a family of integrable Hamiltonians $Z^{(p')} \in  \Pi_{\mathrm{deg}\leq 4}  \Pi_{\mathrm{int}} \mathscr{H}_{\eta,r_{p'},p',\mathcal{O}_{p'}},$ as in Proposition \ref{prop:res_coho}, there exists a Borel set $\mathcal{O}^\sharp \subset \mathcal{O}$ satisfying the measure estimate
$$
\mathrm{meas}(\mathcal{O} \setminus \mathcal{O}^\sharp) \leq \gamma \, \mathrm{meas} (\mathcal{O}_p^{\mathrm{full}})
$$
and such that for all indices $\boldsymbol{k}\in \mathbb{Z}^{\mathcal{S}_p}$, $0\leq q\leq 4$, $\boldsymbol{\ell} \in (\mathcal{S}_p^c)^q$, $\boldsymbol{\sigma}\in \{-1,1\}^q$ associated with non resonant monomials with no actions and commuting with the mass and the momentum, that is satisfying
\begin{equation}
\label{eq:cond_indice_0}
\boldsymbol{k} \neq 0 \quad \mathrm{or} \quad (\boldsymbol{k} = 0 \quad \mathrm{and} \quad \boldsymbol{\sigma}_1 \boldsymbol{\ell}_1^2 + \cdots + \boldsymbol{\sigma}_q \boldsymbol{\ell}_q^2\neq 0),
\end{equation}
\begin{equation}
\label{eq:cond_indice_1}
\forall i,j\leq q, \ \boldsymbol{\ell}_i = \boldsymbol{\ell}_j \implies \boldsymbol{\sigma}_i=\boldsymbol{\sigma}_j,
\end{equation}
\begin{equation}
\label{eq:cond_indice_2}
\sum_{j\in \mathcal{S}_p} \boldsymbol{k}_j + \boldsymbol{\sigma}_1+\cdots+ \boldsymbol{\sigma}_q  = \sum_{j\in \mathcal{S}_p} j \boldsymbol{k}_j + \boldsymbol{\sigma}_1\boldsymbol{\ell}_1+\cdots+ \boldsymbol{\sigma}_q \boldsymbol{\ell}_q =0,
\end{equation}
we have 
\begin{equation}
\label{eq:the_small_divisor_estimate}
\forall \xi \in \mathcal{O}^\sharp, \ |\Omega_{\boldsymbol{k},\boldsymbol{\ell},\boldsymbol{\sigma} }(\xi)|\gtrsim \gamma  \langle \bl_4^*\rangle ^{-10} \langle \| \boldsymbol{\ell} \|_{\ell^\infty} \rangle   r_p^{2-\frac{9\delta}5} \| \boldsymbol{k}\|_{\ell^1}^{-\# \mathcal{S}_p-12}
\end{equation}
where  the small divisors $\Omega_{\boldsymbol{k},\boldsymbol{\ell},\boldsymbol{\sigma} }$ are defined  by
$$
\Omega_{\boldsymbol{k},\boldsymbol{\ell},\boldsymbol{\sigma} } := \sum_{j\in \mathcal{S}_p} \boldsymbol{k}_j \omega^{\mathrm{full}}_j + \sum_{1\leq i \leq q} \boldsymbol{\sigma}_i \omega^{\mathrm{full}}_{\boldsymbol{\ell}_i} 
$$
with
$$
\forall j\in \mathbb{Z}, \quad \omega^{\mathrm{full}}_j :=  j^2 + \omega^{\mathrm{lead}}_j + \sum_{1\leq p'\leq p}  \omega^{(p',p)}_j 
$$
and the frequencies $\omega^{\mathrm{lead}}$ and $\omega^{(p',p)}$ are given by Lemma \ref{lem:trans_Z4} and Lemma \ref{lem:the_key}.
\end{lemma}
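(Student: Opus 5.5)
We define $\mathcal{O}^\sharp$ as $\mathcal{O}$ minus the union over admissible $(\bk,\bl,\bs)$ of the bad sets
$$
\mathcal{B}_{\bk,\bl,\bs} := \{\xi\in\mathcal{O} \ | \ |\Omega_{\bk,\bl,\bs}(\xi)| < \kappa_{\bk,\bl,\bs}\},
$$
where $\kappa_{\bk,\bl,\bs}$ is the right-hand side of \eqref{eq:the_small_divisor_estimate} with a suitable constant. Two regimes are separated first. If $\bk=0$, then $\Omega$ reduces to $\sum_i \bs_i \bl_i^2$, a nonzero integer by \eqref{eq:cond_indice_0}, plus a correction of size $O(r_p^{2-2\delta}+\sum_{p'}\lambda_{p'})$ by Lemma \ref{lem:the_key}, so $|\Omega|\gtrsim 1$. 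Lemma \ref{lem:gain_one_der}, applied to the three largest indices with the remaining ones (of size $\lesssim\langle\bl_4^*\rangle$, since $q\le 4$ and momentum conservation \eqref{eq:cond_indice_2} holds) absorbed, upgrades this to $\gtrsim \langle\|\bl\|_{\ell^\infty}\rangle\langle\bl_4^*\rangle^{-2}$. The same large-divisor argument handles $\bk\neq0$ whenever $\|\bl\|_{\ell^\infty}$ is large compared with $\langle\bl_4^*\rangle^{2}\|\bk\|_{\ell^1}\max_{j\in\mathcal{S}_p}|j|^2$. Indeed, by \eqref{eq:cond_indice_1} no pairing occurs among $\bl$, so the integer part $\sum_j \bk_j j^2+\sum_i\bs_i\bl_i^2$ dominates, while the frequency corrections are $O(\|\bk\|_{\ell^1} r_p^{2-2\delta})$. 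Consequently, only finitely many $\bl$ per $\bk$ need to be excised, namely those with $\|\bl\|_{\ell^\infty}\lesssim \langle\bl_4^*\rangle^{2}\|\bk\|_{\ell^1} r_p^{-\delta}$. For these, the factor $\langle\|\bl\|_{\ell^\infty}\rangle$ in the target bound is paid by the powers $\langle\bl_4^*\rangle^{-10}$ and $\|\bk\|_{\ell^1}^{-12}$.

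For $\bk\neq0$ with bounded $\bl$, the parameter is modulated in the direction $\mathbf{e}=\bk$. If $\sum_j\bk_j\neq0$, the variation along $\bk$ is large because of the $-2\|\xi\|_{\ell^1}$ term in $\omega^{\mathrm{lead}}$. However, mass conservation \eqref{eq:cond_indice_2} forces $|\sum_j \bk_j|\le 4$, so it is cleaner always to use the formula from Lemma \ref{lem:trans_Z4}. For $\xi-\zeta=\alpha\bk$, we have
$$
\sum_j \bk_j(\omega^{\mathrm{lead}}_j(\xi)-\omega^{\mathrm{lead}}_j(\zeta))=\alpha\|\bk\|_{\ell^2}^2-2\alpha\Big(\sum_j\bk_j\Big)^2 .
$$
The normal-site contributions satisfy $\omega^{\mathrm{lead}}_{\bl_i}(\xi)-\omega^{\mathrm{lead}}_{\bl_i}(\zeta)=-2\alpha\sum_j\bk_j$. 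Combining these, the lead variation equals $\alpha(\|\bk\|_{\ell^2}^2-2s^2-2s\sum_i\bs_i)$ with $s=\sum_j\bk_j=-\sum_i\bs_i$, that is, $\alpha(\|\bk\|_{\ell^2}^2+O(16))$. When this is not $\gtrsim|\alpha|\|\bk\|_{\ell^2}^2$, which can only happen for $\|\bk\|_{\ell^2}^2\le 16$, we handle the finitely many exceptional $\bk$ by a different direction, for example a single coordinate $\mathbf{e}=e_j$ where $\bk_j\neq 0$, using the diagonal term $\xi_j$. The corrections $\sum_{p'}\omega^{(p',p)}$ are controlled by \eqref{eq:le_truc_le_plus_impotant_du_papier}, which gives variation at most $\sum_{p'} r_{p'}^{-\delta}\lambda_{p'}(\|\bk\|_{\ell^2}+4)\|\xi-\zeta\|_{\ell^2}$, with $\lambda_{p'}\le r_{p'}^{1/3}$ by the hypothesis on $Z^{(p')}$. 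This is small compared with $\|\bk\|_{\ell^2}\|\xi-\zeta\|_{\ell^2}$. Hence $f=\Omega/\|\bk\|_{\ell^2}$ satisfies the transversality hypothesis of Lemma \ref{lem:measure_basic} along $L=\mathbb{R}\bk$, up to a factor $1/2$.

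The measures are then summed. Corollary \ref{cor:give_measure_est} is applied on the box $\mathcal{O}_p^{\mathrm{full}}$, whose side lengths are $r_{p'}^{2(1-\delta)}/2$. This gives $\mathrm{meas}\,\mathcal{B}\lesssim \kappa\,\|\bk\|_{\ell^2}^{-1}\,\mathrm{meas}(\mathcal{O}_p^{\mathrm{full}})/a_k$, where $a_k$ is the side associated with the smallest-index coordinate in the support of $\bk$. The worst case is $a_k\sim r_p^{2-2\delta}$, and this is exactly what the factor $r_p^{2-9\delta/5}$ in $\kappa$ compensates, leaving a spare $r_p^{\delta/5}$. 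That spare factor absorbs the polynomial constants in $\max_{j\in\mathcal{S}_p}|j|$ and $\#\mathcal{S}_p$, provided $r_p$ is small. Summing over $\bk$ with the weight $\|\bk\|_{\ell^1}^{-\#\mathcal{S}_p-12}$ converges, since there are $\lesssim n^{\#\mathcal{S}_p-1}$ vectors $\bk$ with $\|\bk\|_{\ell^1}=n$, and the at most $\langle\bl_4^*\rangle^{O(1)}\|\bk\|_{\ell^1}^{4}r_p^{-4\delta}$ relevant $\bl$'s are covered by the remaining powers. We also use that, by momentum conservation, $\bl$ is determined up to bounded choices by $\bk$ and its three smallest entries, so that $\langle\bl_4^*\rangle^{-10}$ makes the sum over $\bl_4^*$ converge.

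The main obstacle is the uniform transversality along $\bk$: it must hold uniformly in $\#\mathcal{S}_p$, which requires the cancellation estimate \eqref{eq:le_truc_le_plus_impotant_du_papier}, and the small-$\|\bk\|$ exceptional cases must be handled with care.
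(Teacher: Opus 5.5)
Your argument uses the same ingredients as the paper's proof: transversality of $\Omega_{\bk,\bl,\bs}$ along $\mathbb{R}\bk$ with the corrections controlled by \eqref{eq:le_truc_le_plus_impotant_du_papier}, Corollary~\ref{cor:give_measure_est} for each bad set, and Lemma~\ref{lem:gain_one_der} for the gain $\langle\|\bl\|_{\ell^\infty}\rangle$. It closes after the corrections below. First, your ``exceptional small $\bk$'' case comes from an algebra slip and should be deleted. With $s=\sum_j\bk_j=-\sum_i\bs_i$ you have $-2s\sum_i\bs_i=2s^2$, so your bracket is exactly $\|\bk\|_{\ell^2}^2$. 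The reason is that $\xi$ vanishes on $\mathcal{S}_p^c$, so the $-2\|\xi\|_{\ell^1}$ part of $\omega^{\mathrm{lead}}$ enters $\Omega_{\bk,\bl,\bs}$ multiplied by $\sum_j\bk_j+\sum_i\bs_i=0$. This is precisely the observation the paper starts from: the lead variation along $\xi-\zeta=\alpha\bk$ is $\alpha\|\bk\|_{\ell^2}^2$ for every $\bk\neq 0$.

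This matters because your fallback direction $e_j$ is not covered by what you cite. Estimate \eqref{eq:le_truc_le_plus_impotant_du_papier} only controls variations along $\mathbb{R}\bk$. Off that line, Lemma~\ref{lem:the_key} only provides \eqref{eq:bad_lip}, whose constant $\lambda_{p'}r_p^{-\delta}r_{p'}^{-\delta}$ need not be small for $p'<p$, since $r_p\ll r_{p'}$.

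Two bookkeeping points also need fixing. The hypothesis of Lemma~\ref{lem:gain_one_der} requires $|\bl_1|\geq 4(\max_{j\in\mathcal{S}_p}|j|\,\|\bk\|_{\ell^1}+|\bl_4^*|)^2$, so your cutoff must carry $\|\bk\|_{\ell^1}^2$, as in \eqref{eq:temporara}. The factors $r_p^{-\delta}$ and $r_p^{-4\delta}$ in your count must really be powers of $\max_{j\in\mathcal{S}_p}\langle j\rangle$, which the smallness of $r_p$ relative to $\mathcal{S}_p$ makes $\leq r_p^{-\epsilon}$ for any fixed $\epsilon>0$. A genuine $r_p^{-4\delta}$ could not be absorbed by your spare factor $r_p^{\delta/5}$.

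Apart from this, your organization differs from the paper's. The paper excises, for all admissible indices with $\bl$ unrestricted, the set where $|\Omega_{\bk,\bl,\bs}|<c\gamma r_p^{2-37\delta/20}\langle\|\bl\|_{\ell^\infty}\rangle^{-5}\langle\|\bk\|_{\ell^1}\rangle^{-\#\mathcal{S}_p-1}$. The weight $\langle\|\bl\|_{\ell^\infty}\rangle^{-5}$ makes the sum over $\bl\in\mathbb{Z}^q$, $q\leq 4$, converge with no counting at all. Only afterwards does the paper upgrade to \eqref{eq:the_small_divisor_estimate}. Either $|\bl_1|\lesssim\langle\bl_4^*\rangle^2+\max_{j\in\mathcal{S}_p}\langle j\rangle^2\|\bk\|_{\ell^1}^2$, and powers of $\langle\|\bl\|_{\ell^\infty}\rangle$ are traded for powers of $\langle\bl_4^*\rangle$ and $\|\bk\|_{\ell^1}$, or Lemma~\ref{lem:gain_one_der} gives $|\Omega_{\bk,\bl,\bs}|\geq|\bl_1|/8$ outright.

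You instead excise with the final threshold, which grows like $\langle\|\bl\|_{\ell^\infty}\rangle$. You therefore have to run the large-divisor argument first, to see that those bad sets are empty, and then count the remaining $\bl$ via momentum conservation. That leaves roughly $T^2$ choices for fixed $\bl_4^*$, where $T\sim\langle\bl_4^*\rangle^2\|\bk\|_{\ell^1}^2$ up to $\mathcal{S}_p$-dependent constants. The paper's ordering is shorter and avoids lattice-point counting. Yours makes explicit that, for fixed $\bk$ and $\bl_4^*$, only finitely many resonances are removed.
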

\begin{proof}
We set
\[
\begin{aligned}
	\mathcal{O}^\sharp &:=  \big\{\xi \in \mathcal{O} \ | \ | \Omega_{\boldsymbol{k},\boldsymbol{\ell},\boldsymbol{\sigma} }(\xi) | \geq  c\, \gamma\, r_p^{2-\frac{37\delta}{20}} \langle \| \boldsymbol{\ell}\|_{\ell^\infty}\rangle^{-5} \langle \| \boldsymbol{k}\|_{\ell^1}\rangle^{-\# \mathcal{S}_p-1} \\ 
	&\hskip4.5cm \forall \quad  \boldsymbol{k},\boldsymbol{\ell},\boldsymbol{\sigma}  s.t. \ \eqref{eq:cond_indice_0},\eqref{eq:cond_indice_1}, \eqref{eq:cond_indice_2} \ \mbox {hold} \big\} \\
	& \ = \mathcal{O} \setminus \bigcup_{  \substack{ \boldsymbol{k},\boldsymbol{\ell},\boldsymbol{\sigma} \\ s.t. \ \eqref{eq:cond_indice_0},\eqref{eq:cond_indice_1}, \eqref{eq:cond_indice_2}  } }  E_{\boldsymbol{k}}^{\boldsymbol{\ell},\boldsymbol{\sigma}}(  c\, \gamma\, r_p^{2-\frac{37\delta}{20}} \langle \| \boldsymbol{\ell}\|_{\ell^\infty}\rangle^{-5} \langle \| \boldsymbol{k}\|_{\ell^1}\rangle^{-\# \mathcal{S}_p-1} ) 
\end{aligned}
\]
where $c>0$ is a universal normalization constant that will be fixed later and
$$
\forall \epsilon>0, \quad E_{\boldsymbol{k}}^{\boldsymbol{\ell},\boldsymbol{\sigma}}(\epsilon) := \big\{\xi \in \mathcal{O} \ | \ | \Omega_{\boldsymbol{k},\boldsymbol{\ell},\boldsymbol{\sigma} }(\xi) | \leq  \epsilon \big\}.
$$
We divide the proof in two steps. In the first one we prove the measure estimate and in the second one we prove the improved small divisor estimate \eqref{eq:the_small_divisor_estimate}.

\medskip

\noindent \emph{$\bullet$ Step $1$ : measure estimate.} First, we fix some indices $\boldsymbol{k},\boldsymbol{\ell},\boldsymbol{\sigma}$ as in the statement of the lemma.
Using the bound on $\omega^{(p',p)}$ given by Lemma \ref{lem:the_key}, since $q\leq 4$, we have  
\begin{equation}
\label{eq:usefull_sd}
| \Omega_{\boldsymbol{k},\boldsymbol{\ell},\boldsymbol{\sigma} } |  \geq |\boldsymbol{\sigma}_1 \boldsymbol{\ell}_1^2 + \cdots + \boldsymbol{\sigma}_q \boldsymbol{\ell}_q^2|- 2\max_{j\in \mathcal{S}_p} \langle j \rangle^2 \| \boldsymbol{k}\|_{\ell^1} -\frac12.
\end{equation}
Thus, thanks to the assumption \eqref{eq:cond_indice_0}, if $\boldsymbol{k}=0$ and $\epsilon<1/2$,
 we have  $\mathrm{meas} \,  E_{\boldsymbol{k}}^{\boldsymbol{\ell},\boldsymbol{\sigma}}(\epsilon) = 0$. 

\medskip

As a consequence, now we focus on the case  $\boldsymbol{k}\neq 0$. We notice that, by conservation of the mass, the contribution of the $\|\xi\|_{\ell^1}$ from $\omega^{\mathrm{lead}}(\xi)$ vanishes in $\Omega_{\boldsymbol{k},\boldsymbol{\ell},\boldsymbol{\sigma} }$  and so that we have that for all $\xi,\zeta \in \mathcal{O}$
 \begin{equation*}
 \begin{split}
| \Omega_{\boldsymbol{k},\boldsymbol{\ell},\boldsymbol{\sigma} }(\xi) - \Omega_{\boldsymbol{k},\boldsymbol{\ell},\boldsymbol{\sigma} }(\zeta)   |  \geq&  \Big| \sum_{j\in \mathcal{S}_p} \boldsymbol{k}_j (\xi_j-\zeta_j) \Big| - \sum_{1\leq p' \leq p}  \Big| \sum_{j\in \mathcal{S}_p} \boldsymbol{k}_j (\omega^{(p',p)}_j(\xi)-\omega^{(p',p)}_j(\zeta)) \Big| \\
&-  q \sum_{1\leq p' \leq p} \sup_{j\notin \mathcal{S}_p}|\omega_{j}^{(p',p)}(\xi) - \omega_{j}^{(p',p)}(\zeta)| 
\end{split}
\end{equation*}
To go further, we assume that moreover $\xi -\zeta \in \mathbb{R}^* \boldsymbol{k}$.
Noticing that since $q\leq 4$, we have that $\displaystyle |\sum_{j\in \mathcal{S}_p} \boldsymbol{k}_j| \leq 4$ by conservation of the mass. So applying  the Lipschitz estimates of Lemma \ref{lem:the_key}, we get that
 \begin{equation*}
 \begin{split}
\frac{| \Omega_{\boldsymbol{k},\boldsymbol{\ell},\boldsymbol{\sigma} }(\xi) - \Omega_{\boldsymbol{k},\boldsymbol{\ell},\boldsymbol{\sigma} }(\zeta)   |}{\| \xi- \zeta\|_{\ell^2}} \geq&  \| \boldsymbol{k} \|_{\ell^2}  - \sum_{1\leq p' \leq p} r_{p'}^{1/4} \| \boldsymbol{k} \|_{\ell^2} - 4 \sum_{1\leq p' \leq p} r_{p'}^{1/4} \geq \frac12 \| \boldsymbol{k} \|_{\ell^2} \geq \frac12.
\end{split}
\end{equation*}
Thus, applying Corollary \ref{cor:give_measure_est}, we get, for all $\epsilon>0$, the measure estimate
$$
 \mathrm{meas} \,  E_{\boldsymbol{k}}^{\boldsymbol{\ell},\boldsymbol{\sigma}}(\epsilon) \lesssim_{\# \mathcal{S}_p} \, \epsilon \, r_p^{-2(1-\delta)} \mathrm{meas} (\mathcal{O}_p^{\mathrm{full}}) \leq \epsilon r_p^{-2+\frac{19}{10}\delta } \mathrm{meas} (\mathcal{O}_p^{\mathrm{full}}).
$$
As a consequence, by definition of $\mathcal{O}^\sharp $, the measure estimate $\mathrm{meas}(\mathcal{O} \setminus \mathcal{O}^\sharp) \leq \gamma \, \mathrm{meas} (\mathcal{O}_p^{\mathrm{full}})$ follows by a direct summation argument for a well chosen normalization constant $c\in (0,1)$. 
%\mic{the power $-5$ is enough since you fix one of the $\bl_i$ through momentu conservation and another from (56), right?} \jo{No it is much more basic. We have $q\leq 4$ so in the worst case $\bl \in \mathbb{Z}^4$. If we want a sum indexed by $\bl \in \mathbb{Z}^4$ to be convergent it suffices to put a weight $\| \bl\|_{\ell^\infty}^{-5}$.}
%\mic{so 3 would be fir (not that i want to change, just to be sure I understand)} \jo{fir = fine ? If so, I do not understand what you mean. I am just saying that if I want 
%$$
%\sum_{\bl \in \mathbb{Z}^4\setminus \{0\}} \frac1{ \|\bl\|_{\ell^\infty}^{\alpha}}<\infty
%$$
%I need $\alpha>4$ and so $\alpha = 5$ is a canonical choice.}
%\mic{Yes but you are not summing over all $\bl\in \mathbb Z^4\setminus\{0\}$ but only on those which satisfy
%	\[
%	 |\boldsymbol{\sigma}_1 \boldsymbol{\ell}_1^2 + \cdots + \boldsymbol{\sigma}_q \boldsymbol{\ell}_q^2| \le 4 \max_{j\in \mathcal{S}_p} \langle j \rangle^2 \| \boldsymbol{k}\|_{\ell^1}\,,\qquad  \boldsymbol{\sigma}_1 \boldsymbol{\ell}_1 + \cdots + \boldsymbol{\sigma}_q \boldsymbol{\ell}_q=-  \sum_{j\in \mathcal{S}_p} j \bk_j
%	\] The momentum conservation fixes one of the $\bl_i$ (in term of the others and of $\bk$ ) while the first condition gives $\sim\|\bk\|$ possible choices. So at most 2 are free and I could take -3 as an exponent...I am not saying that I want to use this, I was just checking }
%	\jo{Now I understand what you mean and I fully agree with you :) The bound is not sharp at all.}

\medskip

\noindent \emph{$\bullet$ Step $2$ : Improved small divisor estimate.} Let $\boldsymbol{k},\boldsymbol{\ell},\boldsymbol{\sigma}$ as in the statement of the lemma and $\xi \in \mathcal{O}^\sharp$. By definition, we have that
\begin{equation}
\label{eq:tempora}
| \Omega_{\boldsymbol{k},\boldsymbol{\ell},\boldsymbol{\sigma} }(\xi) | \gtrsim  \gamma\, r_p^{2-\frac{37\delta}{20}} \langle \| \boldsymbol{\ell}\|_{\ell^\infty}\rangle^{-5}  \| \boldsymbol{k}\|_{\ell^1}^{-\# \mathcal{S}_p-1}.
\end{equation}
We aim at improving this estimate to get \eqref{eq:the_small_divisor_estimate}. Without loss of generality, we assume that $|\boldsymbol{\ell}_1| \geq \cdots \geq |\boldsymbol{\ell}_q| $.  We have two cases to consider.

\begin{itemize} \item Either we  have
\begin{equation*}
\frac18 |\boldsymbol{\ell}_1|  \leq 2\langle \boldsymbol{\ell}_4^* \rangle^2 + 2\max_{j\in \mathcal{S}_p} \langle j \rangle^2 \| \boldsymbol{k} \|_{\ell^1}^2
\end{equation*}
an in that case it is clear that \eqref{eq:tempora} implies \eqref{eq:the_small_divisor_estimate}.
\item Or we have
\begin{equation}
\label{eq:temporara}
\frac18 |\boldsymbol{\ell}_1| \geq 2\langle \boldsymbol{\ell}_4^*\rangle^2 + 2\max_{j\in \mathcal{S}_p} \langle j \rangle^2 \| \boldsymbol{k} \|_{\ell^1}^2.
\end{equation}
Using the zero momentum condition and applying Lemma \ref{lem:gain_one_der}, we have that 
$$
| \boldsymbol{\sigma}_1\boldsymbol{\ell}_1^2+\boldsymbol{\sigma}_2\boldsymbol{\ell}_2^2+\boldsymbol{\sigma}_3\boldsymbol{\ell}_3^2| \geq \frac14  |\boldsymbol{\ell}_1| 
$$
 and so, by \eqref{eq:usefull_sd}, we have that
$$
| \Omega_{\boldsymbol{k},\boldsymbol{\ell},\boldsymbol{\sigma} } |  \geq \frac14  |\boldsymbol{\ell}_1|- \langle \boldsymbol{\ell}_4^*\rangle^2- 2\max_{j\in \mathcal{S}_p} \langle j \rangle^2 \| \boldsymbol{k}\|_{\ell^1} -\frac12 \mathop{\geq}^{\eqref{eq:temporara}} \frac18  |\boldsymbol{\ell}_1|.
$$
As a consequence, we also have  \eqref{eq:the_small_divisor_estimate}.
\end{itemize}
\end{proof}

\subsection{Proof of Proposition \ref{prop:res_coho}} The Borel set $\mathcal{O}^\sharp$ is given by Lemma \ref{lem:sd_estimates}. We use the notations of Lemmas \ref{lem:trans_Z4}, \ref{lem:the_key}, \ref{lem:sd_estimates} to translate (at step $p$) the Hamiltonians $Z_4$ and $Z^{(p)}$ and to define the small divisors $\Omega_{\boldsymbol{k},\boldsymbol{\ell},\boldsymbol{\sigma} }$. In order to deal with the assumption \eqref{eq:cond_indice_1} of Lemma \ref{lem:sd_estimates} (about the absence of actions in $v_{\boldsymbol{\ell}}^{\boldsymbol{\sigma}}$), we introduce a notion of reduced indices in order, somehow, to remove the actions from $v_{\boldsymbol{\ell}}^{\boldsymbol{\sigma}}$.

\medskip

More precisely, for all $q\geq 0$, $\boldsymbol{\sigma}\in \{-1,1\}^q$ and $\boldsymbol{\ell}\in (\mathcal{S}_p^c)^q$, the reduced indices $\underline{q}\leq q, \underline{\boldsymbol{\sigma}}\in \{-1,1\}^{\underline{q}}, \underline{\boldsymbol{\ell}}\in (\mathcal{S}_p^c)^{\underline{q}}$ are the only indices satisfying
$$
\underline{\boldsymbol{\ell}}_q \preccurlyeq \cdots \preccurlyeq \underline{\boldsymbol{\ell}}_{1} , \quad \forall i,j\leq \underline{q}, \  \underline{\boldsymbol{\ell}}_i =  \underline{\boldsymbol{\ell}}_j \implies  \underline{\boldsymbol{\sigma}}_i= \underline{\boldsymbol{\sigma}}_j,
$$
$$
\forall j\in \mathcal{S}_p^c, \quad \# \{ i \ | \ \underline{\boldsymbol{\ell}}_i =j \} = \sum_{\boldsymbol{\ell}_i = j} \boldsymbol{\sigma}_i =  \sum_{\underline{\boldsymbol{\ell}}_i = j} \underline{\boldsymbol{\sigma}}_i.
$$
Informally, $v_{\underline{\boldsymbol{\ell}}}^{\underline{\boldsymbol{\sigma}}}$ is nothing but what remains when actions have been factorized from $v_{\boldsymbol{\ell}}^{\boldsymbol{\sigma}}$.

\medskip

This notion, will be useful in this proof because it ensures the assumption \eqref{eq:cond_indice_1} and satisfies
\begin{equation}
\label{eq:reduce_preserve_ev}
\Omega_{\boldsymbol{k},\boldsymbol{\ell},\boldsymbol{\sigma} } = \Omega_{\boldsymbol{k},\underline{\boldsymbol{\ell}},\underline{\boldsymbol{\sigma}}}, \quad \sum_{1\leq i \leq q} \boldsymbol{\sigma}_i \boldsymbol{\ell}_i= \sum_{1\leq i \leq \uq} \us_i \ul_i, \quad \sum_{1\leq i \leq q} \boldsymbol{\sigma}_i= \sum_{1\leq i \leq \uq} \us_i.
\end{equation}
Finally, for $0\leq q\leq 4$, we set
$$
\mathbf{E}_q = \{ (\boldsymbol{k},\boldsymbol{\ell},\boldsymbol{\sigma})\in \mathbb{Z}^{\mathcal{S}_p}\times (\mathcal{S}_p^c)^q \times \{-1,1\}^q \ | \  \boldsymbol{k},\ul,\us \quad \mathrm{satisfy} \quad   \  \eqref{eq:cond_indice_0}, \eqref{eq:cond_indice_1}, \eqref{eq:cond_indice_2} \}
$$
and we recall that by Lemma \ref{lem:sd_estimates} that if $\xi \in \mathcal{O}^\sharp$ then
\begin{equation}
\label{eq:sd_reduced} 
 \forall (\boldsymbol{k},\boldsymbol{\ell},\boldsymbol{\sigma})\in \mathbf{E}_q, \quad |\Omega_{\boldsymbol{k},\boldsymbol{\ell},\boldsymbol{\sigma} }(\xi)|= | \Omega_{\boldsymbol{k},\ul,\us }(\xi)| \gtrsim \gamma  \langle \boldsymbol{\ell}_4^*\rangle ^{-10} \langle \| \ul\|_{\ell^\infty} \rangle   r_p^{2-\frac{9\delta}5} \| \boldsymbol{k}\|_{\ell^1}^{-\# \mathcal{S}_p-12}=: M_{\boldsymbol{k},\ul}.
\end{equation}

\medskip

Now, with these notations at hand, we divide the proof in $4$ step.

\subsubsection*{Step $1$: Construction of $\chi$ and $R$} Given $\xi \in \mathcal{O}^\sharp$, we define the coefficients of a formal Hamiltonians $\chi(\xi)$ by 
\begin{center}
$\chi_{\boldsymbol{k},\boldsymbol{m}}^{\boldsymbol{\ell},\boldsymbol{\sigma}}(\xi) = 0$  \quad if  \quad $  (\boldsymbol{k},\boldsymbol{\ell},\boldsymbol{\sigma}) \not\in  \mathbf{E}_q$ \quad and  \quad $
\displaystyle \chi_{\boldsymbol{k},\boldsymbol{m}}^{\boldsymbol{\ell},\boldsymbol{\sigma}}(\xi)  :=\frac{ P_{\boldsymbol{k},\boldsymbol{m}}^{\boldsymbol{\ell},\boldsymbol{\sigma}}(\xi) }{\ic \, \Omega_{\boldsymbol{k},\boldsymbol{\ell},\boldsymbol{\sigma} }(\xi)}
$  \quad else,
\end{center} 
for $\boldsymbol{k} \in \mathbb{Z}^{\mathcal{S}_p},\boldsymbol{m} \in \mathbb{N}^{\mathcal{S}_p},q\in \mathbb{N}, \boldsymbol{\ell} \in (\mathcal{S}_p^c)^q ,\boldsymbol{\sigma} \in  \{-1,1\}^q $. Note that due, to \eqref{eq:sd_reduced},  this definition makes sense (i.e. $\Omega_{\boldsymbol{k},\boldsymbol{\ell},\boldsymbol{\sigma} }(\xi)$ does not vanish). The other steps of the proof will be dedicated to proving that $\chi$ belongs to $\mathscr{C}_{\eta'',r,p,\mathcal{O}^\sharp}$ and satisfies the bound \eqref{eq:est_ChiR}. Assuming this properties for the moment, thanks to Proposition \ref{prop:poisson_bracket_estimate}, it makes sense to define  $R \in \mathscr{H}_{\eta',r,p,\mathcal{O}^\sharp}$ by
$$
R = \big\{ Z_4^{(\mathrm{lead},p)} + \sum_{1 \leq p' \leq p} Z_4^{(p',p)}, \chi \big\}.
$$
The properties of $R$ are then direct consequences of Proposition \ref{prop:poisson_bracket_estimate} on the stability of our Hamiltonian classes under Poisson brackets. Moreover, a direct calculation (standard in KAM theory) shows that, by construction of $\chi$ and $R$, there exists $Q\in \mathscr{H}_{\eta',r,p,\mathcal{O}^\sharp}$ such that
for all $\xi \in \mathcal{O}^\sharp$, $u\in \mathcal{A}_\xi(3r)$, we have
$$
\{ Z_2 + Z_4 + \sum_{1\leq p'\leq p}Z^{(p')}[\xi^{(p')}] , \chi[\xi] \}(u) = (P+R+Q)(\xi;u) 
$$
where
$$
Q_{\boldsymbol{k},\boldsymbol{m}}^{\boldsymbol{\ell},\boldsymbol{\sigma}}(\xi) = \mathds{1}_{  (\boldsymbol{k},\boldsymbol{\ell},\boldsymbol{\sigma}) \not\in \mathbf{E}_q} P_{\boldsymbol{k},\boldsymbol{m}}^{\boldsymbol{\ell},\boldsymbol{\sigma}}(\xi).
$$
Thus to prove that $\chi,R$ solve the cohomological equation \eqref{eq:coho}, we have to prove that $Q=0$, that is
\begin{equation}
\label{eq:pas_grand_chose2}
P_{\boldsymbol{k},\boldsymbol{m}}^{\boldsymbol{\ell},\boldsymbol{\sigma}}(\xi) \neq  0 \implies  (\boldsymbol{k},\boldsymbol{\ell},\boldsymbol{\sigma})\in \mathbf{E}_q.
\end{equation}
So let $\xi \in \mathcal{O}^\sharp$ and $\boldsymbol{k},\boldsymbol{m}, q ,\boldsymbol{\ell},\boldsymbol{\sigma}$ be such that $P_{\boldsymbol{k},\boldsymbol{m}}^{\boldsymbol{\ell},\boldsymbol{\sigma}}(\xi) \neq  0$.
Since $P$ commutes with the mass and the momentum, we have by \eqref{eq:reduce_preserve_ev} that $\boldsymbol{k},\ul,\us$ satisfy \eqref{eq:cond_indice_2}. Moreover, since $P$ is homogeneous of degree $d\leq 4$, we have that $\uq \leq 4$. The assumption \eqref{eq:cond_indice_1} is ensured by definition of the reduced indices. So it only remains to prove the assumption \eqref{eq:cond_indice_0}. Indeed, assuming by contradiction that $\boldsymbol{k}=0$  and $\us_1 \ul_1^2+\cdots + \us_{\uq} \ul_{\uq}^2=0$ then we would have (by preservation of the mass and of the momentum)
$$
\boldsymbol{\sigma}_1 +\cdots + \boldsymbol{\sigma}_q= \boldsymbol{\sigma}_1 \boldsymbol{\ell}_1+\cdots + \boldsymbol{\sigma}_q \boldsymbol{\ell}_q = \boldsymbol{\sigma}_1 \boldsymbol{\ell}_1^2+\cdots + \boldsymbol{\sigma}_q \boldsymbol{\ell}_q^2 = 0 
$$
which is known to imply (see \cite{KP96}) that $u_{\boldsymbol{\ell}}^{\boldsymbol{\sigma}}$ is integrable, i.e. that there exists $\varphi \in \mathfrak{S}_4$ such that $\varphi(\boldsymbol{\ell},\boldsymbol{\sigma}) =(\boldsymbol{\ell},-\boldsymbol{\sigma})  $. This is impossible because $P$ does not contains integrable terms (i.e. $\Pi_{\mathrm{int}} P = 0$). Thus, $\boldsymbol{k},\ul,\us$ satisfy all the assumptions of   Lemma \ref{lem:sd_estimates} and we have  $(\boldsymbol{k},\boldsymbol{\ell},\boldsymbol{\sigma})\in \mathbf{E}_q$.

\medskip

Actually, for later use we notice a slight refinement of \eqref{eq:pas_grand_chose2}. We denote by $P=P_0+P_1+P_2$ where $P_n := \Pi_{\mathrm{Top}=n} P$ for $n\in\{0,1,2\}$. Then, we have that for all $\xi \in \mathcal{O}^\sharp$
\begin{equation}
\label{eq:sd_true_reduced} 
\exists n,\quad P_{n,\boldsymbol{k},\boldsymbol{m}}^{\boldsymbol{\ell},\boldsymbol{\sigma}}(\xi) \neq  0 \implies  (\boldsymbol{k},\boldsymbol{\ell},\boldsymbol{\sigma})\in \mathbf{E}_q.
\end{equation}
To get it, it suffices to notice that, by construction, $P_0,\cdots,P_2$ also satisfy the assumptions we used on $P$ to get \eqref{eq:pas_grand_chose2} when $P_{\boldsymbol{k},\boldsymbol{m}}^{\boldsymbol{\ell},\boldsymbol{\sigma}}(\xi) \neq  0$ (because $\Pi_{\mathrm{int}}$ and $\Pi_{\mathrm{Top}=n}$ commute for $n=0,1,2$).

\medskip

Now, it remains to prove that $\chi$ belongs to $\mathscr{C}_{\eta'',r,p,\mathcal{O}^\sharp}$ and satisfies the bound \eqref{eq:est_ChiR}.

\subsubsection*{Step $2$: Decomposition of $\chi$.} Let $\xi \in \mathcal{O}^\sharp$. For $0\leq n \leq 2$, we define the formal Hamiltonians $\chi_n(\xi)$ by
\begin{center}
$\chi_{n,\boldsymbol{k},\boldsymbol{m}}^{\boldsymbol{\ell},\boldsymbol{\sigma}}(\xi) = 0$  \quad if  \quad $(\boldsymbol{k},\boldsymbol{\ell},\boldsymbol{\sigma})\not\in \mathbf{E}_q$ \quad and  \quad $
\displaystyle \chi_{n,\boldsymbol{k},\boldsymbol{m}}^{\boldsymbol{\ell},\boldsymbol{\sigma}}(\xi)  :=\frac{ P_{n,\boldsymbol{k},\boldsymbol{m}}^{\boldsymbol{\ell},\boldsymbol{\sigma}}(\xi) }{\ic \, \Omega_{\boldsymbol{k},\boldsymbol{\ell},\boldsymbol{\sigma} }(\xi)}$ \quad else.
\end{center}
The fact that $\chi_n$ is well and defined and that $\chi(\xi) = \chi_0(\xi)+\chi_1(\xi)+\chi_2(\xi)$ is due to \eqref{eq:sd_true_reduced}. Moreover, since $P_n(\xi) \in \mathscr{F}_{\eta,r,p}^{\cste}$, the bound \eqref{eq:sd_reduced} also implies that $\chi_n (\xi) \in \mathscr{F}_{\eta'',r,p}^{\cste}$ (because, by Lemma \ref{lem:mu_P_finite}, we have $\mu_{P_n}\leq \mu_P<\infty$ for $0\leq n\leq 2$).
% \mic{i dont understand (62) is not a bound, you want (60)?} \jo{yes, corrected but is this bound $|\boldsymbol{\ell}_4^*| \leq \mu_P<\infty$ clear for you ? No that I re-read it, it seems to me it is too fast. I mean, now, it seems to me that we want to use that if there exits $n$ such that $P_{n,\boldsymbol{k},\boldsymbol{m}}^{\boldsymbol{\ell},\boldsymbol{\sigma}}(\xi)\neq 0$ then $|\boldsymbol{\ell}_4^*| \leq \mu_P$. It is true but it would require more explanations, no ? Or does I miss something and there is something obvious ? I can write a small paragraph to prove it if you think it is useful. I tried many alternative definitions for $\chi_n$ and with some of them it was really obvious, it could explain why I wrote so few explanations...} \mic{this was the spirit of the remark below, I was wondering if one should not put a remark saying that if $P\in \mathscr{H}_{\eta,r,p}$ has $\nu_P<\infty$ then the same holds for $P_0,P_1$ and  moreover $P_2=0$} \jo{yes, I added Lemma \ref{lem:mu_P_finite} to state and prove it. }
 \\
We have to prove that this decomposition of $\chi(\xi)$ is the one associated with the $\mathscr{H}$ norm. This is a direct consequence of the fact that $P=P_0+P_1+P_2$ is the decomposition of $P$ associated with the definition of the $\mathscr{H}$ norm and that for all $\boldsymbol{k} \in \mathbb{Z}^{\mathcal{S}_p},0\leq q\leq 2, i\in \mathcal{S}_p^c, \boldsymbol{\ell} \in (\mathcal{S}_p^c)^q ,\boldsymbol{\sigma} \in  \{-1,1\}^q $,
$$
(\boldsymbol{k},\boldsymbol{\ell},\boldsymbol{\sigma})\in \mathbf{E}_q \iff (\boldsymbol{k},(\boldsymbol{\ell},\boldsymbol{\sigma};i,\pm 1))\in \mathbf{E}_{q+2}.
$$
Now that we have this decomposition of $\chi$, we have to prove the bound \eqref{eq:est_ChiR} on $\chi$.

\subsubsection*{Step $3$: bound on $\chi$.}   First, we notice that, since by assumption $\mu_P <\infty$, we have by Lemma \ref{lem:mu_P_finite} that $P_2=0$ and so we have $\chi_2=0$. Thus, it suffices to prove bounds on $\chi_0$ and $\chi_1$. All the estimates will rely on the following ones on the coefficients of $\chi$. So let $\xi,\zeta \in \mathcal{O}^\sharp$, $0\leq n \leq 1$, $\boldsymbol{k} \in \mathbb{Z}^{\mathcal{S}_p},\boldsymbol{m} \in \mathbb{N}^{\mathcal{S}_p},q\in \mathbb{N}, \boldsymbol{\ell} \in (\mathcal{S}_p^c)^q ,\boldsymbol{\sigma} \in  \{-1,1\}^q $ be such that $(\boldsymbol{k},\boldsymbol{\ell},\boldsymbol{\sigma})\in \mathbf{E}_q$. By  \eqref{eq:sd_reduced}, we have
$$
|\chi_{n,\boldsymbol{k},\boldsymbol{m}}^{\boldsymbol{\ell},\boldsymbol{\sigma}}(\xi) | \leq  \frac{|P_{n,\boldsymbol{k},\boldsymbol{m}}^{\boldsymbol{\ell},\boldsymbol{\sigma}}(\xi)  |}{M_{\boldsymbol{k},\ul}}
$$
and
	\begin{equation*}
	\begin{split}
|\chi_{n,\boldsymbol{k},\boldsymbol{m}}^{\boldsymbol{\ell},\boldsymbol{\sigma}}(\xi)  - \chi_{n,\boldsymbol{k},\boldsymbol{m}}^{\boldsymbol{\ell},\boldsymbol{\sigma}}(\zeta) | &= |\frac{ P_{n,\boldsymbol{k},\boldsymbol{m}}^{\boldsymbol{\ell},\boldsymbol{\sigma}}(\xi) }{\ic \, \Omega_{\boldsymbol{k},\boldsymbol{\ell},\boldsymbol{\sigma} }(\xi)}  - \frac{ P_{n,\boldsymbol{k},\boldsymbol{m}}^{\boldsymbol{\ell},\boldsymbol{\sigma}}(\zeta) }{\ic \, \Omega_{\boldsymbol{k},\boldsymbol{\ell},\boldsymbol{\sigma} }(\zeta)} | \\
&\leq \frac{|P_{n,\boldsymbol{k},\boldsymbol{m}}^{\boldsymbol{\ell},\boldsymbol{\sigma}}(\xi) -P_{n,\boldsymbol{k},\boldsymbol{m}}^{\boldsymbol{\ell},\boldsymbol{\sigma}}(\zeta) |}{M_{\boldsymbol{k},\ul}} + 
\frac{ |P_{n,\boldsymbol{k},\boldsymbol{m}}^{\boldsymbol{\ell},\boldsymbol{\sigma}}(\xi)  |  }{M_{\boldsymbol{k},\ul}^2}  |\Omega_{\boldsymbol{k},\boldsymbol{\ell},\boldsymbol{\sigma} }(\xi) - \Omega_{\boldsymbol{k},\boldsymbol{\ell},\boldsymbol{\sigma} }(\zeta)|\\
&\leq \frac{|P_{n,\boldsymbol{k},\boldsymbol{m}}^{\boldsymbol{\ell},\boldsymbol{\sigma}}(\xi) -P_{n,\boldsymbol{k},\boldsymbol{m}}^{\boldsymbol{\ell},\boldsymbol{\sigma}}(\zeta) |}{M_{\boldsymbol{k},\ul}} + 
\frac{ |P_{n,\boldsymbol{k},\boldsymbol{m}}^{\boldsymbol{\ell},\boldsymbol{\sigma}}(\xi)  |  }{M_{\boldsymbol{k},\ul}^2} r_p^{-3\delta/2} \langle \| \boldsymbol{k} \|_{\ell^1} \rangle \|\xi -\zeta \|_{\ell^2}
\end{split}
		\end{equation*}
		where the bound on $\Omega_{\boldsymbol{k},\boldsymbol{\ell},\boldsymbol{\sigma} }(\xi) - \Omega_{\boldsymbol{k},\boldsymbol{\ell},\boldsymbol{\sigma} }(\zeta)$ follows directly of the Lipschitz estimate \eqref{eq:bad_lip} on the frequencies $\omega^{(p',p)}$ given by Lemma \ref{lem:the_key}.

\medskip

		By definition of the $\mathscr{H}$ and $\mathscr{C}$ norms (Definition \ref{def:class_take3}), it follows that
			\begin{equation*}
	\begin{split}
		 \| \chi\|_{\mathscr{C}_{\eta'',r,p,\mathcal{O}^\sharp}^{\mathrm{tot}}} &\lesssim  \| P\|_{\mathscr{H}_{\eta'',r,p,\mathcal{O}^\sharp}^{\mathrm{tot}}}  \sup_{\substack{ 0\leq n\leq 1, \ 0\leq q\leq 4-2n \ \\ (\boldsymbol{k},\boldsymbol{\ell},\boldsymbol{\sigma})\in \mathbf{E}_{q-2n}  \\  \boldsymbol{\ell}_1^* \prec{\boldsymbol{i}}_n\prec\dots\prec{\boldsymbol{i}}_1\\ \boldsymbol{\ell}_4^* \preccurlyeq \mu_P}}  \frac{\Upsilon_{n,\boldsymbol{\ell},\boldsymbol{\sigma}}^{-1}  e^{-(\eta-\eta'')\| \boldsymbol{k}\|_{\ell^1}}}{\Theta_{n,\boldsymbol{\ell},\boldsymbol{\sigma}}^{-1} M_{\boldsymbol{k},\ul}}   \Big( 1 + \frac{r^2 r_p^{-\delta/9} \langle \| \boldsymbol{k} \|_{\ell^1} \rangle }{ M_{\boldsymbol{k},\ul}} \Big). 
		\end{split}
		\end{equation*}
		Thus, using the small divisor estimate \eqref{eq:sd_reduced}, it follows that
		\begin{equation*}
	\begin{split}
		 \| \chi\|_{\mathscr{C}_{\eta'',r,p,\mathcal{O}^\sharp}^{\mathrm{tot}}} 	&\lesssim \| P\|_{\mathscr{H}_{\eta'',r,p,\mathcal{O}^\sharp}^{\mathrm{tot}}}  \sup_{\substack{ 0\leq n\leq 1, \ 0\leq q\leq 4-2n \ \\ \boldsymbol{k}\in \mathbb{Z}^{\mathcal{S}_p}, \ \boldsymbol{\sigma}\in \{-1,1\}^{q-2n}\\  \boldsymbol{\ell}_1^* \prec{\boldsymbol{i}}_n\prec\dots\prec{\boldsymbol{i}}_1\\ \boldsymbol{\ell}_4^*\preccurlyeq \mu_P}} 
\Big( \frac{\langle \| \boldsymbol{k} \|_{\ell^1} \rangle^{2\# \mathcal{S}_p+25} e^{-(\eta-\eta'')\| \boldsymbol{k}\|_{\ell^1}}}{\gamma^2 r_p^{2-\frac{9\delta}5}}  \Big) \frac{\Upsilon_{n,\boldsymbol{\ell},\boldsymbol{\sigma}}^{-1} \langle \boldsymbol{\ell}_4^*\rangle ^{20}  }{\Theta_{n,\boldsymbol{\ell},\boldsymbol{\sigma}}^{-1}  \langle \| \ul \|_{\ell^\infty}\rangle}	 \\
&\lesssim \| P\|_{\mathscr{H}_{\eta'',r,p,\mathcal{O}^\sharp}^{\mathrm{tot}}} \! \! \! \frac{  \langle \mu_{P}\rangle ^{21}  (\eta-\eta'')^{2\# \mathcal{S}_p+25} }{\gamma^2 r_p^{2-\frac{7\delta}4}} \hspace{-0.7cm}\sup_{\substack{ 0\leq q\leq 4  \\ \boldsymbol{\ell} \in \mathbb{Z}^q, \  \boldsymbol{\sigma}\in \{-1,1\}^{q}}} 
  \underbrace{\frac{ \langle \| \boldsymbol{\ell}\|_{\ell^\infty} \rangle  }{(1+\mathds{1}_{(\boldsymbol{\ell},\boldsymbol{\sigma})\in \mathbf{C}_{q,1}} \| \boldsymbol{\ell}\|_{\ell^\infty})  \langle \| \ul \|_{\ell^\infty}\rangle \langle \boldsymbol{\ell}_4^*\rangle  }}_{=: A_{\boldsymbol{\ell},\boldsymbol{\sigma}}}.
		\end{split}
		\end{equation*}
To conclude this proof, it only remains to bound $A_{\boldsymbol{\ell},\boldsymbol{\sigma}}$ by $1$. So let $\sigma \in \{-1,1\}^q, \boldsymbol{\ell} \in (\mathcal{S}_p^c)^q$ with $0\leq q \leq 4$. The point is to note that at least one of the following cases holds.
\begin{itemize}
\item Case $1$: $ \| \ul \|_{\ell^\infty} = \| \boldsymbol{\ell}\|_{\ell^\infty}$. In that case it is clear that $ A_{\boldsymbol{\ell},\boldsymbol{\sigma}} \leq 1$.
\item Case $2$:  $(\boldsymbol{\ell},\boldsymbol{\sigma})  \in \mathbf{C}_{q,1}$.  
\item Case $3$: Up to permutation, $(\boldsymbol{\ell},\boldsymbol{\sigma}) = (i,\pm 1;i,\pm 1)$ for some $i\in \mathcal{S}_p^c$.
\end{itemize}
In the two first cases, it is clear that $ A_{\boldsymbol{\ell},\boldsymbol{\sigma}} \leq 1$.
 In the last case, we have $|\boldsymbol{\ell}_4^*| = |i| = \| \boldsymbol{\ell}\|_{\ell^\infty}$ and so, we also have that $ A_{\boldsymbol{\ell},\boldsymbol{\sigma}} \leq 1$.
		 
\section{The loop}

In this section, we pass from step $p-1$ to step $p$. From now, to simplify notations, we fix arbitrarily, once for all, a sequence of analyticity radii:
\begin{equation}
\label{eq:def_etap}
\eta_p = 2 \eta_{\min} + 2^{-p}.
\end{equation}
%\mic{I am afraid i am getting lost, should we not need also to fix $\gamma_p$ and assume some smallness on the $r_p$?} \jo{It is somehow what we do. Of course to prove the KAM theorem (that we do not prove) we need to take another sequence of $\gamma$ going to $0$ but they will easily be compensated by the smallness of the Hamiltonians we have to remove. The smallness assumption on $r_p$ is always, by convention, implicit. We only have to check that we have a finite number of constraints of the form $r_p \lesssim_{r_{p-1},p}  1$. It is something we could recall if you want. } \mic{yes I will recall it in a couple of places, I think it helps} \jo{Perfect!}
The following proposition is the main result of this section. 
\begin{proposition} \label{prop:the_loop} Let $p\geq 1$ and $\mathcal{O}_{p-1} \subset \mathcal{O}_{p-1}^{\mathrm{full}}$ be a Borel set. We set, for all $\xi \in \mathcal{O}_{p-1}$ and all $u\in \mathcal{A}_\xi(3r_{p-1})$  
$$
H(\xi;u) := Z_2(u)+Z_4(u) + \sum_{1\leq p'\leq p-1} Z^{(p')}(\xi^{(p')};u) +  R^{(p-1)}(\xi;u)
$$
 where $\xi^{(p')} := (\xi_j)_{j\in \mathcal{S}_{p'}}$,
\begin{itemize}
\item $Z_2,Z_4$ are given by Definition \ref{def:Z_24} and, as in Proposition \ref{prop:res_coho}, for all $p'\in \{1,\cdots,p-1\}$, $Z^{(p')} \in  \Pi_{\mathrm{deg}\leq 4}  \Pi_{\mathrm{int}} \mathscr{H}_{\eta_{p'},r_{p'},p',\mathcal{O}_{p'}}$ is an integrable Hamiltonian  of degree smaller than or equal to $4$, defined for parameters in a Borel set $\mathcal{O}_{p'} \subset \mathcal{O}_{p'}^{\mathrm{full}}$ satisfying $\mathcal{O}_{p-1}  \subset \mathcal{O}_{p'} \times \mathbb{R}^{\mathcal{S}_{p-1} \setminus \mathcal{S}_{p'}} $ and satisfying the bound
$$
\| Z^{(p')} \|_{ \mathscr{H}_{\eta_{p'},r_{p'},p',\mathcal{O}_{p'}}^{\mathrm{tot}}} \leq r_{p'}^{4+1/3},
$$
\item  $R^{(p-1)} \in \Pi_{\mathrm{deg} \geq 4} \mathscr{H}_{\eta_{p-1},r_{p-1},p-1,\mathcal{O}_{p-1}}$ is a Hamiltonian, containing only terms of degree larger than or equal to $4$ and no quartic integrable term (i.e. $\Pi_{\mathrm{deg} =4} \Pi_{\mathrm{int}} R^{(p-1)} =0 $), satisfying the bound
$$
\| R^{(p-1)} \|_{ \mathscr{H}_{\eta_{p-1},r_{p-1},p-1,\mathcal{O}_{p-1}}^{\mathrm{tot}}}  \leq r_{p-1}^{4-2\delta},
$$
\end{itemize}
Then, there exist
\begin{itemize}
\item a Borel set $\mathcal{O}_{p} \subset  \mathcal{O}_{p-1} \times \big(\frac{r_{p}^{2(1-\delta)}}2,  r_{p}^{2(1-\delta)}\big)^{\mathcal{S}_{p} \setminus \mathcal{S}_{p-1}}  $ satisfying
\begin{equation}
\label{eq:measure_est_loop}
\mathrm{meas}\Big( \mathcal{O}_{p-1} \times \big(\frac{r_{p}^{2(1-\delta)}}2,  r_{p}^{2(1-\delta)}\big)^{\mathcal{S}_{p} \setminus \mathcal{S}_{p-1}} \setminus \mathcal{O}_{p} \Big) \leq   r_p^{\delta/9} \mathrm{meas}(\mathcal{O}_{p}^{\mathrm{full}} ).
\end{equation}
\item an integrable Hamiltonian $Z^{(p)} \in  \Pi_{\mathrm{deg}\leq 4}  \Pi_{\mathrm{int}} \mathscr{H}_{\eta_p,r_{p},p,\mathcal{O}_p}$ of degree smaller than or equal to $4$ satisfying the bound
$$
\| Z^{(p)} \|_{ \mathscr{H}_{\eta_p,r_{p},p,\mathcal{O}_p}^{\mathrm{tot}}} \leq r_{p}^{4+1/3},
$$
\item a Hamiltonian $R^{(p)} \in \Pi_{\mathrm{deg} \geq 4} \mathscr{H}_{\eta_{p},r_{p},p,\mathcal{O}_{p}}$  containing only terms of degree larger than or equal to $4$ and no quartic integrable term (i.e. $\Pi_{\mathrm{deg} =4} \Pi_{\mathrm{int}} R^{(p)} =0 $), satisfying the bound 
$$
\| R^{(p)} \|_{ \mathscr{H}_{\eta_{p},r_{p},p,\mathcal{O}_{p}}^{\mathrm{tot}}}  \leq r_p^{4-2\delta},
$$
\end{itemize}
such that, for all $\xi \in \mathcal{O}_{p}$ and all $u\in \mathcal{A}_\xi(2 r_p)$  
$$
H(\xi;\tau_\xi^{(p)}(u)) =  Z_2(u)+Z_4(u) + \sum_{1\leq p'\leq p} Z^{(p')}(\xi^{(p')};u) +  R^{(p)}(\xi;u)
$$
where $\tau_\xi^{(p)} \in C^2( \mathcal{A}_\xi(r_p); \mathcal{A}_\xi(r_{p-1}/2) )$ is a symplectic map which is close to the identity: for all $ j\in \{0,1,2\}$, $v^{(1)},\cdots,v^{(j)}\in H^s$,
\begin{equation}
\label{eq:tau_close_id}
 \|\mathrm{d}^j \big(\tau_\xi^{(p)} - \mathrm{Id}  \big)(u)  (v^{(1)},\cdots,v^{(j)}) \|_{H^s} \leq r_p^{2-j-\frac13} \prod_{1\leq i\leq j} \|v^{(i)}\|_{H^s}.
 \end{equation}
\end{proposition}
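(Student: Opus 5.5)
We first open the new tangential sites $\mathcal{S}_p\setminus\mathcal{S}_{p-1}$: apply Proposition \ref{prop:open} to $R^{(p-1)}$ with $\rho = r_{p-1}$ and $r=r_p$, and also to the $Z^{(p')}$, producing Hamiltonians in $\mathscr{H}_{\eta_{p-1},r_p,p,\mathcal{O}^\sharp}$ with $\mathcal{O}^\sharp=\mathcal{O}_{p-1}\times(\frac{r_p^{2(1-\delta)}}2,r_p^{2(1-\delta)})^{\mathcal{S}_p\setminus\mathcal{S}_{p-1}}$. For the integrable parts we use Lemma \ref{lem:trans_Z4} and Lemma \ref{lem:the_key}: $Z_4$ and each $Z^{(p')}$ split into constant, quadratic (frequencies $\omega^{\mathrm{lead}}$, $\omega^{(p',p)}$) and quartic integrable parts with bounds $r_p^{4-\delta}$. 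For the translated remainder $R$ (call it $\widetilde R$), the degree property of Proposition \ref{prop:open} gives the homogeneity splitting: terms of degree $d\in\{0,1,2,3\}$ at step $p$ arise only when $v$'s with indices in $\mathcal{S}_p\setminus\mathcal{S}_{p-1}$ become $\sqrt{\xi_j+y_j}e^{\ic\theta_j}$. Since $\xi_j\sim r_p^{2(1-\delta)}$ and the norm weights $r_p^{q+2\|\bm\|}$, rescaling from $r_{p-1}$ to $r_p$ gives $\|\Pi_{\deg=d}\widetilde R\|\lesssim_p r_{p-1}^{4-2\delta}(r_p/r_{p-1})^{4}\cdot(\text{powers of }r_p^{-\delta})$, roughly $r_p^{4-O(\delta)}$ times a small factor $r_{p-1}^{-2\delta}$; since $r_p\ll r_{p-1}$ the relative gain is harmless. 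The key point is that $\Pi_{\deg\le 3}\widetilde R$ and the non integrable part of $\Pi_{\deg=4}\widetilde R$ must be removed, while the integrable part of degree $\le 4$ goes into $Z^{(p)}$.

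Then we perform finitely many normal form steps, by degree $d=0,1,\dots,4$ (in practice a bounded number, since each step raises degree by 2 via the remainder in \eqref{eq:coho}). At each step, take $P=\Pi_{\deg=d}(\mathrm{Id}-\Pi_{\mathrm{int}})$ of the current remainder, check $\mu_P<\infty$ (indices in the fourth position: monomials of degree $\le4$ in normal variables have $\bl_4^*$ bounded by $\max_{j\in\mathcal{S}_p}|j|$ unless four normal modes — fine since $q\le 4$ and the finiteness follows from the opened indices being in $\mathcal{S}_p$; more precisely $\langle\mu_P\rangle\lesssim r_p^{-0^+}$ by the smallness convention on $r_p$), and apply Proposition \ref{prop:res_coho} with $\gamma=r_p^{\delta/8}$ (shared among the finitely many steps so the total excluded measure is $\le r_p^{\delta/9}\mathrm{meas}(\mathcal{O}_p^{\mathrm{full}})$, giving \eqref{eq:measure_est_loop}). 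This yields $\chi$ with $\|\chi\|_{\mathscr{C}}\lesssim r_p^{-2+7\delta/4-O(\delta)}\|P\|$, so $\|\chi\|\lesssim r_p^{2+c\delta}$ if $\|P\|\lesssim r_p^{4-\delta/2}$-ish; here is where the precise choice $\delta=1/74$ and the exponents must be balanced. Compose with the time-one flow $\Phi_{\chi}^{1}$ using Proposition \ref{prop:comp_ham_flow} (the smallness $\|\chi\|\le r_p^{2+\delta}(\eta-\eta')$ needs the gain from above) and Lemma \ref{lem:flows}; the new Hamiltonian equals the old one minus $P$, plus $R$ from \eqref{eq:coho} (degree $d+2$), plus $\sum_{k\ge2}$ ad-terms and $\{\widetilde R-P,\chi\}$ terms which are of higher degree by \eqref{eq:deg_prop_poisson} and smaller by a factor $r_p^{-2}\|\chi\|\lesssim r_p^{c\delta}$. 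The analyticity strip shrinks from $\eta_{p-1}$ to $\eta_p$ in finitely many equal substeps. $\chi$ is of $y$-homogeneity $\le1$ because $P$ of degree $\le4$ with $\|\bm\|\ge2$ already has degree $\ge4$ only if purely $y^2$, which is integrable; so Lemma \ref{lem:flows} applies.

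Finally set $Z^{(p)}$ to be the sum of the translated $\Pi_{\deg\le4}\Pi_{\mathrm{int}}$ pieces collected (from $\widetilde R$ and created terms), minus nothing from $Z_4$ and earlier $Z^{(p')}$ which are kept as is; its bound $r_p^{4+1/3}$ comes from $r_{p-1}^{-2\delta}$-type losses being beaten by $r_p\ll r_{p-1}$ (the quartic integrable part of $R^{(p-1)}$ vanishes by hypothesis, so only genuinely small contributions enter). $R^{(p)}$ collects degree $\ge4$ non-removed terms with bound $r_p^{4-2\delta}$, and $\tau^{(p)}_\xi$ is the composition of the flows; \eqref{eq:tau_close_id} follows from the last item of Lemma \ref{lem:flows}, $\|\chi\|r_p^{-2-j}\le r_p^{2-j-1/3}$. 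The main obstacle is the exponent bookkeeping: the lower-degree terms from opening have size only about $r_p^{4-O(\delta)}r_{p-1}^{-2\delta}$ while the cohomological solution loses $r_p^{-2+7\delta/4}\gamma^{-2}\langle\mu_P\rangle^{21}$; I would first check that the degree $0$ and $2$ terms actually carry extra factors (e.g. degree-$d$ pieces weigh $r_p^{d}\,\xi^{(4-d)/2}$-scaled, giving $\|\cdot\|_{\mathscr{H}_{r_p}}\lesssim r_{p-1}^{-2\delta}(r_p/r_{p-1})^{4-d}r_p^{d}\cdots$) so that $\|\chi\|\le r_p^{2+\delta}$ holds with room.
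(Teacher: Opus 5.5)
There is a genuine gap. Translating first and then removing the low-degree terms with Proposition~\ref{prop:res_coho} at level $p$ cannot work, for a homogeneity reason you half-notice in your last sentence but resolve the wrong way.

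The non-integrable quartic terms of $R^{(p-1)}$ have coefficients of size up to $r_{p-1}^{-2\delta}$; for $p=1$ they are the order-one coefficients of $-\frac14(\mathrm{Id}-\Pi_{\mathrm{int}})\|u\|_{L^4}^4$. Note that $r_{p-1}^{-2\delta}>1$ is a loss, not a small factor. Take those with an index in $\mathcal{S}_p\setminus\mathcal{S}_{p-1}$, e.g.\ $z^{\bk}v_av_b\overline{v_c}\,\overline{v_d}$ with $a$ new. After $v_a\rightsquigarrow\sqrt{\xi_a+y_a}\,z_a$ they become terms of degree $\le 3$ whose norm at radius $r_p$ is of order $r_{p-1}^{-2\delta}r_p^{4-\delta}$. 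When four indices are opened you get degree-$0$ terms of norm $r_{p-1}^{-2\delta}r_p^{4-4\delta}$. Each opened index weighs $\sqrt{\xi_a}\sim r_p^{1-\delta}$ instead of $r_p$, so lowering the degree brings no extra smallness.

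Proposition~\ref{prop:res_coho} at level $p$ only guarantees divisors $\gtrsim\gamma\,r_p^{2-9\delta/5}$ and loses $\gamma^{-2}r_p^{-2+7\delta/4}$. With $\gamma=r_p^{\delta/8}$ you therefore get at best $\|\chi\|\approx r_p^{2+\delta/2}$, and only $r_p^{2-5\delta/2}$ in the degree-$0$ case (up to $r_p^{-0^+}$ factors). Three consequences follow:
\begin{itemize}
\item both bounds violate the hypothesis $\|\chi\|\le r_p^{2+\delta}(\eta-\eta')$ of Lemma~\ref{lem:flows} and Proposition~\ref{prop:comp_ham_flow};
\item brackets such as $\{Z_4^{(\mathrm{lead},p)},\chi\}$ are then of the same size as $P$ up to $r_p^{\pm O(\delta)}$, so the step does not gain;
\item both are far from the bound $\|\chi\|\lesssim r_p^{4-1/3}$ that your final step actually needs to deduce \eqref{eq:tau_close_id} from $\|\chi\|r_p^{-2-j}$.
\end{itemize}
No choice of $\delta$ repairs this: by homogeneity, size over divisor is $r_p^4/r_p^2=r_p^2$, exactly the critical size of a generator on an annulus of radius $r_p$. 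These particular terms are in fact non-resonant at the much larger scale $r_{p-1}^{2}$, since they are non-integrable monomials at step $p-1$. The level-$p$ proposition cannot see that.

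The missing idea is to remove these terms \emph{before} opening the new sites. The paper splits $R^{(p-1)}=B+G+L$:
\begin{itemize}
\item $L=\Pi_{\mathrm{deg}\ge6}R^{(p-1)}$;
\item $B$ is the set of degree $4$--$5$ terms with $\#\{j\,|\,\bl_j\in\mathcal{S}_p\}\ge q+2\|\bm\|_{\ell^1}-3$, i.e.\ exactly those that drop to degree $\le3$ after translation (Lemma~\ref{lem:non_trivial_proj} keeps $B$ in $\mathscr{H}$);
\item $G$ is the remaining degree $4$--$5$ part.
\end{itemize}
It zooms to the intermediate radius $\rho=r_p^{1-8\delta/7}$ and removes $B$ by one Birkhoff step at level $p-1$. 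This works because:
\begin{itemize}
\item the divisors there are of order $r_{p-1}^{2-O(\delta)}$ up to $\gamma=r_p^{\delta/8}$;
\item $\mu_B\le\max_{j\in\mathcal{S}_p}|j|$, because $B$ has at most three indices in $\mathcal{S}_p^c$;
\item $\|\bm\|_{\ell^1}\le1$ on $B$, by the counting condition.
\end{itemize}
Hence $\|\chi\|\le\rho^4r_p^{-\delta/2}\ll\rho^{2+\delta}$. After the flow, everything apart from $G$ (which stays of degree $\ge4$ after translation) has degree $\ge6$ and size $\rho^6r_p^{-2\delta}$. Only then does the paper translate. The non-integrable terms of degree $\le3$ at level $p$ now come from degree $\ge6$ terms and have size $r_p^{6-8\delta}$, which the $r_p^{-2}$ loss at level $p$ can afford.

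Two further points of your plan are also wrong.
\begin{itemize}
\item Non-integrable quartic terms need not be removed, since the conclusion only asks $\Pi_{\mathrm{deg}=4}\Pi_{\mathrm{int}}R^{(p)}=0$. In general they also cannot be: those with four large normal indices have $\mu_P=\infty$. Moreover $y_iy_jz^{\bk}$ with $\bk\neq0$ is a non-integrable quartic term of $y$-homogeneity $2$, contrary to your claim.
\item The part of degree $0,1,2$ cannot be eliminated by finitely many Birkhoff steps. Brackets with a generator of degree $\le2$ do not raise the degree, so they regenerate (quadratically smaller) terms of degree $\le2$. This is why the paper uses the KAM iteration of Proposition~\ref{prop:KAM} there, and a single Birkhoff step only for the cubic terms.
\end{itemize}
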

The rest of this section is devoted to the proof of Proposition \ref{prop:the_loop}. Multilinear estimates and small divisor estimates have been proven in previous sections. In this proof it mainly remains to use them and to pay attention to homogeneity arguments. The proof involves extra analyticity radii $\eta_p^{(i)}$ defined by
\begin{equation}
\label{eq:def_etapa}
\eta_p^{(\alpha)} :=\big( 1 -\frac{\alpha}4 \big)\eta_{p-1} + \frac{\alpha}4 \eta_p, \quad \mathrm{for} \quad \alpha \in [0,4] \quad \mathrm{and} \quad p\geq 1.
\end{equation}

\subsection{Step $1$: A first decomposition and a zoom}  We decompose the remainder term $R^{(p-1)}$ as
$$
R^{(p-1)} = B + G+L
$$ 
where $L := \Pi_{\mathrm{deg}\geq 6} R^{(p-1)} $ contains all the terms of degree larger than or equal to $6$ of $R^{(p-1)} $ and $B$ contains all the terms of degree smaller than or equal to $5$ which are going to generate terms to be removed after the translation, i.e. for all $\boldsymbol{k} \in \mathbb{Z}^{\mathcal{S}_{p-1}},\boldsymbol{m} \in \mathbb{N}^{\mathcal{S}_{p-1}},q\in \mathbb{N}, \boldsymbol{\ell} \in (\mathcal{S}_{p-1}^c)^q ,\boldsymbol{\sigma} \in  \{-1,1\}^q $ we have
\begin{equation}
\label{eq:def_B}
B_{\boldsymbol{k} ,\boldsymbol{m} }^{\boldsymbol{\ell} ,\boldsymbol{\sigma} }  =  (\Pi_{\mathrm{deg} \leq 5} R^{(p-1)})_{\boldsymbol{k} ,\boldsymbol{m} }^{\boldsymbol{\ell} ,\boldsymbol{\sigma} }  \quad \mathrm{if} \quad \# \{j \ | \ \boldsymbol{\ell}_j \in \mathcal{S}_{p}\}\geq q+2\|\boldsymbol{m} \|_{\ell^1}-3
\end{equation}
 and $B_{\boldsymbol{k} ,\boldsymbol{m} }^{\boldsymbol{\ell} ,\boldsymbol{\sigma} } = 0$ else. Applying Lemma \ref{lem:non_trivial_proj} of the appendix, we deduce that $B$ belongs to $\mathscr{H}_{\eta_{p-1},r_{p-1},p-1,\mathcal{O}_{p-1}}$ and satisfies 
 $$
 \| B\|_{ \mathscr{H}_{\eta_{p-1},r_{p-1},p-1,\mathcal{O}_{p-1}}^{\mathrm{tot}}} \lesssim_p \| R^{(p-1)}\|_{ \mathscr{H}_{\eta_{p-1},r_{p-1},p-1,\mathcal{O}_{p-1}}^{\mathrm{tot}}}.
 $$
 Thus, since $\| R^{(p-1)} \|_{ \mathscr{H}_{\eta_{p-1},r_{p-1},p-1,\mathcal{O}_{p-1}}^{\mathrm{tot}}}  \leq r_{p-1}^{4-2\delta}$, these Hamiltonians satisfy the estimate 
 $$
 \| B\|_{ \mathscr{H}_{\eta_{p-1},r_{p-1},p-1,\mathcal{O}_{p-1}}^{\mathrm{tot}}}\vee  \| G \|_{ \mathscr{H}_{\eta_{p-1},r_{p-1},p-1,\mathcal{O}_{p-1}}^{\mathrm{tot}}} \vee  \| L \|_{ \mathscr{H}_{\eta_{p-1},r_{p-1},p-1,\mathcal{O}_{p-1}}^{\mathrm{tot}}}  \leq r_{p-1}^{4-3\delta}.
 $$
 Then we perform a zoom: we decrease the radius of the annulus. More precisely, we set 
 $$
 \rho := r_p^{1-8 \delta /7}. 
 $$
and, using that $\Pi_{\mathrm{deg} \geq 4} B = B$, $\Pi_{\mathrm{deg} \geq 4} G = G$ and $\Pi_{\mathrm{deg} \geq 6} L = L$, we get by homogeneity that
 \begin{equation}
 \label{eq:bound_B}
 \| B\|_{ \mathscr{H}_{\eta_{p-1},\rho,p-1,\mathcal{O}_{p-1}}^{\mathrm{tot}}}\vee  \| G \|_{ \mathscr{H}_{\eta_{p-1},\rho,p-1,\mathcal{O}_{p-1}}^{\mathrm{tot}}}  \leq (\rho/r_{p-1})^4  r_{p-1}^{4-3\delta}  \leq \rho^4 r_p^{- \delta/7 }
 \end{equation}
 and
 \begin{equation}
  \label{eq:bound_L}
 \| L \|_{ \mathscr{H}_{\eta_{p-1},\rho,p-1,\mathcal{O}_{p-1}}^{\mathrm{tot}}} \leq  \rho^6 r_{p-1}^{- 2 - 3\delta }\leq \rho^6 r_p^{- \delta/7 }\,,
 \end{equation}
provided we take $r_p$ sufficiently small with respect to $r_{p-1}$.
 
 At the next step,  we aim at removing $B$ with one step of Birkhoff normal form. So, we are going to apply Proposition \ref{prop:res_coho} to solve a cohomological equation (with $P=-B$). In particular, we have to control its largest fourth largest index $\mu_P$. Indeed, by definition, $B$ only contains terms of degree $5$ which are at least of degree $2$ with respect to $(u_j)_{j\in S_p}$ and terms of degree $4$ which are at least of degree $1$ with respect to $(u_j)_{j\in S_p}$. So its terms are at most of degree $3$ with respect $(u_j)_{j\in S_p^c}$. It follows that 
$$
\mu_B\leq \max_{j\in \mathcal{S}_p} |j| \leq r_p^{-\delta/20}.
$$

\subsection{Step $2$: One step of Birkhoff normal form to remove $B$} To lighten the notations, we set
$$
Z_{\leq p-1}[\xi] := Z_2 + Z_4 + \sum_{1\leq p'\leq p-1}Z^{(p')}[\xi^{(p')}].
$$
so that $H[\xi]= Z_{\leq p-1}[\xi] + R^{(p-1)}[\xi]$. Note that, in this definition, only $R^{(p-1)}$ corresponds to  a formal power series  via the identification \eqref{identi}, $H$ and $Z_{\le p-1}$ are just functions of $u$ parametrized by $\xi$. 
Now, we apply Proposition \ref{prop:res_coho} with $\gamma =r_p^{\delta/8}$ and we use the bound we proved on $\mu_B$. We get a Borel set $\mathcal{O}^{(1)} \subset  \mathcal{O}_{p-1}$, satisfying the measure estimate 
\begin{equation}
\label{eq:meas1}
\mathrm{meas}(\mathcal{O}_{p-1} \setminus \mathcal{O}^{(1)} ) \leq r_p^{\delta/8} \, \mathrm{meas} (\mathcal{O}_{p-1}^{\mathrm{full}}),
\end{equation}
and two Hamiltonians $\chi \in \mathscr{C}_{\eta_{p}^{(1)},\rho,p-1,\mathcal{O}^{(1)} }$ and $A \in   \Pi_{\mathrm{deg \geq 6}}\mathscr{H}_{\eta_{p}^{(1)},\rho,p-1,\mathcal{O}^{(1)} }$ solving the cohomological equation
\begin{equation}
\label{eq:coho1}
\forall \xi \in \mathcal{O}^{(1)} , \forall  u\in \mathcal{A}_\xi(3\rho), \quad \{ Z_{\leq p-1}[\xi] , \chi[\xi] \}(u) = (A-B)(\xi;u) 
\end{equation}
and satisfying the bounds
\begin{equation} \label{eq:bound_Achi}
\begin{split}
\| \chi \|_{\mathscr{C}_{\eta_{p}^{(1/2)},\rho,p-1,\mathcal{O}^{(1)} }^{\mathrm{tot}}} \! \! \! \! &\lesssim (r_p^{\delta/8})^{-2}  \langle \mu_B \rangle^{21}  (\eta_{p-1} - \eta_p^{(1/2)})^{- 2 \# \mathcal{S}_{p-1}-25} r_{p-1}^{-2+7\delta/4} \| B\|_{ \mathscr{H}_{\eta_{p-1},\rho,p-1,\mathcal{O}_{p-1}}^{\mathrm{tot}}} \! \! \! \! \! \! \leq \rho^{4}r_p^{-\frac{\delta}2} \\
\| A \|_{\mathscr{H}_{\eta_{p}^{(1)},\rho,p-1,\mathcal{O}^{(1)} }^{\mathrm{tot}} } \! \! \! \! &\lesssim \rho^{2-\delta} (\eta_{p}^{(1)} - \eta_p^{(1/2)})^{- 9} \| \chi \|_{\mathscr{C}_{\eta_{p}^{(1/2)},\rho,p-1,\mathcal{O}^{(1)} }^{\mathrm{tot}}}  \! \! \leq \rho^{6}r_p^{-\frac{3\delta}2}.\\
\end{split}
\end{equation}
Moreover, since the support of $\chi$ and $B$ coincide, we have 
$$
\Pi_{\|\boldsymbol{m} \| \geq 2} \chi = \Pi_{\|\boldsymbol{m} \| \geq 2} B =0.
$$
The fact that the $y-$homogeneity of $B$ is smaller than or equal to $1$ is due to the definition of \eqref{eq:def_B} and that $B$ only contains terms of degree $4$ and $5$.

\medskip

Thus, since $\rho^{4}r_p^{-\frac{\delta}2} \ll \rho^{2+\delta}$, we can apply Lemma \ref{lem:flows} to define the Hamiltonian flow $\Phi_\chi$. Moreover, using that $\chi$ solves the cohomological equation \eqref{eq:coho1}, we get that for all $\xi \in \mathcal{O}^{(1)} $ and all $u\in \mathcal{A}_\xi(2\rho)$
\begin{equation}
\label{eq:def_H1}
H(\xi; \Phi_{\chi[\xi]}^1(u)) =Z_{\leq p-1}(\xi;u) + G(\xi;u) + L^\sharp(\xi;u) =: H^{(1)}(\xi;u)
\end{equation}
where
\begin{equation*}
\begin{split}
L^\sharp(\xi;u) =& Z_{\leq p-1}[\xi]\circ \Phi_{\chi[\xi]}^1(u) - Z_{\leq p-1}(\xi;u) - \{Z_{\leq p-1}[\xi],\chi[\xi]\}(u) \\
&+ (B+G)(\xi;\Phi_{\chi[\xi]}^1(u)) - (B+G)(\xi;u) +  L(\xi;\Phi_{\chi[\xi]}^1(u)) +  A(\xi;u)  \\
=& \int_{0}^1 \! \! \Big\{ (1-t)\{Z_{\leq p-1}[\xi],\chi[\xi]\}  + (B+G)[\xi] , \chi[\xi]\Big\}\circ \Phi_{\chi[\xi]}^t(u) \, \mathrm{d}t +  L(\xi;\Phi_{\chi[\xi]}^1(u)) +   A(\xi;u)\\
=&  \int_{0}^1 \Big\{  (tB + (1-t)A +G)[\xi] , \chi[\xi]\Big\}\circ \Phi_{\chi[\xi]}^t(u) \, \mathrm{d}t +  L(\xi;\Phi_{\chi[\xi]}^1(u)) +   A(\xi;u).
\end{split}
\end{equation*}
As a consequence of Propositions \ref{prop:comp_ham_flow} and \ref{prop:poisson_bracket_estimate}, it follows that $L^\sharp\in \mathscr{H}_{\eta_{p}^{(2)},\rho,p-1,\mathcal{O}^{(1)} }$ and, thanks to the bounds \eqref{eq:bound_B},\eqref{eq:bound_L} and \eqref{eq:bound_Achi} on $A,B,G,\chi,L$, it satisfies the bound
\begin{equation}
\label{eq:bound_Ldiese}
\| L^\sharp \|_{\mathscr{H}_{\eta_{p}^{(2)},\rho,p-1,\mathcal{O}^{(1)} }^{\mathrm{tot}}} \leq \rho^6 r_p^{- 2\delta }.
\end{equation}

\subsection{Step $3$: A translation and another zoom} Now, we perform the translation: we apply Proposition \ref{prop:open} with 
$$
r = \varrho := r_p^{1-6\delta/7}.
$$
Setting 
\begin{equation}
\label{eq:meas2}
\mathcal{O}^{(2)} := \mathcal{O}^{(1)} \times \big(\frac{r_{p}^{2(1-\delta)}}2,  r_{p}^{2(1-\delta)}\big)^{\mathcal{S}_{p}} \subset \mathcal{O}_p^{\mathrm{full}},
\end{equation}
we get two Hamiltonians $G^{(2)},L^{(2)} \in \mathscr{H}_{\eta_{p}^{(2)},\varrho,p,\mathcal{O}^{(2)} }$, obtained as the traslation of $G$ and $L^\sharp$ respectively, such that for all $\xi  \in \mathcal{O}^{(2)}$ and all $u\in \mathcal{A}_\xi(3\varrho)$ 
%\mic{I dont really like that $\xi^{(p-1)}$ appearing only on the left hand side} \jo{yes but it is rather natural since we do a translation...}\mic{you are using formula (34) right? What bothers me is the apparent wierdness in writing an equality in which the left and the right hand side have two different sets of variables. In general I undestand better if I avoid the $(\xi,u)$ operation as much as possible (for example in (44) I did not change the notation to the formal power series only because one would need to define the traslation from $p'$ to $p$ which seems quite heavy) Maybe I can try to put some reminders or a remark someplace (afer the traslation proposition?) to highlight this} \jo{I am not completely sure to see what you mean. Somehow you do not like equalities between functions and you would prefer equalities between formal power series ?} \jo{I agree that one unusual aspect here is that we have a lot of equalities between function which are given by formal series of different variable (because given by different values of $p'$).} \mic{what bothers me is that in the line below $H^1$ and $Z_{\le p-1}$ are not power series while the last two are so,  I would like to add something whci makes it clearer}
$$
H^{(1)}(\xi^{(p-1)};u) = Z_{\leq p-1}(\xi;u) + G^{(2)}(\xi;u) + L^{(2)}(\xi;u)
$$
and 
$$
\| G^{(2)}\|_{\mathscr{H}_{\eta_{p}^{(2)},\varrho,p,\mathcal{O}^{(2)} }^{\mathrm{tot}}} \lesssim_p \| G \|_{ \mathscr{H}_{\eta_{p-1},\rho,p-1,\mathcal{O}_{p-1}}^{\mathrm{tot}}} \mathop{\leq}^{\eqref{eq:bound_B}}  \rho^4 r_p^{- \delta/6 },
$$
$$
\| L^{(2)}\|_{\mathscr{H}_{\eta_{p}^{(2)},\varrho,p,\mathcal{O}^{(2)} }^{\mathrm{tot}}} \lesssim_p \| L^\sharp \|_{\mathscr{H}_{\eta_{p}^{(2)},\rho,p-1,\mathcal{O}^{(1)} }^{\mathrm{tot}}} \mathop{\leq}^{\eqref{eq:bound_Ldiese}} \rho^6 r_p^{- 3\delta }.
$$
Moreover, since  $\Pi_{\mathrm{int}} \Pi_{\mathrm{deg}=4} R^{(p-1)} = 0$, we have $ \Pi_{\mathrm{int}} G = 0$ and so  
$$
\Pi_{\mathrm{int}} G^{(2)} = 0.
$$
Furthermore, since, by construction  (see \eqref{eq:def_B}), we have
$$
 \min_{ G_{\boldsymbol{k},\boldsymbol{m}}^{\boldsymbol{\ell},\boldsymbol{\sigma}} \neq 0 } 2 \| \boldsymbol{m} \|_{\ell^1} + \# \{ j \ | \ \boldsymbol{\ell}_j \in \mathcal{S}_{p}^c\} \geq 4,
$$
we get that $G^{(2)}$ only contains terms of degree higher than or equal to $4$, i.e.
\begin{equation}
\label{eq:deg_G2}
\Pi_{\mathrm{deg}\geq 4}G^{(2)} = G^{(2)}.
\end{equation}
Thanks to this property, since $\rho/\varrho = \delta^{-2\delta/7}$, we have that
$$
\| G^{(2)}\|_{\mathscr{H}_{\eta_{p}^{(2)},r_p,p,\mathcal{O}^{(2)} }^{\mathrm{tot}}} \leq \Big( \frac{r_p}{ \varrho} \Big)^4 \| G^{(2)}\|_{\mathscr{H}_{\eta_{p}^{(2)},\varrho,p,\mathcal{O}^{(2)} }^{\mathrm{tot}}} \leq r_p^{4- \delta/6 - 2\delta/7 } \leq r_p^{4-\delta/2}
$$
and by monotony
$$
\| L^{(2)}\|_{\mathscr{H}_{\eta_{p}^{(2)},r_p,p,\mathcal{O}^{(2)} }^{\mathrm{tot}}} \leq \| L^{(2)}\|_{\mathscr{H}_{\eta_{p}^{(2)},\varrho,p,\mathcal{O}^{(2)} }^{\mathrm{tot}}}  \leq \rho^6 r_p^{- 3\delta } \leq r_p^{6-7\delta}. 
$$
Finally, we perform a last decomposition by setting
$$
 Z^{(p,2)} := \Pi_{\mathrm{deg} \leq 4} \Pi_{\mathrm{int}}  L^{(2)},  \ J^{(2)}:= \Pi_{\mathrm{deg} \leq 3} (\mathrm{Id}-\Pi_{\mathrm{int}})  L^{(2)}, \ N^{(2)} := G^{(2)} + L^{(2)} -  Z^{(p,2)} - J^{(2)}.
$$
By construction and \eqref{eq:deg_G2}, we have that 
$$
\Pi_{\mathrm{deg}\geq 4}N^{(2)} = N^{(2)}, \quad \Pi_{\mathrm{deg}\leq  3} J^{(2)} = J^{(2)}, \quad \Pi_{\mathrm{int}} J^{(2)} = \Pi_{\mathrm{int}} \Pi_{\mathrm{deg}\leq 4} N^{(2)} = 0
$$
the bounds
$$
\| Z^{(p,2)} \|_{\mathscr{H}_{\eta_{p},r_p,p,\mathcal{O}^{(2)} }^{\mathrm{tot}}} \leq r_p^{6-8\delta} \leq r_p^{4+\frac12}, \quad \| J^{(2)}\|_{\mathscr{H}_{\eta_{p}^{(2)},r_p,p,\mathcal{O}^{(2)} }^{\mathrm{tot}}} \leq r_p^{6-8\delta}, \quad  \| N^{(2)}\|_{\mathscr{H}_{\eta_{p}^{(2)},r_p,p,\mathcal{O}^{(2)} }^{\mathrm{tot}}} \leq r_p^{4-\delta}
$$
and that for all $\xi  \in \mathcal{O}^{(2)}$ and all $u\in \mathcal{A}_\xi(3\varrho)$
\begin{equation}
\label{eq:def_H2}
H^{(1)}(\xi^{(p-1)};u) = Z_{\leq p-1}(\xi;u) +  Z^{(p,2)}(\xi;u) + J^{(2)}(\xi;u) + N^{(2)}(\xi;u) =: H^{(2)}(\xi;u).
\end{equation}
Changing the name of the Hamiltonian is only to highlight that we have performed the translation $T_{p-1}$ and then regrouped the terms in $G^{(2)} $ and $L^{(2)}$. In conclusion with the first three steps  we have passed from $G,L\in \mathscr{H}_{\eta_{p-1},r_{p-1},p-1,\mathcal{O}_{p-1}}$  to $Z^{(p,2)},J^{(2)},N^{(2)} \in \mathscr{H}_{\eta_{p}^{(2)},r_p,p,\mathcal{O}^{(2)}} $.
%\mic{I understand the spirit of this formula but I think some comment is needed} \jo{please}
\subsection{Step $4$: KAM and one last step of Birkhoff normal form} \label{sub:KAM_and_Birk} It remains only to remove the non integrable terms of degree smaller than or equal to $3$ in $H^{(2)}$ that is $J^{(2)}$. For usual degree reasons, terms or degree smaller than or equal to $2$ need a KAM iteration scheme to be removed whereas terms of degree $3$ can be removed by one single step of Birkhoff normal form. The KAM scheme employed here is standard, with its sole novelty lying in the formalism specific to this paper. To streamline the proof and avoid unnecessary complexity, we provide a detailed elimination of the cubic terms. The precise statement of the KAM result we use is deferred to the appendix (Proposition \ref{prop:KAM}). All the intricacies associated with our formalism are already captured in the process of removing the cubic terms. Eliminating the lower-degree terms then follows by iterating this construction infinitely, as is standard in KAM theory. 
\medskip

First, we remove $\Pi_{\deg \leq 2} J^{(2)}$ by applying the KAM theorem given by Proposition \ref{prop:KAM} of the Appendix. More precisely, this proposition provides a Borel set $\mathcal{O}_{p}^{(3)} \subset  \mathcal{O}_{p}^{(2)}$ satisfying
\begin{equation}
\label{eq:meas3}
\mathrm{meas}\Big(   \mathcal{O}_{p}^{(2)} \setminus \mathcal{O}_{p}^{(3)} \Big) \leq r_p^{\delta/8} \mathrm{meas}(\mathcal{O}_{p}^{\mathrm{full}} ),
\end{equation}
 an integrable Hamiltonian $Z^{(p,3)} \in  \Pi_{\mathrm{deg}\leq 4}  \Pi_{\mathrm{int}} \mathscr{H}_{\eta_p,r_{p},p,\mathcal{O}_p^{(3)}}$ of degree smaller than or equal to $4$ satisfying the bound
\begin{equation}
\label{eq:bound_Z3}
\| Z^{(p,3)} \|_{ \mathscr{H}_{\eta_p,r_{p},p,\mathcal{O}_p^{(3)}}^{\mathrm{tot}}} \leq 2 r_{p}^{4+1/2},
\end{equation}
 a Hamiltonian $ J^{(3)} \in \Pi_{\mathrm{deg} = 3} \mathscr{H}_{\eta_{p}^{(3)},r_{p},p,\mathcal{O}_{p}^{(3)}}$,  containing only terms of degree $3$  and no integrable term (i.e. $\Pi_{\mathrm{int}}  J^{(3)} =0 $) satisfying the bound 
\begin{equation}
\label{eq:bound_J3}
\| J^{(3)} \|_{ \mathscr{H}_{\eta_{p}^{(3)},r_{p},p,\mathcal{O}_{p}^{(3)}}^{\mathrm{tot}}}  \leq 2 r_p^{6-8\delta },
\end{equation}
 a Hamiltonian $N^{(3)} \in \Pi_{\mathrm{deg} \geq 4} \mathscr{H}_{\eta_{p}^{(3)},r_{p},p,\mathcal{O}_{p}^{(3)}}$,  containing only terms of degree larger than or equal to $4$ and no quartic integrable term (i.e. $\Pi_{\mathrm{deg} =4} \Pi_{\mathrm{int}} N^{(3)} =0 $), satisfying the bound 
\begin{equation}
\label{eq:bound_R3}
\| N^{(3)} \|_{ \mathscr{H}_{\eta_{p}^{(3)},r_{p},p,\mathcal{O}_{p}^{(3)}}^{\mathrm{tot}}}  \leq 2 r_p^{4-\delta},
\end{equation}
such that, for all $\xi \in \mathcal{O}_{p}^{(3)}$ and all $u\in \mathcal{A}_\xi(\frac{5}2 r_p)$  
\begin{equation}
\label{eq:def_H3}
H^{(2)}(\xi;\Psi_\xi^{(p)}(u)) =  Z_{\leq p-1}(\xi;u) +  Z^{(p,3)}(\xi;u) + J^{(3)}(\xi;u) +  N^{(3)}(\xi;u) =:H^{(3)}(\xi;u)
\end{equation}
where $\Psi_\xi^{(p)}\in C^2( \mathcal{A}_\xi(\frac{5}2 r_p); \mathcal{A}_\xi(3 r_{p}) )$ is a symplectic map which is close to the identity, i.e. for all $ j\in \{0,1,2\}$, $v^{(1)},\cdots,v^{(j)}\in H^s$,
\begin{equation}
\label{eq:Psi_close_id}
 \|\mathrm{d}^j \big(\Psi_\xi^{(p)} - \mathrm{Id}  \big)(u)  (v^{(1)},\cdots,v^{(j)}) \|_{H^s} \leq r_p^{2-j-\frac14} \prod_{1\leq i\leq j} \|v^{(i)}\|_{H^s}.
 \end{equation}

\medskip

Now, it only remains to remove $J^{(3)}$, i.e. the term of degree $3$. Since it is of degree $3$, its largest fourth largest index $\mu_{J^{(3)}}$ vanishes, i.e. $\mu_{J^{(3)}}=0$. Thus it makes sense to apply Proposition \ref{prop:res_coho} to remove it. We get a Borel set $\mathcal{O}_p \subset  \mathcal{O}^{(3)}$, satisfying the measure estimate
\begin{equation}
\label{eq:meas4}
\mathrm{meas}(\mathcal{O}^{(3)} \setminus \mathcal{O}_p ) \leq r_p^{\delta/8} \, \mathrm{meas} (\mathcal{O}_{p}^{\mathrm{full}}),
\end{equation}
and two Hamiltonians $\Gamma \in \Pi_{\mathrm{deg}=3}\mathscr{C}_{\eta_{p}^{(7/2)},r_p,p,\mathcal{O}_p }$ and $Q \in   \Pi_{\mathrm{deg \geq 5}}\mathscr{H}_{\eta_{p}^{(7/2)},r_p,p,\mathcal{O}_p }$ solving the cohomological equation
\begin{equation}
\label{eq:coho2}
\forall \xi \in \mathcal{O}_p , \forall  u\in \mathcal{A}_\xi(3\rho), \quad \{ Z_{\leq p-1}[\xi]  +  Z^{(p,3)}[\xi], \Gamma[\xi] \}(u) = (Q-J^{(3)})(\xi;u) 
\end{equation}
and satisfying the bounds\footnote{we write the estimate in this way to highlight the fact that $Q$ is smaller than $J^{(3)}$. This kind of argument would be convenient to prove Proposition \ref{prop:KAM} (else the exponent associated with the estimate of $J^{(3)}$ could be larger than the one associated with $J^{(2)}$).} 
\begin{equation} \label{eq:bound_QGamma}
\begin{split}
\| \Gamma \|_{\mathscr{C}_{\eta_{p}^{(13/4)},r_p,p,\mathcal{O}_p }^{\mathrm{tot}}} \! \! \! \! \lesssim& (r_p^{\delta/8})^{-2}   (\eta_{p}^{(3)} - \eta_p^{(13/4)})^{- 2 \# \mathcal{S}_p-25} r_{p}^{-2+7\delta/4} \| J^{(3)}\|_{ \mathscr{H}_{\eta_{p}^{(3)},r_p,p,\mathcal{O}^{(3)}}^{\mathrm{tot}}} \! \! \! \! \! \! \\ \leq & \| J^{(3)}\|_{ \mathscr{H}_{\eta_{p}^{(3)},r_p,p,\mathcal{O}^{(3)}}^{\mathrm{tot}}} r_p^{ -2+\frac{5 \delta}4} \leq r_p^{4-7\delta}\\
\| Q \|_{\mathscr{H}_{\eta_{p}^{(7/2)},r_p,p,\mathcal{O}_p }^{\mathrm{tot}}} \! \! \! \! \lesssim& r_p^{2-\delta} (\eta_{p}^{(13/4)} - \eta_p^{(7/2)})^{- 9 }\| \Gamma \|_{\mathscr{C}_{\eta_{p}^{(13/4)},r_p,p,\mathcal{O}_p }^{\mathrm{tot}}}  \! \! \leq  \| J^{(3)}\|_{ \mathscr{H}_{\eta_{p}^{(3)},r_p,p,\mathcal{O}^{(3)}}^{\mathrm{tot}}}  r_p^{ \frac{\delta}5}  \leq    r_p^{6 - 8\delta }.\\
\end{split}
\end{equation} 
Moreover, since $\Pi_{\mathrm{deg}=3} \Gamma = \Gamma$, we have 
$$
\Pi_{\|\boldsymbol{m} \| \geq 2} J^{(3)} =0.
$$

\medskip

Thus, since $r_p^{4-7\delta} \ll r_p^{2+\delta}$, we can apply Lemma \ref{lem:flows} to define the Hamiltonian flow $\Phi_\Gamma$. Moreover, using that $\Gamma$ solves the cohomological equation \eqref{eq:coho2}, we get that for all $\xi \in \mathcal{O}_p$ and all $u\in \mathcal{A}_\xi(2r_p)$
\begin{equation}
\label{eq:mod_H3}
H^{(3)}(\xi; \Phi_{\Gamma[\xi]}^1(u)) =\underbrace{Z_{\leq p-1}(\xi;u) + Z^{(p,3)}(\xi;u)}_{=: Z_{\leq p}^{(3)}(\xi;u)}  + N^{(3)}(\xi;u) + R^\sharp(\xi;u) 
\end{equation}
where
\begin{equation*}
\begin{split}
R^\sharp(\xi;u) =& Z_{\leq p}^{(3)}[\xi]\circ \Phi_{\Gamma[\xi]}^1(u) - Z_{\leq p}^{(3)}(\xi;u)  - \{ Z_{\leq p}^{(3)}[\xi],\Gamma[\xi]\}(u) \\
&+ J^{(3)}(\xi;\Phi_{\Gamma[\xi]}^1(u)) - J^{(3)}(\xi;u) +  N^{(3)}(\xi;\Phi_{\Gamma[\xi]}^1(u)) - N^{(3)}(\xi;u) +  Q(\xi;u)  \\ 
=&  \int_{0}^1 \Big\{  (tJ^{(3)} + (1-t)Q + N^{(3)})[\xi] , \Gamma[\xi]\Big\}\circ \Phi_{\Gamma[\xi]}^t(u) \, \mathrm{d}t +   Q(\xi;u).
\end{split}
\end{equation*}
As a consequence of Propositions \ref{prop:poisson_bracket_estimate} and \ref{prop:comp_ham_flow}, it follows that $R^\sharp\in \mathscr{H}_{\eta_{p},r_p,p,\mathcal{O}_p }$ and, thanks to the bounds \eqref{eq:bound_J3},\eqref{eq:bound_R3} and \eqref{eq:bound_QGamma} on $J^{(3)},N^{(3)},Q,\Gamma$, it satisfies the bound (the estimate is similar to \eqref{eq:bound_QGamma})
\begin{equation}
\label{eq:bound_Rdiese}
\|R^\sharp \|_{ \mathscr{H}_{\eta_{p},r_p,p,\mathcal{O}_p } ^{\mathrm{tot}}} \leq r_p^{6 - 8\delta }.
\end{equation}
Moreover, since $\Pi_{\mathrm{deg}=3} \Gamma = \Gamma$, we have that  
$$
\Pi_{\mathrm{deg}\geq 4} R^\sharp  =R^\sharp .
$$

\subsection{Step $5$: Conclusion of the proof} Now, we set
\begin{equation}
\label{eq:last_dec}
Z^{(p)} = Z^{(p,3)}(\xi;u) + \Pi_{\mathrm{deg}= 4}  \Pi_{\mathrm{int}}R^\sharp \quad \mathrm{and} \quad R^{(p)} = N^{(3)} + (\mathrm{Id} - \Pi_{\mathrm{deg}= 4} \Pi_{\mathrm{int}} ) R^\sharp.
\end{equation}
We deduce of that $R^{(p)}$ only contains terms of degree larger than or equal to $4$ and that by \eqref{eq:bound_Z3},\eqref{eq:bound_R3},  \eqref{eq:bound_Rdiese}, as expected, they satisfy the estimates
$$
\| Z^{(p)} \|_{ \mathscr{H}_{\eta_p,r_{p},p,\mathcal{O}_p}^{\mathrm{tot}}} \leq r_{p}^{4+1/3} \quad \mathrm{and} \quad \| R^{(p)} \|_{ \mathscr{H}_{\eta_{p},r_{p},p,\mathcal{O}_{p}}^{\mathrm{tot}}}  \leq r_p^{4-2\delta}
$$
The measure estimate \eqref{eq:measure_est_loop} of the statement is a direct consequence of \eqref{eq:meas1}, \eqref{eq:meas2}, \eqref{eq:meas3}, \eqref{eq:meas4}. Finally, the change of variable $\tau$ is naturally defined for $\xi \in \mathcal{O}_p$ by
$$
\tau_\xi^{(p)} := \Phi_{\chi[\xi^{(p-1)}]}^1 \circ \Psi_\xi \circ \Phi_{\Gamma[\xi]}^1.
$$
It is symplectic as a composition of symplectic maps. The fact that for all $u\in \mathcal{A}_\xi(r_p)$
$$
H(\xi;\tau_\xi^{(p)}(u)) =  Z_2(u)+Z_4(u) + \sum_{1\leq p'\leq p} Z^{(p')}(\xi^{(p')};u) +  R^{(p)}(\xi;u)
$$
is a direct consequence of the construction (identities \eqref{eq:def_H1}, \eqref{eq:def_H2}, \eqref{eq:def_H3}, \eqref{eq:mod_H3} and \eqref{eq:last_dec} to be precise). Finally, the fact $\tau_\xi^{(p)}$ is close to the identity (i.e. that it satisfies \eqref{eq:tau_close_id}) directly follows of the similar estimate for $\Psi_\xi$ \eqref{eq:Psi_close_id} and the similar estimates for the flows proven in Proposition \ref{lem:flows} applied for $\chi$ and $\Gamma$ using the estimates \eqref{eq:bound_Achi} and \eqref{eq:bound_QGamma} on these Hamiltonians.

%\subsection{Sketch of proof old}
%This is essentially homogeneity arguments. The key point is to perform the operation in the good order.
%
%\begin{itemize}
%\item Step 1: We restrict our Hamiltonian to a much smaller annulus of radius $r_p^{1-2\delta}$. $R^{(p-1)}$ becomes of size $r_p^{4 (1-3\delta) }$.
%\item Step 2: We decompose $R^{(p-1)} = B + G+L$ where $B$ contains all the terms of degree $\leq 5$ which are going to generate terms to be removed after the translation and $L$ all the terms of degree $\geq 6$.
%\item  Step 3: We remove $B$ by one step of BNF : no pb because small divisor are of size $r_{p-1}^2$. The size of $\mathrm{d}^2 \nabla \chi$ is $r_p^{-0^+}$ so it is ok (note that all the terms in $\chi$ are at most linear in $y$)
%\item Step 4: We perform the translation and we restrict our Hamiltonian to a annulus of radius $3r_p$. The terms to be removed are of size $r_p^{6^-}$.
%\item Step 5: We remove these terms by KAM (and one step of BNF for the terms of degree $3$). It is ok because : (small divisor loss $r_p^2$ and cost of $\mathrm{d}^2 \nabla $ is $r_p^4$ (terms to be removed are at most linear in $y$).
%\end{itemize}

\section{Convergence of the changes of variables}
\label{sec:conv}

This section is dedicated to proving the following proposition and its corollary. They will naturally be applied for $\varsigma^{(p)} = \tau_\xi^{(p)} $ where $\tau_\xi^{(p)}$ is given by Proposition \ref{prop:the_loop}.
\begin{proposition} \label{prop:cvg} Let $\xi \in \mathcal{O}_\infty^{\mathrm{full}}$ and for all $p\geq 1$, 
$$
\Psi^{(p)} = \varsigma^{(1)} \circ \cdots \circ \varsigma^{(p)}
$$
where $\varsigma^{(p)}\in \mathcal{C}^2(\mathcal{A}_\xi(\frac12 r_p); \mathcal{A}_\xi(\frac12 r_{p-1}))$  is a symplectic map which is $C^1$ close to the identity and whose second derivative is uniformly not too large, i.e. for all $u\in \mathcal{A}_\xi(\frac12 r_p)$, $ j\in \{0,1,2\}$, $v^{(1)},\cdots,v^{(j)}\in H^s$,
\begin{equation}
\label{eq:bounds_varsi}
 \|\mathrm{d}^j \big(\varsigma^{(p)}  - \mathrm{Id}  \big)(u)  (v^{(1)},\cdots,v^{(j)}) \|_{H^s} \leq r_p^{2-j-\frac13} \prod_{1\leq i\leq j} \|v^{(i)}\|_{H^s}.
\end{equation}
Then there exists $\Psi^{(\infty)} \in C^0(\mathbf{T}_\xi ; H^s)$ and $L^{(\infty)}\in C^0(\mathbf{T}_\xi ; \mathscr{L}(H^s)) $ such that
\begin{itemize}
\item $(\Psi^{(p)})_p$ converges uniformly to $\Psi^{(\infty)}$,
\item $(\mathrm{d}\Psi^{(p)})_p$ converges uniformly to $L^{(\infty)}$,
\item $L^{(\infty)}$ is the differential of $\Psi^{(\infty)} $ in the sense that
\begin{equation}
\label{eq:almost_C1}
\forall u,v \in \mathbf{T}_\xi, \quad \| \Psi^{(\infty)} (u) - \Psi^{(\infty)} (v)  - L^{(\infty)} (v)(u-v)\|_{H^s} \leq \| u-v\|_{H^s}^{3/2}
\end{equation}
\item for all $u\in \mathbf{T}_\xi$, $L^{(\infty)} (u)$ is symplectic and satisfies
\begin{equation}
\label{eq:est_L_m_Id}
 \| L^{(\infty)}(u) - \mathrm{Id} \|_{\mathscr{L}(H^s) } \leq  r_1^{1/2} .
\end{equation}
\end{itemize}

\end{proposition}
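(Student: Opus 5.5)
The plan is to study the maps $\Psi^{(p)}$ on the whole torus $\mathbf{T}_\xi$ and to get telescoping bounds for both $\Psi^{(p)}$ and $\mathrm{d}\Psi^{(p)}$, using the superexponential decay of $(r_p)$. First I check that the compositions make sense. Since $\mathbf{T}_\xi\subset \mathcal{A}_\xi(\rho)$ for every $\rho>0$, each $\varsigma^{(p)}$ is defined on $\mathbf{T}_\xi$. By \eqref{eq:bounds_varsi} with $j=0$, $\varsigma^{(p)}$ moves points by at most $r_p^{5/3}$. I would then prove by induction on $p$ that $\varsigma^{(q+1)}\circ\cdots\circ\varsigma^{(p)}$ maps $\mathbf{T}_\xi$ into $\mathcal{A}_\xi(\frac12 r_q)$. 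Indeed $\sum_{p'>q} r_{p'}^{5/3}\ll r_q^2$, and a displacement $d$ of $u\in\mathbf T_\xi$ changes $\sum |\,|u_k|^2-\xi_k|\langle k\rangle^{2s}$ by at most $O(d\,\|\xi\|^{1/2}+d^2)$. The $\xi$-factor is harmless if $\|\xi\|_{\ell^1_{2s}}$ is bounded by $r_1^{2(1-\delta)}$-type quantities, so smallness of the $r_p$ closes the induction. This step needs a mild quantitative check but should be routine.

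Second, the $C^0$ convergence. I write
\[
\Psi^{(p+1)}-\Psi^{(p)}=\Psi^{(p)}\circ\varsigma^{(p+1)}-\Psi^{(p)} .
\]
Each $\mathrm{d}\varsigma^{(p')}$ has norm at most $1+r_{p'}^{2/3}$, so the product $\prod_{p'}(1+r_{p'}^{2/3})\le 2$ bounds the Lipschitz constant of $\Psi^{(p)}$. Hence $\|\Psi^{(p+1)}-\Psi^{(p)}\|_{H^s}\le 2r_{p+1}^{5/3}$, and the sequence is uniformly Cauchy. For the derivatives I use the chain rule
\[
\mathrm{d}\Psi^{(p)}(u)=\mathrm{d}\varsigma^{(1)}(w_1)\cdots\mathrm{d}\varsigma^{(p)}(w_p), \qquad w_p=u,\quad w_{j-1}=\varsigma^{(j)}(w_j).
\]
Passing from $p$ to $p+1$ does two things. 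It inserts the factor $\mathrm{d}\varsigma^{(p+1)}$, which is within $r_{p+1}^{2/3}$ of $\mathrm{Id}$. It also moves every base point $w_j$ by at most $2r_{p+1}^{5/3}$, which changes $\mathrm{d}\varsigma^{(j)}$ by at most $r_j^{-1/3}r_{p+1}^{5/3}$, by the $j=2$ bound in \eqref{eq:bounds_varsi}. Summing over $j\le p$ gives $\sum_j r_j^{-1/3}r_{p+1}^{5/3}\lesssim r_{p+1}^{4/3}$, since $r_{p+1}\ll r_p^{\alpha}$. So $\mathrm{d}\Psi^{(p)}$ is uniformly Cauchy in $\mathscr{L}(H^s)$ with a summable increment of order $r_{p+1}^{2/3}$. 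The limit $L^{(\infty)}$ is continuous as a uniform limit of continuous maps. The same product estimate gives $\|L^{(\infty)}-\mathrm{Id}\|\le\prod(1+r_p^{2/3})-1\le r_1^{1/2}$, which is \eqref{eq:est_L_m_Id}. Symplecticity passes to the limit because $\varpi_0(Lv,Lw)$ is continuous under operator-norm convergence. $L^{(\infty)}(u)$ is also invertible, by a Neumann series.

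The main obstacle is \eqref{eq:almost_C1}, because $\mathrm{d}^2\Psi^{(p)}$ is not uniformly bounded: it grows like $\sum_{j\le p} r_j^{-1/3}$. My plan is to split at a scale adapted to $\|u-v\|$. Fix $u,v\in\mathbf T_\xi$ with $h=\|u-v\|_{H^s}$, which is small. Choose $p$ to be the largest index with $r_p^{-1/3}h^{1/2}\le \tfrac12$, roughly. By Taylor's formula along the segment (which stays in $\mathcal{A}_\xi(\frac12 r_p)$ when $h$ is small relative to $r_p^2$; otherwise decrease $p$), we get
\[
\|\Psi^{(p)}(u)-\Psi^{(p)}(v)-\mathrm{d}\Psi^{(p)}(v)(u-v)\|\lesssim r_p^{-1/3}h^2\le h^{3/2}/2 .
\]
The tails are controlled as follows:
\[
\|(\Psi^{(\infty)}-\Psi^{(p)})(u)-(\Psi^{(\infty)}-\Psi^{(p)})(v)\| \lesssim r_{p+1}^{2/3}h, \qquad \|(L^{(\infty)}-\mathrm{d}\Psi^{(p)})(v)(u-v)\|\lesssim r_{p+1}^{2/3}h .
\]
The first bound comes from the increment bounds, and the second from the Cauchy estimate. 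By maximality of $p$ we have $r_{p+1}^{1/3}\lesssim h^{1/2}$, so $r_{p+1}^{2/3}h\lesssim h^2\le h^{3/2}$. For $h$ not small, the estimate follows trivially from the boundedness of all quantities, after adjusting constants through smallness of $r_1$. I expect the delicate points to be the segment leaving the annulus and the constant bookkeeping. If needed, I would replace the straight segment by working through the flows, or would use the $C^0$ Lipschitz tails directly.
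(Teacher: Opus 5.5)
For \eqref{eq:almost_C1} you take a genuinely different route from the paper, and as written it has a gap at exactly the point you flagged; it is fixable.

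The rest is sound and close to the paper. Your uniform convergence of $\Psi^{(p)}$ and $\mathrm{d}\Psi^{(p)}$, the bound \eqref{eq:est_L_m_Id}, and symplecticity all work. The paper gets the Cauchy property of $\mathrm{d}\Psi^{(p)}$ from a Hölder estimate rather than from your base-point displacement argument, but both are fine. Well-definedness of the compositions is already part of the hypothesis $\varsigma^{(p)}:\mathcal{A}_\xi(\frac12 r_p)\to\mathcal{A}_\xi(\frac12 r_{p-1})$. Your quantitative induction is only needed to keep short segments inside the annuli.

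The paper interpolates map by map:
$\|\mathrm{d}\varsigma^{(p)}(u)-\mathrm{d}\varsigma^{(p)}(v)\|\le\min(2r_p^{2/3},Cr_p^{-1/3}\|u-v\|)\lesssim r_p^{1/6}\|u-v\|^{1/2}$.
Summing these constants gives a uniform $\frac12$-Hölder bound on $\mathrm{d}\Psi^{(p)}$, hence on $L^{(\infty)}$. The paper then integrates $L^{(\infty)}(\gamma(t))-L^{(\infty)}(v)$ along the arc $\gamma(t)_j=e^{\ic t\theta_j}v_j$, which stays in $\mathbf{T}_\xi$. Your scale splitting does the same interpolation between $\|\mathrm{d}\varsigma^{(p)}-\mathrm{Id}\|\le r_p^{2/3}$ and $\|\mathrm{d}^2\varsigma^{(p)}\|\le r_p^{-1/3}$, but globally rather than map by map. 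It avoids the Hölder lemma and can even give $h^{5/3}$. The paper's version gives the Hölder modulus of $L^{(\infty)}$ as a by-product and needs no case split in $h$.

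\textbf{The gap.} You say the chord from $v$ to $u$ lies in $\mathcal{A}_\xi(\frac12 r_p)$ when $h$ is small relative to $r_p^2$. That is what the crude displacement bound $O(d\|\xi\|^{1/2}+d^2)$ gives, and your fallback is to decrease $p$.
\begin{itemize}
\item After decreasing $p$, maximality only yields $r_{p+1}^2\lesssim h$. The tail $\|(L^{(\infty)}-\mathrm{d}\Psi^{(p)})(v)(u-v)\|\lesssim r_{p+1}^{2/3}h$ is then only $O(h^{4/3})$, which is not $\le h^{3/2}$ for small $h$.
\item Your Lipschitz bound on $\Psi^{(\infty)}-\Psi^{(p)}$ cannot come from integrating along the chord either. $\Psi^{(q)}$ is only defined on $\mathcal{A}_\xi(\frac12 r_q)$, and the chord leaves that set as soon as $r_q<h$.
\end{itemize}

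\textbf{The fix.} For $u,v\in\mathbf{T}_\xi$ and $w=v+t(u-v)$ one has exactly $|w_k|^2-\xi_k=(t^2-t)|u_k-v_k|^2$. So the chord lies in $\mathcal{A}_\xi(h/2)$, with equality of the defect at $t=\frac12$. Alternatively, use the arc $\gamma$ above. It has $\|\gamma'\|\le\frac{\pi}{2}h$ and lies in every domain, so no constraint links $h$ and $p$.

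With either fix, take $p$ maximal with $r_p\ge h$.
\begin{itemize}
\item The Taylor term is $\lesssim r_p^{-1/3}h^2\le h^{5/3}$.
\item The tails are bounded by $2\sup_{\mathbf{T}_\xi}\|\Psi^{(\infty)}-\Psi^{(p)}\|\lesssim r_{p+1}^{5/3}$ and by $r_{p+1}^{2/3}h$. Both are below $h^{5/3}$ since $r_{p+1}<h$.
\item The total $O(h^{5/3})$ is $\le h^{3/2}$ because $h\le 2\|\xi\|_{\ell^1_{2s}}^{1/2}$ is small.
\end{itemize}
If $h>r_1$, no such $p$ exists. In that case the crude bound $\lesssim r_1^{5/3}+r_1^{2/3}h\lesssim r_1^{1/6}h^{3/2}$ suffices.
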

\begin{corollary} \label{cor:biLip} In the setting of Proposition \ref{prop:cvg}, the map $\Psi^{(\infty)} $ is bi-Lipschitz and, a fortiori, a homeomorphism on its image.
\end{corollary}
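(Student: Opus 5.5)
The statement to prove is Corollary \ref{cor:biLip}. The plan is to deduce the bi-Lipschitz property directly from the first-order expansion \eqref{eq:almost_C1} and the bound \eqref{eq:est_L_m_Id}, both from Proposition \ref{prop:cvg}. Since $\mathbf{T}_\xi$ is compact and $\Psi^{(\infty)}$ is continuous, an injective continuous map from a compact space to the Hausdorff space $H^s$ is automatically a homeomorphism onto its image. So the whole point is the two-sided Lipschitz bound, and the topological conclusion is then immediate.

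For $u,v\in\mathbf{T}_\xi$, I write
$$
\Psi^{(\infty)}(u)-\Psi^{(\infty)}(v) = (u-v) + (L^{(\infty)}(v)-\mathrm{Id})(u-v) + E(u,v),
$$
where $\|E(u,v)\|_{H^s}\le \|u-v\|_{H^s}^{3/2}$ by \eqref{eq:almost_C1}. By \eqref{eq:est_L_m_Id}, the middle term has norm at most $r_1^{1/2}\|u-v\|_{H^s}$. I split according to the size of $\|u-v\|_{H^s}$.

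\textbf{Small scales.} If $\|u-v\|_{H^s}\le 1/16$, then $\|u-v\|^{3/2}\le \frac14\|u-v\|$. Taking $r_1$ small, so that $r_1^{1/2}\le 1/4$, gives
$$
\tfrac12\|u-v\|_{H^s}\le \|\Psi^{(\infty)}(u)-\Psi^{(\infty)}(v)\|_{H^s}\le 2\|u-v\|_{H^s}.
$$

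\textbf{Large scales.} If $\|u-v\|_{H^s}> 1/16$, the $3/2$-power error is no longer dominated by the linear term, and this is the main obstacle. I avoid it by using the uniform closeness of $\Psi^{(\infty)}$ to the identity instead of the expansion. Summing \eqref{eq:bounds_varsi} with $j=0$ along the composition, and using the $j=1$ bounds to control the Lipschitz constants of the partial compositions, I expect
$$
\sup_{\mathbf{T}_\xi}\|\Psi^{(\infty)}-\mathrm{Id}\|_{H^s}\lesssim \sum_p r_p^{5/3}\le r_1^{3/2}.
$$
The argument is telescopic. We have $\Psi^{(p)}-\Psi^{(p-1)} = \Psi^{(p-1)}\circ\varsigma^{(p)}-\Psi^{(p-1)}$, which is bounded by $\mathrm{Lip}(\Psi^{(p-1)})\,r_p^{5/3}$. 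Also $\mathrm{Lip}(\Psi^{(p-1)})\le\prod_{p'}(1+r_{p'}^{2/3})\le 2$. This is the same computation used for the uniform convergence in Proposition \ref{prop:cvg}. Hence, in this regime,
$$
\|\Psi^{(\infty)}(u)-\Psi^{(\infty)}(v)\| \ge \|u-v\| - 2r_1^{3/2} \ge \tfrac12\|u-v\|
$$
as soon as $r_1^{3/2}\le 1/64$. Similarly, the upper bound is at most $\|u-v\|+2r_1^{3/2}\le 2\|u-v\|$.

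\textbf{Conclusion.} Combining the two regimes, $\Psi^{(\infty)}$ is bi-Lipschitz on $\mathbf{T}_\xi$, with constants $1/2$ and $2$. In particular it is injective. By compactness of $\mathbf{T}_\xi$ it is then a homeomorphism onto its image; alternatively, the inverse is Lipschitz directly from the lower bound. The only place a smallness condition enters is on $r_1$, which is consistent with the paper's convention that $r_1$ is taken small enough.
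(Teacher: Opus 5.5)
Your proof is correct, but it is organized differently from the paper's. The paper makes no case distinction. Writing $u_j=e^{\ic\theta_j}v_j$ with $\theta_j\in(-\pi,\pi]$, it joins $v$ to $u$ by the path $\gamma(t)=\sum_{j}e^{\ic t\theta_j}v_je^{\ic jx}$. This path stays in $\mathbf{T}_\xi$ and satisfies $\|\gamma'(t)\|_{H^s}\sim\|u-v\|_{H^s}$, since $|e^{\ic\theta}-1|\sim|\theta|$ on $(-\pi,\pi]$. The paper then uses $\partial_t\Psi^{(\infty)}(\gamma(t))=L^{(\infty)}(\gamma(t))\gamma'(t)$ and integrates:
\[
\|\Psi^{(\infty)}(u)-\Psi^{(\infty)}(v)-(u-v)\|_{H^s}\le\int_0^1\|(L^{(\infty)}(\gamma(t))-\mathrm{Id})\gamma'(t)\|_{H^s}\,\mathrm{d}t\lesssim r_1^{1/2}\|u-v\|_{H^s}.
\]
This yields a sharper, scale-free statement: $\Psi^{(\infty)}-\mathrm{Id}$ is $O(r_1^{1/2})$-Lipschitz on $\mathbf{T}_\xi$, so the bi-Lipschitz constants are $1\pm O(r_1^{1/2})$. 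The price is that it exploits the geometry of the torus (paths of length comparable to the distance) and differentiates along them.

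Your route uses \eqref{eq:almost_C1} and \eqref{eq:est_L_m_Id} purely as black boxes and needs no path inside $\mathbf{T}_\xi$. The cost is the cruder constants $1/2$ and $2$, plus an extra $C^0$ input for distant pairs. You need not rederive that input: $\|\Psi^{(p)}(u)-u\|_{H^s}\le r_1^{3/2}$ is Lemma \ref{lem:pouette} with $q=1$, and it passes to the limit. Your telescoping derivation also works, because $u$ and $\varsigma^{(p)}(u)$ are $r_p^{5/3}$-close. The segment joining them therefore stays in the annulus where the chain-rule bound on $\mathrm{d}\Psi^{(p-1)}$ applies.

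Finally, your large-scale case never actually occurs. Indeed $\mathrm{diam}_{H^s}\mathbf{T}_\xi\le 2\|\sqrt{\xi}\|_{H^s}$, which is of order $\varepsilon$ in the notation of Section \ref{sec:proof}. Under the smallness conventions on $(r_p)$ this is below $1/16$, so your small-scale estimate alone already proves the corollary.
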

\begin{proof}[Proof of Corollary \ref{cor:biLip}] Let $u,v \in \mathbf{T}_\xi$ and $\theta \in (-\pi,\pi]^{\mathcal{S}_\infty}$ be such that
$$
\forall j\in \mathcal{S}_\infty, \ u_j = e^{\ic  \theta_j} v_j.
$$
We define $\gamma \in C^1([0,1]; \mathbf{T}_\xi)$ by
$$
\forall t\in [0,1],\forall x\in \mathbb{T}, \quad \gamma(t,x) = \sum_{j\in \mathcal{S}_\infty} \sqrt{\xi_j} e^{\ic t \theta_j} v_j e^{\ic j x}.
$$
Then, we note that for all $t\in [0,1]$, 
\begin{equation}
\label{eq:pas_grand_chose}
\| u-v\|_{H^s}^2 = \sum_{j\in \mathcal{S}_\infty} \xi_j \langle j\rangle^{2s} \big| e^{\ic \theta_j} -1 \big|^2 \sim \sum_{j\in \mathcal{S}_\infty} \xi_j \langle j\rangle^{2s}  \theta_j^2 = \| \gamma'(t) \|_{H^s}^2.
\end{equation}
Using \eqref{eq:almost_C1}, we get that $\Psi^{(\infty)} \circ \gamma \in C^1([0,1]; H^s)$ and
$$
\partial_t \Psi^{(\infty)} \circ \gamma = (L^{(\infty)} \circ \gamma) \gamma'.
$$
Thus, we have
\begin{equation*}
\begin{split}
\| \Psi^{(\infty)} (u) - \Psi^{(\infty)} (v) - (u-v) \|_{H^s} &= \Big\| \int_{0}^1  \partial_t (\Psi^{(\infty)}-\mathrm{Id}) \circ \gamma(t) \, \mathrm{d}t \Big\|_{H^s} \\
&\leq  \int_{0}^1  \| L^{(\infty)}(u)\gamma'(t)- \gamma'(t)\|_{H^s} \, \mathrm{d}t
\end{split}
\end{equation*}
Finally using \eqref{eq:est_L_m_Id} and \eqref{eq:pas_grand_chose}, we deduce that
$$
\| \Psi^{(\infty)} (u) - \Psi^{(\infty)} (v) - (u-v) \|_{H^s} \lesssim r_1^{1/2} \|u-v\|_{H^s}.
$$
\end{proof}
 The rest of this section is devoted to the proof of Proposition \ref{prop:cvg}. From now, we consider the maps $ \varsigma^{(p)}$ as given. For convenience, given $1\leq q \leq p $, we set
 $$
 \Psi^{(q,p)} =\varsigma^{(q)} \circ \cdots \circ \varsigma^{(p)}.
 $$
 
 \subsection{Convergence of \texorpdfstring{$\Psi^{(p)}$}{a}} In this subsection, we aim at proving that $\Psi^{(p)}$ is a Cauchy sequence and thus that it converges.
 
\medskip 
 
 We begin with two basic technical lemmas.
 \begin{lemma} \label{lem:lip}
 For all $p\geq 1$, all $u,v \in \mathcal{A}_\xi(\frac12 r_p)$, we have
 $$
  \| \varsigma^{(p)}(u) -\varsigma^{(p)}(v)\|_{H^s}  \leq (1+r_p^{1/4}) \|u-v\|_{H^s} \quad \mathrm{and} \quad \| \Psi^{(p)}(u) - \Psi^{(p)}(v)\|_{H^s}  \leq 2 \|u-v\|_{H^s}.
 $$
 \end{lemma}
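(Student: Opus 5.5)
The first estimate will follow from the mean value inequality applied to $\varsigma^{(p)}-\mathrm{Id}$, and the second from composing these estimates, with a product over $p$ that converges because the $r_p$ decay fast. The only subtlety is convexity of the domain, since the annulus $\mathcal{A}_\xi(\frac12 r_p)$ is not convex.

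For the first inequality, fix $u,v\in\mathcal{A}_\xi(\frac12 r_p)$ and consider the segment $w_t=v+t(u-v)$. If the whole segment lies in the domain of $\varsigma^{(p)}$, then \eqref{eq:bounds_varsi} with $j=1$ gives
$$
\|(\varsigma^{(p)}-\mathrm{Id})(u)-(\varsigma^{(p)}-\mathrm{Id})(v)\|_{H^s}\le \sup_t\|\mathrm{d}(\varsigma^{(p)}-\mathrm{Id})(w_t)\|\,\|u-v\|_{H^s}\le r_p^{2/3}\|u-v\|_{H^s},
$$
and $r_p^{2/3}\le r_p^{1/4}$ yields the claim. The segment can leave the annulus, so I will split into two regimes.

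\emph{Close points.} If $\|u-v\|_{H^s}$ is small compared with $r_p$, say $\le r_p^{4}$, then $\sum_k\langle k\rangle^{2s}\big||w_{t,k}|^2-|u_k|^2\big|\lesssim \|u-v\|_{H^s}(\|u\|_{H^s}+\|v\|_{H^s})$. This stays inside a slightly larger annulus, on which the bound \eqref{eq:bounds_varsi} is still available. To make this rigorous I would either use that $\varsigma^{(p)}=\tau^{(p)}_\xi$ is defined on $\mathcal{A}_\xi(r_p)\supset\mathcal{A}_\xi(\frac12 r_p)$, or, if only the stated domain is allowed, subdivide the pair into a chain of close points.

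\emph{Far points.} If $\|u-v\|_{H^s}\ge r_p^{4}$, I would use the $j=0$ bound directly:
$$
\|\varsigma^{(p)}(u)-\varsigma^{(p)}(v)-(u-v)\|_{H^s}\le 2r_p^{5/3}.
$$
The issue is that $2r_p^{5/3}$ is not controlled by $r_p^{1/4}\|u-v\|_{H^s}$ when $\|u-v\|_{H^s}$ is only $r_p^4$. The better approach is to avoid straight segments altogether. Write $u=e^{\ic\theta}\cdot(\text{radial perturbation of }v)$ and join $v$ to $u$ by a path in the annulus made of a radial part and an angular rotation part, as in the proof of Corollary \ref{cor:biLip}. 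Its length is $\lesssim\|u-v\|_{H^s}$ by a computation like \eqref{eq:pas_grand_chose}. Integrating $\mathrm{d}(\varsigma^{(p)}-\mathrm{Id})$ along this path then gives the bound with constant $C r_p^{2/3}\le r_p^{1/4}$. I expect this geometric step, building a path of comparable length that stays inside $\mathcal{A}_\xi(\frac12 r_p)$, to be the main obstacle.

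For the second inequality, $\Psi^{(p)}=\varsigma^{(1)}\circ\cdots\circ\varsigma^{(p)}$. Each intermediate image $\Psi^{(q+1,p)}(u)$ must lie in the domain of $\varsigma^{(q)}$, which follows from the mapping property $\varsigma^{(q)}:\mathcal{A}_\xi(\frac12 r_q)\to\mathcal{A}_\xi(\frac12 r_{q-1})$. Iterating the first inequality then gives
$$
\|\Psi^{(p)}(u)-\Psi^{(p)}(v)\|_{H^s}\le\prod_{q=1}^{p}(1+r_q^{1/4})\,\|u-v\|_{H^s}\le \exp\Big(\sum_{q\ge1}r_q^{1/4}\Big)\|u-v\|_{H^s}\le 2\|u-v\|_{H^s},
$$
using the superexponential decay of $(r_q)$ and the smallness of $r_1$.
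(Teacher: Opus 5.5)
Your argument is correct and is essentially the paper's proof. The paper also integrates $\mathrm{d}(\varsigma^{(p)}-\mathrm{Id})$, which is bounded by $r_p^{2/3}$ via \eqref{eq:bounds_varsi}, along a path of length $\lesssim \|u-v\|_{H^s}$ ``as in the proof of Corollary \ref{cor:biLip}'', and then multiplies the Lipschitz constants using $\prod_{q\geq 1}(1+r_q^{1/4})\leq 2$.

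The geometric step you flag as the main obstacle is one the paper passes over, and it does go through. First rotate each mode at fixed modulus $|v_k|$ through its phase difference $\theta_k\in(-\pi,\pi]$ with $u_k$. This keeps all moduli unchanged, so the path stays in $\mathcal{A}_\xi(\frac12 r_p)$, and it costs $|v_k||\theta_k|\leq 2\pi|u_k-v_k|$ per mode. Then, at fixed phase, move the moduli by interpolating $|\gamma_k(t)|^2$ linearly between $|v_k|^2$ and $|u_k|^2$. The annulus functional is then convex in $t$, so this piece stays in $\mathcal{A}_\xi(\frac12 r_p)$, and its length is at most $2\|u-v\|_{H^s}$. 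The total path length is therefore at most $(2\pi+2)\|u-v\|_{H^s}$, which gives the first inequality once $r_p$ is small.
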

\begin{proof}
 Proceeding as in the proof of Corollary \ref{cor:biLip} and using the bound \eqref{eq:bounds_varsi} on $\mathrm{d}\varsigma^{(p)}$, we directly get
 $$
   \| \varsigma^{(p)}(u) -\varsigma^{(p)}(v)\|_{H^s}  \leq (1+(1+\pi)r_p^{2/3}) \|u-v\|_{H^s} \leq  (1+r_p^{1/4}) \|u-v\|_{H^s} .
 $$
 Then, using this estimate, we deduce that
 $$
 \| \Psi^{(p)}(u) - \Psi^{(p)}(v)\|_{H^s}  \leq  \|u-v\|_{H^s} \prod_{1\leq j\leq p} ( 1+r_p^{1/4}) \leq  \|u-v\|_{H^s} \prod_{1\leq j\leq \infty}  (1+r_p^{1/4}) \leq 2 \|u-v\|_{H^s} .
 $$
\end{proof} 
 
 \begin{lemma} \label{lem:pouette}  For all $1\leq q \leq p$ and all $u \in \mathcal{A}_\xi(\frac12 r_p)$, we have
 $$
  \|\Psi^{(q,p)}(u) - u\|_{H^s} \leq r_q^{3/2}.
 $$
 \end{lemma}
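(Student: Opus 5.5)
The plan is a telescoping argument that writes the composition as a sum of single-step displacements. For $1\leq q\leq p$ and $u\in\mathcal{A}_\xi(\frac12 r_p)$ we set $w_k := \Psi^{(k,p)}(u)$ for $q\leq k\leq p$, with the convention $w_{p+1}:=u$. Then $w_k = \varsigma^{(k)}(w_{k+1})$, and
$$
\Psi^{(q,p)}(u) - u = \sum_{k=q}^{p} \big(\varsigma^{(k)}(w_{k+1}) - w_{k+1}\big).
$$
For this to make sense we need each $w_{k+1}$ to lie in the domain $\mathcal{A}_\xi(\frac12 r_k)$ of $\varsigma^{(k)}$. This is ensured by the mapping hypothesis $\varsigma^{(k+1)} : \mathcal{A}_\xi(\frac12 r_{k+1})\to \mathcal{A}_\xi(\frac12 r_{k})$, applied by a downward induction on $k$ starting from $u\in\mathcal{A}_\xi(\frac12 r_p)$. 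Each summand is then bounded using \eqref{eq:bounds_varsi} with $j=0$, which gives $\|\varsigma^{(k)}(w_{k+1})-w_{k+1}\|_{H^s}\leq r_k^{5/3}$.

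Summing these bounds gives
$$
\|\Psi^{(q,p)}(u)-u\|_{H^s}\leq \sum_{k\geq q} r_k^{5/3}.
$$
It then suffices to show that $\sum_{k\geq q} r_k^{5/3}\leq r_q^{3/2}$. This follows from $r_q^{5/3} = r_q^{3/2}\, r_q^{1/6}$ together with the fast decay of the radii. Using the standing convention that $r_1$ is small and $r_{k+1}\leq r_k^{\alpha}$ with $\alpha>1$, in particular $r_{k+1}\leq r_k/2$, we get
$$
\sum_{k\geq q} r_k^{5/3}\leq 2 r_q^{5/3} = 2 r_q^{1/6}\, r_q^{3/2}\leq r_q^{3/2}
$$
as soon as $2 r_1^{1/6}\leq 1$.

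There is no real obstacle here. The only point needing care is the domain bookkeeping, namely that each intermediate point $w_{k+1}$ stays inside the annulus where $\varsigma^{(k)}$ is defined, and this is given directly by the hypothesis on the mapping properties of the $\varsigma^{(k)}$. Beyond that, we only need to record that the smallness constraint on the radii used here, $r_{k+1}\leq r_k/2$ and $2r_1^{1/6}\leq 1$, is one of the finitely many constraints imposed on the sequence $(r_p)_p$.
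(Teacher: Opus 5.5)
Your proof is correct, but it splits the composition differently from the paper.

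The paper argues by downward induction on $q$, writing
\[
\Psi^{(q,p)}(u)-u=\big(\varsigma^{(q)}(\Psi^{(q+1,p)}(u))-\varsigma^{(q)}(u)\big)+\big(\varsigma^{(q)}(u)-u\big).
\]
It bounds the first bracket by the Lipschitz estimate of Lemma~\ref{lem:lip} times the induction hypothesis, which yields $2r_{q+1}^{3/2}+r_q^{5/3}\leq r_q^{3/2}$.

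You instead evaluate each one-step displacement at the image point $w_{k+1}$, so the sum telescopes exactly. You then need only two ingredients:
\begin{itemize}
\item the $C^0$ bound, i.e.\ the case $j=0$ of \eqref{eq:bounds_varsi};
\item the mapping property $\varsigma^{(k+1)}(\mathcal{A}_\xi(\tfrac12 r_{k+1}))\subset\mathcal{A}_\xi(\tfrac12 r_k)$, which you track correctly.
\end{itemize}

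Your route is more elementary for this lemma. It uses no first-derivative bound and no Lipschitz estimate, whose proof in the paper goes through a mean-value argument along paths. No factor $2$ from Lipschitz constants accumulates, so you get the slightly sharper bound $\sum_{k\geq q} r_k^{5/3}$.

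The paper's inductive form fits directly into the following Cauchy-sequence argument (Lemma~\ref{lem:step1}), where the Lipschitz bound on $\Psi^{(q)}$ is needed anyway. Your smallness requirements ($r_{k+1}\leq r_k/2$ and $2r_1^{1/6}\leq 1$) are of the same kind as those the paper uses implicitly.
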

 \begin{proof}
 We proceed by induction on $q$. By \eqref{eq:bounds_varsi}, we know that this estimate is true if $q=p$. Then, given $q<p$, we have
 $$
 \|\Psi^{(q,p)}(u) - u\|_{H^s} =  \|\varsigma^{(q)} \circ \Psi^{(q+1,p)}(u) - u\|_{H^s}  \leq \|\varsigma^{(q)} \circ \Psi^{(q+1,p)}(u)  - \varsigma^{(q)}(u)  \|_{H^s} + \|  \varsigma^{(q)}(u)  - u \|_{H^s}.
 $$
 Thus using the induction hypothesis, Lemma \ref{lem:lip} and  \eqref{eq:bounds_varsi}, we deduce that
 $$
 \|\Psi^{(q,p)}(u) - u\|_{H^s} \leq 2 r_{q+1}^{3/2} + r_{q}^{5/3} \leq r_{q}^{3/2}.
 $$
 \end{proof}

Then, we deduce the convergence of $\Psi^{(p)}$.
\begin{lemma} \label{lem:step1} There exists  $\Psi^{(\infty)} \in C^0(\mathbf{T}_\xi ; H^s)$ such that $(\Psi^{(p)})_p$ converges uniformly to $\Psi^{(\infty)}$.
\end{lemma}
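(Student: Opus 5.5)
The plan is to show that $(\Psi^{(p)})_p$ is uniformly Cauchy on $\mathbf{T}_\xi$ and then use completeness of $H^s$.

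First I check that every composition is well defined on $\mathbf{T}_\xi$. Note that $\mathbf{T}_\xi \subset \mathcal{A}_\xi(\rho)$ for every $\rho>0$, since $|u_k|^2=\xi_k$ there; in particular $\mathbf{T}_\xi\subset\mathcal{A}_\xi(\frac12 r_p)$ for all $p$. Because $\varsigma^{(p)}$ maps $\mathcal{A}_\xi(\frac12 r_p)$ into $\mathcal{A}_\xi(\frac12 r_{p-1})$, the maps $\Psi^{(p)}$ and $\Psi^{(q,p)}$ are well defined on $\mathbf{T}_\xi$ for all $q\le p$. Each is continuous, being a composition of $C^2$ maps.

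Next I fix $p\ge1$ and $u\in\mathbf{T}_\xi$. I write $\Psi^{(p+1)}(u)=\Psi^{(p)}\big(\varsigma^{(p+1)}(u)\big)$ and $\Psi^{(p)}(u)=\Psi^{(p)}(u)$. Both $u$ and $\varsigma^{(p+1)}(u)$ lie in $\mathcal{A}_\xi(\frac12 r_p)$, the latter because $\varsigma^{(p+1)}$ maps into that annulus. The Lipschitz bound of Lemma \ref{lem:lip} then gives
\[
\|\Psi^{(p+1)}(u)-\Psi^{(p)}(u)\|_{H^s}\le 2\,\|\varsigma^{(p+1)}(u)-u\|_{H^s}.
\]
By \eqref{eq:bounds_varsi} with $j=0$, the right-hand side is at most $2\,r_{p+1}^{5/3}$. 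Lemma \ref{lem:pouette} would give an equivalent estimate.

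For $p<p'$ I sum this telescopically:
\[
\sup_{u\in\mathbf{T}_\xi}\|\Psi^{(p')}(u)-\Psi^{(p)}(u)\|_{H^s}\le 2\sum_{n>p} r_n^{5/3}.
\]
The sequence $(r_n)$ is at least superexponentially decaying, so this tail tends to $0$ as $p\to\infty$. Hence $(\Psi^{(p)})_p$ is uniformly Cauchy in $C^0(\mathbf{T}_\xi;H^s)$ with the sup norm. Since $H^s$ is complete, the sequence converges uniformly to some $\Psi^{(\infty)}$, and $\Psi^{(\infty)}$ is continuous as a uniform limit of continuous maps.

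The only delicate point is the domain bookkeeping. The Lipschitz estimate of Lemma \ref{lem:lip} must be applied on $\mathcal{A}_\xi(\frac12 r_p)$, and both points must lie there. This holds precisely because of the mapping property of $\varsigma^{(p+1)}$, so no real obstacle is expected.
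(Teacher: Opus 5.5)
Your proof is correct and is essentially the paper's argument. The paper also combines the Lipschitz bound of Lemma~\ref{lem:lip} for $\Psi^{(q)}$ with closeness to the identity; it just bounds $\|\Psi^{(q+1,p)}(u)-u\|_{H^s}\le r_{q+1}^{3/2}$ in one step via Lemma~\ref{lem:pouette}, where you telescope one factor $\varsigma^{(n)}$ at a time and sum the tail $\sum_{n>p} r_n^{5/3}$, with the same domain bookkeeping in $\mathcal{A}_\xi(\frac12 r_p)$.
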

\begin{proof} It suffices to prove that $(\Psi^{(p)})_p$ is a Cauchy sequence in $C^0(\mathbf{T}_\xi ; H^s)$. Let $u\in \mathbf{T}_\xi $ and $1\leq q \leq p$. We have
$$
 \Psi^{(p)}(u) - \Psi^{(q)}(u) =  \Psi^{(q)}(\Psi^{(q+1,p)}(u)) - \Psi^{(q)}(u) .
$$
Thus applying Lemmas \ref{lem:lip} and \ref{lem:pouette}, we get
$$
\|  \Psi^{(p)}(u) - \Psi^{(q)}(u) \|_{H^s} \leq 2 r_{q+1}^{3/2} \mathop{\longrightarrow}_{q\to \infty} 0.
$$
\end{proof}

 \subsection{Convergence of \texorpdfstring{$\mathrm{d}\Psi^{(p)}$}{a}} In this subsection, we aim at proving that $\mathrm{d}\Psi^{(p)}$ is a Cauchy sequence and thus that it converges.
 
 \medskip
 
 We begin with H\"older estimates.
 \begin{lemma} \label{lem:hold}
  For all $p\geq 1$, all $u,v \in \mathcal{A}_\xi(\frac12 r_p)$, we have
 $$
  \| \mathrm{d}\varsigma^{(p)}(u) -\mathrm{d}\varsigma^{(p)}(v)\|_{\mathscr{L}(H^s)}  \leq r_p^{1/9} \|u-v\|_{H^s}^{1/2} \quad \mathrm{and}\! \quad \| \mathrm{d}\Psi^{(p)}(u) - \mathrm{d}\Psi^{(p)}(v)\|_{\mathscr{L}(H^s)}  \lesssim r_1^{1/9} \|u-v\|_{H^s}^{1/2}.
 $$
 \end{lemma}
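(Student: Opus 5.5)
The first estimate follows from interpolating between the bound on $\mathrm{d}^2\varsigma^{(p)}$ and the smallness of $\mathrm{d}\varsigma^{(p)}-\mathrm{Id}$, both given by \eqref{eq:bounds_varsi}. The second estimate then comes from a telescoping chain rule along the composition $\Psi^{(p)}=\varsigma^{(1)}\circ\cdots\circ\varsigma^{(p)}$, using Lemmas \ref{lem:lip} and \ref{lem:pouette}.

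\emph{First estimate.} Let $u,v\in\mathcal{A}_\xi(\frac12 r_p)$ and set $D:=\mathrm{d}\varsigma^{(p)}-\mathrm{Id}$. The bound \eqref{eq:bounds_varsi} with $j=1$ gives $\|D(u)-D(v)\|_{\mathscr{L}(H^s)}\le 2r_p^{2/3}$. For the case $j=2$, we connect $u$ to $v$ by a path inside the annulus, as in the proof of Corollary \ref{cor:biLip}: rotate the phases of the modes, then move the moduli. This path should have length $\lesssim\|u-v\|_{H^s}$. Integrating $\mathrm{d}^2\varsigma^{(p)}$ along it gives $\|D(u)-D(v)\|\lesssim r_p^{-1/3}\|u-v\|_{H^s}$. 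Taking the geometric mean of the two bounds yields
\[
\|D(u)-D(v)\|\lesssim (r_p^{2/3})^{1/2}(r_p^{-1/3})^{1/2}\|u-v\|_{H^s}^{1/2}=r_p^{1/6}\|u-v\|_{H^s}^{1/2}.
\]
This is at most $r_p^{1/9}\|u-v\|_{H^s}^{1/2}$ once $r_p$ is small. If a suitable path is awkward to construct, we only use the straight segment when $\|u-v\|_{H^s}\le r_p^{2}$, and the trivial bound $2r_p^{2/3}\le r_p^{1/9}\|u-v\|^{1/2}$ otherwise. In the first case the segment stays inside the slightly larger annulus on which the estimates hold; this is the one technical point to verify.

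\emph{Second estimate.} Write $w_q:=\Psi^{(q+1,p)}(u)$ and $w_q':=\Psi^{(q+1,p)}(v)$, with $w_p=u$ and $w_p'=v$. The chain rule gives $\mathrm{d}\Psi^{(p)}(u)=\prod_{q=1}^p\mathrm{d}\varsigma^{(q)}(w_q)$, an ordered product. The difference of two such products telescopes into
\[
\sum_{q=1}^p\Big(\prod_{q'<q}\mathrm{d}\varsigma^{(q')}(w_{q'})\Big)\big(\mathrm{d}\varsigma^{(q)}(w_q)-\mathrm{d}\varsigma^{(q)}(w'_q)\big)\Big(\prod_{q'>q}\mathrm{d}\varsigma^{(q')}(w'_{q'})\Big).
\]
Each factor $\mathrm{d}\varsigma^{(q')}$ has norm at most $1+r_{q'}^{2/3}$, so the products are bounded by $\prod(1+r_{q'}^{2/3})\le 2$. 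By Lemma \ref{lem:lip} we have $\|w_q-w_q'\|\le 2\|u-v\|$. By Lemma \ref{lem:pouette} the points $w_q,w'_q$ stay in the domain of $\varsigma^{(q)}$, since they are $r_{q+1}^{3/2}$-close to points of the smaller annulus. The first estimate therefore applies to each term and gives
\[
\|\mathrm{d}\Psi^{(p)}(u)-\mathrm{d}\Psi^{(p)}(v)\|\lesssim\sum_q r_q^{1/9}\|u-v\|^{1/2}\lesssim r_1^{1/9}\|u-v\|^{1/2},
\]
using the superexponential decay of $(r_q)$.

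The main obstacle is the domain bookkeeping. We need the $j=2$ bound valid along the connecting path, and we need the intermediate points $w_q$ to lie in $\mathcal{A}_\xi(\frac12 r_q)$. Checking the latter means using the action-variation control of the annuli, i.e. that $\Psi^{(q+1,p)}$ maps $\mathcal{A}_\xi(\frac12 r_p)$ into $\mathcal{A}_\xi(\frac12 r_q)$, which is built into the stated mapping property of each $\varsigma^{(p)}$.
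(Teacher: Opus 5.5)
Your proof is essentially the paper's: a Lipschitz bound $\|\mathrm{d}\varsigma^{(p)}(u)-\mathrm{d}\varsigma^{(p)}(v)\|_{\mathscr{L}(H^s)}\lesssim r_p^{-1/3}\|u-v\|_{H^s}$ obtained from the $\mathrm{d}^2$ bound along a path, combined by a geometric mean with the trivial bound $2r_p^{2/3}$, and then the same telescoping of the ordered product $\prod_j \mathrm{d}\varsigma^{(j)}(\Psi^{(j+1,p)}(\cdot))$, using $\prod_j(1+r_j^{2/3})\le 2$ and $\sum_q r_q^{1/9}\lesssim r_1^{1/9}$.

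There is one slip, in your optional fallback. For $\|u-v\|_{H^s}>r_p^2$ you only get $r_p^{1/9}\|u-v\|_{H^s}^{1/2}>r_p^{10/9}$, which is smaller than $2r_p^{2/3}$, so the trivial bound does not cover that case. The split should instead be made at $\|u-v\|_{H^s}\sim r_p$, since $r_p^{1/9+1/2}\ge 2r_p^{2/3}$ for small $r_p$. For $\|u-v\|_{H^s}\le r_p$, the identity $|(1-t)a+tb|^2=(1-t)|a|^2+t|b|^2-t(1-t)|a-b|^2$ shows that the straight segment stays in $\mathcal{A}_\xi\big(\sqrt{r_p^2/4+\|u-v\|_{H^s}^2/4}\big)\subset\mathcal{A}_\xi(r_p/\sqrt{2})$.
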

 \begin{proof} Proceeding as in the proof of Corollary \ref{cor:biLip} and using the bound \eqref{eq:bounds_varsi} on $\mathrm{d}^2\varsigma^{(p)}$, we directly get the Lipschitz estimate
 $$
   \| \mathrm{d}\varsigma^{(p)}(u) -\mathrm{d}\varsigma^{(p)}(v)\|_{H^s}  \leq (1+\pi)r_p^{-1/3} \|u-v\|_{H^s}.
 $$
 Then using \eqref{eq:bounds_varsi}, we deduce that
 \begin{equation*}
 \begin{split}
   \| \mathrm{d}\varsigma^{(p)}(u) -\mathrm{d}\varsigma^{(p)}(v)\|_{\mathscr{L}(H^s)} & \leq \sqrt{2} \| \mathrm{d}\varsigma^{(p)}(u) -\mathrm{d}\varsigma^{(p)}(v)\|_{\mathscr{L}( H^s)}^{1/2}\sup_{w\in  \mathcal{A}_\xi(\frac32 r_p)}\| \mathrm{d}\varsigma^{(p)} (w) - \mathrm{Id}\|_{\mathscr{L}(H^s)}^{1/2} \\
   &\leq \sqrt{2(1+\pi)} r_p^{-1/6}  \|u-v\|_{H^s}^{1/2} r_p^{1/3} \leq   r_p^{1/9} \|u-v\|_{H^s}^{1/2}.
 \end{split}
 \end{equation*}
 For the second estimate, first, we note that
 $$
 \mathrm{d}\Psi^{(p)}(u) = \prod_{1\leq j \leq p}^{\rightarrow} \mathrm{d}\varsigma^{(j)} (\Psi^{(j+1,p)}(u))
 $$
 and so
 \begin{equation*}
 \begin{split}
  &\mathrm{d}\Psi^{(p)}(u) - \mathrm{d}\Psi^{(p)}(v) \\
  =& \sum_{q=1}^p \Big(\prod_{j<q}^{\rightarrow} \mathrm{d}\varsigma^{(j)} (\Psi^{(j+1,p)}(u))   \Big)  \Big( \mathrm{d}\varsigma^{(q)} (\Psi^{(q+1,p)}(u)) -\mathrm{d}\varsigma^{(q)} (\Psi^{(q+1,p)}(v))   \Big)  \Big(\prod_{j>q}^{\rightarrow} \mathrm{d}\varsigma^{(j)} (\Psi^{(j+1,p)}(v))   \Big).
 \end{split}
 \end{equation*}
 Thus, using the H\"older estimate on $\mathrm{d}\varsigma^{(q)}$ and the estimate \eqref{eq:bounds_varsi} on $\mathrm{d}\varsigma^{(j)}$, we deduce that
 $$
 \| \mathrm{d}\Psi^{(p)}(u) - \mathrm{d}\Psi^{(p)}(v) \|_{\mathscr{L}(H^s)} \leq \Big( \prod_{j\geq 1} 1+ r_j^{2/3} \Big) \sum_{q=1}^p r_q^{1/9} \|u-v\|_{H^s}^{1/2}  \leq 2 r_1^{1/9} \|u-v\|_{H^s}^{1/2}.
 $$
 \end{proof}

Then, we deduce the convergence of $\mathrm{d}\Psi^{(p)}$.
\begin{lemma} \label{lem:step2} There exists  $L^{(\infty)} \in C^0(\mathbf{T}_\xi ; \mathscr{H^s})$ such that $(\mathrm{d}\Psi^{(p)})_p$ converges uniformly to $L^{(\infty)}$.
\end{lemma}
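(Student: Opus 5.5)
The plan mirrors the proof of Lemma \ref{lem:step1}: we show that $(\mathrm{d}\Psi^{(p)})_p$ is a Cauchy sequence in $C^0(\mathbf{T}_\xi;\mathscr{L}(H^s))$, which is complete. The limit is then continuous as a uniform limit of continuous maps. Continuity of each $\mathrm{d}\Psi^{(p)}$ on $\mathbf{T}_\xi$ follows from the H\"older estimate of Lemma \ref{lem:hold}, since $\mathbf{T}_\xi\subset \mathcal{A}_\xi(\frac12 r_p)$ for every $p$.

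Fix $u\in\mathbf{T}_\xi$ and $1\leq q\leq p$. Since $\Psi^{(p)}=\Psi^{(q)}\circ\Psi^{(q+1,p)}$, the chain rule gives
$$
\mathrm{d}\Psi^{(p)}(u)-\mathrm{d}\Psi^{(q)}(u) = \big(\mathrm{d}\Psi^{(q)}(\Psi^{(q+1,p)}(u)) - \mathrm{d}\Psi^{(q)}(u)\big)\,\mathrm{d}\Psi^{(q+1,p)}(u) + \mathrm{d}\Psi^{(q)}(u)\big(\mathrm{d}\Psi^{(q+1,p)}(u)-\mathrm{Id}\big).
$$
We bound the two terms separately.

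\emph{First term.} Lemma \ref{lem:hold} applied to $\Psi^{(q)}$, together with Lemma \ref{lem:pouette} (applied with $q+1$), bounds the first factor by $\lesssim r_1^{1/9}\|\Psi^{(q+1,p)}(u)-u\|_{H^s}^{1/2}\lesssim r_{q+1}^{3/4}$. The second factor $\mathrm{d}\Psi^{(q+1,p)}(u)$ is a product of the $\mathrm{d}\varsigma^{(j)}$. By \eqref{eq:bounds_varsi} its operator norm is at most $\prod_{j>q}(1+r_j^{2/3})\leq 2$.

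\emph{Second term.} Telescoping the product gives $\|\mathrm{d}\Psi^{(q+1,p)}(u)-\mathrm{Id}\|\leq \prod_{j>q}(1+r_j^{2/3})-1\lesssim \sum_{j>q} r_j^{2/3}\lesssim r_{q+1}^{2/3}$, using the fast decay of $(r_j)$. Likewise $\|\mathrm{d}\Psi^{(q)}(u)\|\leq 2$.

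Combining the two bounds, $\sup_{u\in\mathbf{T}_\xi}\|\mathrm{d}\Psi^{(p)}(u)-\mathrm{d}\Psi^{(q)}(u)\|_{\mathscr{L}(H^s)}\lesssim r_{q+1}^{2/3}\to0$ as $q\to\infty$, uniformly in $p$, so the sequence is Cauchy.

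The main point needing care is a domain issue. Each estimate must be evaluated at points where the maps are defined, i.e. $\Psi^{(q+1,p)}(u)$ must lie in $\mathcal{A}_\xi(\frac12 r_q)$. This follows from Lemma \ref{lem:pouette} and the inclusion $\mathbf{T}_\xi\subset\mathcal{A}_\xi(\rho)$ for every $\rho>0$, provided $r_{q+1}^{3/2}$ is small compared with $r_q$. The relevant annulus condition is on $\sum|\,|u_k|^2-\xi_k|\langle k\rangle^{2s}$, which is controlled by the $H^s$ distance times $\|u\|_{H^s}$, so this smallness is available.
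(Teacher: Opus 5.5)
Your proof is correct and follows essentially the same route as the paper: you split via the chain rule $\mathrm{d}\Psi^{(p)}(u)=\mathrm{d}\Psi^{(q)}(\Psi^{(q+1,p)}(u))\,\mathrm{d}\Psi^{(q+1,p)}(u)$, control the first piece by Lemma \ref{lem:hold} together with Lemma \ref{lem:pouette}, and control the second piece directly by \eqref{eq:bounds_varsi}. Your domain check is actually automatic: each $\varsigma^{(j)}$ maps $\mathcal{A}_\xi(\frac12 r_j)$ into $\mathcal{A}_\xi(\frac12 r_{j-1})$, so $\Psi^{(q+1,p)}$ already lands in $\mathcal{A}_\xi(\frac12 r_q)$ without any annulus estimate.
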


\begin{proof} It suffices to prove that $(\Psi^{(p)})_p$ is a Cauchy sequence in $C^0(\mathbf{T}_\xi ; \mathscr{L}(H^s))$.  Let $u\in \mathbf{T}_\xi $ and $1\leq q \leq p$. We have
$$
\mathrm{d}\Psi^{(p)}(u) = \mathrm{d}\Psi^{(q)}(\Psi^{(q+1,p)}(u)) \mathrm{d}\Psi^{(q+1,p)}(u).
$$
Thus, we have
$$
\| \mathrm{d}\Psi^{(p)}(u)  - \mathrm{d}\Psi^{(q)}(u) \|_{\mathscr{L}(H^s)} \lesssim  \| \mathrm{d}\Psi^{(q)}(u) - \mathrm{d}\Psi^{(q)}(\Psi^{(q+1,p)}(u)) \|_{\mathscr{L}(H^s)} +\| \mathrm{Id} - \mathrm{d}\Psi^{(q+1,p)}(u) \|_{\mathscr{L}(H^s)}.
$$
We estimate the first term by using Lemmas \ref{lem:hold} and \ref{lem:pouette} and the second term directly by using \eqref{eq:bounds_varsi}. We get
$$
\| \mathrm{d}\Psi^{(p)}(u)  - \mathrm{d}\Psi^{(q)}(u) \|_{\mathscr{L}(H^s)} \lesssim  r_1^{1/9} r_{q+1}^{3/6} + r_{q+1}^{2/3}  \mathop{\longrightarrow}_{q\to \infty} 0.
 $$
 
 \end{proof}
 
 \subsection{Properties of \texorpdfstring{$L^{(\infty)}$}{a}} From now, we consider the maps $\Psi^{(\infty)}$ and $L^{(\infty)}$ given by Lemmas \ref{lem:step1} and \ref{lem:step2}. The fact that for all $u\in \mathbf{T}_\xi$, $L^{(\infty)}(u)$ is symplectic is obvious because $\mathrm{d}\Psi^{(p)}(u)$ is a sequence of symplectic linear maps with converges to $L^{(\infty)}(u)$. So it only remains to prove
\begin{lemma} The estimates \eqref{eq:almost_C1} and \eqref{eq:est_L_m_Id} hold.
\end{lemma}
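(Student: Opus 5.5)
We first prove \eqref{eq:est_L_m_Id}, then \eqref{eq:almost_C1}.

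\emph{Step 1: the bound \eqref{eq:est_L_m_Id}.} For $u\in\mathbf{T}_\xi$, the chain rule gives
$$\mathrm{d}\Psi^{(p)}(u)=\prod^{\rightarrow}_{1\le j\le p}\mathrm{d}\varsigma^{(j)}(\Psi^{(j+1,p)}(u)),$$
and each factor satisfies $\|\mathrm{d}\varsigma^{(j)}-\mathrm{Id}\|_{\mathscr{L}(H^s)}\le r_j^{2/3}$ by \eqref{eq:bounds_varsi}. The elementary telescoping estimate
$$\Big\|\prod_j(\mathrm{Id}+E_j)-\mathrm{Id}\Big\|\le\prod_j(1+\|E_j\|)-1$$
then gives
$$\|\mathrm{d}\Psi^{(p)}(u)-\mathrm{Id}\|\le \exp\Big(\sum_j r_j^{2/3}\Big)-1\lesssim r_1^{2/3}\le \tfrac12 r_1^{1/2},$$
since the $r_j$ decay superexponentially and $r_1$ is small. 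Letting $p\to\infty$ and using Lemma \ref{lem:step2} yields \eqref{eq:est_L_m_Id}.

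\emph{Step 2: the bound \eqref{eq:almost_C1} at finite $p$.} Fix $u,v\in\mathbf{T}_\xi$. As in Corollary \ref{cor:biLip}, join them by the rotation curve $\gamma(t)$ with $\gamma_j(t)=e^{\ic t\theta_j}v_j$. This curve stays in $\mathbf{T}_\xi\subset\mathcal{A}_\xi(\frac12 r_p)$ for every $p$, and by \eqref{eq:pas_grand_chose} it satisfies $\|\gamma'(t)\|_{H^s}\sim\|u-v\|_{H^s}$. The mean value formula gives
$$\Psi^{(p)}(u)-\Psi^{(p)}(v)-\mathrm{d}\Psi^{(p)}(v)(u-v)=\int_0^1\big(\mathrm{d}\Psi^{(p)}(\gamma(t))-\mathrm{d}\Psi^{(p)}(v)\big)\gamma'(t)\,\mathrm{d}t+\mathrm{d}\Psi^{(p)}(v)\Big(\int_0^1\gamma'(t)\,\mathrm{d}t-(u-v)\Big).$$
The last term vanishes, because $\int_0^1\gamma'=\gamma(1)-\gamma(0)=u-v$. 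For the integral term, the H\"older estimate of Lemma \ref{lem:hold} gives
$$\|\mathrm{d}\Psi^{(p)}(\gamma(t))-\mathrm{d}\Psi^{(p)}(v)\|\lesssim r_1^{1/9}\|\gamma(t)-v\|_{H^s}^{1/2}\lesssim r_1^{1/9}\|u-v\|_{H^s}^{1/2},$$
where $\|\gamma(t)-v\|\lesssim\|u-v\|$ comes from the same computation as \eqref{eq:pas_grand_chose} with $t\theta_j$ in place of $\theta_j$. Combining these bounds, the left-hand side is at most $C r_1^{1/9}\|u-v\|_{H^s}^{3/2}\le\|u-v\|_{H^s}^{3/2}$ once $r_1$ is small. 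The constant is uniform in $p$.

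\emph{Step 3: passing to the limit.} By Lemmas \ref{lem:step1} and \ref{lem:step2}, $\Psi^{(p)}\to\Psi^{(\infty)}$ and $\mathrm{d}\Psi^{(p)}\to L^{(\infty)}$ uniformly on $\mathbf{T}_\xi$. The inequality of Step 2 therefore survives the limit, which gives \eqref{eq:almost_C1}.

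The main delicate point is Step 2: one must check that the curve stays in the domains where the H\"older bound holds, and that the chord-versus-arc comparison holds with uniform constants even when some $\theta_j$ are close to $\pm\pi$. Both are handled exactly as in the proof of Corollary \ref{cor:biLip}.
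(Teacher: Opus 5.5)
Your proposal is correct and follows essentially the same argument as the paper: the product bound from \eqref{eq:bounds_varsi} gives \eqref{eq:est_L_m_Id}, and the rotation path $\gamma$ of Corollary \ref{cor:biLip} combined with the H\"older estimate of Lemma \ref{lem:hold} gives \eqref{eq:almost_C1}. The only difference is the order of operations: you integrate $\mathrm{d}\Psi^{(p)}$ along $\gamma$ at finite $p$ and pass to the limit only in the final inequality, whereas the paper first passes to the limit in Lemma \ref{lem:hold} and then integrates $L^{(\infty)}$ along $\gamma$; this implicitly needs $\partial_t(\Psi^{(\infty)}\circ\gamma)=(L^{(\infty)}\circ\gamma)\gamma'$, a point your ordering avoids.
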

\begin{proof} The proof of \eqref{eq:est_L_m_Id} is direct. Indeed, as a consequence of \eqref{eq:bounds_varsi}, we have for all $p\geq 1$ and all $u\in \mathbf{T}_\xi$, $ \|  \mathrm{d}\Psi^{(p)}(u) - \mathrm{Id} \|_{\mathscr{L}(H^s) } \leq 2 r_1^{2/3}$. Thus, it suffices to take the limit as $p$ goes to $+\infty$.

\medskip

Now, we focus on \eqref{eq:almost_C1}. 
First, we note that passing to the limit in Lemma \ref{lem:hold}, we have
$$
\forall u,v\in \mathbf{T}_\xi, \ \| L^{(\infty)}(u) - L^{(\infty)}(v)\|_{\mathscr{L}(H^s)} \lesssim r_1^{1/9} \|u-v\|_{H^s}^{1/2}.
$$
Then we consider the same angles $\theta$ and path $\gamma$ as in the proof of Corollary \ref{cor:biLip}.
Thus, using the bound \eqref{eq:pas_grand_chose} on $\gamma'$, we get
\begin{equation*}
\begin{split}
\| \Psi^{(\infty)} (u) - \Psi^{(\infty)} (v)  - L^{(\infty)} (v)(u-v)\|_{H^s} &= \big\| \int_0^1 ( L^{(\infty)}(\gamma(t)) - L^{(\infty)} (v)) \gamma'(t) \, \mathrm{d}t \big\|_{H^s}\\
&\lesssim \sup_{0\leq t \leq 1} \| L^{(\infty)}(\gamma(t)) - L^{(\infty)}(v) \|_{H^s}  \|\gamma'(t)\|_{H^s} \\
&\lesssim r_1^{1/9} \sup_{0\leq t \leq 1} \| \gamma(t) - v \|_{\mathscr{L}(H^s)}^{1/2}  \|\gamma'(t)\|_{H^s} \\ &\lesssim r_1^{1/9} \|u-v\|_{H^s}^{3/2}.
\end{split}
\end{equation*}
\end{proof}

\section{Proof of the main theorem} \label{sec:proof} Now, we are in position to prove the main result of this paper, i.e. Theorem \ref{thm:main}. The proof is mainly a synthesis of what has been done in the rest of the paper. We do not specify the sites family $(\mathcal{S}_p)_{p\geq 1}$: only the choice of $\mathcal{S}_\infty$ matters. Note that the flexibility we have on the choice of this family would be important for some possible refinements of Theorem \ref{thm:main} we discuss in Section \ref{sec:main}.  We set $\varepsilon = C r_1^{1-\delta}$, where $C = (\sum_{j\in \mathcal{S}_1} \langle j \rangle^{2s})^{1/2}$  so that for all $\xi \in \mathcal{O}_\infty^{\mathrm{full}}$, we have $\varepsilon^2/4 \leq \| \xi \|_{\ell^1}^{2s} \leq 2 \varepsilon^2$. Moreover, we recall that by definition, for all $\xi \in \mathcal{O}_\infty^{\mathrm{full}}$, we have $\mathrm{supp} \, \xi = \mathcal{S}_\infty $.

Now, to prove the theorem, the point is to iterate the loop (that is Proposition \ref{prop:the_loop}) and to pass to the limit by Proposition \ref{prop:cvg}. First, for the initialization, we note that $H^{\eqref{eq:NLS}}$ satisfies the assumption of Proposition \ref{prop:the_loop} by Proposition \ref{prop:preparation_NLS}. Indeed, since $\Pi_{\mathrm{deg}\leq 3}R^{(0)}=0$, provided that $r_0$ is small enough, we have
$$
\| R^{(0)} \|_{2 \eta_{\min}+1,r_0,0} \leq (r_0/\rho_0)^4 \| R^{(0)} \|_{4 \eta_{\min},\rho_0,0} \leq r_0^{4-2\delta}.
$$ 
After applying Proposition \ref{prop:the_loop}, we apply it to $H \circ \tau^{(1)}$ and so on, inductively for all $p \geq 1$. Thus, we get Borel set $\mathcal{O}_p$, change of variables $\tau^{(p)}$, Hamiltonians $Z^{(p)},R^{(p)}$ as in Proposition \ref{prop:the_loop}. In order to pass to the limit, we set for $p\geq 0$ (recalling that $\mathcal{O}_0 := \emptyset$)
$$
 \widetilde{\mathcal{O}}_{p} := \mathcal{O}_p \times \prod_{p'>p} \big(\frac{r_{p'}^{2(1-\delta)}}2,  r_{p'}^{2(1-\delta)}\big)^{\mathcal{S}_{p'} \setminus \mathcal{S}_{p'-1}}  \subset \mathcal{O}_\infty^{\mathrm{full}} \quad \mathrm{and} \quad
\mathcal{O}_\infty := \bigcap_{p\geq 0}  \widetilde{\mathcal{O}}_{p}.
$$
They are Borel sets for the product topology. We denote by $\mathbb{P}$ the natural product probability measure on $\mathcal{O}_\infty^{\mathrm{full}}$. Since by Proposition \ref{prop:the_loop}, the sequences $(\widetilde{\mathcal{O}}_{p} )_{p\geq 1}$ is decaying, we have
\begin{equation*}
\begin{split}
\mathbb{P}(\mathcal{O}_\infty^{\mathrm{full}} \setminus \mathcal{O}_\infty) = \sum_{p\geq 1} \mathbb{P}( \widetilde{\mathcal{O}}_{p} \setminus  \widetilde{\mathcal{O}}_{p-1})  &= \sum_{p\geq 1} \frac{\mathrm{meas}\Big( \mathcal{O}_{p-1} \times \big(\frac{r_{p}^{2(1-\delta)}}2,  r_{p}^{2(1-\delta)}\big)^{\mathcal{S}_{p} \setminus \mathcal{S}_{p-1}} \setminus \mathcal{O}_{p} \Big)}{ \mathrm{meas}(\mathcal{O}_{p}^{\mathrm{full}} )} \\
&\leq  \sum_{p\geq 1} r_p^{\delta/9} \leq 2 r_1^{\delta/9}.
\end{split}
\end{equation*}
A fortiori, it implies that $\mathcal{O}_\infty\neq \emptyset$. Now, let $\xi \in \mathcal{O}_\infty$ and set $\xi^{(p)} := (\xi_j)_{j\in \mathcal{S}_p}$. Then, we set 
$$
\Psi^{(p)} = \tau_{\xi^{(1)}}^{(1)} \circ \cdots \circ \tau_{\xi^{(p)}}^{(p)}.
$$
Then applying Proposition \ref{prop:cvg}, we know that it converges uniformly on $\mathbf{T}_\xi $ to a map $\Psi^{(\infty)}=:\Psi$.  Denoting by $\mathcal{T}\subset H^s$ its image, we know by Corollary \ref{cor:biLip} that $\Psi^{(\infty)} : \mathbf{T}_\xi \to \mathcal{T}$ is a homeomorphism. The fact that it is a symplectic deformation directly results of Proposition \ref{prop:cvg} (it suffices to invoke a Neumann series argument to get the invertibility of $L^{(\infty)}$). The uniform bound on $\Psi-\mathrm{Id}$ of Theorem \ref{thm:main} is given by Lemma \ref{lem:pouette}.

Then, since $\Psi^{(p)}$ is symplectic and since $R^{(p)}$ does not contain terms of degree smaller than $2$, we have that for all $p\geq 1$
\begin{center}
$\forall v\in \mathbf{T}_{\xi},\quad  U_v^{(p)}(t) := \displaystyle t\mapsto \Psi^{(p)}( \underbrace{\sum_{j\in \mathcal{S}_p} v_j e^{-\ic \omega_j^{(p)} t} e^{ijx}}_{=:v^{(p)}(t)} )$ is a global solution of \eqref{eq:NLS}
\end{center} 
where, $\omega^{(p)}$ are the frequencies $\omega^{\mathrm{full}}$ of Lemma \ref{lem:sd_estimates}, that is, using the frequency decomposition of Lemma \ref{lem:trans_Z4} and Lemma \ref{lem:the_key}
$$
\forall j\in \mathbb{Z}, \quad \omega^{(p)}_j = j^2 +\omega_j^{\mathrm{lead}}(\xi^{(p)})  +\sum_{1\leq p' \leq p} \omega_j^{(p',p)}(\xi^{(p)}).
$$
The fact that $|\omega^{(p)}_j - j^2|\leq \varepsilon^{1/4}$ is, on the one hand, a direct consequence of the definition of $\omega^{\mathrm{lead}}$ (see Lemma \ref{lem:trans_Z4}) and, on the other hand, a direct consequence of the estimates on $\omega_j^{(p',p)}$ and $Z^{(p')}$  given by Lemma \ref{lem:the_key} and Proposition \ref{prop:the_loop} respectively.

If follows that, as a consequence of Tychonoff's theorem, the sequence $\omega^{(p)}$ converge pointwize, up to extraction of a subsequence\footnote{this compactness argument is just a shortcut, we could prove as in \cite{BGR24} that it converges.}, to a sequence $\omega\equiv \omega^{(\infty)} \in \mathbb{R}^{\mathbb{Z}}$ satisfying the same estimate.

\medskip

To conclude, it only remains to prove that $U_v^{(\infty)}$ is a global solution to \eqref{eq:NLS}. Since, for all $p\geq 1$,  $U_v^{(p)}$ is a global solution to \eqref{eq:NLS}, it suffices to prove that $U_v^{(p)}$ converges locally uniformly to $U_v^{(\infty)}$. 
Indeed, for any $t\in \mathbb{R}$, using the Lipschitz bound on $\Psi^{(p)}$ given by Lemma \ref{lem:lip}, we have 
\begin{equation*}
\begin{split}
\|U_v^{(p)}(t) - U_v^{(\infty)}(t) \|_{H^s} &= \| \Psi^{(p)}(v^{(p)}(t)) -\Psi^{(\infty)}(v^{(\infty)}(t))  \|_{H^s} \\
&\leq \| \Psi^{(p)}(v^{(p)}(t)) - \Psi^{(p)}(v^{(\infty)}(t)) \|_{H^s} + \| \Psi^{(p)}(v^{(\infty)}(t)) - \Psi^{(\infty)}(v^{(\infty)}(t))  \|_{H^s}\\
&\leq 2 \|v^{(p)}(t) - v^{(\infty)}(t)\|_{H^s} + \sup_{z\in \mathbf{T}_\xi}  \| \Psi^{(p)}(z) - \Psi^{(\infty)}(z)\|_{H^s} .
\end{split}
\end{equation*}
 The right hand side last term converges to $0$ by Proposition \ref{prop:cvg}. On the other hand, since $|\omega_j^{(p)} - \omega_j^{(\infty)}|\leq 2 \,\varepsilon^{1/4}$, applying the dominated convergence theorem, we have
$$
\|v^{(p)}(t) - v^{(\infty)}(t)\|_{H^s}  \leq   \big( \sum_{j\in \mathcal{S}_\infty \setminus \mathcal{S}_p} \xi_j \langle j \rangle^{2s} \big)^{1/2} +  \big( \sum_{j\in \mathcal{S}_p} \xi_j  |t||\omega_j^{(p)} - \omega_j^{(\infty)}| \langle j \rangle^{2s} \big)^{1/2} \mathop{\longrightarrow}_{p\to \infty} 0.
$$

\appendix

\addcontentsline{toc}{section}{Appendix}

\addtocontents{toc}{\protect\setcounter{tocdepth}{-1}} 

\section{Poisson brackets}
The aim of this Appendix is to prove Proposition \ref{prop:poisson_bracket_estimate}.
First, we prove Poisson algebra properties with respect to Poisson brackets for $\mathscr{F}_{\eta,r,p}$ and prove Proposition \ref{prop:poisson_bracket_estimate}. 
\subsection{Basic properties of Poisson brackets}
Let us start by introducing some notation, which we shall use only here to shorten formulas.

%\mic{here of course I could write everything with the notation $(\bl,\bs)$, probabily should...i look a bit at the proof, also I think it is inevitable to use $\mathcal S_p$}
\begin{definition}[Sets of indices] Given $q,p\ge 0$, 
		we denote $\mathbb I = \cup_{q\ge 2} \mathbb I_q$ where $\mathbb I_q:=(\mathcal S_p^c)^q  \times \{-1,1\}^{q}$, similarly we denote $\mathbb J= \Z^{\mathcal S_p}\times \N^{\mathcal S_p}$. Given $\bh\in \bI$, we denote by $q(\boldsymbol{h})$ the cardinality of $\boldsymbol{h}$. 
		Given $\boldsymbol{h}=  (\boldsymbol{\ell}, \boldsymbol{\sigma})\in \bI_q$, with $\bl\in (\mathcal S_p^c)^q $ and $\bs \in  \times \{-1,1\}^{q}$ we denote 
		\begin{itemize}
%			\item  %$\bl:= (\bl_i)_{i=1}^q$, $\bs= (\bs_i)_{i=1}^q$ and 
%			 $\bar{\boldsymbol{h}} :=  (\boldsymbol{\ell}, -\boldsymbol{\sigma})$, \jo{Do you use it ?} \mic{probably not!}
			\item  $ \|\boldsymbol{h}\|_{\ell^\infty}:= \langle \|\boldsymbol{\ell}\|_{\ell^\infty} \rangle$, 
			\item   $\boldsymbol h \prec i\in \mathbb Z $ if $ \boldsymbol{\ell}_1^*\prec i$.
		\end{itemize}

 For compactness of notation, we denote $\mathbb M= \cup_{q\ge 0} \mathbb M_q$ where
$$
\begin{aligned}
	\mathbb M_{q}:= \Big\{ (\boldsymbol{k} ,\boldsymbol{m} ,(\boldsymbol{\ell},\boldsymbol\sigma) )&\in \mathbb J \times  \mathbb I_q \ | \	
	&\sum_{j\in \mathcal{S}_p} \boldsymbol{k}_j + \sum_{1\leq i \leq q} \boldsymbol{\sigma}_i  =0,
	\quad 
	\displaystyle \sum_{j\in \mathcal{S}_p} j\boldsymbol{k}_j + \sum_{1\leq i \leq q} \boldsymbol{\sigma}_i \boldsymbol{\ell}_i=0\Big\}.
\end{aligned}
$$ 
\end{definition}

\begin{definition}[Cuts]\label{bohh}
	Given $(\bk,\bm,\bh)\in \mathbb M$ we say that  two families of indices 
	$
	(\bk^{(1)},\bm^{(1)},\bh^{(1)})$, $(\bk^{(2)},\bm^{(2)},\bh^{(2)})$ belonging to $ \mathbb J \times  \mathbb I
	$
	form a cut of $(\bk,\bm,\bh)$ if  %(\mic{up to a permutation?}) 
	$$
	(\bk,\bm,\bh)= (\bk^{(1)}+\bk^{(2)},\bm^{(1)}+\bm^{(2)},(\bh^{(1)};\bh^{(2)}))
	$$ 
	and moreover one of the two following  conditions holds
	\begin{itemize}
		\item  The elements $(\bk^{(1)},\bm^{(1)},\bh^{(1)}),(\bk^{(2)},\bm^{(2)},\bh^{(2)})$ belong to $\mathbb M$. We call this an $A$-cut and denote the set of all $A$-cuts as 
		$C_A(\bk,\bm,\bh)$.
		\item setting 
	\begin{equation}
			\label{taglio}
		\begin{aligned}
			&	\widehat \sigma := -(\sum_{j\in \mathcal{S}_p} \boldsymbol{k}_j^{(1)} + \sum_{1\leq i \leq q^{(1)}} \boldsymbol{\sigma}^{(1)}_i  ) = \sum_{j\in \mathcal{S}_p} \boldsymbol{k}_j^{(2)} + \sum_{1\leq i \leq q^{(2)}} \boldsymbol{\sigma}^{(2)}_i  \,,
			\\ &
			\widehat\jmath:= -\widehat\sigma (\displaystyle \sum_{j\in \mathcal{S}_p} j\boldsymbol{k}^{(1)}_j + \sum_{1\leq i \leq q^{(1)}} \boldsymbol{\sigma}^{(1)}_i \boldsymbol{\ell}^{(1)}_i) =
			\widehat\sigma (\displaystyle \sum_{j\in \mathcal{S}_p} j\boldsymbol{k}^{(2)}_j + \sum_{1\leq i \leq q^{(2)}} \boldsymbol{\sigma}^{(2)}_i \boldsymbol{\ell}^{(2)}_i)
		\end{aligned}
	\end{equation}
		we have that $(\widehat\jmath,\widehat\sigma)\in \mathcal S_p^c \times \{\pm 1\}$.  This implies that, setting $\widehat\bh^{(i)}= (\bh^{(i)};(\widehat\jmath,(-1)^{i+1}\widehat\sigma))$, the elements $(\bk^{(1)},\bm^{(1)},\widehat\bh^{(1)}),(\bk^{(2)},\bm^{(2)},\widehat\bh^{(2)})$ belong to $\mathbb M$. 
		We call this a $v$-cut and denote the set of all $v$-cuts as $C_v(\bk,\bm,\bh)$.
	\end{itemize}
\end{definition}

Next we define formal Poisson brackets on formal power series, these coincide with the usual ones when the series is uniformly convergent.

\begin{lemma} \label{lem:poisson_F}Given $p\ge 0$, $r > 0$, $\eta \geq 0$, $P,Q\in {\mathscr{F}_{\eta,r,p}^{\cste}} $, set
	\[
	\{P,Q\} :=  \sum_{\substack{ \boldsymbol{k},\boldsymbol{k}' \in \mathbb{Z}^{\mathcal{S}_p}\\ \boldsymbol{m},\boldsymbol{m}' \in \mathbb{N}^{\mathcal{S}_p}\\ q,q'\in \mathbb{N}} }    \sum_{\substack {\boldsymbol{\ell} \in (\mathcal{S}_p^c)^{q}\\  \boldsymbol{\sigma} \in  \{-1,1\}^{q} } }\sum_{\substack {\boldsymbol{\ell}' \in (\mathcal{S}_p^c)^{q'}\\  \boldsymbol{\sigma}' \in  \{-1,1\}^{q'} } } P_{\boldsymbol{k},\boldsymbol{m}}^{\boldsymbol{\ell},\boldsymbol{\sigma}} Q_{\boldsymbol{k}',\boldsymbol{m}'}^{\boldsymbol{\ell}',\boldsymbol{\sigma}'}  \,   \{v_{\boldsymbol{\ell}}^{\boldsymbol{\sigma}} y^{\boldsymbol{m}} \, z^{ \boldsymbol{k} },v_{\boldsymbol{\ell}'}^{\boldsymbol{\sigma}'}  y^{\boldsymbol{m}'} \, z^{ \boldsymbol{k}' } \}
	\]
	then  for all $0<\eta'<\eta$ one has 
	\[
	\|\{P,Q\}\|_{\mathscr{F}_{\eta',r,p}^{\cste}}  \lesssim \frac{e^{2\eta'}}{r^2(\eta-\eta')}\|P\|_{\mathscr{F}_{\eta,r,p}^{\cste}} \|Q\|_{\mathscr{F}_{\eta,r,p}^{\cste}} \,.
	\]
\end{lemma}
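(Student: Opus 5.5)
We will compute the Poisson bracket of two monomials explicitly and then sum coefficient-wise. Using \eqref{eq:Poisson_formula} in the partial action--angle variables, we have $\{y_j, z_j\} = c\,\ic z_j$ for $j\in\mathcal{S}_p$ (a universal constant $c$, $|c|\le 4$), $\{v_\ell,\overline{v_\ell}\}= -2\ic$ for $\ell\in\mathcal{S}_p^c$, and all other brackets between coordinates vanish. By Leibniz,
\[
\{v_{\bl}^{\bs} y^{\bm} z^{\bk}, v_{\bl'}^{\bs'} y^{\bm'} z^{\bk'}\} = \sum_{j\in\mathcal{S}_p} c\,\ic\,(\bm_j \bk'_j - \bk_j\bm'_j)\, v_{\bl}^{\bs}v_{\bl'}^{\bs'} y^{\bm+\bm'-e_j} z^{\bk+\bk'} + \sum_{\substack{i\le q,\ i'\le q'\\ \bl_i=\bl'_{i'},\ \bs_i=-\bs'_{i'}}} (\pm 2\ic)\, v_{\widehat{\bl}}^{\widehat{\bs}} y^{\bm+\bm'} z^{\bk+\bk'},
\]
where $(\widehat{\bl},\widehat{\bs})$ is the concatenation with the pair $(i,i')$ removed. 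This expresses $\{P,Q\}$ as a formal series whose coefficient at $(\bK,\bM,\bh)$ is a sum over $A$-cuts (from the $y$--$z$ part, with $\bM=\bm+\bm'-e_j$) and $v$-cuts (from the contracted pair, the removed index being $(\widehat\jmath,\widehat\sigma)$ of Definition \ref{bohh}) of products of coefficients of $P$ and $Q$. After symmetrizing over $\mathfrak{S}_{q+q'-2}$ (or $\mathfrak{S}_{q+q'}$), one checks directly that reality, symmetry, mass and momentum conservation are inherited. Indeed, mass and momentum are additive and the contracted pair contributes zero to both.

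For the norm, we bound the supremum over $\bh$ of the new coefficient. In the $v$-cut part, for fixed $(\bk,\bm,q)$ and $(\bk',\bm',q')$, at most $qq'$ contractions occur, and the contracted index is determined by the other indices via momentum, so there is no sum over it. Each term is thus bounded by $qq'\,\mathcal{P}_{\bk,\bm,q}\mathcal{Q}_{\bk',\bm',q'}$. In the $A$-cut part, the term is bounded by $(\|\bm\|_{\ell^\infty}\|\bk'\|_{\ell^1}+\|\bk\|_{\ell^1}\|\bm'\|_{\ell^\infty})\mathcal{P}\mathcal{Q}$. Now compare weights. The output monomial has degree $q+q'-2+2\|\bm+\bm'\|_{\ell^1}$ in the $v$ case, or $q+q'+2\|\bm+\bm'\|_{\ell^1}-2$ in the $y$ case. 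So $r^{\text{deg}}e^{\eta'(\cdots)}$ equals $r^{-2}e^{-2\eta'}$ times the product of the weights of $P$ and $Q$ at $\eta'$, up to the factor $e^{\eta'(\|\bk+\bk'\|_{\ell^1}-\|\bk\|_{\ell^1}-\|\bk'\|_{\ell^1})}\le 1$. The sign of the $e^{\pm2\eta'}$ factor should be tracked carefully, since the statement has $e^{2\eta'}$ in the numerator. This is harmless because the weight loses two units of degree while $e^{\eta'}$ multiplies degree, giving $e^{-2\eta'}\le e^{2\eta'}$.

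It remains to absorb the prefactor $\max(1,q+q'-2,\|\bk+\bk'\|_{\ell^1},\|\bm+\bm'\|_{\ell^\infty})$ together with the combinatorial factors $qq'$ or $\|\bm\|_{\ell^\infty}\|\bk'\|_{\ell^1}$. The prefactor is at most $\max(\cdot)_P+\max(\cdot)_Q$, which is absorbed by one of the $\max$ factors in $\|P\|$ and $\|Q\|$. The extra factor, of the form $q'$ or $\|\bk'\|_{\ell^1}$ and so on, is then bounded by $\max(q',\|\bk'\|_{\ell^1},\|\bm'\|_{\ell^\infty})$. We gain it through the loss of analyticity, using $x e^{-(\eta-\eta')x}\lesssim (\eta-\eta')^{-1}$ with $x = \|\bk'\|_{\ell^1}+q'+2\|\bm'\|_{\ell^1}$. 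Summing over the two families of indices gives the product of norms. The main obstacle is this bookkeeping: each product should use the $\max$ weight only once and pay for the other derivative factor with $(\eta-\eta')^{-1}$, while checking that the supremum over $\bh$ of the output does not involve an uncontrolled sum over the contracted index. That sum is controlled because the contracted index is fixed by momentum.
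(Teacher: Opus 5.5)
Your proposal is correct and is essentially the paper's proof. It uses the same splitting of $\{P,Q\}$ into the $y$--$z$ part (the sum over $A$-cuts, the paper's $K$) and the contraction part (the sum over $v$-cuts, the paper's $L$, where the contracted index is fixed by mass and momentum and the factor $qq'$ counts positions). This leads to the convolution bounds \eqref{stimabase1}--\eqref{stimabase1bis} and then to a loss of analyticity.

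The only difference is in the final bookkeeping. The paper pays the output prefactor $\max(1,q,\|\bk\|_{\ell^1},\|\bm\|_{\ell^\infty})$ with $(\eta-\eta')^{-1}$ and both combinatorial factors with the $\max$ weights of $P$ and $Q$. You instead pay the prefactor and one combinatorial factor with those weights and the other factor with $(\eta-\eta')^{-1}$. This is equally valid, since in every term exactly three factors must be paid by two weights and one $(\eta-\eta')^{-1}$. Your remark that the degree count actually gives $e^{-2\eta'}$, and hence a fortiori the stated $e^{2\eta'}$, is also right.
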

\begin{proof}
	%	For the norm $| \cdot|_{r,\eta}$ the most elegant proof is to show that setting $H = dP[X_Q]$ one has that $\mathtt H \prec d\mathtt P[\mathcal X_\mathtt Q]$, (which implies that the same holds for the vector fields) and then use standard Cauchy estimates on finite dimensional analytic functions. 
	%	
	%	I think a similar proof would work also for $\| \cdot\|_{r,\eta}$ but maybe a direct proof through \eqref{norm1}, is simpler.
	%	
By definition
	\begin{equation}
		\label{poisson1}
		\begin{aligned}
			\{P,Q\}
			& = \! \! \! \! \! \! \! \sum_{\substack {(\boldsymbol{k}^{(1)} ,\boldsymbol{m}^{(1)} ,\boldsymbol{h}^{(1)}),(\boldsymbol{k}^{(2)} ,\boldsymbol{m}^{(2)} ,\boldsymbol{h}^{(2)})  \in \mathbb M \\  \bh = (\bh^{(1)};\bh^{(2)})} }   %\sum_{{(\boldsymbol{k}^{(2)}, \boldsymbol{m}^{(2)}, , \boldsymbol{h}^{(2)},) \in \mathbb M} }    
			P_{\boldsymbol{k}^{(1)},\boldsymbol{m}^{(1)}}^{\boldsymbol{h}^{(1)}} Q_{ \boldsymbol{k}^{(2)},\boldsymbol{m}^{(2)}}^{\boldsymbol{h}^{(2)}}  \,  \{y^{\boldsymbol{m}^{(1)}} z^{\bk^{(1)}} ,y^{\boldsymbol{m}^{(2)}} z^{\bk^{(2)}}\} v_{\boldsymbol{h}} \ =: K \\
			&	+  \sum_{\substack {(\boldsymbol{k}^{(1)} ,\boldsymbol{m}^{(1)} ,\boldsymbol{h}^{(1)}),(\boldsymbol{k}^{(2)} ,\boldsymbol{m}^{(2)} ,\boldsymbol{h}^{(2)})  \in \mathbb M \\  \bm =\boldsymbol{m}^{(1)}+\bm^{(2)}\\ \bk = \bk^{(1)}+\bk^{(2)}} }  P_{ \boldsymbol{k}^{(1)},\boldsymbol{m}^{(1)}}^{\boldsymbol{h}^{(1)}} Q_{ \boldsymbol{k}^{(2)},\boldsymbol{m}^{(2)}}^{\boldsymbol{h}^{(2)}}  \,  \{v_{\boldsymbol{h}^{(1)}} ,v_{\boldsymbol{h}^{(2)}}\} y^{\boldsymbol{m}} z^{\boldsymbol{k}} \qquad \quad =:L
		\end{aligned}
	\end{equation}
	Setting as usual $q$ to be the cardinality of $\bh$, we have
	\begin{equation}\label{KK}
	\sharp(\mathfrak{S}_{q})K_{\bk,\bm}^{\bh}=
		2\ic\sum_{\varphi\in\mathfrak{S}_{q}}\tilde K_{\bk,\bm}^{\varphi \bh}
		\end{equation}
	\begin{equation}\label{estim:poi1}
	\begin{split}
		\tilde  K_{\bk,\bm}^{ \bh} \! \!  &= \! \!  \! \!  \! \! \! \!  \sum_{\substack{ C_A(\bk,\bm,\bh) \\
				j\in \mathcal S_p} } \! \! \! \!   \boldsymbol{k}^{(1)}_j(\boldsymbol{m}^{(2)}_j+1)P_{\bk^{(1)},\bm^{(1)}}^{ \bh^{(1)}}Q_{\bk^{(2)},\bm^{(2)}+e_j}^{ \bh^{(2)}}\! \!  \! \! - \boldsymbol{k}^{(2)}_j(\boldsymbol{m}^{(1)}_j+1)P_{\bk^{(1)},\bm^{(1)}+e_j}^{ \bh^{(1)}}Q_{\bk^{(2)},\bm^{(2)}}^{ \bh^{(2)}} \\
				& =: \sum_{\substack{ C_A(\bk,\bm,\bh) } }  \bili(P^{ \bh^{(1)}},Q^{ \bh^{(2)}}).
				\end{split}
	\end{equation}
	where    $(e_j)_{j\in \mathcal S_p}$ denotes the canonical basis of $\mathbb R^{\mathcal S_p}$. We introduced the notation $\bili$ for future use in the proof of Lemma \ref{poisson_base}. Note that, of course, $\bili$ depends on $\bk^{(1)},\bm^{(1)},\bk^{(2)},\bm^{(2)}$ but this dependency is always the same, so to highlight more important features we omit it.  Note that $|K_{\bk,\bm}^{ \bh}| \le \max_{\varphi\in \mathfrak{S}_q}|\tilde K_{\bk,\bm}^{\varphi \bh}|$ .
		Moreover, if $P,Q$ are homogeneous in $v$ of degree resp. $q^{(1)},q^{(2)}$ then $K$ is homogeneous of degree $q^{(1)}+ q^{(2)}$  and, given $\bh$ of cardinality $q^{(1)}+ q^{(2)}$, there is at most one non-trivial $A$-cut of $\bh$. 
	
	Similarly 
	\begin{equation}\label{LL}
	\sharp(\mathfrak{S}_{q})L_{\bk,\bm}^{\bh}=2\ic 		\sum_{\varphi\in\mathfrak{S}_{q}}\tilde L_{\bk,\bm}^{\varphi \bh}\end{equation}
	\begin{align}\label{pan}
		\tilde  L_{\bk,\bm}^{ \bh} =\sum_{C_v(\bk,\bm,\bh)} \widehat\sigma  (q^{(1)}+1)( q^{(2)}+1) P_{\bk^{(1)},\bm^{(1)}}^{ \widehat\bh^{(1)}}Q_{\bk^{(2)},\bm^{(2)}}^{ \widehat\bh^{(2)}}=: \sum_{C_v(\bk,\bm,\bh)} \mathcal L(P^{ \widehat\bh^{(1)}},Q^{ \widehat\bh^{(2)}}).
	\end{align}
	where $\widehat\bh^{(1)}= (\bh^{(1)};\widehat\jmath,\widehat\sigma)$ and $\widehat\bh^{(2)}={ (\bh^{(2)};\widehat\jmath,-\widehat\sigma)}$ as seen in Definition \ref{bohh} and $q^{(i)}$ is the cardinality of $\bh^{(i)}$. As before  $|L_{\bk,\bm}^{\bh}| \le \max_{\varphi\in \mathfrak{S}_q}|\tilde L_{\bk,\bm}^{\varphi \bh}|$. Similarly to the previous case, if $P,Q$ are homogeneous in $v$ of degree resp. $q^{(1)}+1,q^{(2)}+1$ then  $L$ is homogeneous of degree $q^{(1)}+ q^{(2)}$  and, given $\bh$ of cardinality $q^{(1)}+ q^{(2)}$ there is at most one non-trivial $v$-cut of $\bh$. 	By the argument above, setting
	%where $c^{(1)}$ is the cardinality of $\widehat\jmath,\widehat\sigma$ in $\bh^{(1)}$ and  $c^{(2)}$ is the cardinality of $\widehat\jmath,-\widehat\sigma$ in $\bh^{(2)}$
	\[
		\mathcal P_{\bk,\bm,q}:= \sup_{\bh\in \mathbb I_q}  \!|P^{\bh}_{{\boldsymbol{k}},{\boldsymbol{m}}}|\,,\quad 	\mathcal Q_{\bk,\bm,q}:= \sup_{\bh\in \mathbb I_q}  \!|Q^{\bh}_{{\boldsymbol{k}},{\boldsymbol{m}}}|\,,\quad 	\mathcal K_{\bk,\bm,q}:= \sup_{\bh\in \mathbb I_q}  \!|K^{\bh}_{{\boldsymbol{k}},{\boldsymbol{m}}}|\,,\quad \mathcal L_{\bk,\bm,q}:= \sup_{\bh\in \mathbb I_q}  \!|L^{\bh}_{{\boldsymbol{k}},{\boldsymbol{m}}}|
	\]
	 we have that
	\begin{equation}
		\label{stimabase1}
		\begin{aligned}
		\mathcal K_{\bk,\bm,q}
			\le 2 \!\!\! \sum_{\substack{q^{(1)}+q^{(2)}= q \\ \bk^{(1)}+\bk^{(2)}=\bk \\ \bm^{(1)}+\bm^{(2)}=\bm }}\!\!\!\sum_{j\in \mathcal S_p} & |\boldsymbol{k}^{(1)}_j|(\|\boldsymbol{m}^{(2)}\|_{\ell_\infty}+1)\mathcal P_{\bk^{(1)},\bm^{(1)},q^{(1)}}\mathcal Q_{\bk^{(2)},\bm^{(2)}+e_j,q^{(2)}}+ \\  &|\boldsymbol{k}^{(2)}_j|(\|\boldsymbol{m}^{(1)}\|_{\ell_\infty}+1)\mathcal P_{\bk^{(1)},\bm^{(1)}+e_j,q^{(1)}}\mathcal Q_{\bk^{(2)},\bm^{(2)},q^{(2)}}
		\end{aligned}
	\end{equation}
	\begin{equation}
		\label{stimabase1bis}
		\begin{aligned}
			\mathcal L_{\bk,\bm,q} \le 2 \!\!\! \sum_{\substack{q^{(1)}+q^{(2)}= q+2 \\ \bk^{(1)}+\bk^{(2)}=\bk \\ \bm^{(1)}+\bm^{(2)}=\bm }} & q^{(1)}q^{(2)}\mathcal P_{\bk^{(1)},\bm^{(1)},q^{(1)}}\mathcal Q_{\bk^{(2)},\bm^{(2)},q^{(2)}}\,.
		\end{aligned}
	\end{equation}	
	As a consequence, noticing that
	\begin{equation*}
	\begin{split}
	\|\{P,Q\}\|_{\mathscr{F}_{\eta',r,p}^{\cste}} &\leq \sum_{\substack{(\bk,\bm)\in\mathbb J\\ q\in \N}} \!\! \max(1,q ,\|\bk\|_{\ell^1} , \|\bm\|_{\ell^{\infty}})(\mathcal K_{\bk,\bm,q} +\mathcal L_{\bk,\bm,q}  ) r^{q+2\|\bm\|_{\ell^1}} e^{\eta'(\|\bk\|_{\ell^1} +q+2\|\bm\|_{\ell^1})}\\
	&\lesssim (\eta-\eta')^{-1} \sum_{\substack{(\bk,\bm)\in\mathbb J\\ q\in \N}} \!\! (\mathcal K_{\bk,\bm,q} +\mathcal L_{\bk,\bm,q}  ) r^{q+2\|\bm\|_{\ell^1}} e^{\eta(\|\bk\|_{\ell^1} +q+2\|\bm\|_{\ell^1})},
	\end{split}
	\end{equation*}
	we deduce of this convolution structure that $\|\{P,Q\}\|_{\mathscr{F}_{\eta',r,p}^{\cste}} \lesssim \frac{e^{2\eta'}}{r^2(\eta-\eta')}\|P\|_{\mathscr{F}_{\eta,r,p}^{\cste}} \|Q\|_{\mathscr{F}_{\eta,r,p}^{\cste}}$.
	
\end{proof}

\subsection{Proof of Proposition \ref{prop:poisson_bracket_estimate}.} The key step in proving Proposition \ref{prop:poisson_bracket_estimate} is the following result.

\begin{lemma}\label{poisson_base}
	For any  $\alpha,r>0$, $\eta,p\ge 0$  and for any  $P\in \mathscr{H}_{\eta+\alpha,r,p}$ and  $Q\in \mathscr{C}_{\eta+\alpha,r,p}$  one has
	\begin{equation}\label{estim:poisson2hom} 
		\begin{aligned}
			&	\| \{P,Q\}\|_{\mathscr{H}_{\eta,r,p}}  \lesssim_{p,\eta}\ \frac{1}{r^2 \alpha}\| P\|_{\mathscr{H}_{\eta+\alpha,r,p}}\| Q\|_{\mathscr{C}_{\eta+\alpha,r,p}}
		\end{aligned}
	\end{equation}
\end{lemma}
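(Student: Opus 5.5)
We start from the explicit coefficient formulas \eqref{KK}--\eqref{pan} of Lemma \ref{lem:poisson_F}: $\{P,Q\}=K+L$, where $K$ comes from $A$-cuts (brackets in the $(y,z)$ variables) and $L$ from $v$-cuts (one contraction of a normal index $\widehat\jmath$). The $\mathscr{F}$-part of the estimate is already contained in Lemma \ref{lem:poisson_F}. What remains is to build a decomposition $\{P,Q\}=K_0+K_1+K_2$ with $K_0\in\mathscr{S}_1$, $K_1\in\mathscr{T}_1\cap\mathscr{S}_2$, $K_2\in\mathscr{T}_2$, and to bound the weighted sup $|\cdot|^{\Theta,q}$ of \eqref{eq:Theta2}.

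We write $P=P_0+P_1+P_2$ and $Q=Q_0+Q_1$ (recall $Q_2=0$ in $\mathscr{C}$), and expand $\{P,Q\}=\sum_{n,n'}\{P_n,Q_{n'}\}$. The bilinearity of $\bili$ and $\mathcal L$ in \eqref{estim:poi1}, \eqref{pan} lets us treat each pair separately. For a monomial whose $2n$ top modes are distinct actions $\boldsymbol{i}_n,\dots,\boldsymbol{i}_1$ above $\bl^*_1$, we split according to how these actions are distributed in the cut. Either they all sit inside $\bh^{(1)}$ or all inside $\bh^{(2)}$, or the contracted index $\widehat\jmath$ is one of them, or they are split between the two factors.

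\emph{Top actions fully inside one factor.} Here the coefficient of $\{P,Q\}$ is $\bili$ or $\mathcal L$ of a Töplitz (independent of $\boldsymbol{i}$) or smoothing (decaying) coefficient times a coefficient not involving $\boldsymbol{i}$. Töplitz times Töplitz stays Töplitz and contributes to $K_1$ or $K_2$. Anything involving a factor that decays in $\boldsymbol{i}_1$ (respectively $\boldsymbol{i}_2$) is assigned to $K_0$ (respectively the $\mathscr{S}_2$ part of $K_1$). The required limits exist because the sums over cuts are finite, with at most one nontrivial cut at fixed homogeneity, as noted in the proof of Lemma \ref{lem:poisson_F}.

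\emph{Actions split or contracted.} If $\widehat\jmath=\boldsymbol{i}_1$ is contracted, then $\boldsymbol{i}_1$ is a top index of a factor of $Q\in\mathscr{C}$, whose coefficient carries the gain $\Upsilon^{-1}=\langle\|\bl\|_{\ell^\infty}\rangle$. Since $\widehat\jmath$ is fixed by momentum, the resulting term decays like $\langle\boldsymbol{i}_1\rangle^{-1}$, so it lies in $\mathscr{S}_1$ and yields the weight $\Theta^{-1}\sim 1+\|\bl\|_{\ell^\infty}$ required for $K_0$. The genuinely mixed terms are handled similarly. The $\mathscr{C}$ decay of $Q$ always pays for the loss of Töplitz structure, which is exactly why the right-hand side pairs $\mathscr{H}$ with $\mathscr{C}$ norms.

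\emph{Summation.} Once every coefficient of $K_n$ is bounded by the corresponding weighted products $|P|^{\Theta}|Q|^{\Upsilon}$, we redo the convolution summation \eqref{stimabase1}--\eqref{stimabase1bis}. The factors $\|\bk\|_{\ell^1},q,\|\bm\|_{\ell^\infty}$ are absorbed by $e^{-\alpha(\cdot)}$, giving $\alpha^{-1}$, while the loss of two units of homogeneity gives $r^{-2}$.

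The main obstacle is the case analysis in the mixed situation, where top actions of the result come from both factors or the ordering $\bl^*_1\prec\boldsymbol{i}_n$ is changed by the cut. There one must check carefully that a term neither Töplitz nor decaying can never arise. We expect the $\mathscr{C}$ decay of $Q$ in its largest index to cover every such configuration, and this verification is tedious but elementary.
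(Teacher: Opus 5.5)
\textbf{Verdict: genuine gap.} Your architecture is the paper's: split $\{P,Q\}=K+L$, define the T\"oplitz parts as limits in $|\boldsymbol{i}_1|$ and $|\boldsymbol{i}_2|$, classify cuts by where the top action sits, then redo the convolution sum. The gap is in the contracted configurations, and this is exactly where the paper needs an extra idea.

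You claim that when the contracted index is $\widehat\jmath=\boldsymbol{i}_1$, the $Q$-factor has $\boldsymbol{i}_1$ as its top index and so decays like $\langle\boldsymbol{i}_1\rangle^{-1}$ by the $\mathscr{C}$ norm. That holds only if $\boldsymbol{i}_1$ is \emph{unpaired} in the $Q$-factor. Consider the $v$-cut where the output pair is split, with $(\boldsymbol{i}_1,\sigma)$ in the $P$-factor and $(\boldsymbol{i}_1,-\sigma)$ in the $Q$-factor, and the contraction is $(\widehat\jmath,\widehat\sigma)=(\boldsymbol{i}_1,-\sigma)$. Then both factors carry the full action $|v_{\boldsymbol{i}_1}|^2$ at the top. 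The coefficients are $P^{(\bh^{(1)};\boldsymbol{i}_1,\pm1)}=P_0+P_1+P_2$ and $Q^{(\bh^{(2)};\boldsymbol{i}_1,\pm1)}=Q_0+Q_1$.

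Here $Q_1\in\mathscr{T}_1$ does not depend on $\boldsymbol{i}_1$. The weight $\Upsilon^{-1}_{1,\bl,\bs}=\langle\|\bl\|_{\ell^\infty}\rangle$ gives decay only in the remaining indices $\bl$, not in the top action. So for each fixed $\sigma$, the piece $(P_1+P_2)Q_1$ neither decays in $\boldsymbol{i}_1$ nor vanishes as $|\boldsymbol{i}_1|\to\infty$. Your assignment of this term to $\mathscr{S}_1$ is therefore wrong. So is your expectation that ``the $\mathscr{C}$ decay of $Q$ in its largest index covers every configuration'': for $Q_1$ there is no decay in the largest index.

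The missing idea is an exact cancellation. The form $\mathcal L$ in \eqref{pan} carries the factor $\widehat\sigma=-\sigma$. By permutation symmetry, $P^{(\bh^{(1)};\boldsymbol{i}_1,\sigma;\boldsymbol{i}_1,-\sigma)}$ and $Q^{(\bh^{(2)};\boldsymbol{i}_1,-\sigma;\boldsymbol{i}_1,\sigma)}$ do not depend on $\sigma$. Hence the contributions for $\sigma=\pm1$ are opposite and sum to zero; this is \eqref{stoc}, i.e.\ $\{|v_{\boldsymbol{i}_1}|^2,|v_{\boldsymbol{i}_1}|^2\}=0$. The same cancellation is needed again at depth two, with $\widehat\jmath=\boldsymbol{i}_2$, to get the $(1+|\boldsymbol{i}_2|)^{-1}$ bound on $L_1$.

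Only after these terms are discarded is it true that every remaining non-T\"oplitz configuration has a $Q$-factor whose largest index is unpaired. In that case $Q=Q_0$, which decays like $\langle\boldsymbol{i}_1\rangle^{-1}$, or like $\langle\widehat\jmath\rangle^{-1}$ when $\widehat\jmath\succeq\boldsymbol{i}_1$.

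A minor point: the sums over cuts are not finite, since the splittings $\bk=\bk^{(1)}+\bk^{(2)}$ range over infinitely many vectors. Exchanging the limits with these sums needs dominated convergence from the norm bounds, not finiteness.
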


\begin{proof}
		Recalling formula \eqref{poisson1} consider separately the terms $K,L$. 
		
		\noindent \underline{\emph{Step $1$ : Estimate on $K$.}} First, we are going to establish a decomposition of $K$ of the form $K= K_0+K_1+K_2$ as in \eqref{eq:cond2}. Then, we will get explicit formulas for $K_0,K_1$. Finally, we will use these formulas to estimate the $\mathscr{H}$ norm of $K$.
		
		\noindent \underline{\emph{Step $1.1.$ : Decomposition of $K$.}} A direct analysis shows that if the decomposition exist we would have
		$$
		K_0= K- \widehat K_1 \quad \text{and} \quad K_1 =\widehat K_1- K_2
		$$
		where $\widehat K_1 \in \mathscr{T}_{1,\eta,r,p}$ and $K_2\in \mathscr{T}_{2,\eta,r,p}$ are defined through their coefficients by
		\begin{equation}
		\label{eq:def_hat_K1_and_K2}
		\widehat K_{1,\boldsymbol{k},\boldsymbol{m}}^{\boldsymbol{h}; \boldsymbol{j}_1,\pm 1} = \lim_{|\bi_1|\to  \infty}   K_{\boldsymbol{k},\boldsymbol{m}}^{\boldsymbol{h};\bi_1,\pm 1}
		\quad \mathrm{and} \quad
		K_{2,\boldsymbol{k},\boldsymbol{m}}^{\boldsymbol{h}; \boldsymbol{j}_2,\pm 1;\boldsymbol{j}_1,\pm 1} =  \lim_{|\bi_1|\to  \infty} \lim_{|\bi_2|\to  \infty}   K_{\boldsymbol{k},\boldsymbol{m}}^{\boldsymbol{h}; \bi_2,\pm 1;\bi_1,\pm 1}
		\end{equation}
		for $\bh \prec \boldsymbol{j}_2 \prec \boldsymbol{j}_1$. Conversely, to prove the existence of the decomposition, it suffices to prove that the limits exist and to define the Hamiltonians in this way. Note that the fact that $K_0 \in \mathscr{S}_{1,\eta,r,p}$ and $K_1 \in \mathscr{S}_{2,\eta,r,p}$ will follow directly of their definition.

By consistency, it shall be convenient to define  $\widehat P_1:=  P_1 +P_2\in \mathscr{T}_{1,\eta,r,p}^{\cste} $ so that for any $\bk,\bm$ and $\bh\prec \tilde\imath$,  one has
	\[
	\widehat P_{1,\bk,\bm}^{(\bh;\tilde\imath,\pm 1)} = \lim_{|i|\to \infty}   P_{\bk,\bm}^{(\bh;i,\pm 1)} \,, \quad Q_{1,\bk,\bm}^{(\bh;\tilde\imath,\pm 1)} = \lim_{|i|\to \infty}   Q_{\bk,\bm}^{(\bh;i,\pm 1)} \,.
	\]
%	\jo{I tried to introduce the following notation to shorten a lot the formulas. What do you think ? It is just to factorize things that are just copied and past each line. Maybe we could optimize more because we always use $ \bili_{\bk^{(1)},\bm^{(1)}}^{\bk^{(2)},\bm^{(2)}}$ : the indices do not vary. Maybe we could remove them for readability but insist on the fact that the dependency is implicit... Or be fully rigorous by fixing $\bk^{(1)},\bm^{(1)},\bk^{(2)},\bm^{(2)}$ but it would be an big change.}
%	In view of formula \eqref{estim:poi1}\footnote{indeed, with this notation \eqref{estim:poi1} writes $	\tilde  K_{\bk,\bm}^{ \bh} =\sum_{ C_A(\bk,\bm,\bh)  } \bili_{\bk^{(1)},\bm^{(1)}}^{\bk^{(2)},\bm^{(2)}}(P^{\bh^{(1)}},Q^{\bh^{(2)}}) $ }, to shorten the notation and highlight the important objects, given two formal Hamiltonian $A,B$, we set
%	$$
%	\bili_{\bk^{(1)},\bm^{(1)}}^{\bk^{(2)},\bm^{(2)}}(A^{\bh^{(1)}},B^{\bh^{(2)}}) := \hspace{-0.3cm}\sum_{\substack{j\in \mathcal{S}_p\\ \check{\boldsymbol{m}}^{(1)} = \boldsymbol{m}^{(1)} + e_j   \\ \check{\boldsymbol{m}}^{(2)} = \boldsymbol{m}^{(2)} + e_j  }}   \boldsymbol{k}^{(1)}_j\check{\boldsymbol{m}}^{(2)}_j A_{\bk^{(1)},\bm^{(1)}}^{ \bh^{(1)}}B_{\bk^{(2)},\check{\boldsymbol{m}}^{(2)}}^{ \bh^{(2)}}\! \!   - \boldsymbol{k}^{(2)}_j\check{\boldsymbol{m}}^{(1)}_j A_{\bk^{(1)},\check{\boldsymbol{m}}^{(1)}}^{ \bh^{(1)}}B_{\bk^{(2)},\bm^{(2)}}^{ \bh^{(2)}}
%	$$
	
\medskip		

Let us  denote\footnote{This is just the number of ways one  can insert  the elements $q+1,\dots, q+n$ in the ordered list $(1,\dots,q)$, %\mic{ hence this set is obtained as composition of the permutations of the elements $q,\dots, n+q$ with  $(n,q)$-shuffles.}\jo{too early to make sense, no ?} 
			Note that $\mathfrak B_{0,n}\:= \mathfrak S_n $, $\mathfrak B_{q,0}\:= {\rm Id}$ and that $\sharp \mathfrak B_{q,n} = \frac{(q+n)!}{q!}$. }
		\[
		\mathfrak B_{q,n}\:= \{\varphi\in \mathfrak S_{q+n}\,:\quad \varphi(i) <\varphi(i+1)\,,\;\forall i=1,\dots,q-1\}.
		\]
		
In order to prove that $\widehat{K}_1$ is well defined, it is convenient to note that, by \eqref{KK}, for $\bh\prec \bi_1$ 
\begin{equation}
\label{eq:average2}
	 K ^{(\bh;\bi_1,\pm 1)}_{\bk,\bm}= \frac{2\ic }{\sharp(\mathfrak S_{q+2})}\sum_{\vartheta\in \mathfrak S_q} \sum_{\varphi \in \mathfrak B_{q,2}} \tilde K^{\varphi(\vartheta \bh;\bi_1,\pm 1)}_{\bk,\bm}
\end{equation}
and that, by \eqref{estim:poi1}, for $\varphi \in \mathfrak B_{q,2}$, 
%	\begin{equation}
%		\label{limite1}
%		 \tilde K^{\varphi(\bh;\bi_1,\pm 1)}_{\bk,\bm}= 	 \sum_{\substack{ C_A(\bk,\bm,\varphi(\bh;\bi_1,\pm 1)) \\
%				j\in \mathcal S_p} }  \boldsymbol{k}^{(1)}_j(\boldsymbol{m}^{(2)}_j+1)P_{\bk^{(1)},\bm^{(1)}}^{ \check{\bh}^{(1)}}Q_{\bk^{(2)},\bm^{(2)}+e_j}^{ \check{\bh}^{(2)}}-	
%			 \boldsymbol{k}^{(2)}_j(\boldsymbol{m}^{(1)}_j+1)P_{\bk^{(1)},\bm^{(1)}+e_j}^{ \check{\bh}^{(1)}}Q_{\bk^{(2)},\bm^{(2)}}^{ \check{\bh}^{(2)}}
%	\end{equation}
	\begin{equation}
		\label{limite1}
		 \tilde K^{\varphi(\bh;\bi_1,\pm 1)}_{\bk,\bm}= 	 \sum_{  C_A(\bk,\bm,\varphi(\bh;\bi_1,\pm 1))  }  \bili(P^{ \check{\bh}^{(1)}},Q^{ \check{\bh}^{(2)}})
	\end{equation}
	where $ \varphi(\bh;\bi_1,\pm 1)=(\check \bh^{(1)};\check\bh^{(2)})$ implies that there exists  $\bh^{(1)},\bh^{(2)}\prec \bi_1 $ such that $\bh=(\bh^{(1)};\bh^{(2)})$ and (apart from a permutation) one of the following holds
	\begin{itemize}
		\item[\emph{Type 1}:]$\check \bh^{(1)} =(\bh^{(1)};\bi_1,\pm 1)$ and  $\check \bh^{(2)} = \bh^{(2)}$ 
		\item[\emph{Type 2}:] $\check \bh^{(1)} =\bh^{(1)}$ and  $\check \bh^{(2)} = (\bh^{(2)};\bi_1,\pm 1)$   
		\item [\emph{Type 3}:]  $\check \bh^{(1)} =(\bh^{(1)};(\bi_1,\sigma))$ and  $\check \bh^{(2)} = (\bh^{(2)};(\bi_1,-\sigma))$  for some sign $\sigma$. 
	\end{itemize}
	% Next we note that as soon as $|i|> 2\sum_{i=1}^q |\bh_i| +\sum_{s\in \mathcal S}|\bk_s|$ then  item $(3)$ is incompatible with the condition $(\bk^{(i)},\bm^{(i)}\check{\bh}^{(i)})\in \mathbb M$. 
	We note that the terms of type 3 do not contribute to the limit $|\bi_1|\to \infty$, indeed by construction in this case $\check \bh^{(2)}\notin \mathbf{C}_{q,1}$ and hence  we have 
	$|Q_{\bk^{(2)},\hat \bm^{(2)}}^{ \check{\bh}^{(2)}}| = |Q_{0,\bk^{(2)},\hat\bm^{(2)}}^{ \check{\bh}^{(2)}}|\le |Q|^{\Upsilon, q^{(2)}+1}_{\bk^{(2)},\hat\bm^{(2)}}|\bi_1|^{-1}$ for any choice of $\hat\bm^{(2)}$ ($q^{(2)}$ being the cardinality of $\bh^{(2)}$). The existence of the limit for the other terms then follows directly of the existence of the limit for $P$ and $Q$. Indeed, we have
	\begin{equation}	\label{limite2}
	\begin{split}
		\sum_{\varphi \in \mathfrak B_{q,2}}\lim_{|\bi_1|\to\infty} \tilde K^{\varphi(\bh;\bi_1,\pm 1)}_{\bk,\bm}
	=	\sum_{ C_A(\bk,\bm,\bh)  }   
	 \sum_{\varphi_1\in \mathfrak B_{q^{(1)},2}}&  \bili(\widehat P_1^{ \varphi_1(\bh^{(1)};\tilde\imath,\pm 1)},Q^{ \bh^{(2)}}) \\
	&+ \sum_{\varphi_2\in \mathfrak B_{q^{(2)},2}}  \bili(P^{ \bh^{(1)}},Q_1^{ \varphi_2(\bh^{(2)};\tilde\imath,\pm 1)}) 
	\end{split}
	\end{equation}
	where $q^{(s)}$ is the cardinality of $\bh^{(s)}$  and $\tilde\imath\in \mathcal{S}_p^c$ is any index such that $\bh\prec \tilde\imath$. Recalling \eqref{eq:average2}, this ensures that $\widehat K_1\in \mathscr{T}_{1,\eta,r,p}$ is well defined by \eqref{eq:def_hat_K1_and_K2}.

	%we obtain
	% \[
	% \begin{aligned}
		% 	\widehat K_1 %^{(\bh;i)}_{1,\bk,\bm}&:= \frac{1}{|\mathfrak S_q|}\sum_{\vartheta\in \mathfrak S_q} \sum_{\varphi \in \mathfrak B_{q+2}}\lim_{j\to\infty} \tilde K^{\varphi(\vartheta \bh;j)}_{\bk,\bm}=
		% = \{\widehat P_1, Q\}_{y,z} +\{P,\widehat Q_1\}_{y,z}
		% \end{aligned}
	% \]
	Following the same reasoning, we get that  $K_2$ is well defined and is given for  $\bh\prec \bi_2\prec \bi_1$ by
	$$ 
	K_{2,\bk,\bm}^{(\bh;\bi_2,\pm 1;\bi_1,\pm 1)} :=  \frac{2\ic }{\sharp(\mathfrak S_{q+4})}\sum_{\vartheta\in \mathfrak S_{q}} \sum_{\varphi\in \mathfrak B_{q,4}} \tilde K_{2,\bk,\bm}^{\varphi(\vartheta \bh;\bi_2,\pm 1;\bi_1,\pm 1)}  
	$$
	where
	\begin{equation}
		\label{kappa2}
			\sum_{\varphi\in \mathfrak B_{q,4}}  \tilde K_{2,\bk,\bm}^{\varphi(\bh;\bi_2\pm 1;\bi_1,\pm 1)} :=      	 \sum_{\substack{ C_A(\bk,\bm,\bh) } } \eqref{eq:dev_K_2}
	\end{equation}
		\begin{equation}
	\label{eq:dev_K_2}
	\begin{split}
		 2& \!\!\! \sum_{\substack{\varphi_1\in \mathfrak B_{q^{(1)},2} }}  \sum_{\varphi_2\in \mathfrak B_{q^{(2)},2}}\!\!\!   \bili( \widehat P_{1  }^{ \varphi_1(\bh^{(1)};\tilde\imath ,\pm 1)} ,Q_{1}^{ \varphi_2(\bh^{(2)};\tilde\jmath,\pm 1)}  )
			+\!\!\! \!\!\! \sum_{\varphi_1\in \mathfrak B_{q^{(1)},4}} \!\!\! \! \! \bili(P_{2}^{ \varphi_1(\bh^{(1)};\tilde\jmath,\pm 1;\tilde\imath,\pm 1)},Q^{ \bh^{(2)}} )
		\end{split}
	\end{equation}

	As before $\tilde\jmath\prec \tilde\imath$ are arbitrary couples of indexes in $\mathcal S_p^c$ with $\bh\prec\tilde\jmath$, so  $\tilde K_{2,\bk,\bm}^{(\bh;\bi_2,\pm 1;\bi_1,\pm 1)} $ does not  depend on $\bi_2\prec\bi_1$.
	
	We have proven that $\widehat K_1 ,K_2$  are well defined and so that
	\[
\Pi_{\mathrm{Top}=2}  K = K_2\,,\quad  \Pi_{\mathrm{Top}=1}  K = \widehat K_1 - K_2=: K_1 \,,\quad  \Pi_{\mathrm{Top}=0}  K = K- \widehat K_1 =: K_0\,.
\]

\noindent \underline{\emph{Step $1.2.$ : Explicit formulas for $K_0$ and $K_1$.}} To estimate the $\mathscr{H}$ norm of $K$, we have to prove special bounds for coefficients of the form $K ^{(\bh;\bi_1,\pm 1)}_{0,\bk,\bm}$ and $K ^{(\bh;\bi_2,\pm 1;\bi_1,\pm 1)}_{1,\bk,\bm}$. Thus, we decompose them as a sum of terms enjoying some decay properties.

	Recalling that $K_0=K-\widehat K_1$ and formula \eqref{limite1}, we have that for $\bh \prec \bi_1$
\begin{equation}
		\label{kappa0}
	K ^{(\bh;\bi_1,\pm 1)}_{0,\bk,\bm}:= \frac{2\ic }{\sharp \mathfrak S_{q+2}}\sum_{\vartheta\in \mathfrak S_q} \sum_{\varphi \in \mathfrak B_{q,2}}\tilde K^{\varphi(\vartheta \bh;\bi_1,\pm 1)}_{0,\bk,\bm} \quad \text{where} \quad \sum_{\varphi \in \mathfrak B_{q,2}}\tilde K^{\varphi( \bh;\bi_1,\pm 1)}_{0,\bk,\bm} = (1)+ (2)+ (3)
	\end{equation}
	\begin{equation*}
		\begin{aligned}
	&	(1)= \! \! \!	\sum_{\substack{ C_A(\bk,\bm,\bh) } }    \sum_{\substack{ \varphi_1\in \mathfrak B_{q^{(1)},2} }} \ \bili(P_{0}^{ \varphi_1(\bh^{(1)};\bi_1,\pm 1)},Q^{ \bh^{(2)}})  
	 \\
			& (2)=  \sum_{\substack{ C_A(\bk,\bm,\bh)   } }  \sum_{\substack{\varphi_2\in \mathfrak B_{q^{(2)},2} }} \bili(  P^{ \bh^{(1)}}, Q_{0}^{ \varphi_2(\bh^{(2)};\bi_1,\pm 1)} )  \\
			&(3)= 		  \sum_{\substack{ C_A(\bk,\bm,(\bh;\bi_1,\pm 1))\\
					\scriptscriptstyle{type}\; 3  } } 
				\sum_{\substack{\varphi_1\in \mathfrak B_{q^{(1)},1}\\ \varphi_2\in \mathfrak B_{q^{(2)},1} }} \bili(
				 P^{ \varphi_1(\bh^{(1)};(\bi_1,\sigma) )},Q_{0}^{ \varphi_2(\bh^{(2)};(\bi_1,-\sigma))} ) .
		\end{aligned}
	\end{equation*}
	In the last line we used the fact that, since  $(\bh^{(2)};\bi_1,-\sigma)\notin \mathbf{C}_{q,1}$,  $Q^{ \varphi_2(\bh^{(2)};\bi_1,-\sigma)}= Q_{0}^{ \varphi_2(\bh^{(2)};\bi_1,-\sigma)}$.

For $K_1$, we proceed in the same way.		Recalling that $K_1=\widehat K_1-K_2$ and formula \eqref{kappa2}, we have that for $\bh \prec \bi_2 \prec  \bi_1$
	\begin{equation}
		\label{kappa1}
		K_{1,\bk,\bm}^{(\bh;\bi_2,\pm 1;\bi_1,\pm 1)} :=  \frac{2\ic }{\sharp(\mathfrak S_{q+4})}\sum_{\vartheta\in \mathfrak S_{q}} \underbrace{\sum_{\varphi\in \mathfrak B_{q,4}} \tilde K_{1,\bk,\bm}^{\varphi(\vartheta \bh;\bi_2,\pm 1;\bi_1,\pm 1)} }_{=(1)+(2)+(3)+(4)+(5)+(6) } 
	\end{equation}
	where, given any index $\tilde\imath \in \mathcal S_p^c$ such that $\bi_2\prec \tilde\imath$, we have 	
	\begin{align*}
		&(1) :=    	 \sum_{\substack{ C_A(\bk,\bm,\bh) } } 
			\sum_{\substack{\varphi_1\in \mathfrak B_{q^{(1)},2} }}  \sum_{\varphi_2\in \mathfrak B_{q^{(2)},2}}
		\bili(P_{0}^{ \varphi_1(\bh^{(1)};\bi_2,\pm 1)} ,Q_{1}^{ \varphi_2(\bh^{(2)};\tilde\imath,\pm 1)})  
		\\ 
		&(2):=  	\sum_{\substack{ C_A(\bk,\bm,\bh) } } 
			\sum_{\substack{\varphi_1\in \mathfrak B_{q^{(1)},2} }}  \sum_{\varphi_2\in \mathfrak B_{q^{(2)},2}}
		\bili(\widehat P_1^{ \varphi_1(\bh^{(1)};\bi_2,\pm 1)} ,Q_{0}^{ \varphi_2(\bh^{(2)};\tilde\imath,\pm 1)})  
	\end{align*}
	% 2\{\widehat P_1, \widehat Q_1\}_{y,z} 	+\{ P_2, Q\}_{y,z} +\{P, Q_2\}_{y,z}+
	\begin{align*}
		& (3) :=   	  \sum_{\substack{ C_A(\bk,\bm,(\bh;\bi_2,\pm 1))\\
				\scriptscriptstyle{type}\; 3} }  \sum_{\substack{\varphi_1\in \mathfrak B_{q^{(1)},3}}} \sum_{ \varphi_2\in \mathfrak B_{q^{(2)},1}} \bili(\widehat P_{1}^{ \varphi_1(\bh^{(1)};\bi_2,\sigma;\tilde\imath,\pm 1)} ,  Q_{0}^{ \varphi_2(\bh^{(2)};\bi_2,-\sigma)}),
		\\
		& (4) :=   	 \sum_{\substack{ C_A(\bk,\bm,(\bh;\bi_2,\pm 1))\\
				\scriptscriptstyle{type}\; 3} }  \sum_{\substack{\varphi_1\in \mathfrak B_{q^{(1)},1}}} \sum_{\varphi_2\in \mathfrak B_{q^{(2)},3}} \bili( P^{ \varphi_1(\bh^{(1)};\bi_2,\sigma)} ,Q_{1}^{ \varphi_2(\bh^{(2)};\bi_2,-\sigma;\tilde\imath, \pm 1)})
	\end{align*}
	\begin{align*}
		&(5) :=   	 \sum_{\substack{ C_A(\bk,\bm,\bh) } } \sum_{\varphi_1\in \mathfrak B_{q^{(1)},4}} \bili( P_{1}^{ \varphi_1(\bh^{(1)};\bi_2,\pm 1;\tilde\imath, \pm 1)} ,Q^{ \bh^{(2)}}  ) \\
		&(6) :=    	 \sum_{\substack{ C_A(\bk,\bm,\bh)} }  \sum_{\varphi_2\in \mathfrak B_{q^{(2)},4}} \bili( P^{ \bh^{(1)}}  , Q_{1}^{ \varphi_2(\bh^{(2)};\bi_2,\pm 1;\tilde\imath, \pm 1)}  ).
	\end{align*}
	 As expected the expression above does not depend on $\bi_1$.
	
	\noindent \underline{\emph{Step $1.3.$ : Bounds.}} 
	We now prove that the $K_i$ satisfy the bounds in \eqref{poisson_base}. Recall that 
	\begin{equation}
		\label{stimine}
		\begin{aligned}
			&|P_{n,\bk,\bm}^{\bh;\bi_n,\pm 1;\dots;\bi_1,\pm 1}| \le \Theta_{n,\bh}  |P|^{\Theta,q+2n }_{\bk,\bm}\,,\quad |\widehat P_{1,\bk,\bm}^{\bh}| \le 2 | P|^{\Theta,q}_{\bk,\bm}\,,\quad |P_{\bk,\bm}^{\bh}| \le 3| P|^{\Theta,q}_{\bk,\bm}\\
			 &  |Q_{n,\bk,\bm}^{\bh;\bi_n,\pm 1;\dots;\bi_1,\pm 1}| \le (1+\|\bl\|_{\ell^\infty})^{-1} |Q|^{\Upsilon,q+2n}_{\bk,\bm}\,,\quad 
			\quad |Q_{\bk,\bm}^{\bh}| \le 2 | Q|^{\Upsilon,q}_{\bk,\bm}
		\end{aligned}
	\end{equation}
	From \eqref{kappa2} and \eqref{stimine}, recalling also that $\Theta_{2,\bh}=1$, 
	we immediately obtain for $q\ge 4$ %\mic{Here, when i estimate $K_{n}$  I changed $\sharp \bh =q-2n$ so that the total degree is $q$ and this contributes  to $\mathfrak K^{\Theta}_{\bk,\bm,q}$, I dont know wheter its the best choice }
	\begin{equation*}
			\sup_{\substack{\bh\in \mathbb I_{q-4}\\ \bh\prec\bi_2\prec\bi_1\\ (\bk,\bm,\bh)\in \mathbb M}}|K^{\bh;\bi_1,\pm 1;\bi_2,\pm 1}_{2,\bk,\bm}| \Theta_{2,\bh}^{-1}  \lesssim B 
	\end{equation*}
	with 
	\begin{equation}
		\label{stima2}
		\begin{aligned}
		 B:&=  \sum_{\substack{q^{(1)}+q^{(2)}= q\\ \bk^{(1)}+\bk^{(2)}=\bk \\ \bm^{(1)}+\bm^{(2)}=\bm }}\!\sum_{j\in \mathcal S_p}  |\boldsymbol{k}^{(1)}_j|(\|\boldsymbol{m}^{(2)}\|_{\ell^\infty}+1) |P|^{\Theta,q^{(1)}}_{\bk^{(1)},\bm^{(1)}}|Q|^{\Upsilon,q^{(2)}}_{\bk^{(2)},\bm^{(2)}+e_j} \\ &\qquad\qquad\qquad\qquad\qquad+|\boldsymbol{k}^{(2)}_j|(\|\boldsymbol{m}^{(1)}\|_{\ell^\infty}+1)|P|^{\Theta,q^{(1)}}_{\bk^{(1)},\bm^{(1)}+e_j} |Q|^{\Upsilon,q^{(2)}}_{\bk^{(2)},\bm^{(2)}}\,.
		\end{aligned}
	\end{equation}
	Regarding $K_{0,\bk,\bm}^{\bh}$, we note that if $\bh\notin \mathbf{C}_{q,1}$ then, by \eqref{limite1} (recall that $\Theta_{0,\bh}=1 $ in this case), we obtain for $q\ge 0$
	\begin{equation}
		\label{stima00}
		\begin{aligned}
			\sup_{\substack{\bh\in \mathbb I_{q}\\ \bh\notin \mathbf{C}_{q,1}\\ (\bk,\bm,\bh)\in \mathbb M}}|K^{(\bh)}_{0,\bk,\bm}| \Theta_{0,\bh}^{-1}  \lesssim B
%			\!\!\! \sum_{\substack{q^{(1)}+q^{(2)}= q \\ \bk^{(1)}+\bk^{(2)}=\bk \\ \bm^{(1)}+\bm^{(2)}=\bm }}\!\!\!\sum_{j\in \mathcal S_p} & |\boldsymbol{k}^{(1)}_j|(\|\boldsymbol{m}^{(2)}\|_{\ell^\infty}+1)|P|^{\Theta,q^{(1)}}_{\bk^{(1)},\bm^{(1)}}|Q|^{\Upsilon, q^{(2)}}_{\bk^{(2)},\bm^{(2)}+e_j,}+ \\  &|\boldsymbol{k}^{(2)}_j|(\|\boldsymbol{m}^{(1)}\|_{\ell^\infty}+1)|P|^{\Theta,q^{(1)}}_{\bk^{(1)},\bm^{(1)}+e_j}|Q|^{\Upsilon, q^{(2)}}_{\bk^{(2)},\bm^{(2)}})\,.
		\end{aligned}
	\end{equation}
	Otherwise, for  $(\bh;\bi_1,\pm 1)$ with $\bh\prec \bi_1$ we note that in \eqref{kappa0}
	
	\begin{itemize}
		\item  In  item (1) one has  $\Theta_{0,(\bh;\bi_1,\pm 1)} ={1+|\bi_1|}= \Theta_{0,(\bh^{(1)};\bi_1,\pm 1)}$, so that  e. g.  one has 
		$$|P_{0,\bk^{(1)},\bm^{(1)}}^{ \varphi_1(\bh^{(1)};\bi_1,\pm 1)}Q_{\bk^{(2)},\bm^{(2)}+e_j}^{ \bh^{(2)}}  | \Theta_{0,(\bh;\bi_1)}^{-1} \le 2 |P|^{\Theta,q^{(1)}+2}_{\bk^{(1)},\bm^{(1)}}|Q|^{\Upsilon,q^{(2)}}_{\bk^{(2)},\bm^{(2)}+e_j} . $$
		\item In  items (2) and (3) one has  $\Theta_{0,(\bh;\bi_1)} ={1+|\bi_1|}= \Upsilon_{0,(\bh^{(2)};\bi_1,\pm 1)}$, so that e.g. we have
		\[
		|P_{\bk^{(1)},\bm^{(1)}}^{ \bh^{(1)}} Q_{0,\bk^{(2)},\bm^{(2)}+e_j}^{ \varphi_2(\bh^{(2)};\bi_1,\pm 1)}| \Theta_{0,(\bh;\bi_1)}^{-1} \le 3  |P|^{\Theta,q^{(1)}}_{\bk^{(1)},\bm^{(1)}}|Q|^{\Upsilon,q^{(2)}+2}_{\bk^{(2)},\bm^{(2)}+e_j} 
		\] 
	\end{itemize}
%	in the first line of \eqref{kappa0} $\Theta_{0,(\bh;\bi_1)} ={1+|\bi_1|}= \Theta_{0,(\bh^{(1)};\bi_1)}$, while in the second and  the third line 
%	$\Theta_{0,(\bh;\bi_1)} ={1+|\bi_1|}= \Upsilon_{0,(\bh^{(2)};\bi_1)}$.  \mic{finqui} So, by \eqref{stimine} 
%	
	we obtain for $q\ge 2$:
	\begin{equation}
		\label{stima0}
		\begin{aligned}
			\sup_{\substack{\bh\in \mathbb I_{q-2}\\ \bh\prec\bi_1\\ (\bk,\bm,\bh)\in \mathbb M}}|K^{(\bh;\bi_1,\pm 1)}_{0,\bk,\bm}| \Theta_{0,(\bh;\bi_1,\pm 1)}^{-1}  \lesssim B
%			\!\!\! \sum_{\substack{q^{(1)}+q^{(2)}= q \\ \bk^{(1)}+\bk^{(2)}=\bk \\ \bm^{(1)}+\bm^{(2)}=\bm }}\!\!\!\sum_{j\in \mathcal S_p} & |\boldsymbol{k}^{(1)}_j|(|\boldsymbol{m}^{(2)}|_{\ell_\infty}+1)|P|^{\Theta,q^{(1)}}_{\bk^{(1)},\bm^{(1)}}|Q|^{\Upsilon,q^{(2)}}_{\bk^{(2)},\bm^{(2)}+e_j} + \\  &|\boldsymbol{k}^{(2)}_j|(|\boldsymbol{m}^{(1)}|_{\ell_\infty}+1)|P|^{\Theta,q^{(1)}}_{\bk^{(1)},\bm^{(1)}+ e_j}|Q|^{\Upsilon,q^{(2)}}_{\bk^{(2)},\bm^{(2)}} 
\,.
		\end{aligned}
	\end{equation}
	
	Regarding  $K_{1,\bk,\bm}^{(\bh;\bi_1,\pm 1)}$ with $\bh\prec\bi_1$,  we note that if $\bh\notin\mathbf{C}_{q-2,1}$ then $K_{1,\bk,\bm}^{(\bh;\bi_1,\pm 1)}= \widehat K_{1,\bk,\bm}^{(\bh;\bi_1,\pm 1)}$ and we get (recall that $\Theta_{1,\bh}=1 $ in this case)
	\begin{equation}
		\label{stima10}
		\begin{aligned}
			\sup_{\substack{\bh\in \mathbb I_{q-2}\\ \bh\notin \mathbf{C}_{q-2,1}\\ (\bk,\bm,\bh)\in \mathbb M}}|K^{(\bh;\bi_1,\pm 1)}_{1,\bk,\bm}| \Theta_{1,\bh}^{-1}  \lesssim B
%			\!\!\! \sum_{\substack{q^{(1)}+q^{(2)}= q \\ \bk^{(1)}+\bk^{(2)}=\bk \\ \bm^{(1)}+\bm^{(2)}=\bm }}\!\!\!\sum_{j\in \mathcal S_p} & |\boldsymbol{k}^{(1)}_j|(|\boldsymbol{m}^{(2)}|_{\ell_\infty}+1)\mathfrak  P^\Theta_{\bk^{(1)},\bm^{(1)},q^{(1)}}\mathfrak  Q^\Upsilon_{\bk^{(2)},\bm^{(2)}+e_j,q^{(2)}}+ \\  &|\boldsymbol{k}^{(2)}_j|(|\boldsymbol{m}^{(1)}|_{\ell_\infty}+1)\mathfrak  P^\Theta_{\bk^{(1)},\bm^{(1)}+e_j,q^{(1)}}\mathfrak  Q^\Upsilon_{\bk^{(2)},\bm^{(2)},q^{(2)}})\,.
		\end{aligned}
	\end{equation}
	Otherwise for  $K_{1,\bk,\bm}^{(\bh;\bi_2,\pm 1;\bi_1,\pm 1)}$ with $\bh\prec \bi_2$  defined in  %and $\bi_2\prec \bi_2$ 
	\eqref{kappa1}, we  note that:
	\begin{itemize}
		\item  in (1) and (5) we have  $\Theta_{1,(\bh;\bi_2,\pm 1)}^{-1} ={1+|\bi_2|}= \Theta_{n,(\bh^{(1)};\bi_2,\pm 1)}^{-1}$  with $n=0,1$, so that  for example 
		$$
		|P_{0,\bk^{(1)},\bm^{(1)}}^{ \varphi_1(\bh^{(1)};\bi_2,\pm 1)}Q_{1,\bk^{(2)},\bm^{(2)}+e_j}^{ \varphi_2(\bh^{(2)};\tilde\imath,\pm 1)}|\Theta_{1,(\bh;\bi_2,\pm 1)}^{-1} \le  |P|^{\Theta,q^{(1)}+2}_{\bk^{(1)},\bm^{(1)}}|Q|^{\Upsilon,q^{(2)}+2}_{\bk^{(2)},\bm^{(2)}+e_j} 
		$$ 
		\item in (2) and (6) we have $\Theta_{1,(\bh;\bi_2,\pm 1)}^{-1} ={1+|\bi_2|}= \Upsilon_{n,(\bh^{(2)};\bi_1,\pm 1)}^{-1}$   with $n=0,1$,
%		 then we estimate  for example $|	\widehat P_{1,\bk^{(1)},\bm^{(1)}}^{ \varphi_1(\bh^{(1)};\bi_1)} Q_{0,\bk^{(2)},\bm^{(2)}+e_j}^{ \varphi_2(\bh^{(2)};\bi_2)}|\le  2 \Upsilon_{0,(\bh^{(2)};\bi_2)} \mathfrak P^{\Theta}_{\bk^{(1)},\bm^{(1)},q^{(1)}+2} \mathfrak Q^\Upsilon_{\bk^{(2)},\bm^{(2)}+e_j,q^{(2)}+2} $ 
%		
		\item in (3) and (4) we have $\Theta_{1,(\bh;\bi_2,\pm 1)}^{-1} ={1+|\bi_2|}= \Upsilon_{0, (\bh^{(2)};\bi_2,-\sigma)}^{-1}$.
%		moreover, since $(\bh^{(2)};\bi_2,-\sigma)\notin \mathbf P$ then
%		$Q_{\bk^{(2)},\bm^{(2)}+e_j}^{ \varphi_2(\bh^{(2)};\bi_2,-\sigma}= Q_{0,\bk^{(2)},\bm^{(2)}+e_j}^{ \varphi_2(\bh^{(2)};\bi_2,-\sigma}$. Thus  we estimate  for example
%		$$
%		\widehat P_{1,\bk^{(1)},\bm^{(1)}}^{ \varphi_1(\bh^{(1)};\bi_2,\sigma);\bi_1)}   Q_{\bk^{(2)},\bm^{(2)}+e_j}^{ \varphi_2(\bh^{(2)};\bi_2,-\sigma)}|
%		\le \Upsilon_{0, (\bh^{(2)};\bi_2,-\sigma)} \mathfrak P^{\Theta}_{\bk^{(1)},\bm^{(1)},q^{(1)}+3} \mathfrak Q^\Upsilon_{\bk^{(2)},\bm^{(2)}+e_j,q^{(2)}+1} 
%		$$
%%		
%		\item  in (4), since  $(\bh^{(2)};\bi_2,-\sigma)\notin \mathbf P$, then, by \eqref{stimine}, we have  $|\widehat Q_{1,\bk^{(2)},\bm^{(2)}+e_j}^{ \varphi_2(\bh^{(2)};\bi_2,-\sigma;\bi_1)}|= | Q_{1,\bk^{(2)},\bm^{(2)}+e_j}^{ \varphi_2(\bh^{(2)};\bi_2,-\sigma;\bi_1)}|\le \Upsilon_{1,(\bh^{(2)};\bi_2,-\sigma)} \mathfrak Q_{\bk,\bm,q^{(2)}+3}=  \Theta_{1,(\bh;\bi_2)} \mathfrak Q_{\bk,\bm,q^{(2)}+3}$.
	\end{itemize}
	In conclusion  we obtain:
	\begin{equation}
		\label{stima1}
		\begin{aligned}
			\sup_{\substack{\bh\in \mathbb I_{q-4}\\ \bh\prec\bi_1\\ (\bk,\bm,\bh)\in \mathbb M}}|K^{(\bh;\bi_2,\pm 1;\bi_1,\pm 1)}_{1,\bk,\bm}| \Theta_{1,(\bh;\bi_2,\pm 1)}^{-1}  \lesssim B
		\end{aligned}
	\end{equation}
	We have thus proved that $|K|_{\bk,\bm}^{\Theta,q} \lesssim B$.
Then, we conclude as in the proof of Lemma \ref{lem:poisson_F}.
	
			\noindent \underline{\emph{Step $2$ : Estimate on $L$.}} 
We follow the same scheme as for $K$, by first proposing a decomposition $L= L_0+L_1+L_2$.

\noindent \underline{\emph{Step $2.1.$ : Decomposition of $L$.}} We write 
$L_0= L- \widehat L_1 $ and $L_1 =\widehat L_1- L_2$
where
\begin{equation}
	\label{eq:def_hat_L1_and_L2}
	\widehat L_{1,\boldsymbol{k},\boldsymbol{m}}^{(\boldsymbol{h}; \boldsymbol{j}_1,\pm 1)} = \lim_{|\bi_1|\to  \infty}   L_{\boldsymbol{k},\boldsymbol{m}}^{(\boldsymbol{h};\bi_1,\pm 1)}
	\quad \mathrm{and} \quad
	L_{2,\boldsymbol{k},\boldsymbol{m}}^{(\boldsymbol{h}; \boldsymbol{j}_2,\pm 1;\boldsymbol{j}_1,\pm 1)} =  \lim_{|\bi_1|\to  \infty} \lim_{|\bi_2|\to  \infty}   L_{\boldsymbol{k},\boldsymbol{m}}^{(\boldsymbol{h}; \bi_2,\pm 1;\bi_1,\pm 1)}
\end{equation}
for $\bh \prec \boldsymbol{j}_2 \prec \boldsymbol{j}_1$. Now, we aim at proving that these limits exist.
%Then in order to prove that this is a decomposition as in \eqref{eq:cond2}, we just need to verify that the corresponding norm \eqref{eq:normcst2} is finite. 

As for the term $K$, it is convenient to note that for $\bh\prec \bi_1$ 
\begin{equation}
	\label{eq:averageL}
	L ^{(\bh;\bi_1,\pm 1)}_{\bk,\bm}= \frac{2\ic }{\sharp(\mathfrak S_{q+2})}\sum_{\vartheta\in \mathfrak S_q} \sum_{\varphi \in \mathfrak B_{q,2}} \tilde L^{\varphi(\vartheta \bh;\bi_1,\pm 1)}_{\bk,\bm}
\end{equation}
and that, by \eqref{pan}, for $\varphi \in \mathfrak B_{q,2}$, 
%	\begin{equation}
	%		\label{limite1}
	%		 \tilde K^{\varphi(\bh;\bi_1,\pm 1)}_{\bk,\bm}= 	 \sum_{\substack{ C_A(\bk,\bm,\varphi(\bh;\bi_1,\pm 1)) \\
			%				j\in \mathcal S_p} }  \boldsymbol{k}^{(1)}_j(\boldsymbol{m}^{(2)}_j+1)P_{\bk^{(1)},\bm^{(1)}}^{ \check{\bh}^{(1)}}Q_{\bk^{(2)},\bm^{(2)}+e_j}^{ \check{\bh}^{(2)}}-	
	%			 \boldsymbol{k}^{(2)}_j(\boldsymbol{m}^{(1)}_j+1)P_{\bk^{(1)},\bm^{(1)}+e_j}^{ \check{\bh}^{(1)}}Q_{\bk^{(2)},\bm^{(2)}}^{ \check{\bh}^{(2)}}
	%	\end{equation}
\begin{equation}
	\label{limiteL}
	\tilde L^{\varphi(\bh;\bi_1,\pm 1)}_{\bk,\bm}= 	 \sum_{  C_v(\bk,\bm,\varphi(\bh;\bi_1,\pm 1))  }  \mathcal{L}(P^{ \widehat{\bh}^{(1)}},Q^{ \widehat{\bh}^{(2)}})
\end{equation}
where $ \varphi(\bh;\bi_1,\pm 1)=(\tilde\bh^{(1)};\tilde\bh^{(2)})$,  $\widehat\bh^{(1)}= (\tilde\bh^{(1)};\widehat\jmath,\widehat\sigma)$ and $\widehat\bh^{(2)}={ (\tilde\bh^{(2)};\widehat\jmath,-\widehat\sigma)}$, see formula \eqref{taglio}, and $\tilde q^{(i)}$ is the cardinality of $\tilde\bh^{(i)}$. 

This implies that there exists  $\bh^{(1)},\bh^{(2)}\prec \bi_1 $ such that $\bh=(\bh^{(1)};\bh^{(2)})$ and (apart from a permutation) one of the following holds 
\begin{itemize}
	\item[ \emph{Type 1}:] $\tilde \bh^{(1)} =(\bh^{(1)};\bi_1,\pm 1)$ and  $\tilde \bh^{(2)} = \bh^{(2)}$ so that $\widehat \bh^{(1)}=(\bh^{(1)};
	\bi_1,\pm 1;\hat\jmath,\hat \sigma)$  and $\widehat \bh^{(2)}=(\bh^{(2)};\hat\jmath,-\hat \sigma)$ with 
	$$\
	\hat\sigma = -\sum_{j\in \mathcal{S}_p} \boldsymbol{k}_j^{(1)} + \sum_{1\leq i \leq q^{(1)}} \boldsymbol{\sigma}^{(1)}_i  \quad \mbox{and} \quad  \hat\jmath =  -\hat\sigma (\displaystyle \sum_{j\in \mathcal{S}_p} j\boldsymbol{k}^{(1)}_j + \sum_{1\leq i \leq q^{(1)}} \boldsymbol{\sigma}^{(1)}_i \boldsymbol{\ell}^{(1)}_i).
	$$
	\item[ \emph{Type 2}:] $\tilde \bh^{(1)} =\bh^{(1)}$ and  $\tilde \bh^{(2)} = (\bh^{(2)};\bi_1,\pm 1)$   so that $\widehat \bh^{(1)}=(\bh^{(1)};\hat\jmath,\hat \sigma)$  and $\widehat \bh^{(2)}=(\bh^{(2)};\bi_1,\pm 1;\hat\jmath,-\hat \sigma)$ with $\hat\sigma,\hat\jmath$ defined as in \emph{Type 1.}
	\item[ \emph{Type 3}:] $\tilde \bh^{(1)} =(\bh^{(1)};(\bi_1,\sigma))$ and  $\tilde\bh^{(2)} = (\bh^{(2)};\bi_1,-\sigma)$  for some  $\sigma\in\{ -1,1\}$ so that  $\widehat\bh^{(1)} =(\bh^{(1)};(\bi_1,\sigma);(\hat\jmath,\hat \sigma))$ and  $\widehat\bh^{(2)} = (\bh^{(2)};\bi_1,-\sigma;\hat\jmath,-\hat \sigma)$. In this last case
	\begin{equation}
		\label{caso3}
		\hat\sigma = -\sum_{j\in \mathcal{S}_p} \boldsymbol{k}_j^{(1)} + \sum_{1\leq i \leq q^{(1)}} \boldsymbol{\sigma}^{(1)}_i - \sigma \,,\quad  \hat\jmath =  -\hat\sigma (\displaystyle \sum_{j\in \mathcal{S}_p} j\boldsymbol{k}^{(1)}_j + \sum_{1\leq i \leq q^{(1)}} \boldsymbol{\sigma}^{(1)}_i \boldsymbol{\ell}^{(1)}_i) -\hat\sigma \sigma \bi_1 \,.
	\end{equation}
\end{itemize}

It is important to notice that in \eqref{limiteL}, the terms of \emph{Type 3} such that $\hat\jmath =\bi_1$ and $\hat\sigma =-\sigma$ cancel out. Indeed they would contribute
\[
\sum_{\substack{C_v(\bk,\bm,\varphi(\bh;\bi_1,\pm 1))\\ \mbox{\tiny{\emph{Type 3}}} \\ \hat\jmath =\bi_1\,,\; \hat\sigma =-1}} \mathcal{L}(P^{ \widehat{\bh}^{(1)}},Q^{ \widehat{\bh}^{(2)}})+ \sum_{\substack{C_v(\bk,\bm,\varphi(\bh;i))\\ \mbox{\tiny{\emph{Type 3}}} \\ \hat\jmath =\bi_1\,,\; \hat\sigma =1}} \mathcal{L}(P^{ \widehat{\bh}^{(1)}},Q^{ \widehat{\bh}^{(2)}})
\]
and of course (recall that $\tilde q^{(i)}= q^{(i)}+1 $)
\begin{equation}
	\label{stoc}
	\sum_{\sigma= \pm 1} %\sum_{\substack{C_v(\bk,\bm,\varphi(\bh;i))\\ \mbox{\tiny{\emph{Type 3}}} \\ \hat\jmath =i\,,\; \hat\sigma =-\sigma}} 
	(-\sigma)  {(q^{(1)} +2)(q^{(2)}+2}) P_{\bk^{(1)},\bm^{(1)}}^{ (\bh^{(1)};\bi_1,\sigma; \bi_1,-\sigma)} Q_{\bk^{(2)},\bm^{(2)}}^{ (\bh^{(2)};\bi_1,-\sigma;\bi_1,\sigma)}=0\,.
\end{equation}
Moreover if $(\bi_1,\sigma) \ne (\hat\jmath,-\hat\sigma)$, then $\widehat \bh^{(2)}\notin \mathbf C_{q^{(2)},1}$  (recall that either $\bi_1$ or $\hat\jmath$ is the highest mode) so, by \eqref{stimine}, we have 
$$
|Q_{\bk^{(2)},\bm^{(2)}}^{ \widehat\bh^{(2)}}| =  |Q_{0,\bk^{(2)},\bm^{(2)}}^{ \widehat\bh^{(2)}}| \le  |Q|^{\Upsilon,q^{(2)}}_{\bk^{(2)},\bm^{(2)}}  \|\hat\bl^{(2)}\|_{\ell^\infty}^{-1} \le   |Q|^{\Upsilon,q^{(2)}}_{\bk^{(2)},\bm^{(2)}}  |\bi_1|^{-1}\,.
$$  
In conclusion the terms of \emph{Type 3}  give no contribution in the limit $|\bi_1|\to \infty$ of \eqref{limiteL}. Thus we obtain 
\begin{align}\label{ellehat}
	\lim_{|\bi_1|\to\infty} \sum_{\varphi \in \mathfrak B_{q,2}}	\tilde  L_{\bk,\bm}^{ \varphi(\bh;\bi_1,\pm 1)} =   \sum_{C_v(\bk,\bm,\bh)}  & \sum_{\varphi_1 \in \mathfrak B_{q^{(1)},2}} \mathcal L(\widehat P_{1,\bk^{(1)},\bm^{(1)}}^{ (\varphi_1(\bh^{(1)};\tilde\imath,\pm 1);\hat\jmath,\hat\sigma)},Q_{\bk^{(2)},\bm^{(2)}}^{ (\bh^{(2)};\hat\jmath,-\hat\sigma)}) \\ &  +  \sum_{\varphi_2 \in \mathfrak B_{q^{(2)},2}} \mathcal L( P_{\bk^{(1)},\bm^{(1)}}^{ (\bh^{(1)};\hat\jmath,\hat\sigma)} , Q_{1,\bk^{(2)},\bm^{(2)}}^{ (\varphi_2(\bh^{(2)};\tilde\imath,\pm 1);\hat\jmath,-\hat\sigma)})\nonumber%- \mic{c^{(1)}_+ c^{(2)}_-}  P_{\bk^{(1)},\bm^{(1)}}^{ \bh^{(1)};\hat\jmath,-\hat\sigma}\chi_{\bk^{(2)},\bm^{(2)}}^{ \bh^{(2)};\hat\jmath,\hat\sigma}
\end{align}
where $\tilde\imath\in \mathcal S^c_p$ is any mode which satisfies $(\bh;\hat\jmath,\hat\sigma)\prec\tilde\imath$; this defines $\widehat L$ according to \eqref{eq:averageL}.

	Following the same reasoning, we get that  $L_2$ is well defined and is given for  $\bh\prec \bi_2\prec \bi_1$ by
$$ 
L_{2,\bk,\bm}^{(\bh;\bi_2,
	\pm 1;\bi_1,\pm 1)} =  \frac{2 \ic}{|\mathfrak S_{q+4}|}\sum_{\vartheta\in \mathfrak S_{q}} \sum_{\varphi\in \mathfrak B_{q,4}} \tilde L_{2,\bk,\bm}^{\varphi(\vartheta \bh;\bi_2,
	\pm 1;\bi_1,\pm 1)}  
$$
where (see \eqref{pan} for the definition of $\mathcal L$):
\begin{align}\label{elle2}
	\sum_{\varphi\in \mathfrak B_{q,4}} \tilde L_{2,\bk,\bm}^{(\bh;\bi_2,
		\pm 1;\bi_1,\pm 1)}  =  \sum_{C_v(\bk,\bm,\bh)}
	\Big( 2&\sum_{\varphi_s \in \mathfrak B_{q^{(s)},2}}\mathcal L(\widehat P_{1,\bk^{(1)},\bm^{(1)}}^{ (\varphi_1(\bh^{(1)};\tilde\imath,\pm 1);\hat\jmath,\hat\sigma)}, Q_{1,\bk^{(2)},\bm^{(2)}}^{ (\varphi_2(\bh^{(2)};\tilde\jmath,\pm 1);\hat\jmath,-\hat\sigma)}) \\	
	 &+\sum_{\varphi_1 \in \mathfrak B_{q^{(1)},4}}\mathcal L(P_{2,\bk^{(1)},\bm^{(1)}}^{ (\varphi_1(\bh^{(1)};\tilde\jmath,\pm 1;\tilde\imath,\pm 1);\hat\jmath,\hat\sigma)},Q_{\bk^{(2)},\bm^{(2)}}^{ (\bh^{(2)};\hat\jmath,-\hat\sigma)})\Big) \,.\nonumber
\end{align}
% $(\bh;\hat\jmath)\prec\tilde\jmath\prec\tilde\imath$.
		As before $\tilde\jmath\prec \tilde\imath$ are arbitrary couples of indexes in $\mathcal S_p^c$ with $(\bh;\hat\jmath,\hat\sigma)\prec\tilde\jmath$.
		We have proven that $\widehat L_1 ,L_2$  are well defined and so that
	\[
	\Pi_{\mathrm{Top}=2}  L = L_2\,,\quad  \Pi_{\mathrm{Top}=1}  L = \widehat L_1 - L_2=: L_1 \,,\quad  \Pi_{\mathrm{Top}=0}  L = L- \widehat L_1 =: L_0\,.
	\]
	
	\noindent \underline{\emph{Step $2.2.$ : Explicit formulas for $L_0$ and $L_1$.}} 
	Recalling that $L_0=L-\widehat L_1$ and formula \eqref{limiteL}, we have that  $L_{0,\bk,\bm}^{(\bh)}= L_{\bk,\bm}^{(\bh)}$ defined in \eqref{pan},  for any $\bh\not\in \mathbf C_{q,1}$. Otherwise for  $\bh \prec \bi_1$ we have
	\begin{equation}
		\label{kappa01}
		\sum_{\varphi \in \mathfrak B_{q,2}}	\tilde  L_{0,\bk,\bm}^{ \varphi(\bh;\bi_1,\pm 1)} = (1) +(2)+(3)
	\end{equation}
	with 
	\[
	\begin{aligned}
		&(1):=\sum_{\substack{C_v(\bk,\bm,\bh)\\ \hat\jmath \prec \bi_1}} \sum_{\varphi_1 \in \mathfrak B_{q^{(1)},2}} \mathcal L(P_{0,\bk^{(1)},\bm^{(1)}}^{ (\varphi_1(\bh^{(1)};\bi_1,\pm 1);\hat\jmath,\hat\sigma)},Q_{\bk^{(2)},\bm^{(2)}}^{ (\bh^{(2)};\hat\jmath,-\hat\sigma)})\\ & \qquad\qquad\qquad\qquad\qquad\qquad\qquad\qquad+   \sum_{\varphi_2 \in \mathfrak B_{q^{(2)},2}} \mathcal L(P_{\bk^{(1)},\bm^{(1)}}^{ (\bh^{(1)};\hat\jmath,\hat\sigma}, Q_{0,\bk^{(2)},\bm^{(2)}}^{ (\varphi_2(\bh^{(2)};\bi_1,\pm 1);\hat\jmath,-\hat\sigma)})%- \mic{c^{(1)}_+ c^{(2)}_-}  P_{\bk^{(1)},\bm^{(1)}}^{ \bh^{(1)};(\hat\jmath,-\hat\sigma)}\chi_{\bk^{(2)},\bm^{(2)}}^{ \bh^{(2)};\hat\jmath,\hat\sigma}
		\\
		& (2):= \sum_{\substack{C_v(\bk,\bm,\bh)\\ \hat\jmath \not\prec \bi_1}} \sum_{\varphi_1 \in \mathfrak B_{q^{(1)},2}}  \mathcal L( P_{\bk^{(1)},\bm^{(1)}}^{ (\varphi_1(\bh^{(1)};\bi_1,\pm 1));\hat\jmath,\hat\sigma)}- \widehat P_{1,\bk^{(1)},\bm^{(1)}}^{ (\varphi_1(\bh^{(1)};\tilde\imath,\pm 1);\hat\jmath,\hat\sigma)}, Q_{\bk^{(2)},\bm^{(2)}}^{ (\bh^{(2)};\hat\jmath,-\hat\sigma)})\\ &  \qquad\qquad\qquad  +   \sum_{\varphi_2 \in \mathfrak B_{q^{(2)},2}} \mathcal L( P_{\bk^{(1)},\bm^{(1)}}^{ (\bh^{(1)};\hat\jmath,\hat\sigma)} , Q_{\bk^{(2)},\bm^{(2)}}^{ (\varphi_2(\bh^{(2)};\bi_1,\pm 1);\hat\jmath,-\hat\sigma)}- Q_{1,\bk^{(2)},\bm^{(2)}}^{ (\varphi_2(\bh^{(2)};\tilde\imath,\pm 1);\hat\jmath,-\hat\sigma)})
		\\
		&(3):=    \sum_{\substack{ C_v(\bk,\bm,(\bh;\bi_1))\\
				\scriptscriptstyle{type}\; 3} }\sum_{\sigma=\pm 1} \sum_{\varphi_s\in \mathfrak B_{q^{(s)},1}}  \mathcal L(P_{\bk^{(1)},\bm^{(1)}}^{ (\varphi_1(\bh^{(1)};(\bi_1,\sigma));\hat\jmath,\hat\sigma )},Q_{\bk^{(2)},\bm^{(2)}}^{ (\varphi_2(\bh^{(2)};(\bi_1,-\sigma));\hat\jmath,-\hat\sigma)} )
	\end{aligned}
	\]
	where $(\bh;\hat\jmath,\hat\sigma)\prec\tilde\imath$.
	It remains to deal with $L_1$. As before, if $\bh\not\in\mathbf{C}_{q,1}$, then  $L_{1,\bk,\bm}^{(\bh;\bi_1)} =  \widehat L_{1,\bk,\bm}^{(\bh;\bi_1)}$ defined in \eqref{ellehat}. 
	Otherwise
	\begin{equation}
		\label{L1}
		\sum_{\varphi\in \mathfrak B_{q,4}}  \tilde L_{1,\bk,\bm}^{(\bh;\bi_2;\bi_1)} =  (1)+(2)+(3)+(4)+(5)
	\end{equation}
	
	\[\begin{aligned}
		(1):=	 \sum_{\substack{C_v(\bk,\bm,\bh)\\ \hat\jmath \prec \bi_2}} \sum_{\varphi_1 \in \mathfrak B_{q^{(1)},2}} \mathcal L(P_{0,\bk^{(1)},\bm^{(1)}}^{ (\varphi_1(\bh^{(1)};\bi_2,\pm 1);\hat\jmath,\hat\sigma)}&, Q_{1,\bk^{(2)},\bm^{(2)}}^{ (\varphi_2(\bh^{(2)};\tilde\imath,\pm 1);\hat\jmath,-\hat\sigma)})\\ & +   \sum_{\varphi_2 \in \mathfrak B_{q^{(2)},2}}  \mathcal L(\widehat P_{1,\bk^{(1)},\bm^{(1)}}^{ (\varphi_1(\bh^{(1)};\tilde\imath,\pm 1);\hat\jmath,\hat\sigma)}, Q_{0,\bk^{(2)},\bm^{(2)}}^{ (\varphi_2(\bh^{(2)};\bi_2,\pm 1);\hat\jmath,-\hat\sigma)})%- \mic{c^{(1)}_+ c^{(2)}_-}  P_{\bk^{(1)},\bm^{(1)}}^{ \bh^{(1)};(\hat\jmath,-\hat\sigma)}\chi_{\bk^{(2)},\bm^{(2)}}^{ \bh^{(2)};\hat\jmath,\hat\sigma}
	\end{aligned}
	\]
	\[\begin{aligned}
		(2):=	  \sum_{\substack{C_v(\bk,\bm,\bh)\\ \hat\jmath \not\prec \bi_2}}\sum_{\varphi_1 \in \mathfrak B_{q^{(1)},2}}&\mathcal L(P_{\bk^{(1)},\bm^{(1)}}^{ (\varphi_1(\bh^{(1)};\bi_2,\pm 1);\hat\jmath,\hat\sigma)} - \widehat P_{1,\bk^{(1)},\bm^{(1)}}^{ (\varphi_1(\bh^{(1)};\tilde\jmath,\pm 1);\hat\jmath,\hat\sigma)}, Q_{1,\bk^{(2)},\bm^{(2)}}^{ \varphi_2(\bh^{(2)};\tilde\imath,\pm 1);\hat\jmath,-\hat\sigma)})\\ & +   \sum_{\varphi_2 \in \mathfrak B_{q^{(2)},2}} \mathcal L(\widehat P_{1,\bk^{(1)},\bm^{(1)}}^{ (\varphi_1(\bh^{(1)};\tilde\imath,\pm 1);\hat\jmath,\hat\sigma)},Q_{\bk^{(2)},\bm^{(2)}}^{ (\varphi_2(\bh^{(2)};\bi_2,\pm 1);\hat\jmath,-\hat\sigma)}-Q_{1,\bk^{(2)},\bm^{(2)}}^{ (\varphi_2(\bh^{(2)};\tilde\jmath,\pm 1);\hat\jmath,-\hat\sigma)})%- \mic{c^{(1)}_+ c^{(2)}_-}  P_{\bk^{(1)},\bm^{(1)}}^{ \bh^{(1)};(\hat\jmath,-\hat\sigma)}\chi_{\bk^{(2)},\bm^{(2)}}^{ \bh^{(2)};\hat\jmath,\hat\sigma}
	\end{aligned}
	\]
	\[
	\begin{aligned}
		(3)	:=  \sum_{\substack{C_v(\bk,\bm,\bh)\\ \scriptscriptstyle{\emph{type} \,3}}} \sum_{\substack{\varphi_1 \in \mathfrak B_{q^{(1)},3}\\\varphi_2 \in \mathfrak B_{q^{(2)},1}}} \sum_{\sigma=\pm 1}&\mathcal L(\widehat P_{1,\bk^{(1)},\bm^{(1)}}^{ (\varphi_1(\bh^{(1)};\tilde\imath,\pm 1; \bi_2,\sigma);\hat\jmath,\hat\sigma)},Q_{\bk^{(2)},\bm^{(2)}}^{ (\varphi_2(\bh^{(2)};\bi_2,-\sigma);\hat\jmath,-\hat\sigma)})+
		\\ &\quad + \sum_{\substack{\varphi_1 \in \mathfrak B_{q^{(1)},3}\\\varphi_2 \in \mathfrak B_{q^{(2)},1}}} \sum_{\sigma=\pm 1} \mathcal L(P_{\bk^{(1)},\bm^{(1)}}^{ (\varphi_1(\bh^{(1)}; \bi_2,\sigma);\hat\jmath,\hat\sigma)},Q_{1,\bk^{(2)},\bm^{(2)}}^{ (\varphi_2(\bh^{(2)};\tilde\imath,\pm 1;\bi_2,-\sigma);\hat\jmath,-\hat\sigma)})
	\end{aligned}
	\]
	\[
	\begin{aligned}
		(4):=  \sum_{\substack{C_v(\bk,\bm,\bh)\\ \hat\jmath \prec \bi_2,\pm 1}} \sum_{\varphi_1 \in \mathfrak B_{q^{(1)},4}}&\mathcal L(P_{1,\bk^{(1)},\bm^{(1)}}^{ (\varphi_1(\bh^{(1)};\bi_2,\pm 1;\tilde\imath,\pm 1);\hat\jmath,\hat\sigma)},Q_{\bk^{(2)},\bm^{(2)}}^{ (\bh^{(2)};\hat\jmath,-\hat\sigma)}) \nonumber\\ &+   \sum_{\varphi_2 \in \mathfrak B_{q^{(2)},4}} \mathcal L(P_{\bk^{(1)},\bm^{(1)}}^{ (\bh^{(1)};\hat\jmath,\hat\sigma)}, Q_{1,\bk^{(2)},\bm^{(2)}}^{ (\varphi_2(\bh^{(2)};\bi_2,\pm 1;\tilde\imath,\pm 1);\hat\jmath,-\hat\sigma)})\,.
	\end{aligned}
	\]
	\[
	\begin{aligned}
		(5):=  \sum_{\substack{C_v(\bk,\bm,\bh)\\ \hat\jmath \not\prec \bi_2}}   & \sum_{\varphi_1 \in \mathfrak B_{q^{(1)},4}}\mathcal L(\widehat P_{1,\bk^{(1)},\bm^{(1)}}^{ (\varphi_1(\bh^{(1)};\bi_2,\pm 1;\tilde\imath,\pm 1);\hat\jmath,\hat\sigma)}-P_{2,\bk^{(1)},\bm^{(1)}}^{ (\varphi_1(\bh^{(1)};\tilde\jmath,\pm 1;\tilde\imath,\pm 1);\hat\jmath,\hat\sigma)} , Q_{\bk^{(2)},\bm^{(2)}}^{ (\bh^{(2)};\hat\jmath,-\hat\sigma)}) \nonumber\\ &+   \sum_{\varphi_2 \in \mathfrak B_{q^{(2)},4}} \mathcal L(P_{\bk^{(1)},\bm^{(1)}}^{ (\bh^{(1)};\hat\jmath,\hat\sigma)}  ,  Q_{1,\bk^{(2)},\bm^{(2)}}^{ (\varphi_2(\bh^{(2)};\bi_2,\pm 1;\tilde\imath,\pm 1);\hat\jmath,-\hat\sigma)}-  Q_{2,\bk^{(2)},\bm^{(2)}}^{ (\varphi_2(\bh^{(2)};\tilde\jmath,\pm 1;\tilde\imath,\pm 1);\hat\jmath,-\hat\sigma)} )\,.
	\end{aligned}
	\]
	with $(\bh;\hat\jmath,\hat\sigma)\prec\tilde\jmath\prec\tilde\imath$.
	
		\noindent \underline{\emph{Step $2.3.$ : Bounds.}} 
		We shall systematically use \eqref{stimine} and define
		\begin{equation}
			\label{comelochiamo}
			C:= \!\!\! \sum_{\substack{q^{(1)}+q^{(2)}= q+2 \\ \bk^{(1)}+\bk^{(2)}=\bk \\ \bm^{(1)}+\bm^{(2)}=\bm }}q^{(1)}q^{(2)} |P|^{\Theta,q^{(1)}}_{\bk^{(1)},\bm^{(1)}}|Q|^{\Upsilon,q^{(2)}}_{\bk^{(2)},\bm^{(2)}}\,.
		\end{equation}
		From \eqref{elle2} and \eqref{stimine}, recalling also \eqref{stimabase1bis} and that $\Theta_{2,\bh}=1$, 
		we immediately obtain for $q\ge 4$ %\mic{Here, when i estimate $K_{n}$  I changed $\sharp \bh =q-2n$ so that the total degree is $q$ and this contributes  to $\mathfrak K^{\Theta}_{\bk,\bm,q}$, I dont know wheter its the best choice }
		\begin{equation*}
			\sup_{\substack{\bh\in \mathbb I_{q-4}\\ \bh\prec\bi_2\prec\bi_1\\ (\bk,\bm,\bh)\in \mathbb M}}|L^{\bh;\bi_1,\pm 1;\bi_2,\pm 1}_{2,\bk,\bm}| \Theta_{2,\bh}^{-1}  \lesssim C\,.
		\end{equation*}
		Regarding $L_{0,\bk,\bm}^{\bh}$ we obtain % for $q\ge 0$ 
		$|L^{(\bh)}_{0,\bk,\bm}| \Theta_{0,\bh}^{-1}  \lesssim C$ for all $\bh\notin \mathbf{C}_{q,1}$.
%		\begin{equation*}
%				\sup_{\substack{\bh\in \mathbb I_{q}\\ \bh\notin \mathbf{C}_{q,1}\\ (\bk,\bm,\bh)\in \mathbb M}}|L^{(\bh)}_{0,\bk,\bm}| \Theta_{0,\bh}^{-1}  \lesssim C
%				\end{equation*}
		Otherwise, for  $(\bh;\bi_1,\pm 1)$ with $\bh\prec \bi_1$ we have the following bounds for the terms in \eqref{kappa01}
		
		\noindent $\bullet$ In $(1)$ we have $ \Theta_{0,(\bh;\bi_1,\pm 1)}=\Theta_{0,(\bh^{(1)};\bi_1,\pm 1;\hat\jmath,\hat\sigma)}=\Theta_{0,(\bh^{(2)};\bi_1,\pm 1; \hat\jmath,-\hat\sigma)}$, so we bound e.g.
		\[
		|P_{0,\bk^{(1)},\bm^{(1)}}^{ (\varphi_1(\bh^{(1)};\bi_1,\pm 1);\hat\jmath,\hat\sigma)}Q_{\bk^{(2)},\bm^{(2)}}^{ (\bh^{(2)};\hat\jmath,-\hat\sigma)}|\Theta_{0,(\bh;\bi_1,\pm 1)}^{-1} \le 2  |P|^{\Theta,q^{(1)}+3}_{\bk^{(1)},\bm^{(1)}}|Q|^{\Upsilon,q^{(2)}+1}_{\bk^{(2)},\bm^{(2)}} 
		\]

		\noindent $\bullet$  In  $(2)$, we have that $\tilde\imath$ is an arbitrary  mode in $\mathcal S^c$ such that $\hat\jmath \prec \tilde\imath$. Note that, since $\hat\jmath\not\prec\bi_1$ and $\bh\prec\bi_1$, we have automatically that $\bh^{(s)}\prec\hat\jmath\prec \tilde\imath$ for $s=1,2$. %Thus    ${ (\bh^{(2)};\hat\jmath,-\hat\sigma)}\notin \mathbf C_{q^{(2)},1}$. 
		As a consequence 
		$\Upsilon_{0, (\bh^{(2)};\hat\jmath,-\hat\sigma)} \le \langle \bi_1 \rangle^{-1}$ and  $(\bh^{(2)};\hat\jmath,-\hat\sigma)\not\in \mathbf C_{q^{(2)}+1,1}$ so that 
		$$ 
		|Q_{\bk^{(2)},\bm^{(2)}}^{ (\bh^{(2)};\hat\jmath,-\hat\sigma)}|= |Q_{0,\bk^{(2)},\bm^{(2)}}^{ (\bh^{(2)};\hat\jmath,-\hat\sigma)}| \le   \frac{1}{\langle \bi_1 \rangle } |Q|^{\Upsilon,q^{(2)}+1}_{\bk^{(2)},\bm^{(2)}}.
		$$
		  Similarly 
		$$|Q_{1,\bk^{(2)},\bm^{(2)}}^{ (\bh^{(2)};\hat\jmath,-\hat\sigma);\tilde\imath,\pm 1)}|, |Q_{\bk^{(2)},\bm^{(2)}}^{ (\varphi_2(\bh^{(2)};\bi_1,\pm 1);\hat\jmath,-\hat\sigma)}|  \le \frac{1}{\langle \bi_1 \rangle } |Q|^{\Upsilon,q^{(2)}+3}_{\bk^{(2)},\bm^{(2)}}.$$
%		Similarly $(\bh^{(2)};\bi_1);\hat\jmath,-\hat\sigma)\notin\pmo$ (if $\bi_1\prec \hat\jmath$ then it is not in $\mathbf P$ otherwise if $\bi_1=\hat\jmath$ it is in $\mathbf O$) so that  again
%		$$
%		|Q_{\bk^{(2)},\bm^{(2)}}^{ (\varphi_2(\bh^{(2)};\bi_1);\hat\jmath,-\hat\sigma)}|= |Q_{0.\bk^{(2)},\bm^{(2)}}^{ (\varphi_2(\bh^{(2)};\bi_1);\hat\jmath,-\hat\sigma)}| \le 
%		\Upsilon_{0, (\bh^{(2)};\bi_1;\hat\jmath,-\hat\sigma)}\mathfrak Q^{\Upsilon}_{\bk^{(2)},\bm^{(2)},q^{(2)}+3}\le \frac{1}{|\bi_1| } \mathfrak Q^{\Upsilon}_{\bk^{(2)},\bm^{(2)},q^{(2)}+3}.
%		$$
		The factors coming from $P$ in line $(2)$  are just estimated by  $|P|^{\Theta}_{\bk^{(1)},\bm^{(1)}, q^{(1)}+ a}$ with $a=1,3$.
		
		\noindent $\bullet$  In  $(3)$, as explained in \eqref{stoc}, we may assume that $(\hat\jmath,\hat\sigma)\neq (\bi_1,-\sigma)$. 
		As a consequence  $(\bh^{(2)};\bi_1,- \sigma;\hat\jmath,-\hat\sigma) \notin \mathbf C_{q^{(2)}+2,1}$  and 
		\[
		\begin{aligned}
			|Q_{\bk^{(2)},\bm^{(2)}}^{ (\bh^{(2)};(\bi_1,-\sigma);\hat\jmath,-\hat\sigma} | &= |Q_{0,\bk^{(2)},\bm^{(2)}}^{ (\bh^{(2)};(\bi_1,-\sigma);\hat\jmath,-\hat\sigma} |\le  
			\frac{1}{|\bi_1| }| Q|^{\Upsilon,q^{(2)}+2}_{\bk^{(2)},\bm^{(2)}}.
		\end{aligned}
		\]
		
		In conclusion
	\[
	\begin{aligned}
		\sup_{\substack{\bh\in \mathbb I_{q-2}, \bi_1\in \mathcal S_p^c\\ \bh\prec \bi_1 \\ (\bk,\bm,\bh)\in \mathbb M}}|L^{(\bh;\bi_1,\pm 1)}_{0,\bk,\bm}| \Theta_{0,(\bh;\bi)}^{-1}  \lesssim C.
	\end{aligned}
	\]
	It remains to deal with $L_1$. As for $L_0$, we have $|L^{(\bh;\bi_1,\pm 1 )}_{1,\bk,\bm}| \Theta_{1,\bh}^{-1}\lesssim C$ if $h\not\in \mathbf C_{q-2,1}$ with $q\ge 2$. Now item (1) and (4) are  bounded as (1) in \eqref{kappa01}, items (2) and (5) are bounded as (2) of \eqref{kappa01}    and item (3) as (3) in \eqref{kappa01}.
	In conclusion we obtain
	\[
	\begin{aligned}
		\sup_{\substack{\bh\in \mathbb I_{q-4}, \bi_2,\bi_1 \in \mathcal S^c_p\\ \bh\prec \bi_2\prec\bi_1 \\ (\bk,\bm,\bh)\in \mathbb M}}|L^{(\bh;\bi_2,\pm 1;\bi_1,\pm 1)}_{1,\bk,\bm}| \Theta_{1,(\bh;\bi_2)}^{-1}  \lesssim C\,.
	\end{aligned}
	\]
		We have thus proved that $|L|_{\bk,\bm}^{\Theta,q} \lesssim C$.
	Then, we conclude as in the proof of Lemma \ref{lem:poisson_F}.
\end{proof}

\section{Translations} \label{sub:trans}
The aim of this subsection is to prove Proposition \ref{prop:open}.
The starting point is to pass form formal power series $P$ in the variables $(y,z,v)\in \R^{\mathcal S_p}\times \R^{\mathcal S_p} \times \C^{\mathcal S^c_p}$ to a formal power series $R$ in $(y_{\rm new},z_{\rm new},,v_{\rm new},)\in \R^{\mathcal S_{p+1}}\times \R^{\mathcal S_{p+1}} \times \C^{\mathcal S^c_{p+1}}$.  In view of condition 
\eqref{identita} we have to apply a  formal change of variables  defined as
%\[
%\begin{cases}
%	(y_j, z_j )&= (y^{\rm new}_j , z^{\rm new}_j )\;\forall j\in \mathcal S_p\,,\quad v_{j}= v^{\rm new}_j \,\forall j\in \mathcal S_{p+1}^c 
%\\
%v^{(\pm 1)}_j &= \sqrt{\xi_j^{\rm new} + y^{\rm new}_j} (z^{\rm new}_j )^{\pm 1} =\sqrt{\xi_j^{\rm new} }\sum_{m}\binom{1/2}{m} (\frac{ y^{\rm new}_j}{\xi^{\rm new}_j})^m
%\end{cases}
%\]
\begin{equation}
	\label{radiceq}
v^{(\pm 1)}_j = \sqrt{\xi_j^{\rm new} + y^{\rm new}_j} (z^{\rm new}_j )^{\pm 1} =\sum_{m}\binom{1/2}{m} (y^{\rm new}_j)^m (\xi^{\rm new}_j)^{1/2-m} \quad \forall j\in \mathcal S_{p+1} \setminus \mathcal S_{p}\,.
\end{equation}
and as the identity on all other variables. 
Our purpose is to show that not only this operator, which we denote by $T_p$, preserves the convergence of power series, but it also maintains the action-T\"oplitz structure. The first property is quite standard, we have the following
\begin{lemma}[Translation 1] \label{lem:trans_F1} Let $p\geq 0$, $\eta \in [\eta_{\min},9 \eta_{\min}]$, $\mathcal{O} \subset \mathcal{O}_p^{\mathrm{full}}$ be a Borel set and $\rho,r>0$ such that
	$$
	2r_p \geq \rho \gtrsim_{p} r_{p+1}^{1-\delta} \gtrsim_{p} r.
	$$
	Then, there exists a unique expansion operator
	$$
	T_p : \mathscr{F}_{\eta,\rho,p,\mathcal{O}} \to \mathscr{F}_{\eta,r,p+1,\mathcal{O}^{\sharp}}
	\quad \mathrm{where} \quad
	\mathcal{O}^\sharp := \mathcal{O}\times  \big(\frac{r_{p+1}^{2(1-\delta)}}2,  r_{p+1}^{2(1-\delta)}\big)^{\mathcal{S}_{p+1}\setminus \mathcal{S}_{p}},
	$$ 
	such that for all $P\in \mathscr{F}_{\eta,\rho,p,\mathcal{O}}$, $\xi=:(\xi^{\flat},\xi^{\mathrm{new}})\in \mathcal{O}^\sharp$ and $u\in \mathcal{A}_{\xi}(3r_{p+1})$  we have
	$$
	T_p P(\xi;u) = P(\xi^{\flat};u)
	$$
	and which satisfies the bounds
	$$
	\forall j\geq 1, \forall\alpha>0, \quad \| \mathrm{D}_j^\alpha T_p P  \|_{\mathscr{F}_{\eta,r,p+1,\mathcal{O}^\sharp}^{\mathrm{tot}}} \lesssim_{j,\alpha,p} \| \mathrm{D}_j^\alpha  P  \|_{\mathscr{F}_{\eta,\rho,p,\mathcal{O}}^{\mathrm{tot}}}
	$$
	where $D_j^\alpha$ denotes the (unbounded) multiplier defined by $(D_j^\alpha P)_{\bk,\bm}^{\bl,\bs} = \langle \bl_j^\ast \rangle^{\alpha} P_{\bk,\bm}^{\bl,\bs}$.
\end{lemma}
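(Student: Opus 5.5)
The plan is to define $T_p$ coefficientwise by substituting the expansion \eqref{radiceq} into each monomial of $P$. Then I check that the resulting coefficients satisfy the symmetry constraints of Definition \ref{def:class_F}, and I estimate the norm via the binomial series.

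\emph{Construction.} Fix $\xi=(\xi^\flat,\xi^{\mathrm{new}})\in\mathcal{O}^\sharp$. Take a monomial $v_{\bl}^{\bs}y^{\bm}z^{\bk}$ of $P(\xi^\flat)$. Split $\bl$ into the indices in $\mathcal{S}_{p+1}^c$ and those in $\mathcal{S}_{p+1}\setminus\mathcal{S}_p$. For every $j\in\mathcal{S}_{p+1}\setminus\mathcal{S}_p$, let $a_j$ and $b_j$ count the occurrences of $(j,+1)$ and $(j,-1)$. The factor for site $j$ is then
\[
(\xi_j^{\mathrm{new}}+y_j)^{(a_j+b_j)/2}z_j^{a_j-b_j}=\sum_{n\ge0}\binom{(a_j+b_j)/2}{n}(\xi_j^{\mathrm{new}})^{(a_j+b_j)/2-n}y_j^n z_j^{a_j-b_j}.
\]
This is the unique formal series in the new variables which, after setting $y_j=|u_j|^2-\xi_j^{\mathrm{new}}$, gives back the monomial. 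Collecting terms defines $T_pP(\xi)$. Uniqueness follows from the injectivity remark after Lemma \ref{lem:ouf_cest_des_fonctions}, once the new coefficients are symmetrized in $\bl$ (this is harmless since the function is unchanged). Reality holds because the substitution commutes with conjugation ($z\mapsto\bar z=z^{-1}$, $\bs\mapsto-\bs$). Mass and momentum are preserved because the new exponents are $\bk_j^{\mathrm{new}}=a_j-b_j$: the $j$-contributions $a_j-b_j$ and $j(a_j-b_j)$ just move from the $\bs$-sums to the $\bk$-sums. The identity $T_pP(\xi;u)=P(\xi^\flat;u)$ on $\mathcal{A}_\xi(3r_{p+1})$ then follows by absolute convergence, which the norm bound below provides.

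\emph{Sup bound.} Write $c=a_j+b_j$. On the domain, $\xi_j^{\mathrm{new}}\sim r_{p+1}^{2(1-\delta)}\gtrsim_p r^2$ and, since $r\ll r_{p+1}^{1-\delta}$, also $r^2\ll\xi^{\mathrm{new}}_j$. Each new monomial therefore has weight
\[
r^{2n}e^{2\eta n}\,|\tbinom{c/2}{n}|(\xi^{\mathrm{new}}_j)^{c/2-n}\le (\xi^{\mathrm{new}}_j)^{c/2}\,\big|\tbinom{c/2}{n}\big|\,(e^{2\eta}r^2/\xi_j^{\mathrm{new}})^n .
\]
Summing over $n$ gives at most $(\xi_j^{\mathrm{new}})^{c/2}(1+e^{2\eta}r^2/\xi^{\mathrm{new}}_j)^{c/2}\le(2\xi^{\mathrm{new}}_j)^{c/2}$, since $\eta\le9\eta_{\min}$ is bounded and $r$ is small relative to $r_{p+1}^{1-\delta}$. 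This is at most $\rho^{c}$, because $\rho\gtrsim_p r_{p+1}^{1-\delta}$ with a large enough constant. The weight $e^{\eta\|\bk^{\mathrm{new}}\|_{\ell^1}}$ with $|a_j-b_j|\le c$ is dominated by the old weight $e^{\eta c}$ coming from $q$. The prefactor $\max(1,q,\|\bk\|,\|\bm\|_\infty)$ can grow with $n$, but only linearly, and that growth is absorbed by the geometric factor. The old $y,z$ variables are untouched, and the new $q$ only decreases, which is compensated by $r\le\rho$. Hence $\|T_pP\|\lesssim_p\|P\|$.

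\emph{Lipschitz part.} The coefficients of $T_pP$ now also depend on $\xi^{\mathrm{new}}$, through $(\xi^{\mathrm{new}}_j)^{c/2-n}$. Its derivative produces a factor $|c/2-n|/\xi_j^{\mathrm{new}}\lesssim_p (c+n)r^{-2}$. Because the total norm uses $r^2\|\cdot\|^{\mathrm{lip}}$, this factor is harmless: the factor $c+n$ is absorbed by losing an arbitrarily small amount of the geometric margin, which is available since $\rho/r_{p+1}^{1-\delta}$ can be taken large. The $\xi^\flat$-Lipschitz part is inherited directly from $P$.

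\emph{The multiplier $\mathrm{D}_j^\alpha$.} In the new series, the ordered list $\bl^{\mathrm{new}}$ is obtained from $\bl$ by deleting the entries in $\mathcal{S}_{p+1}\setminus\mathcal{S}_p$. Consequently $(\bl^{\mathrm{new}})_j^*$ equals $\bl^*_{j'}$ for some $j'\ge j$, so $\langle(\bl^{\mathrm{new}})^*_j\rangle\le\langle\bl^*_{j}\rangle$. Alternatively, the deleted entries lie in the finite set $\mathcal{S}_{p+1}$, and one gets a bound by $\max(\langle\bl^*_j\rangle,\max_{\mathcal{S}_{p+1}}\langle i\rangle)$, which costs a $p$-dependent constant. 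In either case the weighted estimate reduces to the unweighted one.

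I expect the main obstacle to be the combinatorial bookkeeping when one new coefficient collects contributions from many old coefficients. Many $\bl$ (different counts $a_j,b_j$ together with the old $\bm,\bk$) can map to the same new index, and the supremum over $(\bl,\bs)$ in the norm must still be controlled. My plan is to parametrize these preimages by $(a_j,b_j)_j$ and $n$ with $a_j-b_j=\bk^{\mathrm{new}}_j$ and $n_j=\bm^{\mathrm{new}}_j$ fixed. The sum over $c$ is then geometric, thanks to the factor $(2\xi^{\mathrm{new}}/\rho^2)^{c/2}e^{-\eta c}$, and it converges with a $p$-dependent constant because $\#\mathcal{S}_{p+1}$ is finite.
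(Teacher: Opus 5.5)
Your proposal follows the same route as the paper, which defers to \cite[Proposition~7.1]{BGR24}: substitute the binomial expansion \eqref{radiceq} of the square root, regroup, bound the binomial coefficients using $r^2\ll\xi^{\mathrm{new}}_j\ll\rho^2$, and obtain the $\mathrm{D}_j^\alpha$ bound from the fact that deleting entries can only decrease $\langle\bl^*_j\rangle$. The one bookkeeping point to make explicit is that regrouping the symmetric coefficients produces the positional count $\binom{q}{c}\frac{c!}{\prod_j a_j!\,b_j!}$; this is not absorbed by $r\le\rho$ alone, but it is absorbed by the margin in the hypotheses, since $\sum_{c}\binom{q}{c}r^{q-c}(C_p r_{p+1}^{1-\delta})^c=(r+C_p r_{p+1}^{1-\delta})^q\le\rho^q$ (also, for odd $c$ use $\sum_n|\binom{c/2}{n}|x^n\le(1-x)^{-c/2}$ rather than $(1+x)^{c/2}$).
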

\begin{proof}
	We refer to \cite[Proposition 7.1.]{BGR24}, where a similar result is proved in detail. We emphasize that the proof is very explicit but tedious. It mainly involves performing the Taylor expansion of a square root as in \eqref{radiceq}, reordering the terms, and estimating them by controlling binomial coefficients. In particular, the coefficients of $T_p P$ are known in terms of those of $P$ (see formula (116) in \cite{BGR24}).
\end{proof}
To control the action-Toplitz norm of translations, we introduce some definitions.
 First, we define  the following useful operator
%\mic{there is something that slight;y bothers me: of course our definition of $\mathbf C_{q,n}$ depends on $p$ (because implicitly $\bl_i\in (S_p)^c$) \jo{I am not sure about this, it is not what I had in mind. Do we really use it implicitly ? But I agree with you that in the rest of the paper we only use it when $\bl_i\in S_p^c$}, however in this definition below it is not clear , I hesitate to propose this but maybe we could add a p in definition 2.18 or write below that we assume $\bl_i\in (S_p)^c$ and define for $p'\geq p$
%	\[
%	\mathbf{C}_{q,n}^{(p,p')} := \{ (\bl,\bs) \in \mathbf{C}^{(p)}_{q,n} \ | \ \bl_{2n}^* \succ \max_{j\in \mathcal{S}_{p'}}^{\prec} j \} \quad \mathrm{and} \quad \mathbf{C}_{n}^{(p)} := \bigcup_{q\geq 2n}   \mathbf{C}_{q,n}^{(p)} .   
%	\]}
%	\jo{We could. It involves more notations but if you think it avoid possible misunderstanding it could be worth.}
%	\mic{maybe a remark is better}
\begin{definition}[Projections $\Pi_{\mathrm{C}^{(p)}_n}$] Let $p\geq 0$. Given $q\geq 2n\geq 2$, we denote
$$
 \mathbf{C}_{q,n}^{(p)} := \{ (\bl,\bs) \in \mathbf{C}_{q,n} \ | \ \bl_{2n}^* \succ \max_{j\in \mathcal{S}_{p}}^{\prec} j \} \quad \mathrm{and} \quad \mathbf{C}_{n}^{(p)} := \bigcup_{q\geq 2n}   \mathbf{C}_{q,n}^{(p)} .   
 $$ 
We denote by $\Pi_{\mathrm{C}^{(p)}_n}$ the associated projection defined by restriction of the coefficients. 
\end{definition}
Note that, by construction
$$
\lim_{|\boldsymbol{i}_n|\to \infty} \cdots \lim_{|\boldsymbol{i}_1|\to \infty} P^{\boldsymbol{\ell},\boldsymbol{\sigma};\boldsymbol{i}_n,\pm 1; \dots ; \boldsymbol{i}_1,\pm 1 }_{\boldsymbol{k},\boldsymbol{m}} =  \lim_{|\boldsymbol{i}_n|\to \infty} \cdots \lim_{|\boldsymbol{i}_1|\to \infty} (\Pi_{\mathrm{C}^{(p)}_n} P)^{\boldsymbol{\ell},\boldsymbol{\sigma};\boldsymbol{i}_n,\pm 1; \dots ; \boldsymbol{i}_1,\pm 1 }_{\boldsymbol{k},\boldsymbol{m}} \,.
$$
%moreover if $(\bl,\bs) \in \mathbf{C}_{n} $  then $\Pi_{\mathrm{C}^{(p)}_n} T_p v^{\bs}_{\bl }= T_p \Pi_{\mathrm{C}^{(p)}_n} v^{\bs}_{\bl }$.
As a direct consequence of Lemma \ref{lem:trans_F1} and formula (116) in \cite{BGR24} we have 
\begin{corollary}\label{lem:trans_F}
The operator 	$T_p$ satisfies  the following  properties
	\begin{itemize}
		\item[(i)] for all $n\geq 1$, $\Pi_{\mathbf{C}^{(p+1)}_n} T_p = T_p \Pi_{\mathbf{C}^{(p+1)}_n} $, 
		\item[(ii)] for all $n\geq 1$,  $T_p \Pi_{\mathbf{C}_n^{(p+1)}} \mathscr{T}_{n,\eta,\rho,p,\mathcal{O}} \subset \Pi_{\mathbf{C}_n^{(p+1)}} \mathscr{T}_{n,\eta,r,p+1,\mathcal{O}^\sharp}^{\cste}$, 
		\item[(iii)] 
		$
		\Pi_{\mathrm{deg}\leq d-1} T_p P  =0 \quad \mathrm{where} \quad d= \min_{ P_{\boldsymbol{k},\boldsymbol{m}}^{\boldsymbol{\ell},\boldsymbol{\sigma}} \neq 0 } 2 \| \boldsymbol{m} \|_{\ell^1} + \# \{ j \ | \ \boldsymbol{\ell}_j \in \mathcal{S}_{p+1}^c\}
		$
		\item[(iv)] if $\Pi_{\mathrm{int}} P=0$ then $\Pi_{\mathrm{int}} R = 0$.
	\end{itemize}
\end{corollary}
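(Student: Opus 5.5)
The plan is to read off all four items from the explicit formula for the coefficients of $T_pP$ (formula (116) in \cite{BGR24}, recalled in the proof of Lemma \ref{lem:trans_F1}). By linearity and uniqueness in Lemma \ref{lem:trans_F1}, $T_p$ acts monomial by monomial. A monomial $y^{\bm}z^{\bk}v_{\bl}^{\bs}$ in the variables of step $p$ is expanded by substituting \eqref{radiceq} for each factor $v_{\bl_i}^{\bs_i}$ with $\bl_i\in\mathcal S_{p+1}\setminus\mathcal S_p$. The factors with $\bl_i\in\mathcal S_{p+1}^c$ stay untouched as normal variables, and $y,z$ on $\mathcal S_p$ are kept. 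The output is therefore a sum of monomials $y_{\rm new}^{\bm'}z_{\rm new}^{\bk'}v_{\bl'}^{\bs'}$, where $(\bl',\bs')$ is exactly the sub-multi-index of $(\bl,\bs)$ made of the entries lying in $\mathcal S_{p+1}^c$. The coefficients depend only on $P_{\bk,\bm}^{\bl,\bs}$, on $\xi^{\rm new}$, and on the entries of $\bl$ lying in $\mathcal S_{p+1}\setminus\mathcal S_p$. Everything below follows from this structural remark.

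For (i), I will note that membership in $\mathbf C^{(p+1)}_{n}$ only concerns the $2n$ largest modes for $\prec$, and these are required to be $\succ\max^{\prec}_{j\in\mathcal S_{p+1}}j$. For $(\bl,\bs)\in\mathbf C^{(p+1)}_{q,n}$, the top $n$ actions therefore lie in $\mathcal S_{p+1}^c$ and survive unchanged in $(\bl',\bs')$. Conversely, all the modes in $\mathcal S_{p+1}\setminus\mathcal S_p$ that get translated are $\preccurlyeq\max^{\prec}\mathcal S_{p+1}$. So $(\bl,\bs)\in\mathbf C^{(p+1)}_n$ iff $(\bl',\bs')\in\mathbf C^{(p+1)}_n$, and restricting coefficients commutes with $T_p$.

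For (ii), the top-$n$ pairs are untouched by the translation, and the coefficient formula does not involve their indices $\bi_1,\dots,\bi_n$ except through $P$'s coefficient. Hence independence of $P$'s coefficients from the $\bi$'s, which is the action-Töplitz property, transfers to $T_pP$. The norm bound comes from Lemma \ref{lem:trans_F1}.

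For (iii), I will use the homogeneity count. In \eqref{radiceq}, each translated factor contributes $y_{\rm new}^m$ of degree $2m\ge0$. The new degree of an output monomial is thus at least $2\|\bm\|_{\ell^1}+\#\{j:\bl_j\in\mathcal S_{p+1}^c\}$, which is $\ge d$.

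For (iv), I expect the main obstacle. I will argue that a non-integrable monomial cannot be mapped to an integrable one, and will try the following. If $(\bl,\bs)$ is not integrable, then either $\bk\neq0$, or some mode is unpaired. If the unpaired mode lies in $\mathcal S_{p+1}^c$, it remains unpaired in $(\bl',\bs')$. If it lies in $\mathcal S_{p+1}\setminus\mathcal S_p$, then \eqref{radiceq} contributes a nonzero net power of $z^{\rm new}_j$. In either case the output has $\bk'\neq0$ or is non-paired, so $\Pi_{\rm int}$ kills it. The delicate point is to check that cancellations between different input monomials cannot produce an integrable output. This should follow because the exponent $\bk'$ on $\mathcal S_{p+1}\setminus\mathcal S_p$ is determined by the net sign count of the translated modes, so integrable outputs can only come from integrable inputs.
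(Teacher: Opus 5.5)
Your proposal is correct and follows the paper's route: the paper states the four items as a direct consequence of Lemma~\ref{lem:trans_F1} and the explicit coefficient formula for $T_pP$ obtained by expanding \eqref{radiceq} (formula (116) of the reference cited there), and gives no further detail. Your monomial-by-monomial reading of that formula is exactly the verification left implicit there: the surviving multi-index $(\bl',\bs')$ consists of the entries in $\mathcal S_{p+1}^c$, membership in $\mathbf C^{(p+1)}_n$ and integrability can be read off from $(\bk',\bl',\bs')$, and the new $y$-powers only raise the degree.
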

\begin{definition}[Extension operator $E^{(p)}_n$] Let $p,n\geq 1$. We denote by $E^{(p)}_n$ the linear operator such that for all $r>0,\eta\geq 0$, $E^{(p)}_n$ maps  $\Pi_{\mathbf{C}_n^{(p)}} \mathscr{T}_{n,\eta,r,p}^{\cste}$ to $\mathscr{T}_{n,\eta,r,p}^{\cste}$ and $\Pi_{\mathbf{C}_n^{(p)}}  E^{(p)}_n = \mathrm{Id}$.
\end{definition}

We are in position to prove Proposition \ref{prop:open}.
\begin{proof}[Proof of Proposition \ref{prop:open}] We set $R:=T_p P$ and $P_n :=\Pi_{\mathrm{Top}=n} P$ for $n=0,1,2$. Thanks to Lemma \ref{lem:trans_F1}, we have that $R\in \mathscr{F}_{\eta,r,p+1,\mathcal{O}^{\sharp}}$ and it actually only remains to prove the $\mathscr{H}$ bound \eqref{eq:bound_open} on $R$.

First, we set
$$
R_0 := R -  E_{1}^{(p+1)}T_p \Pi_{\mathbf{C}^{(p+1)}_1} (P-P_0).
$$
%\mic{ $T_p P_0 +T_p \widehat P_1 - E_{1}^{(p+1)}T_p \Pi_{\mathbf{C}^{(p+1)}_1} \widehat P_1 $} 
Since, by construction $P-P_0\in \mathscr{T}_{1,\eta,\rho,p,\mathcal{O}}$, applying point \emph{(ii)} of Corollary  \ref{lem:trans_F}, it makes sense to apply the extension operator and so  $R_0$ is well defined. Then we note that
\begin{multline*}
\| D_1 \Pi_{\mathbf{C}_{1}^{(p+1)}} R_0\|_{\mathscr{F}_{\eta,r,p+1,\mathcal{O}^\sharp}^{\mathrm{tot}}} = \| D_1 \Pi_{\mathbf{C}_{1}^{(p+1)}} T_p (P -\Pi_{\mathbf{C}^{(p+1)}_1} (P-P_0)) \|_{\mathscr{F}_{\eta,r,p+1,\mathcal{O}^\sharp}^{\mathrm{tot}}} \\ = \| D_1  T_p \Pi_{\mathbf{C}_{1}^{(p+1)}} P_0 \|_{\mathscr{F}_{\eta,r,p+1,\mathcal{O}^\sharp}^{\mathrm{tot}}} 
\lesssim_p  \| D_1 \Pi_{\mathbf{C}_{1}^{(p+1)}} P_0 \|_{\mathscr{F}_{\eta,\rho,p,\mathcal{O}}^{\mathrm{tot}}} \lesssim_p \|P\|_{\mathscr{H}_{\eta,\rho,p,\mathcal{O}}^{\mathrm{tot}}}.
\end{multline*}
 Since, by the previous Lemma, $R_0  \in \mathscr{F}_{1,\eta,r,p+1,\mathcal{O}^\sharp}$, it implies that $R_0  \in \mathscr{S}_{1,\eta,r,p+1,\mathcal{O}^\sharp}$ and 
 $$
 \| R_0\|_{\mathscr{H}_{\eta,r,p+1,\mathcal{O}^\sharp}^{\mathrm{tot}}} \sim_p   \| R_0\|_{\mathscr{F}_{\eta,r,p+1,\mathcal{O}^\sharp}^{\mathrm{tot}}} + \| D_1 \Pi_{\mathbf{C}_{1}^{(p+1)}} R_0\|_{\mathscr{F}_{\eta,r,p+1,\mathcal{O}^\sharp}^{\mathrm{tot}}} \lesssim_p \|P\|_{\mathscr{H}_{\eta,\rho,p,\mathcal{O}}^{\mathrm{tot}}}
 $$
   by definition of the $\mathscr{H}$ norm.

Then, we set 
$$
R_2 := E_2^{(p+1)}T_p \Pi_{\mathbf{C}_2^{(p+1)}} P_2.
$$
Thanks to property \emph{(ii)} of Lemma \ref{lem:trans_F}, it is $2$-action T\"oplitz. Moreover, its $\mathscr{H}$ norm and its $\mathscr{F}$ norm coincide, so we have $\| R_2\|_{\mathscr{H}_{\eta,r,p+1,\mathcal{O}^\sharp}^{\mathrm{tot}}} \lesssim_p \|P\|_{\mathscr{H}_{\eta,\rho,p,\mathcal{O}}^{\mathrm{tot}}}$.

Finally, we set 
$$
R_1 := R-R_0-R_2 = E_{1}^{(p+1)}T_p \Pi_{\mathbf{C}^{(p+1)}_1} (P_1+P_2) -E_2^{(p+1)}T_p \Pi_{\mathbf{C}_2^{(p+1)}} P_2 .
$$
By definition of the extension operators it is $1$-action T\"oplitz. Thus, as for $R_0$, it suffices to prove that $\| D_3 \Pi_{\mathbf{C}_{2}^{(p+1)}} R_1\|_{\mathscr{F}_{\eta,r,p+1,\mathcal{O}^\sharp}^{\mathrm{tot}}} \lesssim_p \|P\|_{\mathscr{H}_{\eta,\rho,p,\mathcal{O}}^{\mathrm{tot}}}$.
Indeed, proceeding as for $R_0$, we have
\begin{equation*}
\begin{split}
\| D_3 \Pi_{\mathbf{C}_{2}^{(p+1)}} R_1\|_{\mathscr{F}_{\eta,r,p+1,\mathcal{O}^\sharp}^{\mathrm{tot}}}  
= \| D_3  \Pi_{\mathbf{C}_{2}^{(p+1)}} T_p ( \Pi_{\mathbf{C}_1^{(p+1)}} (P_1+P_2) - \Pi_{\mathbf{C}_2^{(p+1)}} P_2) \|_{\mathscr{F}_{\eta,r,p+1,\mathcal{O}^\sharp}^{\mathrm{tot}}} \\
= \| D_3  T_p   \Pi_{\mathbf{C}_2^{(p+1)}} P_1\|_{\mathscr{F}_{\eta,r,p+1,\mathcal{O}^\sharp}^{\mathrm{tot}}} \lesssim_p \ \| D_3  \Pi_{\mathbf{C}_2^{(p+1)}} P_1 \|_{\mathscr{F}_{\eta,\rho,p,\mathcal{O}}^{\mathrm{tot}}} 
\lesssim_p \|P\|_{\mathscr{H}_{\eta,\rho,p,\mathcal{O}}^{\mathrm{tot}}}.
\end{split}
\end{equation*}
\end{proof}

\section{Boundedness of a non trivial projection}

\begin{lemma} \label{lem:non_trivial_proj}Let $p\geq 1$, $\eta \geq 0$, $r>0$, $R\in \Pi_{4\leq \mathrm{deg}  \leq 5}\mathscr{H}_{\eta,r,p}^{\cste}$ and set $B\in \Pi_{4\leq \mathrm{deg}  \leq 5}\mathscr{F}_{\eta,r,p}^{\cste}$ by
	\begin{equation}
		\label{eq:def_B_app}
		B_{\boldsymbol{k} ,\boldsymbol{m} }^{\boldsymbol{\ell} ,\boldsymbol{\sigma} }  =   R_{\boldsymbol{k} ,\boldsymbol{m} }^{\boldsymbol{\ell} ,\boldsymbol{\sigma} } \mathds{1}_{\# \{j \ | \ \boldsymbol{\ell}_j \in \mathcal{S}_{p}\}\geq q+2\|\boldsymbol{m} \|_{\ell^1}-3}
	\end{equation}
	Then, $B\in \Pi_{4\leq \mathrm{deg}  \leq 5}\mathscr{H}_{\eta,r,p}^{\cste}$, satisfies the bound
	\begin{equation}
		\label{eq:petite_borne}
		\| B\|_{\mathscr{H}_{\eta,r,p}^{\cste}} \lesssim_p  \| R\|_{\mathscr{H}_{\eta,r,p}^{\cste}}.
	\end{equation}
\end{lemma}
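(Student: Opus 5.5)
The plan is to treat $B$ as a coefficient restriction and rebuild its action-T\"oplitz decomposition by hand from the decomposition $R=R_0+R_1+R_2$ of \eqref{eq:cond2}. Denote by $F:=\mathcal{S}_{p+1}\setminus\mathcal{S}_p$ the finite set of indices singled out by the indicator in \eqref{eq:def_B_app} (the new tangential sites of the loop), and set $\mathtt{j}_F:=\max_{j\in F}\langle j\rangle$. The indicator then reads $\#\{j\ |\ \boldsymbol{\ell}_j\notin F\}+2\|\boldsymbol{m}\|_{\ell^1}\leq 3$. Write $\mathbb{1}(\boldsymbol{\ell},\boldsymbol{m})$ for this indicator.

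The key observation is a locality one. If a coefficient is indexed by $(\boldsymbol{\ell},\boldsymbol{\sigma};\boldsymbol{i}_n,\pm1;\dots;\boldsymbol{i}_1,\pm1)$ with all $\boldsymbol{i}_s\succ\max^{\prec}F$, then each pair contributes $2$ to the count of indices outside $F$. Consequently, for such indices the indicator $\mathbb{1}$ equals a quantity $\mathbb{1}_n(\boldsymbol{\ell},\boldsymbol{m})$ that does not depend on $\boldsymbol{i}$, namely $\#\{j\ |\ \boldsymbol{\ell}_j\notin F\}+2\|\boldsymbol{m}\|_{\ell^1}\leq 3-2n$. In particular $\mathbb{1}_2\equiv0$, since the degree is at most $5$.

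This suggests the following construction, in the spirit of the proof of Proposition~\ref{prop:open}:
\begin{itemize}
\item Set $B_2:=0$.
\item Set $B_1:=E_1^{(p+1)}\big(\mathbb{1}_1\,\Pi_{\mathbf{C}_1^{(p+1)}}(R_1+R_2)\big)$. This is obtained by multiplying the coefficients of $R_1+R_2$ with high pair beyond $F$ by the pair-independent factor $\mathbb{1}_1$, and then extending T\"oplitz-wise to all admissible $\boldsymbol{i}$. It is $1$-action T\"oplitz by construction.
\item Set $B_0:=B-B_1$.
\end{itemize}

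We then check the required properties.

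\emph{$B_1\in\mathscr{S}_2$.} After the second pair also goes to infinity, the indicator becomes $\mathbb{1}_2=0$. Hence the double limits of $B_1$ vanish, so $B_1\in\mathscr{S}_{2}$.

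\emph{Bound on $B_1$.} Its $\Theta$-weighted coefficients are bounded by those of $R_1+R_2$. Here we use that $\Theta^{-1}_{1,\cdot}$ only involves $\boldsymbol{\ell}$, on which $E_1^{(p+1)}$ does not act.

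\emph{$B_0\in\mathscr{S}_1$.} For $\boldsymbol{i}_1$ beyond $F$, the limit of $B$ is $\mathbb{1}_1\lim R=\mathbb{1}_1(R_1+R_2)$, which equals the corresponding coefficient of $B_1$. Hence the limits of $B_0$ vanish, so $B_0\in\mathscr{S}_{1}$.

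\emph{Bound on $B_0$.} We need the weighted bound $|B_{0}^{\boldsymbol{\ell},\boldsymbol{\sigma};\boldsymbol{i},\pm1}|\lesssim(1+\|\boldsymbol{\ell}\|_{\ell^\infty})^{-1}$ times the norm, on $\mathbf{C}_{q,1}$. We split into two cases.
\begin{itemize}
\item If $\boldsymbol{i}\succ\max^{\prec}F$, then $B_0=\mathbb{1}_1 R_0$, and the bound is inherited from $R_0$.
\item If $\boldsymbol{i}\preccurlyeq\max^{\prec}F$, the top index, and hence $\|\boldsymbol{\ell}\|_{\ell^\infty}$, is at most $\mathtt{j}_F$. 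So the weight is $\lesssim_p 1$, and it suffices to bound $|B|$ and $|B_1|$ crudely by $3|R|^{\Theta}$ and $2|R|^{\Theta}$ via \eqref{stimine}.
\end{itemize}
For terms not in $\mathbf{C}_{q,1}$, the coefficient of $B_0$ equals $\mathbb{1}R_0$, again controlled directly.

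Summing with the weights of \eqref{eq:normcst2} then gives \eqref{eq:petite_borne}. The constant depends on $p$ only through $\mathtt{j}_F$.

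The main obstacle I expect is bookkeeping rather than analysis. First, one must make the T\"oplitz extension $E_1^{(p+1)}$ precise, specifically how coefficients with $\boldsymbol{\ell}^*_1\prec\boldsymbol{i}\preccurlyeq\max^{\prec}F$ are redefined. Second, one must verify that symmetrization over $\mathfrak{S}_q$ and the reality/mass/momentum constraints are preserved by multiplication with $\mathbb{1}_1$, which is clear since $\mathbb{1}$ is permutation invariant and depends only on $(\boldsymbol{\ell},\boldsymbol{m})$. Finally, in the finite region near $F$, one must check that the loss $\langle\|\boldsymbol{\ell}\|_{\ell^\infty}\rangle$ is genuinely bounded by $\mathtt{j}_F$. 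This requires $\boldsymbol{i}$ to be the largest index, which is the definition of $\mathbf{C}_{q,1}$.
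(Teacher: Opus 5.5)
Your construction is the paper's: $B_2=0$, $B_1$ is the limit of $R$ along the top pair multiplied by the indicator with that pair removed, and $B_0:=B-B_1$. Since $R_1+R_2$ is already $1$-action T\"oplitz, $E_1^{(p+1)}$ acts trivially, so your $B_1$ is simply $\mathds{1}_1\cdot(R_1+R_2)$, which is the paper's \eqref{eq:def_B1}. Your reading of the indicator through $F$ matches the paper's proof up to the shift $p\leftrightarrow p-1$. Your checks that $B_0\in\mathscr{S}_1$ and $B_1\in\mathscr{S}_2$ are correct, and so is your estimate of $B_0$. Splitting according to whether the top pair lies beyond $F$ is a slightly more direct variant of the paper's case split $R=R_0$ versus $R=R_n$, $n\in\{1,2\}$.

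The gap is in the bound on $B_1$. In the $\mathscr{H}$ norm, the coefficient $B_1^{\bl,\bs;\bi_1,\pm1}$ carries the weight $\Theta^{-1}_{1,\bl,\bs}=1+\|\bl\|_{\ell^\infty}$ whenever $(\bl,\bs)\in\mathbf{C}_{q-2,1}$. That is, whenever $\bl$ itself ends with a second pair $(\bi_2,\pm1)$, the weight is $1+|\bi_2|$. The contribution of $R_2$ to these coefficients is controlled by $\|R\|_{\mathscr{H}}$ only with weight $\Theta^{-1}_{2,\cdot}=1$. Since $R_2$ is $2$-action T\"oplitz, it does not decay in $\bi_2$. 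Hence the weighted coefficients of $B_1$ are not inherited from those of $R$: the level-one weighted coefficients of $R_1+R_2$ are unbounded unless $R_2=0$. Your remark that $E_1^{(p+1)}$ does not act on $\bl$ does not address this.

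The repair is the counting you already used for $B_1\in\mathscr{S}_2$. The pair $(\bi_2,\pm1)$ contributes $2$ to the number of indices outside $F$, so the reduced indicator vanishes unless $\bi_2\in F$. On the support the weight is therefore at most $1+\mathtt{j}_F$, which gives $|B_1|^{\Theta,q}_{\bk,\bm}\lesssim(1+\mathtt{j}_F)\,|R|^{\Theta,q}_{\bk,\bm}$. This is exactly where the paper pays the factor $1+\max_{i\in\mathcal{S}_p}|i|$ for $B_1$, through \eqref{eq:petite_observation}. With this correction your proof is complete. Incidentally, $\mathds{1}_2\equiv 0$ holds irrespective of the degree, since the count would have to be $\leq -1$.
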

\begin{proof} We divide the proof in two steps.

	\noindent \underline{\emph{Step 1: algebraic decomposition.}} First, we aim at proving that 
	$$
	B=B_0 +B_1  
	$$
	for some explicit Hamiltonians $B_0 \in \mathscr{S}_{1,\eta,r,p}^{\cste}$ and $B_1 \in  \mathscr{T}_{1,\eta,r,p}^{\cste}  \cap \mathscr{S}_{2,\eta,r,p}^{\cste}$. Note that it will directly imply that  $\Pi_{\mathrm{Top}=2} B =0$.
	
	First, we define the Hamiltonian $B_1 \in \mathscr{T}_{1,\eta,r,p}^{\cste} $ by the relation
	\begin{equation}
		\label{eq:def_B1}
		B_{1,\boldsymbol{k} ,\boldsymbol{m} }^{\boldsymbol{\ell} ,\boldsymbol{\sigma};i,\pm 1 } =\big( \lim_{|\boldsymbol{i}_1| \to +\infty} R_{\boldsymbol{k} ,\boldsymbol{m} }^{\boldsymbol{\ell} ,\boldsymbol{\sigma};\boldsymbol{i}_1,\pm 1 } \big) \mathds{1}_{\# \{j \ | \ \boldsymbol{\ell}_j \in \mathcal{S}_{p}\}\geq q+2\|\boldsymbol{m} \|_{\ell^1}-1}, \quad \forall i\succ \max_{1\leq j\leq q}^{\prec} \boldsymbol{\ell}_j
	\end{equation}
	where $\boldsymbol{k} \in \mathbb{Z}^{\mathcal{S}_{p-1}},\boldsymbol{m} \in \mathbb{N}^{\mathcal{S}_{p-1}},q\in \mathbb{N}, \boldsymbol{\ell} \in (\mathcal{S}_{p-1}^c)^q ,\boldsymbol{\sigma} \in  \{-1,1\}^q $.
	We notice that this limit exist because $R\in \mathscr{H}_{\eta,r,p}^{\cste}$. We notice that, by definition of $B$, we have
	\begin{equation}
		\label{eq:bien_vu}
		\lim_{|\boldsymbol{i}_1| \to +\infty} B_{\boldsymbol{k} ,\boldsymbol{m} }^{\boldsymbol{\ell} ,\boldsymbol{\sigma};\boldsymbol{i}_1,\pm 1 }  = B_{1,\boldsymbol{k} ,\boldsymbol{m} }^{\boldsymbol{\ell} ,\boldsymbol{\sigma};i,\pm 1 }, \quad  \forall i\succ \max_{1\leq j\leq q}^{\prec} \boldsymbol{\ell}_j.
	\end{equation}
	Then, we set 
	$$
	B_0 := B-B_1 \in  \mathscr{F}_{\eta,r,p}^{\cste}.
	$$
	As a consequence of \eqref{eq:bien_vu}, we directly note that $B_0 \in \mathscr{S}_{1,\eta,r,p}^{\cste}$. To prove that $B_1 \in \mathscr{S}_{2,\eta,r,p}^{\cste}$, we have to use the degree property. Indeed, assume by contradiction that there exists $\boldsymbol{i}_1,\boldsymbol{i}_2 \in \mathbb{Z}$, $\boldsymbol{k} \in \mathbb{Z}^{\mathcal{S}_{p-1}},\boldsymbol{m} \in \mathbb{N}^{\mathcal{S}_{p-1}},q\in \mathbb{N}, \boldsymbol{\ell} \in (\mathcal{S}_{p-1}^c)^q ,\boldsymbol{\sigma} \in  \{-1,1\}^q $ such that
	$$
	B_{1,\boldsymbol{k} ,\boldsymbol{m} }^{\boldsymbol{\ell} ,\boldsymbol{\sigma}; \boldsymbol{i}_2 ,\pm 1 ; \boldsymbol{i}_1 ,\pm 1 } \neq 0 \quad \mathrm{where} \quad \boldsymbol{i}_1 \succ \boldsymbol{i}_2 \succ \max_{i\in \mathcal{S}_{p-1}}^{\prec} i.  
	$$
	Using \eqref{eq:def_B1}, it would imply that 
	$$
	\big( \lim_{|i|\to +\infty} R_{\boldsymbol{k} ,\boldsymbol{m} }^{\boldsymbol{\ell} ,\boldsymbol{\sigma}; \boldsymbol{i}_2 ,\pm 1;i,\pm 1 } \big) \neq 0 \quad \mathrm{and} \quad  \# \{j \ | \ \boldsymbol{\ell}_j \in \mathcal{S}_{p}\}\geq q+2\|\boldsymbol{m} \|_{\ell^1}+1.
	$$
	Since $R\in \Pi_{4\leq \mathrm{deg}  \leq 5}\mathscr{H}_{\eta,r,p}^{\cste}$, the first condition implies that $\boldsymbol{m}=0$ and $0\leq q\leq 1$. Thus, it suffices to notice that $\# \{j \ | \ \boldsymbol{\ell}_j \in \mathcal{S}_{p}\}\leq q$ to get a contradiction. Thus we have proven that  
	\begin{equation}
		\label{eq:petite_observation}
		B_{1,\boldsymbol{k} ,\boldsymbol{m} }^{\boldsymbol{\ell} ,\boldsymbol{\sigma}; \boldsymbol{i}_2 ,\pm 1 ; \boldsymbol{i}_1 ,\pm 1 } = 0 \quad \mathrm{when} \quad \boldsymbol{i}_1 \succ \boldsymbol{i}_2 \succ \max_{i\in \mathcal{S}_{p-1}}^{\prec} i.  
	\end{equation}
	It directly follows that $B_1 \in \mathscr{S}_{2,\eta,r,p}^{\cste}$.

	\noindent \underline{\emph{Step 2: bound.}} To deduce that $B\in \mathscr{H}_{\eta,r,p}^{\cste}$ and to get the bound \eqref{eq:petite_borne}, it suffices to prove that
	$$
	\forall n\in \{0,1\}, \forall \boldsymbol{k} \in \mathbb{Z}^{\mathcal{S}_{p-1}},\forall \boldsymbol{m} \in \mathbb{N}^{\mathcal{S}_{p-1}},\forall q\in \mathbb{N}, \quad  |B_n|^{\Theta,q}_{\boldsymbol{k},\boldsymbol{m}}    \lesssim_p  |R|^{\Theta,q}_{\boldsymbol{k},\boldsymbol{m}} 
	$$
	where the notation  $|\cdot|^{\Theta,q}_{\boldsymbol{k},\boldsymbol{m}} $  is defined in Definition \ref{def:class_take3} of the $\mathscr{H}$ norm.
	
	\medskip
	
	The bound on $B_1$ is a direct consequence of \eqref{eq:petite_observation}. Indeed, it implies that 
	$$
	(1+ \max_{i\in \mathcal{S}_p} |i|)^{-1} |B_1|^{\Theta,q}_{\boldsymbol{k},\boldsymbol{m}}  \leq \sup_{\substack{  {\boldsymbol{\ell}}\in (\mathcal S_p^c)^{q}\\  \boldsymbol{\sigma}\in\{-1,1\}^{q}}}  \! \! \! \! \! \! |B^{\boldsymbol{\ell},\boldsymbol{\sigma}}_{1,{\boldsymbol{k}},{\boldsymbol{m}}}| \leq \sup_{\substack{  {\boldsymbol{\ell}}\in (\mathcal S_p^c)^{q}\\  \boldsymbol{\sigma}\in\{-1,1\}^{q}}}  \! \! \! \! \! \! |R^{\boldsymbol{\ell},\boldsymbol{\sigma}}_{{\boldsymbol{k}},{\boldsymbol{m}}}| \leq 3 |R|^{\Theta,q}_{\boldsymbol{k},\boldsymbol{m}} .
	$$
	The bound on $B_0$ requires more care. Applying the triangular inequality, it suffices to deal with the tree special cases where $R = \Pi_{\mathrm{Top}=n} R =: R_n$, $n=0,1,2$.

	\noindent  \underline{$\bullet$ \emph{Case 1: $R =R_0$}}. In that case, we have $B_1 = 0$ and so $B_0=B$. Since the coefficients of $B$ are smaller than or equal to those of $R$, it follows that directly that $|B_0|^{\Theta,q}_{\boldsymbol{k},\boldsymbol{m}} \leq |R|^{\Theta,q}_{\boldsymbol{k},\boldsymbol{m}} $.
	
	\noindent \underline{$\bullet$  \emph{Case 2: $R =R_1$ with $n\in \{1,2\}$}}. In that case if $B^{\boldsymbol{\ell},\boldsymbol{\sigma}}_{0,{\boldsymbol{k}},{\boldsymbol{m}}}\neq 0$ then $(\boldsymbol{\ell},\boldsymbol{\sigma})\in \mathbf{C}_{q,1} $ (because this property holds also for $B_1$ and $B$). Thus, it suffices to control special terms of the form
	$$
	B_{0,{\boldsymbol{k}},{\boldsymbol{m}}}^{\boldsymbol{\ell},\boldsymbol{\sigma};i,\pm 1} \quad \mathrm{where} \quad i \succ \max_{1 \leq j\leq q-2}^{\prec} \boldsymbol{\ell}_j
	$$ 
	and  $\boldsymbol{\ell} \in (\mathcal{S}_{p-1}^c)^{q-2} ,\boldsymbol{\sigma} \in  \{-1,1\}^{q-2}$. Recalling the definition \eqref{eq:def_B1} of $B_1$, in this special case, we have
	\begin{equation*}
		\begin{split}
			B_{0,{\boldsymbol{k}},{\boldsymbol{m}}}^{\boldsymbol{\ell},\boldsymbol{\sigma};i,\pm 1} &=(\mathds{1}_{ 2 \mathds{1}_{i\in \mathcal{S}_p} + \# \{j \ | \ \boldsymbol{\ell}_j \in \mathcal{S}_{p}\}\geq q+2\|\boldsymbol{m} \|_{\ell^1}-3} - \mathds{1}_{\# \{j \ | \ \boldsymbol{\ell}_j \in \mathcal{S}_{p}\}\geq q+2\|\boldsymbol{m} \|_{\ell^1}-3}) R_{n,{\boldsymbol{k}},{\boldsymbol{m}}}^{\boldsymbol{\ell},\boldsymbol{\sigma};i,\pm 1}
			\\ &= \mathds{1}_{\# \{j \ | \ \boldsymbol{\ell}_j \in \mathcal{S}_{p}\}<q+2\|\boldsymbol{m} \|_{\ell^1}-3}\mathds{1}_{i \in S_p} R_{n,{\boldsymbol{k}},{\boldsymbol{m}}}^{\boldsymbol{\ell},\boldsymbol{\sigma};i,\pm 1}
		\end{split}
	\end{equation*}
	(because $R_{n,{\boldsymbol{k}},{\boldsymbol{m}}}^{\boldsymbol{\ell},\boldsymbol{\sigma};i,\pm 1}=  \lim_{|\boldsymbol{i}_1| \to +\infty} R_{\boldsymbol{k} ,\boldsymbol{m} }^{\boldsymbol{\ell} ,\boldsymbol{\sigma};\boldsymbol{i}_1,\pm 1 }$ and $R\in \Pi_{4\leq \mathrm{deg}  \leq 5} R =R$). Thus, if $B_{0,{\boldsymbol{k}},{\boldsymbol{m}}}^{\boldsymbol{\ell},\boldsymbol{\sigma};i,\pm 1} \neq 0$, the largest index is $i$ and it has to belong to $\mathcal{S}_p$. So as previously, we have
	$$
	|B_1|^{\Theta,q}_{\boldsymbol{k},\boldsymbol{m}}  \leq  (1+ \max_{i\in \mathcal{S}_p} |i|) |R|^{\Theta,q}_{\boldsymbol{k},\boldsymbol{m}} .
	$$

\end{proof}

\section{A KAM theorem} In this subsection, we state precisely the KAM theorem we use in this paper. Its sole novelty lies in the formalism specific to this paper. Thus its proof is omitted. Nevertheless all the specific features related with our context are proven and explained in details in Subsection \ref{sub:KAM_and_Birk}. Concerning notations, the radii $\eta_p$ and $\eta_p^{(\alpha)}$ used in the statement are respectively defined by \eqref{eq:def_etap} and \eqref{eq:def_etapa}.

\begin{proposition} \label{prop:KAM} Let $p\geq 1$ and $\mathcal{O}_{p}^{(2)} \subset \mathcal{O}_{p}^{\mathrm{full}}$ be a Borel set. We set, for all $\xi \in \mathcal{O}_{p}^{(2)}$ and all $u\in \mathcal{A}_\xi(3r_{p})$  
	$$
	H^{(2)}(\xi;u) := Z_2(u)+Z_4(u) + \sum_{1\leq p'\leq p-1} Z^{(p')}(\xi;u) +  Z^{(p,2)}(\xi;u) +    J^{(2)}(\xi;u) + N^{(2)}(\xi;u) 
	$$
	where $\xi^{(p')} := (\xi_j)_{j\in \mathcal{S}_{p'}}$,
	\begin{itemize}
		\item $Z_2,Z_4$ are given by Definition \ref{def:Z_24} and, as in Proposition \ref{prop:res_coho}, for all $p'\in \{1,\cdots,p-1\}$, $Z^{(p')} \in  \Pi_{\mathrm{deg}\leq 4}  \Pi_{\mathrm{int}} \mathscr{H}_{\eta_{p'},r_{p'},p',\mathcal{O}_{p'}}$ is an integrable Hamiltonian  of degree smaller than or equal to $4$, defined for parameters in a Borel set $\mathcal{O}_{p'} \subset \mathcal{O}_{p'}^{\mathrm{full}}$ satisfying $\mathcal{O}_{p}  \subset \mathcal{O}_{p'} \times \mathbb{R}^{\mathcal{S}_{p} \setminus \mathcal{S}_{p'}} $ and satisfying the bound
		$$
		\| Z^{(p')} \|_{ \mathscr{H}_{\eta_{p'},r_{p'},p',\mathcal{O}_{p'}}^{\mathrm{tot}}} \leq r_{p'}^{4+1/3},
		$$
		\item $Z^{(p,2)} \in  \Pi_{\mathrm{deg}\leq 4}  \Pi_{\mathrm{int}} \mathscr{H}_{\eta_p,r_{p},p,\mathcal{O}_{p}^{(1)}}$ is an integrable Hamiltonian  of degree smaller than or equal to $4$ satisfying the bound
		$$
		\| Z^{(p,2)} \|_{ \mathscr{H}_{\eta_p,r_{p},p,\mathcal{O}_{p}^{(1)}}^{\mathrm{tot}}} \leq r_{p}^{4+1/2},
		$$
		\item  $ N^{(2)} \in \Pi_{\mathrm{deg} \geq 4 } \mathscr{H}_{\eta_{p}^{(2)},r_{p},p,\mathcal{O}_{p}^{(2)}}$ is a Hamiltonian, containing only terms of degree larger than or equal to $4$, no quartic integrable term (i.e. $\Pi_{\mathrm{deg} =4} \Pi_{\mathrm{int}}  N^{(2)} =0 $) and satisfying the bound
		$$
		\|  N^{(2)} \|_{ \mathscr{H}_{\eta_{p}^{(2)},r_{p},p,\mathcal{O}_{p}^{(2)}}^{\mathrm{tot}}}  \leq r_p^{4-\delta},
		$$
		\item  $ J^{(2)} \in \Pi_{\mathrm{deg} \leq 3} \mathscr{H}_{\eta_{p}^{(2)},r_{p},p,\mathcal{O}_{p}^{(2)}}$ is a Hamiltonian, containing only terms of degree smaller than $4$ and no integrable term (i.e. $\Pi_{\mathrm{int}}  J^{(2)} =0 $), satisfying the bound
		$$
		\|  J^{(2)} \|_{ \mathscr{H}_{\eta_{p}^{(2)},r_{p},p,\mathcal{O}_{p}^{(2)}}^{\mathrm{tot}}}  \leq r_p^{6-8\delta}.
		$$
	\end{itemize}
	Then, there exist
	\begin{itemize}
		\item a Borel set $\mathcal{O}_{p}^{(3)} \subset  \mathcal{O}_{p}^{(2)}$ satisfying
		$$
		\mathrm{meas}\Big(   \mathcal{O}_{p}^{(2)} \setminus \mathcal{O}_{p}^{(3)} \Big) \leq r_p^{\delta/8} \mathrm{meas}(\mathcal{O}_{p}^{\mathrm{full}} ).
		$$
		\item an integrable Hamiltonian $Z^{(p,3)} \in  \Pi_{\mathrm{deg}\leq 4}  \Pi_{\mathrm{int}} \mathscr{H}_{\eta_p,r_{p},p,\mathcal{O}_p^{(3)}}$ of degree smaller than or equal to $4$ satisfying the bound
		$$
		\| Z^{(p,3)} \|_{ \mathscr{H}_{\eta_p,r_{p},p,\mathcal{O}_p^{(3)}}^{\mathrm{tot}}} \leq 2 r_{p}^{4+1/2},
		$$
		\item a Hamiltonian $ J^{(3)} \in \Pi_{\mathrm{deg} = 3} \mathscr{H}_{\eta_{p}^{(3)},r_{p},p,\mathcal{O}_{p}^{(3)}}$,  containing only terms of degree $3$  and no integrable term (i.e. $\Pi_{\mathrm{int}}  J^{(3)} =0 $) satisfying the bound 
		$$
		\| J^{(3)} \|_{ \mathscr{H}_{\eta_{p}^{(3)},r_{p},p,\mathcal{O}_{p}^{(3)}}^{\mathrm{tot}}}  \leq 2 r_p^{6-8\delta },
		$$
		\item a Hamiltonian $N^{(3)} \in \Pi_{\mathrm{deg} \geq 4} \mathscr{H}_{\eta_{p}^{(3)},r_{p},p,\mathcal{O}_{p}^{(3)}}$,  containing only terms of degree larger than or equal to $4$ and no quartic integrable term (i.e. $\Pi_{\mathrm{deg} =4} \Pi_{\mathrm{int}} N^{(3)} =0 $), satisfying the bound 
		$$
		\| N^{(3)} \|_{ \mathscr{H}_{\eta_{p}^{(3)},r_{p},p,\mathcal{O}_{p}^{(3)}}^{\mathrm{tot}}}  \leq 2 r_p^{4-\delta},
		$$
	\end{itemize}
	such that, for all $\xi \in \mathcal{O}_{p}^{(3)}$ and all $u\in \mathcal{A}_\xi(2 r_p)$  
	$$
	H^{(2)}(\xi;\Psi_\xi^{(p)}(u)) =  Z_2(u)+Z_4(u) + \sum_{1\leq p'\leq p-1} Z^{(p')}(\xi;u) +  Z^{(p,3)}(\xi;u) + J^{(3)}(\xi;u) +  N^{(3)}(\xi;u) 
	$$
	where $\Psi_\xi^{(p)} \in C^2( \mathcal{A}_\xi(2r_p); \mathcal{A}_\xi(3 r_{p}) )$ is a symplectic map which is close to the identity, i.e. for all $ j\in \{0,1,2\}$, $v^{(1)},\cdots,v^{(j)}\in H^s$,
	$$
	\|\mathrm{d}^j \big(\Psi_\xi^{(p)} - \mathrm{Id}  \big)(u)  (v^{(1)},\cdots,v^{(j)}) \|_{H^s} \leq r_p^{2-j-\frac14} \prod_{1\leq i\leq j} \|v^{(i)}\|_{H^s}.
	$$
	
\end{proposition}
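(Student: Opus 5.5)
The plan is to prove Proposition \ref{prop:KAM} by a standard quadratically convergent KAM iteration. Each step is a copy of the elimination of the cubic terms already carried out in Subsection \ref{sub:KAM_and_Birk}, with the degree-$3$ remainder replaced by the degree $\leq 2$ part of the current perturbation. We set $J^{(2)}_0 := \Pi_{\mathrm{deg}\leq 2} J^{(2)}$ and $J^{(3)}_0:=\Pi_{\mathrm{deg}=3}J^{(2)}$. We also fix a sequence of analyticity radii decreasing from $\eta_p^{(2)}$ to $\eta_p^{(3)}$, say $\eta^{[n]} := \eta_p^{(2+1-2^{-n})}$. For the measure exclusions we take $\gamma_n := r_p^{\delta/8}2^{-n-1}$.

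At step $n$ the Hamiltonian has the form $Z_{\leq p-1} + Z^{[n]} + J^{[n]}_{\leq 2} + J^{[n]}_3 + N^{[n]}$ on a Borel set $\mathcal{O}^{[n]}$. We require the size of $J^{[n]}_{\leq 2}$ to be at most $\varepsilon_n := r_p^{6-8\delta}\, r_p^{\kappa(2^n-1)}$ for some small $\kappa>0$. Each homogeneous component of $J^{[n]}_{\leq 2}$ has degree $\leq 2$ and thus largest fourth normal mode $0$, and it contains no integrable terms. We therefore apply Proposition \ref{prop:res_coho} with $P = -\Pi_{\mathrm{deg}=d} J^{[n]}_{\leq 2}$ for $d=0,1,2$ in turn. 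The integrable part used there is $Z_{\leq p-1}+Z^{[n]}$; the dependence of $Z^{[n]}$ on the new parameters is covered by Lemma \ref{lem:the_key}, since $Z^{[n]}$ is a small perturbation of $Z^{(p,2)}$ satisfying the same kind of bounds, $\lesssim r_p^{4+1/2}$.

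This yields $\chi^{[n]}\in\mathscr{C}$ with $\|\chi^{[n]}\|\lesssim \gamma_n^{-2}(\eta^{[n]}-\eta^{[n+1]})^{-C_p} r_p^{-2+7\delta/4}\varepsilon_n$ and a remainder $R^{[n]}$ two degrees higher. Because $\chi^{[n]}$ has degree $\leq 2$, it is at most linear in $y$ by homogeneity, so Lemma \ref{lem:flows} and Proposition \ref{prop:comp_ham_flow} apply. We then expand $H\circ\Phi^1_{\chi^{[n]}}$ exactly as in \eqref{eq:mod_H3}.

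The new terms are regrouped by degree. Degree-$\leq 2$ terms of the form $\{J_{\leq 2},\chi\}$ are quadratically small, $\lesssim r_p^{-2-}\varepsilon_n^2$. The remainder $R^{[n]}$ contributes at degree $2$ (from $P$ of degree $0$) and at degree $3$ and higher; these pieces are placed back into $J_{\leq 2}$ or into $J_3$ or $N$ according to degree. The integrable degree-$\leq 4$ part is added to $Z^{[n+1]}$. The terms $\{J_3+N,\chi\}$ are of size $r_p^{4-}\varepsilon_n r_p^{-2-}$, and only their degree-$\leq 2$ part matters for the next step. By the degree property \eqref{eq:deg_prop_poisson}, a bracket of a degree $\geq 4$ term with a degree $\leq 2$ generator has degree $\geq 2$, so this part has size $r_p^{2-}\varepsilon_n$. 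That is a gain, not a loss.

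I expect this homogeneity bookkeeping to be the main obstacle: showing that the degree $\leq 2$ part really contracts superlinearly. The degree-$0$ and degree-$1$ eliminations feed degree-$2$ and degree-$3$ pieces back through $R$ and through brackets with $Z_4$ translated. The plan is to eliminate in the order $d=0,1,2$ within each step, and to check that each feedback carries a factor $r_p^{2-\delta}$, as in \eqref{eq:est_ChiR}, which beats the small-divisor loss $\gamma_n^{-2} r_p^{-2+7\delta/4}$ only marginally. This margin is exactly what the choices $\gamma\sim r_p^{\delta/8}$ and the exponent $6-8\delta$ allow. A genuinely superlinear factor, of order $\varepsilon_n^2$, arises only from $\{J_{\leq 2},\chi\}$. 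If the linear feedback is too large, I would instead absorb it by redefining the frequencies, that is, by moving integrable quadratic parts into $Z^{[n+1]}$, as is standard.

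The cumulative corrections are summable: $\sum_n\|\chi^{[n]}\|\lesssim r_p^{4-7\delta}$. This gives the bounds $2r_p^{4+1/2}$, $2r_p^{6-8\delta}$ and $2r_p^{4-\delta}$ on the limits $Z^{(p,3)}$, $J^{(3)}$ and $N^{(3)}$. The measure loss is at most $\sum_n\gamma_n\,\mathrm{meas}(\mathcal{O}_p^{\mathrm{full}})\leq r_p^{\delta/8}\mathrm{meas}(\mathcal{O}_p^{\mathrm{full}})$, and we set $\mathcal{O}^{(3)}_p=\bigcap_n\mathcal{O}^{[n]}$. Finally, $\Psi^{(p)}_\xi$ is the limit of the compositions of the time-one flows; its convergence and the bounds on $\mathrm{d}^j(\Psi-\mathrm{Id})$ for $j\leq 2$ follow from Lemma \ref{lem:flows}, with the same telescoping arguments as in Lemmas \ref{lem:lip}–\ref{lem:hold}.
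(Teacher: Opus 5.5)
There is a genuine gap, and it is the one you flag yourself: in your scheme the degree $\leq 2$ part of the new perturbation is not superlinearly small. Besides $\{J_{\leq 2},\chi\}$, one step also creates degree $\leq 2$ terms that are \emph{linear} in $\varepsilon_n$:
the degree-$2$ remainder $R_0$ that Proposition~\ref{prop:res_coho} produces when you remove $\Pi_{\mathrm{deg}=0}J$;
the terms $\Pi_{\mathrm{deg}\leq 2}\{J_3,\chi_0+\chi_1\}$;
and $\Pi_{\mathrm{deg}=2}\{N,\chi_0\}$.

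Your size count for the $N$-contribution omits the small-divisor loss in $\chi$. Since $\|\chi\|\sim\gamma_n^{-2}r_p^{-2+7\delta/4}\varepsilon_n$ and the bracket costs $r_p^{-2}$, you get $\|\{N,\chi_0\}\|\lesssim r_p^{-2}\,r_p^{4-\delta}\,\gamma_n^{-2}r_p^{-2+7\delta/4}\varepsilon_n\approx r_p^{\delta/2}\varepsilon_n$. This is the same order as $R_0$, not $r_p^{2-}\varepsilon_n$. At the fixed radius $r_p$, the fact that these terms have degree $2$ gives no extra smallness.

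More fundamentally, no fixed contraction factor can save a linear term. The per-step loss $\gamma_n^{-2}(\eta^{[n]}-\eta^{[n+1]})^{-2\#\mathcal{S}_p-25}$ grows geometrically in $n$. A recursion like $\varepsilon_{n+1}\lesssim C_p^{\,n}\big(r_p^{\delta/2}\varepsilon_n+r_p^{-4+}\varepsilon_n^2\big)$ therefore does not even tend to zero. Your fallback of moving these terms into the frequencies only handles their integrable part. For instance, $R_0$ is entirely non-integrable, since $\Pi_{\mathrm{int}}R=0$ in Proposition~\ref{prop:res_coho}.

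The missing idea is the triangular correction $\tilde P_d$ in the paper's sketch. The map $\chi_d\mapsto\{(\text{quartic part of the translated integrable Hamiltonian})+J_3+N,\chi_d\}$ strictly raises the degree. Hence the linearized requirement that $\Pi_{\mathrm{deg}\leq 2}\big(J_{\leq 2}+\{Z+J_3+N,\chi\}\big)$ contain only integrable terms is lower triangular in $d$, and you should solve it exactly within each step. Apply Proposition~\ref{prop:res_coho} successively with:
$P_0=-\Pi_{\mathrm{deg}=0}J$;
$P_1=-\Pi_{\mathrm{deg}=1}\big(J+\{J_3,\chi_0\}\big)$;
$P_2=-(\mathrm{Id}-\Pi_{\mathrm{int}})\Pi_{\mathrm{deg}=2}\big(J+R_0+\{J_3,\chi_0+\chi_1\}+\{N,\chi_0\}\big)$.
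The integrable part $\Pi_{\mathrm{int}}\Pi_{\mathrm{deg}=2}(\cdots)$ is added to $Z^{[n+1]}$. This is why the frequencies $\omega^{\mathrm{full},i}$, and hence the non-resonant sets, change at every step, with $\gamma_i=c_ir_p^{\delta/8}$.

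With this choice, the degree $\leq 2$ part of the new Hamiltonian comes only from second-order terms of the Lie series and is $O(C_p^{\,n}r_p^{-4+}\varepsilon_n^2)$. All linear-in-$\varepsilon_n$ terms then have degree $\geq 3$ and go, summably, into $J^{(3)}$ and $N^{(3)}$, with the quartic integrable ones going into $Z^{(p,3)}$. The rest of your plan matches the paper's sketch: repeated use of Proposition~\ref{prop:res_coho} with $\mu_P=0$, flows via Lemma~\ref{lem:flows}, decreasing $\gamma_n$, and convergence of the compositions.
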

{\begin{proof}[Sketch of the proof] We apply a rapidly converging sequence of changes of variables $\Psi^{(p,i)}_\xi= \Phi^{(p,i)}_\xi \circ \Psi^{(p,i-1)}_\xi$ so that
		$$
		H^{(2)}(\xi;\Psi_\xi^{(p,i)}(u)) =  Z_2(u)+Z_4(u) + \sum_{1\leq p'\leq p-1} Z^{(p')}(\xi;u) +  Z^{(p,2,i)}(\xi;u) + J^{(2,i)}(\xi;u) +  N^{(2,i)}(\xi;u) 
		$$
		and $\Phi^{(p+1,i)}$ is the time one Hamiltonian flow generated by  solving three times  the cohomological  equation \eqref{eq:coho} with $P_d= \Pi_{\rm{deg}=d}J^{(2,i)} + \tilde P_d $ where $d=0,1,2$ is the homogeneity degree and $\tilde P_d$ is a triangular correction term. As is standard in KAM theory at each step of our iteration we have to impose non-resonance condition as in \eqref{eq:the_small_divisor_estimate} with a  different frequency $\omega^{\rm{full},i}$ and $\gamma_i = c_i r_p^{\delta/8}$ with $c_i\le 1$ and an  appropriate sequence which decreases to zero.
\end{proof}}

\end{document}